\newtheorem{thm}{Theorem}[section]
\newtheorem{la}[thm]{Lemma}
\newtheorem{Defn}[thm]{Definition}
\newtheorem{Remark}[thm]{Remark}
\newtheorem{Conj}[thm]{Conjecture}
\newtheorem{prop}[thm]{Proposition}
\newtheorem{cor}[thm]{Corollary}
\newtheorem{Example}[thm]{Example}
\newtheorem{Number}[thm]{\!\!}
\newenvironment{defn}{\begin{Defn}\rm}{\end{Defn}}
\newenvironment{example}{\begin{Example}\rm}{\end{Example}}
\newenvironment{rem}{\begin{Remark}\rm}{\end{Remark}}
\newenvironment{numba}{\begin{Number}\rm}{\end{Number}}
\newenvironment{proof}{{\noindent\bf Proof.}}%
                  {\nopagebreak\hspace*{\fill}$\Box$\medskip\par}
\newcommand{\cO}{{\mathcal O}}
\newcommand{\cK}{{\mathcal K}}
\newcommand{\cE}{{\mathcal E}}
\newcommand{\cL}{{\mathcal L}}
\newcommand{\cA}{{\mathcal A}}
\newcommand{\cT}{{\mathcal T}}
\newcommand{\ve}{\varepsilon}
\newcommand{\R}{{\mathbb R}}
\newcommand{\N}{{\mathbb N}}
\newcommand{\mto}{\mapsto}
\newcommand{\sub}{\subseteq}
\DeclareMathOperator{\id}{id}
\DeclareMathOperator{\graph}{graph}
\DeclareMathOperator{\Supp}{supp}
\DeclareMathOperator{\pr}{pr}
\DeclareMathOperator{\Evol}{Evol}
\DeclareMathOperator{\ev}{ev}
\DeclareMathOperator{\im}{im}
\DeclareMathOperator{\fb}{fb}
\DeclareMathOperator{\comp}{comp}
\newcommand{\coloneq}{\colonequals}
\newcommand{\pl}{{\displaystyle \lim_{\longleftarrow}}}
\newcommand{\cg}{{\mathfrak g}}
\newcommand{\ch}{{\mathfrak h}}
\begin{document}
$\;$\\[-14mm]
\begin{center}
{\bf\Large Manifolds of mappings on cartesian products}\\[4mm]
{\bf Helge Gl\"{o}ckner\footnote{Supported by German Academic Exchange Service, DAAD grant 57568548} and Alexander Schmeding}\vspace{2mm}
\end{center}
\begin{abstract}
\noindent Given smooth manifolds
$M_1,\ldots, M_n$
(which may have a boundary or corners),
a smooth manifold~$N$
modeled on locally convex spaces
and $\alpha\in(\N_0\cup\{\infty\})^n$,
we consider the set $C^\alpha(M_1\times\cdots\times M_n,N)$
of all mappings $f\colon M_1\times\cdots\times M_n\to N$
which are $C^\alpha$ in the sense of Alzaareer.
Such mappings admit, simultaneously,
continuous iterated directional derivatives of orders $\leq \alpha_j$
in the $j$th variable for $j\in\{1,\ldots, n\}$,
in local charts.
We show that $C^\alpha(M_1\times\cdots\times M_n,N)$
admits a canonical smooth manifold structure
whenever each $M_j$ is compact and $N$ admits a local addition.
The case of non-compact domains is also considered.\vspace{1.4mm}
\end{abstract}
\textbf{MSC 2020 subject classification:} 58D15 (primary);
%
22E65,
26E15, 26E20,\\[.4mm]
%
46E40,
46T20 (secondary)\\[2.3mm]
%
%
\textbf{Key words:}
infinite-dimensional manifold;
infinite-dimensional Lie group;\\[.3mm]
compact-open topology; exponential law; evaluation map; mapping group;\\[.3mm]
regularity; box product; non-compact manifold

\tableofcontents
\section{Introduction and statement of the results}
As known from classical work by Eells \cite{Eel},
the set $C^\ell(M,N)$ of all $C^\ell$-maps
$f \colon M\to N$ can be given a smooth Banach manifold
structure
for each $\ell\in\N_0$, compact smooth manifold~$M$
and $\sigma$-compact finite-dimensional smooth manifold~$N$.
More generally, $C^\ell(M,N)$ is a smooth manifold
for each $\ell\in \N_0\cup\{\infty\}$,
locally compact, paracompact
smooth manifold~$M$ with rough boundary in the sense of~\cite{GaN}
(this includes finite-dimensional manifolds with boundary,
and manifolds with corners as
in \cite{Cer, Dou, Mic})
and each smooth
manifold~$N$ modeled on locally convex
spaces such that $N$ admits a local addition
(a concept recalled in Definition~\ref{def-loa});
see \cite{Ham, Mic, Mil, Wit, AGS, Rou}
for discussions in different levels of\linebreak
generality,
and~\cite{KaM} for manifolds of smooth maps in the convenient setting
of analysis. For compact~$M$, the modeling space of $C^\ell(M,N)$
around $f\in C^\ell(M,N)$ is the locally convex space
$\Gamma_{C^\ell}(f^*(TN))$ of all $C^\ell$-sections in the pullback bundle
$f^*(TN)\to M$, which can be identified with
\[
\Gamma_f:=\{\tau\in C^\ell(M,TN)\colon \pi_{TN}\circ \tau=f\};
\]
if $M$ is not compact,
the locally convex space of compactly supported $C^\ell$-sections
of $f^*(TN)$ is used.
Let $L$ be a smooth manifold modeled
on locally convex spaces (possibly with rough boundary),
and $k\in \N_0\cup\{\infty\}$.
For compact~$M$, it is known from
\cite[Proposition 1.23 and Definition 1.17]{AGS} that a map
\[
g \colon L\to C^\ell(M,N)
\]
is $C^k$ if and only if the corresponding map of two variables,
\[
g^\wedge\colon L\times M\to N,\;\, (x,y)\mto g(x)(y)
\]
is $C^{k,\ell}$ in the sense of \cite{AaS},
i.e., a continuous map which in local charts admits
up to $\ell$ directional derivatives
in the second variable, followed by up to~$k$
directional derivatives in the first variable,
with continuous dependence on point and directions
(see \ref{defn-C-alpha} and \ref{C-alpha-in-mfd} for details).
We thus obtain a bijection
\[
\Phi\colon C^k(L,C^\ell(M,N))\to C^{k,\ell}(L\times M,N),\;\, g\mto g^\wedge.
\]
As our first result, for compact~$L$ we construct a smooth
manifold structure on $C^{k,\ell}(L\times M,N)$
which turns $\Phi$ into a $C^\infty$-diffeomorphism.
More generally, analogous to the $n=2$ case
of $C^{k,\ell}$-maps,
we consider $N$-valued $C^\alpha$-maps
on an $n$-fold product
$M_1\times\cdots\times M_n$ of smooth manifolds
for any $n\in\N$ and $\alpha=(\alpha_1,\ldots, \alpha_n)
\in (\N_0\cup\{\infty\})^n$.
With terminology explained presently, we get:
\begin{thm}\label{thmA}
Given $\alpha=(\alpha_1,\ldots,\alpha_n)\in(\N_0\cup\{\infty\})^n$,
let $M_j$ for $j\in\{1,\ldots,n\}$
be a compact smooth manifold with rough boundary.
Let $N$ be a smooth manifold modeled on locally
convex spaces such that~$N$ can be covered by local additions.
Then $C^\alpha(M_1\times\cdots\times M_n,N)$
admits a smooth manifold structure which is canonical.
The following hold for this canonical manifold
structure:
\begin{itemize}
\item[\rm(a)]
$C^\alpha(M_1\times\cdots\times M_n,N)$
can be covered by local additions.
If $N$ admits a local addition, then also
$C^\alpha(M_1\times\cdots\times M_n,N)$
admits a local addition.
\item[\rm(b)]
Given $m\in \N$ and $\beta=(\beta_1,\ldots,\beta_m)\in (\N_0\cup\{\infty\})^m$,
let $L_j$ be a compact smooth manifold with rough boundary for $j\in \{1,\ldots, m\}$.
Then canonical smooth manifold structures turn the bijection
\[
\!\!\!\!\!\!\hspace{-2mm}
C^\beta(L_1\times\cdots\times L_m,C^\alpha(M_1\times\cdots\times M_n,N))
\!\to C^{\beta,\alpha}(L_1\times\cdots\times L_m\times M_1\times\cdots\times
M_n,N)
\]
taking $g$ to $g^\wedge$
into a $C^\infty$-diffeomorphism.
\end{itemize}
\end{thm}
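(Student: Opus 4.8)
The plan is to build the manifold structure from an atlas of charts modelled on the section spaces $\Gamma_f$, following the classical recipe of Eells and Michor but carried out inside the $C^\alpha$-category. Write $M\coloneq M_1\times\cdots\times M_n$, which is compact. The first step is to record that for each $f\in C^\alpha(M,N)$ the space $\Gamma_f=\{\tau\in C^\alpha(M,TN)\colon \pi_{TN}\circ\tau=f\}$ of $C^\alpha$-sections of $f^*(TN)$ is a locally convex space. This reduces, via finitely many bundle trivializations over a cover of the compact set $M$ subordinate to charts of $N$, to the fact that $C^\alpha(K,E)$ is a locally convex space for compact~$K$ with rough boundary and a locally convex space~$E$, together with the observation that $\Gamma_f$ can be realized as a closed vector subspace of a finite product of such spaces, cut out by the continuous linear gluing (cocycle) conditions on chart overlaps.

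Next I would define the charts. Given~$f$, since $f(M)$ is compact and $N$ is covered by local additions, I would produce a local addition $\Sigma\colon\Omega\to N$ defined on an open neighbourhood of $f(M)$; obtaining such a $\Sigma$ from the given covering over the compact image is one of the technical points that the hypothesis ``$N$ can be covered by local additions'' is designed to accommodate. The chart is then
\[
\phi_f\colon O_f\to C^\alpha(M,N),\qquad \tau\mapsto\Sigma\circ\tau ,
\]
where $O_f\subseteq\Gamma_f$ is the open set of sections whose image lies in $\Omega$. That $\phi_f$ is injective with image an open set of maps ``$C^\alpha$-close to~$f$'' follows because $(\pi_{TN},\Sigma)$ is a diffeomorphism onto an open neighbourhood of the diagonal in $N\times N$.

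The heart of the construction is the smoothness of the transition maps. On an overlap, $\phi_g^{-1}\circ\phi_f$ has the form $\tau\mapsto\theta\circ\tau$ for a smooth fibrewise map $\theta$ between open subsets of $TN$ built from the two local additions. Smoothness of such pushforward operators $C^\alpha(M,\Omega)\to C^\alpha(M,\Omega')$, i.e.\ an $\Omega$-lemma for $C^\alpha$-maps, is the workhorse: it is proved by passing through the exponential law, which converts smoothness of a parametrized family $h\mapsto\theta\circ\tau_h$ into the assertion that the associated map $(h,x)\mapsto\theta(\tau^\wedge(h,x))$ is $C^{\infty,\alpha}$, a property supplied by the $C^\alpha$-chain rule once $\tau^\wedge$ is $C^{\infty,\alpha}$. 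I expect this $\Omega$-lemma, together with the bookkeeping of the mixed orders it requires, to be the main obstacle, alongside the patching of local additions over compact images in the merely ``covered'' case. With the transitions smooth the charts form an atlas, and \emph{canonicity} is the statement that this structure is the unique one for which a map $g\colon L\to C^\alpha(M,N)$ is $C^k$ precisely when $g^\wedge\colon L\times M\to N$ is $C^{k,\alpha}$; I would verify the constructed structure has this property by evaluating both sides in charts and invoking the exponential law, uniqueness then being formal.

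For part~(a), I would push a local addition on~$N$ forward to the mapping space: under the canonical identification of $T\,C^\alpha(M,N)$ with the section space of $TN$-valued $C^\alpha$-maps, a local addition $\Sigma$ on~$N$ induces $\tau\mapsto\Sigma\circ\tau$ as a local addition on $C^\alpha(M,N)$, and if $N$ is merely covered by local additions the same pushforward construction, applied over the compact images, shows that $C^\alpha(M,N)$ is covered by them. Finally, part~(b) is the iterated exponential law, refining the $m=1$ universal property used for canonicity: writing $L\coloneq L_1\times\cdots\times L_m$, I would express $g\mapsto g^\wedge$ and its inverse in the charts of $C^\beta(L,C^\alpha(M,N))$ and of $C^{\beta,\alpha}(L\times M,N)$, where both reduce once more to pushforward operators, and then appeal to the exponential law for $C^\alpha$-maps between locally convex spaces to identify the relevant $C^\beta$-valued section spaces over~$L$ with $C^{\beta,\alpha}$-section spaces over $L\times M$ compatibly with the charts. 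The delicate point here is that the two mixed multi-orders $\beta$ and $\alpha$ must be kept strictly separate throughout, so that the reduction to the vector-valued exponential law respects the order of differentiation in each block of variables.
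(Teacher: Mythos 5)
Your proposal is correct and follows essentially the same route as the paper: locally convex section spaces $\Gamma_f$ realized as closed subspaces of products of spaces $C^\alpha(K,E)$ via trivializations (the paper's \ref{bundleconv}), charts $\tau\mapsto\Sigma\circ\tau$ whose transitions, and the canonicity of the resulting structure, are checked through the exponential law for section spaces combined with the $C^\alpha$-chain rule (Lemmas~\ref{expsect}, \ref{evalsect}, \ref{la:cano:mfdmap}), part~(a) via the identification $TC^\alpha(M,N)\cong C^\alpha(M,TN)$ and pushforward of a normalized local addition (Theorem~\ref{thm:tangentident}, Lemma~\ref{la:lift:locadd}), and part~(b) via the iterated exponential law, which the paper obtains more formally from uniqueness of pre-canonical structures (Lemma~\ref{base-cano}\,(b), Lemma~\ref{la:ISO1}\,(c), Proposition~\ref{prop:currying}) rather than by your direct chart computation. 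The one obstacle you flag in fact dissolves: ``covered by local additions'' means an upward \emph{directed} family of open submanifolds each admitting a local addition, so compactness of $f(M)$ places the image inside a single member $N_j$ of the family --- no patching of local additions over the compact image is needed, and the paper accordingly just glues the open subsets $C^\alpha(M,N_j)$ of the mapping space (Proposition~\ref{prop:cano:covered}).
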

The following terminology was used:
We say that a smooth manifold~$N$
\emph{can be covered by local additions}
if $N$ is the union of an upward directed family
$(N_j)_{j\in J}$ of open submanifolds~$N_j$
which admit a local addition.
For instance, any (not necessarily paracompact)
finite-dimensional smooth manifold
has this property, e.g.\ the long line.
We also used
canonical manifold structures.\\[2mm]
Note that if a map $f\colon L_1\times \cdots\times L_m\times
M_1\times\cdots\times M_n\to N$
on a product of smooth manifolds
with rough boundary is $C^{\beta,\alpha}$
with $\alpha\in (\N_0\cup\{\infty\})^n$
and $\beta\in (\N_0\cup\{\infty\})^m$,
then the map
\[
f^\vee(x):=f(x,\cdot)\colon M_1\times \cdots\times M_n\to N
\]
is $C^\alpha$ for each $x\in L_1\times\cdots\times L_m$
(see \cite[Lemma~3.3]{Alz}).
\begin{defn}\label{defn-can}
Let $N$ be a smooth manifold modeled
on a locally convex space, $M_1,\ldots, M_n$
be finite-dimensional smooth manifolds with rough boundary
and $\alpha\in(\N_0\cup\{\infty\})^n$.
A smooth manifold structure on $C^\alpha(M_1\times\cdots\times M_n,N)$
is called
\emph{pre-canonical}
if the following condition is satisfied
for each $m\in\N$
and each $\beta\in (\N_0\cup\{\infty\})^m$:
If $L_j$ for $j\in \{1,\ldots, m\}$ is a smooth manifold
with rough boundary modeled on locally convex spaces,
then a map
\[
g \colon L_1\times\cdots\times L_m\to C^\alpha(M_1\times\cdots\times M_n,N)
\]
is $C^\beta$ if and only if the map
\[
g^\wedge \colon L_1\times\cdots\times L_m\times M_1\times\cdots\times M_n\to N
\]
given by $g^\wedge(x_1,\ldots, x_m,y_1,\ldots, y_n):=
g(x_1,\ldots, x_m)(y_1,\ldots, y_n)$ is $C^{\beta,\alpha}$.
Thus
\[
C^\alpha(L_1\times\cdots\times L_m,C^\beta(M_1\times\cdots\times M_n,N))
\hspace*{-.3mm}\to\hspace*{-.3mm} C^{\beta,\alpha}(L_1\times\cdots\times L_n\times
M_1\times\cdots\times M_n,N),\vspace{-1mm}
\]
\begin{equation}\label{pre-can-bij}
\quad \;\;  g\mto g^\wedge
\end{equation}
is a bijection.
The manifold structure is called
\emph{canonical} if, moreover,
its underlying topology
is the compact-open $C^\alpha$-topology
(as in Definition~\ref{defn-alpha-top}).
\end{defn}
Canonical manifold structures are essentially unique whenever they exist,
and so are pre-canonical ones (see Lemma~\ref{base-cano}\,(b) for details).\\[2.3mm]
We address two further topics for not necessarily
compact domains:
\begin{itemize}
\item[(i)]
We formulate criteria ensuring that
$C^\alpha(M_1\times\cdots\times M_n,G)$
admits a canonical smooth manifold structure
(making the latter a Lie group),
for a Lie group~$G$
modeled on a locally convex space;
\item[(ii)]
Manifold structures
on $C^\alpha(M_1\times\cdots\times M_n,N)$
which are modeled on certain spaces
of compactly supported
$TN$-valued functions,
in the spirit of~\cite{Mic}.
\end{itemize}
To discuss (i), we use a generalization
of the regularity concept introduced by John Milnor~\cite{Mil}
(the case $r=\infty$).
If $G$ is a Lie group modeled on a locally convex
space, with neutral element $e$,
we write $\lambda_g\colon G\to G$, $x\mto gx$
for left translation with $g\in G$
and consider the smooth left action
\[
G\times TG\to TG,\quad (g,v)\mto g.v:=T\lambda_g(v)
\]
of $G$ on its tangent bundle.
We write $\cg:=T_eG$ for the Lie algebra of $G$.
Let $r\in \N_0\cup\{\infty\}$.
The Lie group $G$ is called \emph{$C^r$-semiregular}
if, for each $C^r$-curve $\gamma\colon [0,1]\to \cg$,
the initial value problem
\[
\dot{\eta}(t)=\eta(t).\gamma(t),\quad \eta(0)=e
\]
has a (necessarily unique)
solution $\eta\colon [0,1]\to G$. Write $\Evol(\gamma):=\eta$.
If, moreover, the map
\[
\Evol\colon C^r([0,1],\cg)\to C^{r+1}([0,1],G)
\]
is smooth, then $G$ is called \emph{$C^r$-regular}
(cf.\ \cite{Sem}).
If $s\leq r$ and $G$ is $C^s$-regular, then $G$ is $C^r$-regular
(see \cite{Sem}). We show:
\begin{thm}\label{thmB}
Let $G$ be a $C^r$-regular Lie group modeled
on a locally convex space with $r\in \N_0\cup\{\infty\}$.
For some $n\in\N$, let $M_1,\ldots, M_n$
be locally compact smooth manifolds
with rough boundary
and $\alpha\in (\N_0\cup\{\infty\})^n$.
For each $j\in\{1,\ldots, n\}$ such that $M_j$
is not compact, assume that
$\alpha_j\geq r+1$
and $M_j$ is $1$-dimensional
with finitely many connected components.
Then we have:
\begin{itemize}
\item[\rm(a)]
$C^\alpha(M_1\times\cdots\times M_n,G)$
admits a canonical smooth manifold structure;
\item[\rm(b)]
The canonical manifold structure from~{\rm(a)}
makes $C^\alpha(M_1\times\cdots\times M_n,G)$
a $C^r$-regular Lie group.
\end{itemize}
\end{thm}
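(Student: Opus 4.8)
The plan is to derive everything from the group structure of $G$, using left translation to trivialize the tangent bundle $TG\cong G\times\cg$ and thereby reduce all statements about $C^\alpha$-sections to statements about $\cg$-valued $C^\alpha$-maps. Write $M:=M_1\times\cdots\times M_n$. For part~(a), I would first recall that every Lie group $G$ modeled on a locally convex space admits a local addition, built from a chart around~$e$ together with left translations; thus $G$ can be covered by (indeed admits) a local addition, and the manifold-structure construction for non-compact one-dimensional domains (topic~(ii) above) applies. The modeling space around the constant map with value~$e$ should be the space of $C^\alpha$-maps $M\to\cg=T_eG$ having compact support in the non-compact, one-dimensional directions; under the left trivialization this is exactly the space of compactly supported $C^\alpha$-sections of the pullback of $TG$. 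Charts around an arbitrary $f$ are then obtained by left translation, sending $g$ to the $\cg$-valued map $f^{-1}g$ read in a chart of $G$, so that the manifold structure is homogeneous. To check that the structure is \emph{canonical} in the sense of Definition~\ref{defn-can}, I would verify the pre-canonical bijection directly: a map $h\colon L\to C^\alpha(M,G)$ is $C^\beta$ if and only if $h^\wedge$ is $C^{\beta,\alpha}$. This reduces, after passing to the $\cg$-valued picture, to the exponential law for $C^{\beta,\alpha}$-maps underlying Theorem~\ref{thmA} (cf.\ \cite{AaS,Alz,AGS}); one checks separately that the compact-support condition is respected and that the compact-open $C^\alpha$-topology is the underlying topology.

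For the Lie group assertion in part~(b), smoothness of multiplication and inversion is an immediate consequence of the canonical (hence pre-canonical) property established in~(a). Indeed, pointwise multiplication corresponds under the bijection~\eqref{pre-can-bij} to the map $(x,y)\mapsto g_1^\wedge(x,y)\,g_2^\wedge(x,y)$, which is $C^{\infty,\alpha}$ because multiplication in $G$ is smooth and $C^{\beta,\alpha}$-maps are stable under composition with smooth maps of the target; the same applies to inversion. Hence the group operations are smooth and $C^\alpha(M,G)$ is a Lie group, with Lie algebra $C^\alpha(M,\cg)$.

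The heart of the matter is the regularity claim, and this is the step I expect to be the main obstacle. Given a $C^r$-curve $\gamma\colon[0,1]\to C^\alpha(M,\cg)$ into the Lie algebra of the mapping group, the exponential law turns it into a $C^{r,\alpha}$-map $\gamma^\wedge\colon[0,1]\times M\to\cg$. The idea is to solve the evolution equation pointwise: for each $y\in M$ the curve $\gamma^\wedge(\cdot,y)\colon[0,1]\to\cg$ is $C^r$, so by $C^r$-regularity of $G$ we may set $\eta^\wedge(t,y):=\Evol_G(\gamma^\wedge(\cdot,y))(t)$, which exists and is unique. Since $\Evol_G$ sends the zero curve to the constant curve at~$e$, the map $\eta^\wedge$ inherits compact support in the non-compact directions from $\gamma^\wedge$ (where $\gamma^\wedge(\cdot,y)\equiv 0$ one has $\eta^\wedge(\cdot,y)\equiv e$), so each $\eta(t)$ lies in $C^\alpha(M,G)$ and in the correct chart around the constant map.

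The obstacle is to prove that the induced map $\Evol$ of the mapping group is \emph{smooth} as a map $C^r([0,1],C^\alpha(M,\cg))\to C^{r+1}([0,1],C^\alpha(M,G))$; equivalently, that $\eta^\wedge$ is $C^{r+1,\alpha}$ and depends smoothly on~$\gamma$. This is a parameter-dependent regularity statement, and I would establish it by upgrading the smoothness of $\Evol_G\colon C^r([0,1],\cg)\to C^{r+1}([0,1],G)$, via the exponential law, to the assertion that a $C^{\alpha}$-family $y\mapsto\gamma^\wedge(\cdot,y)\in C^r([0,1],\cg)$ of right-hand sides yields a $C^\alpha$-family $y\mapsto\eta^\wedge(\cdot,y)\in C^{r+1}([0,1],G)$ of solutions, smoothly in the family. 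It is precisely here that the hypotheses on the non-compact factors enter decisively: one-dimensionality ensures that each such factor is, on each of its finitely many connected components, diffeomorphic to an interval, so that differentiation in these variables can be organized through the parametrized evolution and interchanged with the time-integration defining $\Evol_G$; and the requirement $\alpha_j\ge r+1$ is exactly what is needed to match the derivatives required in the $j$-th variable against the single order in the time direction gained by $\Evol_G$, keeping the parametrized solution valued in the $C^{r+1}$-regularity class throughout. Carrying out this interchange-of-derivatives argument carefully, and deducing smoothness of $\Evol$ from smoothness of $\Evol_G$ together with the exponential law, is the technical core; the remaining assertions of~(b) reduce to it.
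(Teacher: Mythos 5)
Your proposal for part~(a) rests on a construction that cannot deliver what Theorem~\ref{thmB} asserts. The manifold structure you describe --- charts around $f$ obtained by left translation, modeled on $C^\alpha$-maps $M\to\cg$ with \emph{compact support} in the non-compact directions --- is precisely the Michor-type structure of Theorem~\ref{thmC} (Section~\ref{sec-box}), and the paper states explicitly there that such structures ``need not be compatible with the compact-open $C^\alpha$-topology, and need not be pre-canonical.'' Indeed they are not: a chart domain $O_f'$ in that structure only contains maps which agree with $f$ off a compact set, so the underlying topology is strictly finer than the compact-open $C^\alpha$-topology whenever some $M_j$ is non-compact, and your planned ``separate check'' that the topology is compact-open must fail. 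Concretely, for $M=\R$, $G=(\R,+)$, $\alpha=k\geq r+1$, your model space is $C^k_c(\R,\R)$ with its direct limit topology, whereas the canonical structure is the vector space $C^k(\R,\R)$ with the compact-open $C^k$-topology; these are not homeomorphic. There is also no way to repair the left-translation charts within the compact-open topology: no compact-open neighborhood of $f$ keeps $f^{-1}g$ inside a fixed chart domain of $G$, which is exactly why non-compact domains are genuinely hard. The paper's construction is entirely different and is where the hypotheses you treat as peripheral actually enter: for a $1$-dimensional $M$ with $k\geq r+1$, one uses $C^r$-semiregularity of $G$ (Lemma~\ref{reparametrize}) to show that the left logarithmic derivative together with evaluation at a base point, $f\mapsto(\delta_M(f),f(x_0))$, is a \emph{bijection} onto $\Omega^1_{C^{k-1}}(M,\cg)\times G$; the manifold structure is transported through this global parametrization (so the model is the space of \emph{all} $C^{k-1}$ one-forms with the compact-open topology, not compactly supported sections), and canonicity is verified by projective-limit arguments over compact subintervals (Lemmas~\ref{specialized-PL}, \ref{la:ascending-union}). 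Products are then handled by induction on the number of factors using the exponential law (Lemma~\ref{wedge-is-lie}) and Lemma~\ref{ind-correct-top} to upgrade pre-canonical to canonical.

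For part~(b), two of your observations do match the paper: smoothness of the group operations follows from pre-canonicity (Lemma~\ref{la-is-gp}), and the evolution of the mapping group is indeed given pointwise by $\ve_x\circ\cE(\gamma)=\Evol_G(L(\ve_x)\circ\gamma)$. But the ``technical core'' you anticipate --- a parametrized interchange-of-derivatives argument to prove smoothness of $\Evol$ --- is not how the paper proceeds, and in your framework it would additionally have to cope with the direct-limit topology on compactly supported sections. The paper avoids all of this abstractly: once the structure is pre-canonical and \emph{compatible with evaluations} (Definition~\ref{defn-expected}), Lemma~\ref{expected-lie-regular} produces the evolution map as a composition $\beta\circ\Evol_*\circ\theta\circ\psi\circ\phi_*$ of exponential-law isomorphisms with the pushforward $\Evol_*$, whose smoothness is immediate from pre-canonicity (Corollary~\ref{cor:pushforward}); no derivative interchange occurs, and the hypothesis $\alpha_j\geq r+1$ is consumed already in building the manifold structure, not here. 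Since your part~(a) constructs the wrong manifold, the regularity argument in your part~(b) has no foundation to stand on.
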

The Lie algebra of $C^\alpha(M_1\times \cdots\times M_n,G)$
can be identified with the topological Lie algebra
$C^\alpha(M_1\times\cdots\times M_n,L(G))$
in a standard way (Proposition~\ref{amendment}).
Of course,
we are most interested in the case
that the non-compact $1$-dimensional factors
are $\sigma$-compact
and hence intervals, or finite disjoint unions
of such.
But we did not need to assume
$\sigma$-compactness in the theorem,
and thus $M_j$ with $\alpha_j\geq r+1$
might well be a long line,
or a long ray.\\[2.3mm]
Disregarding the issue of being canonical,
the Lie group structure on\linebreak
$C^\infty(M_1\times\cdots\times M_n,G)=C^\alpha(M_1\times\cdots\times M_n,G)$
with $\alpha_1:=\cdots:=\alpha_n=\infty$
was first obtained in~\cite{NaW},
for smooth manifolds $M_j$ without boundary
which are compact
or diffeomorphic to~$\R$.
The Lie group structure for $n=1$
was first obtained in~\cite{Hmz}
for domains diffeomorphic to intervals,
together with a sketch for the case $n=2$
(assuming additional conditions,
e.g. $\alpha_1\geq r+3$
and $\alpha_2\geq r+1$ if $M_1=M_2=\R$).
Our approach differs:
While the studies in \cite{NaW} and \cite{Hmz}
assume regularity of~$G$ from the start to enforce
exponential laws,
and build it into a notion of Lie group structures on mapping groups
that are ``compatible
with evaluations,''
we take canonical and pre-canonical manifold structures
as the starting point (independent of regularity)
and combine them with regularity
or compatibility with evaluations
(adapted to $C^\alpha$-maps
in Definition~\ref{defn-expected})
only when needed.\\[2.3mm]
As to topic~(b),
our constructions show:
\begin{thm}\label{thmC}
Given $\alpha=(\alpha_1,\ldots,\alpha_n)\in(\N_0\cup\{\infty\})^n$,
let $M_j$ for $j\in\{1,\ldots,n\}$
be a paracompact, locally compact
smooth manifold with rough boundary;
abbreviate $M:=M_1\times\cdots\times M_n$.
Let $N$ be a smooth manifold modeled on locally
convex spaces such that~$N$ admits
a local addition.
Let $\pi_{TN}\colon TN\to N$ be the canonical map.
For $f\in C^\alpha(M,N)$
and a compact subset $K\sub M$, the set
\[
\!\! \Gamma_{f,K}:=
\left\{\tau\in C^\alpha(M,TN)\colon \mbox{$\pi_{TN}\circ \tau=f$ {\rm\&} $\tau(x)=0
\in T_{f(x)}N$ for all $x\in M\setminus K$}\right\}
\]
is a vector subspace of $\prod_{x\in M}T_{f(x)}N$,
and a locally convex space in the topology induced by
$C^\alpha(M,TN)$. Give
$\Gamma_f=\bigcup_K\Gamma_{f,K}$ the locally convex direct
limit topology.
Then $C^\alpha(M,N)$
admits a unique smooth manifold structure
modeled on the set $\cE:=\{\Gamma_f\colon f\in C^\alpha(M,N)\}$
of locally convex spaces such that,
for each $f\in C^\alpha(M,N)$
and local addition $\Sigma\colon TN\supseteq U \to N$
of~$N$, the map
\[
\Gamma_f\cap C^\alpha(M,U)\to C^\alpha(M,N),\;\,
\tau\mto \Sigma\circ\tau
\]
is a $C^\infty$-diffeomorphism onto an open subset of $C^\alpha(M,N)$.
\end{thm}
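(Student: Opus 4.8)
The plan is to build the manifold structure directly from the charts prescribed in the statement, taking each map $\tau\mto\Sigma\circ\tau$ as an \emph{inverse} chart, and to reduce the only nontrivial point---smooth compatibility---to a pushforward (Omega-) lemma for compactly supported $C^\alpha$-sections together with the behaviour of smooth maps on locally convex direct limits. I would begin with the linear preliminaries. Fixing $f$ and a compact $K\sub M$, the set $\Gamma_{f,K}$ is the space of $C^\alpha$-sections of the (locally convex) pullback bundle $f^*(TN)$ supported in~$K$; as a vector subspace of $C^\alpha(M,TN)$ with the induced compact-open $C^\alpha$-topology it is locally convex, and $\Gamma_f=\bigcup_K\Gamma_{f,K}$ carries the ensuing locally convex direct limit topology. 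Writing $(\pi_{TN},\Sigma)\colon U\to V$ for the diffeomorphism onto an open neighbourhood~$V$ of the diagonal provided by the local addition, I would record that $\tau\mto\Sigma\circ\tau$ is injective with inverse $g\mto[x\mto(\pi_{TN},\Sigma)^{-1}(f(x),g(x))]$, and that its image is exactly the set of $g\in C^\alpha(M,N)$ with $(f(x),g(x))\in V$ for all~$x$ and $g=f$ off a compact set. The latter is equivalent to compact support of~$\tau$ since $(\pi_{TN},\Sigma)(0_p)=(p,p)$; and the domain $\Gamma_f\cap C^\alpha(M,U)$ is open in $\Gamma_f$ because the condition $\tau(x)\in U$ is open in the compact-open $C^\alpha$-topology and is automatic off the support. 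In particular the charts cover, as $f$ lies in the image via the zero section.

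The heart of the matter is smooth compatibility of the charts. If $g$ lies in the images of both $\phi_{f_1,\Sigma_1}$ and $\phi_{f_2,\Sigma_2}$, then $f_1=f_2$ off a compact set (both equal~$g$ there), and the change of charts sends $\tau_1$ to the section
\[
\tau_2(x)=\bigl((\Sigma_2)_{f_2(x)}\bigr)^{-1}\bigl(\Sigma_1(\tau_1(x))\bigr),
\]
where $(\Sigma_i)_p$ denotes $\Sigma_i$ restricted to the fibre over~$p$. Introducing the smooth fibre map $\beta(w,p):=((\Sigma_2)_p)^{-1}(\Sigma_1(w))$ on the open set where $(p,\Sigma_1(w))\in V_2$, this reads $\tau_2=\beta\circ(\tau_1,f_2)$. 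Since $\beta(0_{f_1(x)},f_2(x))=0_{f_2(x)}$ precisely where $f_1(x)=f_2(x)$, the support of $\tau_2$ lies in $\Supp(\tau_1)\cup\{x\colon f_1(x)\neq f_2(x)\}$, a compact set, so the transition really maps the chart domain into $\Gamma_{f_2}$; it is a homeomorphism onto its image since its inverse has the same shape. Declaring all chart images open and the $\phi_{f,\Sigma}$ homeomorphisms therefore produces a well-defined topology on $C^\alpha(M,N)$.

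For smoothness I would write the transition as the composite of the continuous affine map $\tau_1\mto(\tau_1,f_2)$ into $C^\alpha(M,TN\times N)=C^\alpha(M,TN)\times C^\alpha(M,N)$ with the pushforward $\zeta\mto\beta\circ\zeta$. The main obstacle I expect is controlling this pushforward on compactly supported sections in the presence of the direct limit topology. Restricted to $\Gamma_{f_1,K}$ the transition lands in the single step $\Gamma_{f_2,\,K\cup C}$ with $C:=\{f_1\neq f_2\}$ fixed, where smoothness follows from the Omega-lemma for $C^\alpha$-maps (postcomposition with the smooth map~$\beta$). Because $M$ is paracompact and locally compact, each connected component is $\sigma$-compact, so $\Gamma_f$ is a countable strict (hence compactly regular) direct limit; a map out of such a direct limit is smooth as soon as its restriction to every step is smooth. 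Composing with the continuous linear inclusion of $\Gamma_{f_2,K\cup C}$ into $\Gamma_{f_2}$ then shows the transition is $C^\infty$ on each step, hence $C^\infty$. This yields a smooth atlas modelled on the family~$\cE$.

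It remains to address Hausdorffness and uniqueness. Hausdorffness follows because the constructed topology refines the compact-open $C^0$-topology, which is Hausdorff as~$N$ is. Uniqueness is immediate: any two smooth structures for which all maps $\tau\mto\Sigma\circ\tau$ are $C^\infty$-diffeomorphisms onto open subsets share precisely these charts, so the identity is a diffeomorphism between them. The technically delicate point throughout is the interplay between the $C^\alpha$-calculus on the product domain and the compactly supported sections; once the pushforward lemma is available in that setting and combined with the direct-limit smoothness criterion, the remaining verifications are routine.
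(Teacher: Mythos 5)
Your chart-level bookkeeping is correct and matches the paper: the inverse of $\tau\mto\Sigma\circ\tau$ is $g\mto\theta^{-1}\circ(f,g)$, the image is exactly $\{g\colon (f,g)(M)\sub U',\ g=f \text{ off a compact set}\}$, and the transition map is $\tau_1\mto\theta_2^{-1}\circ(f_2,\Sigma_1\circ\tau_1)$, which carries each step $\Gamma_{f_1,K}$ into the fixed step $\Gamma_{f_2,K\cup C}$ with $C=\overline{\{f_1\neq f_2\}}$ compact. The genuine gap is the principle you then invoke to pass from the steps to the limit. In Bastiani calculus (which this paper uses throughout), a \emph{nonlinear} map on an open subset of a locally convex direct limit is not smooth, nor even continuous, merely because its restrictions to the steps are smooth: the universal property of the locally convex direct limit topology applies to linear maps only, and the topology for which ``continuous/open on every step'' suffices is the final topology in the category of topological spaces, which for infinite-dimensional steps (as the $\Gamma_{f,K}$ are, in general) is strictly finer than the locally convex direct limit topology and need not even be a vector topology. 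Compact regularity does not repair this: it yields continuity on compact sets, which gives continuity only on compactly generated domains, and there is no reason for these limits to be compactly generated. (In the convenient calculus your argument would go through, since $c^\infty$-smoothness is tested on curves, which factor through steps; but $C^\alpha$-smoothness here requires honest continuity of the map and all differentials.) The same final-topology/direct-limit-topology confusion undermines your openness claim for $O_f=\Gamma_f\cap C^\alpha(M,U)$: openness of $O_f\cap\Gamma_{f,K}$ in every step gives openness only in the final topology. Finally, your reduction to a \emph{countable} strict limit fails in the stated generality: a paracompact, locally compact $M$ may have uncountably many ($\sigma$-compact) components, and then the system of compact subsets has no countable cofinal subfamily.

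The paper closes exactly this gap by a different device, and this is the real content of its proof. It chooses a locally finite family of compact full submanifolds $K_i=K_{1,i_1}\times\cdots\times K_{n,i_n}$ whose interiors cover $M$, and embeds $C^\alpha(M,N)$ via $f\mto(f|_{K_i})_{i\in I}$ into the fine box product $\prod^{\fb}_{i\in I}C^\alpha(K_i,N)$ of the canonical manifolds provided by Theorem~\ref{thmA}. The key functional-analytic step uses a smooth partition of unity subordinate to $(K_i^o)_{i\in I}$ (available since $M$ is paracompact and locally compact) to produce a continuous linear right inverse for the restriction map $s\colon\Gamma_f\to\bigoplus_{i\in I}\Gamma_{f|_{K_i}}$, so that $s$ is an embedding of the direct limit $\Gamma_f$ onto a closed, complemented subspace of a locally convex direct \emph{sum} (Lemma~\ref{operations-with-Gamma} supplies the needed continuity statements). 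Chart domains are then preimages under $s$ of open boxes, whence open in the direct limit topology, and the chart transitions are restrictions of the box-product transitions, which are \emph{componentwise} maps between open subsets of direct sums; smoothness of such maps is known (\cite[Proposition~7.1]{Mea}, cf.\ \ref{basics-fine-box}). In this way smoothness is never checked ``step by step on a direct limit''; it is inherited from the compact case through the direct-sum structure. To salvage your direct approach you would have to prove a substitute for your limit principle, and doing so in the Bastiani setting essentially amounts to reproducing this partition-of-unity/box-product argument.
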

In the case that $n=1$, $k=\infty$
and $M:=M_1$ is a smooth manifold with corners,
we recover the smooth manifold structure
on $C^\infty(M,N)$ discussed by Michor~\cite{Mic}.\\[2.3mm]
Using manifold structures on
infinite cartesian products of manifolds
making them ``fine box products''
(a concept recalled in
Section~\ref{sec-box}),
Theorem~\ref{thmC} turns into a
corollary to Theorem~\ref{thmA}.\\[2.3mm]
In the case $n=1$,
for compact~$M$ and $\ell\in\N_0\cup\{\infty\}$, canonical manifold\linebreak
structures on $C^\ell(M,N)$ as in Theorem~\ref{thmA}
have already been considered in \cite{AGS},
in a weaker sense (fixing $m=1$
in Definition~\ref{defn-can}).
Parts of our discussion adapt arguments from~\cite{AGS}
to the more difficult case
of $C^\alpha$-maps.\\[2.3mm]
\textbf{Acknowledgement.}
The authors would like to thank the mathematical\linebreak
institute at NTNU Trondheim for its hospitality while conducting the work presented in this article,
as well as Nord universitet Levanger.
\section{Preliminaries and notation}\label{secprels}
We write $\N:=\{1,2,\ldots\}$
and $\N_0:=\N\cup\{0\}$.
If $\alpha,\beta\in(\N_0\cup\{\infty\})^n$
with $n\in \N$,
we write $\alpha\leq\beta$
if $\alpha_j\leq \beta_j$ for all $j\in\{1,\ldots, n\}$.
We let $|\alpha|:=\alpha_1+\cdots+\alpha_n\in\N_0\cup\{\infty\}$.
As usual, $\infty+k:=\infty$ for all $k\in\N_0\cup\{\infty\}$.
For $j\in \{1,\ldots, n\}$,
let $e_j:=(0,\ldots,0,1,0,\ldots, 0)\in(\N_0)^n$
with $1$ in the $j$th slot.
We abbreviate
``Hausdorff locally convex topological
$\R$-vector space''
as ``locally convex space.''
We work in the setting of differential calculus
going back to Andr\'{e}e Bastiani~\cite{Bas}
(see \cite{Res, GaN, Ham, Mic, Mil, Nee}
for discussions in
varying generality),
also known as Keller's $C^k_c$-theory~\cite{Kel}.
For $C^\alpha$-maps, see
\cite{Alz} (cf.\ \cite{AaS}
and \cite{GaN} for the case of two variables,
$\alpha\in (\N_0\cup\{\infty\})^2$).
We now introduce concepts for later use and
collect basic facts.
For proofs, see the appendix.
%
\begin{numba}\label{defn-Ck}
Consider locally convex spaces $E$, $F$
and a map $f\colon U\to F$
on an open subset $U\sub E$.
Write
\[
(D_yf)(x):=\frac{d}{dt}\Big|_{t=0}f(x+ty)
\]
for the directional derivative of~$f$ at $x\in U$
in the direction $y\in E$, if it exists.
Let $k\in \N_0\cup\{\infty\}$.
If $f$ is continuous, the iterated directional derivatives
\[
d^jf(x,y_1,\ldots, y_j):=(D_{y_j}\ldots D_{y_1}f)(x)
\]
exist for all $j\in\N_0$ such that
$j\leq k$, $x\in U$ and $y_1,\ldots, y_j\in E$,
and the maps $d^jf\colon U\times E^j\to F$
are continuous, then $f$ is called~$C^k$.
If $U$ may not be open,
but has dense interior~$U^o$
and is locally convex in the sense
that each $x\in U$ has a convex neighbourhood
in~$U$, following~\cite{GaN}
a map $f\colon U\to F$ is called $C^k$
if it is continuous,
$f|_{U^o}$ is $C^k$
and $d^j(f|_{U^o})$ has a continuous
extension $d^jf\colon U\times E^j\to F$
for all $j\in \N_0$ with $j\leq k$.
The $C^\infty$-maps are also called~\emph{smooth}.
\end{numba}
\begin{rem}\label{ext-ops}
If $E=\R^n$ and $U$ is relatively open in
$[0,\infty[^n$,
then $f$ as above is $C^k$ if and only $f$
has a $C^k$-extension to an open set
in $\R^n$ (see \cite{EXT}, cf.\ \cite{Han}).
\end{rem}
\begin{numba}\label{defn-mfd}
Let $k\in\N\cup\{\infty\}$.
A \emph{manifold
with rough boundary}
modeled on a non-empty set $\cE$ of locally
convex spaces is a Hausdorff topological
space~$M$, together with a set $\cA$
of homeomorphisms (``charts'')
$\phi\colon U_\phi\to V_\phi$
from an open subset $U_\phi\sub M$
onto a locally convex subset $V_\phi\sub E_\phi$
with dense interior for some $E_\phi\in\cE$,
such that $\phi\circ \psi^{-1}$
is $C^k$ for all $\phi,\psi\in \cA$,
the union $\bigcup_{\phi\in \cA}U_\phi$
equals~$M$, and $\cA$ is maximal.
If $k=0$, assume
in addition that $\phi(x)\in\partial V_\phi$
if and only if $\psi(x)\in\partial V_\psi$
for all $\phi,\psi\in\cA$ with
$x\in U_\phi\cap U_\psi$
(which is automatic if $k\geq 1$).
Let $\partial M$ be the set of
all $x\in M$ such that $\phi(x)\in \partial V_\phi$
for some (and hence any) chart $\phi$
around~$x$.
If $\cE$ is a singleton,
$M$ is called \emph{pure}.
If $M$ is a $C^k$-manifold with rough boundary
and $\partial M=\emptyset$,
then $M$ is called a \emph{$C^k$-manifold}
or a \emph{$C^k$-manifold without boundary}, for emphasis.
(See \cite{GaN} for all of this in the pure case;
cf.\ \cite{AGS} for modifictions
in the general case).
\end{numba}
\begin{numba}\label{conventions-mfd}
All manifolds and Lie groups considered
in the article are modeled on locally
convex spaces which may be infinite-dimensional,
unless the contrary is stated.
Finite-dimensional manifolds need not be paracompact
or $\sigma$-compact, unless stated explicitly.
As we are interested in manifolds
of mappings, consideration of pure manifolds
would not be sufficient.
\end{numba}
\begin{numba}\label{defn-df}
If $U$ is an open subset
of a locally convex space~$E$
(or a locally convex subset with dense interior),
we identify its tangent bundle $TU$ with $U\times E$,
as usual,
with bundle projection $(x,y)\mto x$.
If $M$ is a $C^k$-manifold with
rough boundary and $f\colon M\to U$
a $C^k$-map with
$k\geq 1$, we write $df$ for the second component
of $Tf\colon TM\to TU=U\times E$.
Thus $Tf=(f\circ \pi_{TM},df)$,
using the bundle projection $\pi_{TM}\colon TM\to M$.
\end{numba}
\begin{numba}\label{lie-functor}
If $G$ is a Lie group
with neutral element $e$,
we write $L(G):=T_eG$ (or $\cg$)
for its tangent space at~$e$,
endowed with its natural topological
Lie algebra structure.
If $\psi\colon G\to H$
is a smooth homomorphism between
Lie groups, we let $L(\psi):=T_e\psi\colon L(G)\to L(H)$
be the associated continuous Lie algebra homomorphism.
\end{numba}
\begin{numba}\label{leftlog}
If $G$ is a Lie group
with Lie algebra $\cg$ and $I$ a non-degenerate
interval with $0\in I$, we define $\delta^\ell(\eta)$
for $\eta\in C^1(I,G)$ via
$\delta^\ell(\eta)(t):=\eta(t)^{-1}.\dot{\eta}(t)$,
with $\dot{\eta}(t):=T\eta(t,1)$.
\end{numba}
\begin{la}\label{reparametrize}
Let $k,r\in \N_0\cup\{\infty\}$
with $k\geq r$.
If $G$ is $C^r$-semiregular
and $\gamma\in C^k(I,\cg)$,
then there exists a unique
$\eta\in C^1(I,\cg)$
such that $\eta(0)=e$ and $\delta^\ell(\eta)=\gamma$.
Moreover, $\eta$ is $C^{k+1}$.
\end{la}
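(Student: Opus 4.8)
The plan is to reduce the statement to the known case of a single-variable interval by a reparametrization/extension trick, and then to invoke $C^r$-semiregularity. First I would observe that the interval $I$ may be unbounded or half-open, so the cleanest route is to prove the result on a compact subinterval $[0,T]\sub I$ (or $[-T,0]$) first and then glue. Concretely, given $\gamma\in C^k(I,\cg)$ and a compact subinterval $J=[a,b]\ni 0$, I would want to produce $\eta_J\in C^{k+1}(J,G)$ with $\eta_J(0)=e$ and $\delta^\ell(\eta_J)=\gamma|_J$. Since $G$ is only assumed $C^r$-semiregular and $r\leq k$, the hypothesis $C^r$-semiregularity directly yields, for the $C^r$-curve $\gamma|_J$ (reparametrized affinely onto $[0,1]$), a solution of the left logarithmic derivative equation; the point is that $\delta^\ell$ is the \emph{left} logarithmic derivative, whereas $\Evol$ as defined in the paper solves the \emph{right} equation $\dot\eta=\eta.\gamma$, so I would first pass between the two via the inversion map $\eta\mto\eta^{-1}$, which intertwines left and right logarithmic derivatives (up to a sign and an $\Ad$-twist), or equivalently reformulate $\Evol$ for the left equation.

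Having obtained a $C^1$ solution $\eta$ with $\delta^\ell(\eta)=\gamma$ on each compact subinterval, uniqueness on overlaps lets me patch these into a single $\eta\in C^1(I,G)$ with $\eta(0)=e$; here uniqueness is the standard fact that two solutions of the same left-logarithmic-derivative equation with the same initial value agree (this is what the parenthetical ``necessarily unique'' in the $C^r$-semiregularity definition records, and it globalizes by a connectedness/maximality argument on $I$). So the existence and uniqueness of a $C^1$ solution are comparatively routine once the left-versus-right bookkeeping is done. The genuinely substantive claim is the gain of regularity: that $\eta$ is in fact $C^{k+1}$, not merely $C^1$, when $\gamma$ is $C^k$.

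For the regularity upgrade I would argue by a bootstrap in local charts. Fix $t_0\in I$, choose a chart of~$G$ around $\eta(t_0)$, and rewrite $\delta^\ell(\eta)=\gamma$ in the chart as an ODE $\dot\eta(t)=\Theta(\eta(t),\gamma(t))$, where $\Theta$ is a smooth map built from the group multiplication and the identification of $T_gG$ with $\cg$ via left translation (this is exactly the smooth left action $G\times TG\to TG$, $(g,v)\mto g.v$, recalled just before the statement). The right-hand side is jointly smooth in its first argument and $C^k$ in~$t$ through its dependence on $\gamma(t)$. Knowing $\eta$ is $C^j$ for some $j\geq 1$ with $j\leq k$, the composite $t\mto\Theta(\eta(t),\gamma(t))$ is $C^{\min(j,k)}=C^j$ by the chain rule for $C^\alpha$-maps, whence $\dot\eta$ is $C^j$ and $\eta$ is $C^{j+1}$; iterating raises $j$ until $j=k$, giving $\eta\in C^{k+1}$ (with the convention $\infty+1=\infty$ covering the $k=\infty$ case). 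The main obstacle I anticipate is making this bootstrap rigorous in the locally convex setting: the usual finite-dimensional ODE regularity theory is unavailable, so I must rely solely on the chart formula and on the composition and chain rules for Bastiani-$C^k$ maps, and I must check carefully that differentiating $\dot\eta=\Theta(\eta,\gamma)$ really does propagate one extra degree of differentiability at each step, uniformly enough to conclude joint continuity of all the required iterated directional derivatives up to order $k+1$.
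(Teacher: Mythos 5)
Your overall strategy coincides with the paper's proof: the paper likewise covers $I$ by non-degenerate compact segments having $0$ as an endpoint, reparametrizes the segment joining $0$ and $\tau\neq 0$ affinely via $t\mapsto t\tau$, observes by the chain rule that $\eta$ solves $\delta^\ell(\eta)=\gamma$ on the segment if and only if $\theta(t):=\eta(t\tau)$ solves $\delta^\ell(\theta)=\xi$ on $[0,1]$ with $\xi(t):=\tau\gamma(t\tau)$ (the factor $\tau$ absorbing the orientation reversal when $\tau<0$), invokes $C^r$-semiregularity on $[0,1]$ for existence and uniqueness, and glues over this directed cover. The regularity gain is also the same argument: the paper compresses your chart bootstrap into the single remark that the right-hand side $(t,y)\mapsto y.\gamma(t)$ of the differential equation is $C^k$, whence any solution is $C^{k+1}$.

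One claim in your proposal is, however, a misreading of the paper's conventions, and the step it motivates would derail the proof if actually carried out. You assert that $\Evol$ solves the ``right'' equation and that one must first pass to $\delta^\ell$ via the inversion map. In the paper, the action is $g.v:=T\lambda_g(v)$, i.e.\ by \emph{left} translations, so the initial value problem $\dot{\eta}(t)=\eta(t).\gamma(t)$ in the definition of $C^r$-semiregularity reads $\dot{\eta}(t)=T\lambda_{\eta(t)}(\gamma(t))$; applying $T\lambda_{\eta(t)^{-1}}$ gives $\eta(t)^{-1}.\dot{\eta}(t)=\gamma(t)$, which is exactly $\delta^\ell(\eta)=\gamma$ as defined in \ref{leftlog}. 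Thus the equation solved by $\Evol$ already \emph{is} the left logarithmic derivative equation; no bookkeeping between left and right versions is needed, and composing with inversion would instead produce a solution of the wrong (right-invariant) equation. Once this spurious step is deleted, your argument is the paper's proof.
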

\begin{numba}\label{submfd}
Let $M$ be a smooth manifold
(without boundary).
A subset $N\sub M$ is called a \emph{submanifold}
if, for each $x\in N$, there exist
a chart $\phi\colon U_\phi\to V_\phi\sub E_\phi$
of~$M$ around~$x$ and a closed vector subspace
$F\sub E_\phi$ such that $\phi(U_\phi\cap N)=V_\phi\cap F$.
\end{numba}
\begin{numba}\label{defn-full-sub}
Let $M$ be a smooth manifold
with rough boundary.
A subset $N\sub M$
is called a \emph{full submanifold}
if, for each $x\in N$,
there exists a chart $\phi\colon U_\phi\to V_\phi \sub E_\phi$
of~$M$ around $x$ such that
$\phi(U_\phi\cap N)$
is a locally convex subset of~$E_\phi$
with dense interior.
\end{numba}
\begin{numba}\label{defn-C-alpha}
Let $F$ and $E_1,\ldots, E_n$
be locally convex spaces,
$U_j\sub E_j$ be an open subset
for $j\in\{1,\ldots, n\}$
and $f\colon U\to F$ be a map
on $U:=U_1\times\cdots \times U_n$.
Identifying $E:=E_1\times\cdots\times E_n$ with
$E_1\oplus\cdots\oplus E_n$,
we can identify each $E_j$
with a vector subspace of~$E$,
and simply write $D_yf(x)$ for
a directional derivative with $x\in U$,
$y\in E_j$ (rather than $D_{(0,\ldots,0,y,0,\ldots, 0)}f(x)$
with $j-1$ zeros on the left
and $n-j$ zeros on the right-hand side).
For $y=(y_1,\ldots, y_k)\in E_j^k$,
abbreviate
\[
D_y:=D_{y_k}\ldots D_{y_1}.
\]
Let $\alpha\in (\N_0\cup\{\infty\})^n$.
Following \cite{Alz}, we say that $f$ is
$C^\alpha$ if $f$ is continuous, the iterated directional derivatives
\[
d^\beta f(x,y_1,\ldots, y_n)
:=(D_{y_n}\cdots D_{y_1}f)(x)
\]
exist for all $\beta\in\N_0^n$
with $\beta\leq\alpha$,
$x\in U$ and $y_j=(y_{j,1},\ldots, y_{j,\beta_j})\in (E_j)^{\beta_j}$
for $j\in\{1,\ldots, n\}$, and
\[
d^\beta f\colon U\times E_1^{\beta_1}\times\cdots\times E_n^{\beta_n}\to
F
\]
is continuous. If $U_j$ may not be open but is
a locally convex subset of~$E_j$ with dense
interior, we say that $f\colon U\to F$
is $C^\alpha$ if $f$ is continuous,
$f|_{U^o}$ is $C^\alpha$
and $d^\beta(f|_{U^o})$ has a continuous
extension $d^\beta f\colon U\times E_1^{\beta_1}\times\cdots\times E_n^{\beta_n}\to F$
for all $\beta\in (\N_0)^n$ such that $\beta\leq \alpha$.
\end{numba}
\begin{numba}\label{C-alpha-in-mfd}
Let $M_1,\ldots, M_n$ be
$C^\infty$-manifolds with rough boundary,
$\alpha\in (\N_0\cup\{\infty\})^n$
and $N$ be a $C^k$-manifold with $k\geq|\alpha|$.
We say that a map $f\colon M_1\times \cdots \times M_n\to N$
is $C^\alpha$ if, for each $x=(x_1,\ldots, x_n)\in M_1\times\cdots\times M_n$,
there are charts $\phi_j\colon U_j\to V_j$
for~$M_j$ around $x_j$ for $j\in\{1,\ldots,n\}$
and a chart $\psi\colon U_\psi\to V_\psi$ for~$n$
around $f(x)$ such that $f(U_1\times\cdots\times U_n)\sub U_\psi$
and
\[
\psi\circ f\circ (\phi_1\times\cdots\times \phi_n)^{-1}\colon
V_1\times\cdots\times V_n\to V_\psi
\]
is $C^\alpha$. The latter then holds for any such
charts, by the Chain Rule for $C^\alpha$-maps
(as in \cite[Lemma~3.16]{Alz}).
\end{numba}
\begin{numba}\label{reorder}
Let $N$ and $M_1,\ldots, M_n$ be
$C^\infty$-manifolds with rough boundary,
$\sigma$ be a permutation of $\{1,\ldots, n\}$,
and $\alpha\in (\N_0\cup\{\infty\})^n$.
If $f\colon M_{\sigma(1)}\times\cdots\times M_{\sigma(n)}\to N$
is $C^{\alpha\circ\sigma}$,
then the map
\[
M_1\times\cdots\times M_n\to N,\quad (x_1,\ldots, x_n)\mto
f(x_{\sigma(1)},\ldots,x_{\sigma(n)})
\]
is $C^\alpha$.
This follows from Schwarz' Theorem
(in the form of \cite[Proposition~3.5]{Alz}).
\end{numba}
We shall use
simple facts:
\begin{la}\label{la-sub-PL}
Let $E_j$ for $j\in\{1,\ldots, n\}$
and $F$ be locally convex spaces,
and $U_j\sub E_j$ be a locally convex subset
with dense interior. Let
$E:=E_1\times\cdots\times E_n$,
$U:=U_1\times\cdots\times U_n$,
$\alpha\in (\N_0\cup\{\infty\})^n$
and $f\colon U\to F$ be a map.
\begin{itemize}
\item[\rm(a)]
If $Y\sub F$ is a closed
vector subspace and $f(U)\sub Y$,
then $f$ is $C^\alpha$
if and only if its co-restriction
$f|^Y\colon U\to Y$
is $C^\alpha$.
\item[\rm(b)]
If $F$ is the projective limit
of a projective system $((F_a)_{a\in A},(\lambda_{a,b})_{a\leq b})$
of locally convex spaces $F_a$ and continuous linear mappings
$\lambda_{a,b}\colon F_b\to F_a$,
with limit maps $\lambda_a\colon F\to F_a$,
then $f$ is $C^\alpha$ if and only if $\lambda_a\circ f\colon U\to F_a$
is $C^\alpha$ for all $a\in A$.
\end{itemize}
\end{la}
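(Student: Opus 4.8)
The plan is to derive both statements from two elementary principles: first, that postcomposition with a continuous linear map preserves the $C^\alpha$-property (a continuous linear map $\Lambda$ commutes with directional derivatives, $D_y(\Lambda\circ g)=\Lambda\circ D_yg$, hence with all iterated derivatives $d^\beta$, and the continuous extensions in the non-open case are obtained by postcomposing with $\Lambda$ as well); second, that the relevant target spaces carry \emph{initial} topologies, so that continuity and convergence can be tested after composing with the structure maps. Throughout I would treat the case of open $U$ first and reduce the locally-convex-with-dense-interior case to it by the density/extension argument built into~\ref{defn-C-alpha}.

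For~(a), let $\iota\colon Y\to F$ be the (continuous linear) inclusion. The implication ``$f|^Y$ is $C^\alpha$ $\Rightarrow$ $f$ is $C^\alpha$'' is immediate, since $f=\iota\circ f|^Y$ and postcomposition with $\iota$ preserves $C^\alpha$. For the converse, assume $f$ is $C^\alpha$. On $U^o$ each difference quotient $t^{-1}(f(x+ty)-f(x))$ lies in the vector subspace~$Y$; as $Y$ is closed, the limit $D_yf(x)$ lies in~$Y$, and by iteration every $d^\beta f$ takes values in~$Y$. Since $Y$ carries the subspace topology, the corestriction $(d^\beta f)|^Y$ is continuous, so $f|^Y$ is $C^\alpha$ on~$U^o$. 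If $U$ is not open, the continuous extensions $d^\beta f\colon U\times\cdots\to F$ again take values in the closed set~$Y$ (by continuity and density of~$U^o$), and corestricting them yields the required continuous extensions for $f|^Y$; continuity of $f|^Y$ itself is clear.

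For~(b), realize the projective limit $F$ as a closed subspace of $\prod_{a\in A}F_a$ carrying the initial topology with respect to the limit maps $\lambda_a$, so that a net converges in $F$ precisely when each of its images under $\lambda_a$ converges. The implication ``$f$ is $C^\alpha$ $\Rightarrow$ each $\lambda_a\circ f$ is $C^\alpha$'' follows as in~(a), with $\lambda_a$ in place of $\iota$. For the converse, suppose every $\lambda_a\circ f$ is $C^\alpha$. Working on $U^o$, I would show by induction on $|\beta|$ that $d^\beta f$ exists and $\lambda_a\circ d^\beta f=d^\beta(\lambda_a\circ f)$ for all $a$ and all $\beta\leq\alpha$. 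In the induction step one forms a directional derivative of $d^\beta f$ in some direction $y\in E_j$; its difference quotients have images under $\lambda_a$ converging to $d^{\beta+e_j}(\lambda_a\circ f)(x)$. The candidate limit $z=(z_a)_{a\in A}$ with $z_a:=d^{\beta+e_j}(\lambda_a\circ f)(x)$ lies in~$F$ because $\lambda_{a,b}(z_b)=z_a$ follows from $\lambda_{a,b}\circ\lambda_b=\lambda_a$, and componentwise convergence then gives convergence of the difference quotients to $z$ in~$F$. Continuity of each $d^\beta f$ holds because its composite with every $\lambda_a$ equals the continuous map $d^\beta(\lambda_a\circ f)$ and $F$ carries the initial topology. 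For non-open~$U$, the continuous extensions of $d^\beta(\lambda_a\circ f)$ agree with the bonding maps $\lambda_{a,b}$ on the dense set~$U^o$, hence are mutually compatible and assemble into a continuous extension $U\times\cdots\to F$ of $d^\beta(f|_{U^o})$.

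The only genuine point of substance — which I would flag as the crux rather than a true obstacle — lies in the converse of~(b): one must check that the componentwise limits of the difference quotients actually define an element of~$F$ (and not merely of the ambient product $\prod_{a\in A}F_a$), and, in the non-open case, that the separately constructed continuous extensions are mutually compatible. Both are settled by the bonding-map identity $\lambda_{a,b}\circ\lambda_b=\lambda_a$ together with the density of~$U^o$; everything else reduces mechanically to the two principles stated at the outset.
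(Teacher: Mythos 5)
Your proposal is correct, and the two parts deserve separate verdicts. Part (a) is essentially the paper's own proof: the easy direction is postcomposition with the continuous linear inclusion, and the converse rests on the observation that difference quotients of the already-constructed derivatives are linear combinations of values lying in $Y$, so their limits stay in the closed subspace $Y$, with density of $U^o$ carrying the conclusion to the boundary extensions; the paper only organizes the iteration more explicitly (reduction to finite $\alpha$, induction on $|\alpha|$, always stripping the outermost derivative in the variable of minimal index — the bookkeeping your phrase ``by iteration'' glosses over). For part (b), however, you take a genuinely different route. The paper never re-runs a difference-quotient argument there: it identifies $F$ with the closed vector subspace $Y:=\{(x_a)_{a\in A}\in\prod_{a\in A}F_a\colon \lambda_{a,b}(x_b)=x_a \text{ for } a\leq b\}$ of the product via the isomorphism $\lambda\colon x\mapsto(\lambda_a(x))_{a\in A}$, observes that $\lambda\circ f$ has $C^\alpha$ components $\lambda_a\circ f$, invokes the componentwise criterion for $C^\alpha$-maps into products (citing Lemma~3.8 of \cite{Alz}), and then applies part (a) to corestrict from $\prod_{a\in A}F_a$ to the closed subspace $Y$, so that $f=\lambda^{-1}\circ(\lambda\circ f)$ is $C^\alpha$. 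Your direct induction — componentwise limits of difference quotients, compatibility of the thread $(z_a)_a$ via $\lambda_{a,b}\circ\lambda_b=\lambda_a$, and the initial topology to upgrade componentwise convergence and continuity, plus the analogous gluing of extensions in the non-open case — is sound, and you correctly flag the one real crux (that the componentwise limits form an element of $F$, not merely of the ambient product). But it essentially inlines proofs of both the product criterion and of part (a), so it duplicates work and forces you to redo the derivative-ordering bookkeeping inside the induction. What your version buys is self-containedness (no appeal to \cite{Alz}); what the paper's reduction buys is brevity and reuse of (a) as a black box.
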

\begin{la}\label{specialized-PL}
Let $M$, $N$, and $L_1,\ldots, L_n$ be
smooth manifolds with rough boundary,
$F$ be a locally convex space,
$\psi\colon M\to F\times N$
be a $C^\infty$-diffeomorphism,
and $f\colon L_1\times\cdots\times L_n\to M$
be a map. Assume that
$F$ is the projective limit
of a projective system $((F_a)_{a\in A},(\lambda_{a,b})_{a\leq b})$
of locally convex spaces $F_a$ and continuous linear mappings
$\lambda_{a,b}\colon F_b\to F_a$,
with limit maps $\lambda_a\colon F\to F_a$.
For $a\in A$, let $M_a$ be a smooth manifold
and $\rho_a\colon M\to M_a$
be a $C^ \infty$-map.
Assume that there exist $C^\infty$-maps
$\psi_a\colon M_a\to F_a\times N$
making the diagram
\[
\begin{array}{rcl}
M & \stackrel{\psi}{\longrightarrow} & F\times N\\
\rho_a \downarrow\; & & \;\;\;\; \downarrow \lambda_a\times\id_N\\
M_a & \stackrel{\psi_a}{\longrightarrow} & F_a\times N
\end{array}
\]
commute.
Then $f$ is $C^\alpha$ if and only if
$\rho_a\circ f$ is $C^\alpha$
for all $a\in A$.
\end{la}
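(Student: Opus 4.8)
The plan is to transport the problem through the diffeomorphism $\psi$, split off the $N$-factor, and apply Lemma~\ref{la-sub-PL}(b) to the remaining $F$-valued component. Throughout, abbreviate $L:=L_1\times\cdots\times L_n$. Since $\psi\colon M\to F\times N$ is a $C^\infty$-diffeomorphism, the chain rule for $C^\alpha$-maps (as in \ref{C-alpha-in-mfd}) applied to $\psi$ and to $\psi^{-1}$ shows that $f$ is $C^\alpha$ if and only if $\psi\circ f\colon L\to F\times N$ is $C^\alpha$. Write $\psi\circ f=(g_F,g_N)$ with $g_F\colon L\to F$ and $g_N\colon L\to N$. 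Checking the $C^\alpha$-property in charts on the domain and on $F\times N$ (where a chart on $F\times N$ is $\id_F\times\phi_N$ for a chart $\phi_N$ on $N$, so that the target chart splits), one sees that $\psi\circ f$ is $C^\alpha$ if and only if both $g_F$ and $g_N$ are $C^\alpha$. It therefore suffices to characterize when $g_F$ and $g_N$ are $C^\alpha$.

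For the forward implication, I would assume $f$ is $C^\alpha$; as each $\rho_a$ is $C^\infty$, the chain rule gives that $\rho_a\circ f$ is $C^\alpha$ for every $a\in A$. For the converse, assume $\rho_a\circ f$ is $C^\alpha$ for all $a\in A$ and fix some $a$. Then $\psi_a\circ\rho_a\circ f\colon L\to F_a\times N$ is $C^\alpha$ since $\psi_a$ is $C^\infty$, and the commuting square yields $\psi_a\circ\rho_a\circ f=(\lambda_a\times\id_N)\circ\psi\circ f=(\lambda_a\circ g_F,\,g_N)$; projecting to the two factors shows that $\lambda_a\circ g_F\colon L\to F_a$ is $C^\alpha$ and that $g_N$ is $C^\alpha$ (the latter already settled using $A\neq\emptyset$). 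Passing to charts $\phi_j\colon U_j\to V_j$ on the domain, the chart representation $g_F\circ(\phi_1\times\cdots\times\phi_n)^{-1}\colon V_1\times\cdots\times V_n\to F$ is a map into the projective limit $F$ all of whose compositions with the limit maps $\lambda_a$ are $C^\alpha$; hence $g_F$ is $C^\alpha$ by Lemma~\ref{la-sub-PL}(b). With both $g_F$ and $g_N$ being $C^\alpha$, it follows that $\psi\circ f$, and therefore $f$, is $C^\alpha$.

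The argument is short precisely because Lemma~\ref{la-sub-PL}(b) does the real work. The step I would be most careful about is the reduction of the $C^\alpha$-property of a map into $F\times N$ to its two components, together with the localization in domain charts needed to put $g_F$ into the exact shape ($F$-valued map on a product of locally convex subsets with dense interior) demanded by Lemma~\ref{la-sub-PL}(b); one cannot apply that lemma to the full target $F\times N$, since $N$ is only a manifold, so isolating the $N$-factor via the $\id_N$ in the commuting square is the point on which the whole reduction hinges. Once the problem is localized in charts on the $L_j$, the projective-limit criterion applies verbatim and the remainder is bookkeeping with the commuting square.
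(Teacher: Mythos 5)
Your proposal is correct and follows essentially the same route as the paper's own proof: transport $f$ through the diffeomorphism $\psi$, use the commuting square and the component-wise characterization of $C^\alpha$-maps into $F_a\times N$ (the paper cites \cite[Lemma~3.8]{Alz} where you argue via splitting charts), apply Lemma~\ref{la-sub-PL}\,(b) to the $F$-valued component, and recover $f=\psi^{-1}\circ(\psi\circ f)$. Your extra care in localizing to domain charts before invoking Lemma~\ref{la-sub-PL}\,(b) is a detail the paper leaves implicit, but the argument is the same.
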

\begin{numba}
If $\alpha=(\alpha_1,\ldots,\alpha_n)\in (\N_0\cup\{\infty\})^n$
and $\beta=(\beta_1,\ldots, \beta_m)\in (\N_0\cup\{\infty\})^m$,
we shall write $(\alpha,\beta)$ as a shorthand
for $(\alpha_1,\ldots, \alpha_n,\beta_1,\ldots,\beta_m)$
and abbreviate $C^{(\alpha,\beta)}$ as $C^{\alpha,\beta}$.
Likewise for higher numbers of multiindices.
\end{numba}
Let $r\in \N_0\cup\{\infty\}$,
$E_1,\ldots, E_n$ and $F$ be locally convex spaces
and $U_j$ be a locally convex subset
of~$E_j$ with dense interior, for $j\in\{1,\ldots, n\}$.
We mention
that a map $f\colon U_1\times\cdots\times U_n\to F$
is $C^r$ if and only if it is $C^\beta$
for all $\beta\in (\N_0\cup\{\infty\})^n$
such that $|\beta|\leq r$.
More generally, the following is known
(as first formulated and proved in
the unpublished work~\cite{Ing}):
\begin{la}\label{ingrisch}
For $i\in\{1,\ldots, n\}$,
let $E_i$ be a locally convex space of the form
$E_i=E_{i,1}\times\cdots\times E_{i,m_i}$
for some $m_i\in \N$
and locally convex spaces $E_{i,1},\ldots, E_{i,m_i}$.
Let $U_{i,j}$ be a locally convex subset of $E_{i,j}$
with dense interior for all $i\in\{1,\ldots, n\}$
and $j\in\{1,\ldots, m_i\}$;
define $U_i:=U_{i,1}\times\cdots\times U_{i,m_i}$.
Let $\alpha\in (\N_0\cup\{\infty\})^n$.
Then a map $f\colon U_1\times\cdots \times U_n\to F$
is $C^\alpha$ if and only if $f$ is $C^{\beta_1,\ldots,\beta_n}$
on $\prod_{i=1}^n\prod_{j=1}^{m_i}U_{i,j}$
for all $(\beta_1,\ldots,\beta_n)\in \prod_{i=1}^n (\N_0\cup\{\infty\})^{m_i}$
such that $|\beta_i|\leq \alpha_i$ for all $i\in\{1,\ldots, n\}$.
\end{la}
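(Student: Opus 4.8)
The plan is to prove both implications order by order. Since being $C^\alpha$ (resp.\ $C^{(\beta_1,\ldots,\beta_n)}$) means precisely that the iterated directional derivatives $d^\gamma f$ (resp.\ $d^\beta f$) exist and are continuous for all \emph{finite} $\gamma\leq\alpha$ (resp.\ all admissible $\beta$ with entries in $\N_0$), it suffices to treat finite orders, and the values $\infty$ then take care of themselves. Write $\iota_{i,j}\colon E_{i,j}\to E_i$ for the canonical inclusion and $\pi_{i,j}\colon E_i\to E_{i,j}$ for the projection; a directional derivative in a direction lying in $\iota_{i,j}(E_{i,j})$ is thus at once a derivative ``in the block $E_i$'' and a derivative ``in the factor $E_{i,j}$.'' The engine of the proof is the special case recorded just before the statement, namely that a map on a finite product is $C^r$ if and only if it is $C^\beta$ for all $\beta$ with $|\beta|\leq r$; most important is its first-order instance: a continuous map whose factor partial derivatives all exist and are continuous is $C^1$, with directional derivative in a direction $y$ equal to the sum of the partial ones evaluated at the components of~$y$.

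First I would reduce to the case that every $U_{i,j}$ is open, so that each interior $U_i^o=\prod_j U_{i,j}^o$ is open and the interior versions of both notions apply; the general dense-interior case then follows from the extension clause in~\ref{defn-C-alpha} via the inclusion/sum relations established below, a continuous boundary extension of one family of derivatives existing exactly when one of the other does. The forward implication on interiors is the easy one: if $\beta$ is admissible, i.e.\ $|\beta_i|\leq\alpha_i$, then $(|\beta_1|,\ldots,|\beta_n|)\leq\alpha$, so $f$ being $C^\alpha$ supplies the continuous block derivative of that order, and $d^\beta f$ is obtained from it by feeding factor vectors through the $\iota_{i,j}$ into the direction slots. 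Continuity of the inclusions and of the block derivative, together with Schwarz' theorem \cite[Proposition~3.5]{Alz} to realize the order of differentiation prescribed in~\ref{defn-C-alpha}, show that $d^\beta f$ exists and is continuous, so $f$ is $C^\beta$.

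The reverse implication is the main point, and I would prove it by induction on $|\gamma|$, carrying the auxiliary statement that $d^\gamma f$ exists, is continuous, and is a \emph{finite sum} of admissible fine derivatives $d^\beta f$ (with $|\beta_i|=\gamma_i$) precomposed with the projections $\pi_{i,j}$ in their direction arguments. The base case $\gamma=0$ is trivial. For the step, choose $i$ with $\gamma_i\geq1$, put $\gamma'=\gamma-e_i\leq\alpha$, and set $g:=d^{\gamma'}f$, which by hypothesis has the stated form. Differentiating each summand once more in a factor $E_{i,j}$ raises the block-$i$ order to $\gamma_i\leq\alpha_i$, so the factor partial derivatives $D_{\iota_{i,j}(\cdot)}g$ exist, are again admissible fine derivatives of~$f$, and are continuous. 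Applying the first-order product fact (at fixed non-$i$ components of $x$ and fixed direction arguments) then yields that $g$ is differentiable in the whole block $E_i$, with $D_y g=\sum_j D_{\iota_{i,j}\pi_{i,j}(y)}g$ for $y\in E_i$. Placing $y$ in the correct slot (again using Schwarz) identifies this with $d^\gamma f$; the sum formula then exhibits $d^\gamma f$ in the required form, and joint continuity is read directly off that formula, each summand being a globally continuous admissible fine derivative composed with continuous projections. Taking the union over all finite $\gamma\leq\alpha$ shows $f$ is $C^\alpha$.

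The hard part is the reverse direction, and within it the delicate step is securing \emph{joint} continuity of the freshly formed block derivative: the product fact applied at fixed parameters only delivers existence of $D_y g$ together with continuity in the block variable. The device that resolves this is to use that fact \emph{solely} to extract the sum formula, and then to obtain joint continuity for free from the global continuity of the admissible fine derivatives occurring as summands. The remaining work is purely organizational: tracking, via Schwarz, that the order of differentiation built up in the induction matches the ordering in the definition of $C^\alpha$, and checking that the same sum/restriction relations transport the continuous boundary extensions between the two families, which disposes of the dense-interior case.
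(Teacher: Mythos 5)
Your proof is correct in substance, but it takes a genuinely different route from the paper's. The paper proves the nontrivial direction by induction on the total number of factors $m=m_1+\cdots+m_n$: after permuting so that $m_n\geq 2$, it merges the last two factors $U_{n,m_n-1}\times U_{n,m_n}$ into a single factor, citing \cite[Lemma~3.12]{Alz} (the parametrized two-factor instance of the fact quoted just before the lemma) to see that fine differentiability of all admissible orders yields $C^{\beta_1,\ldots,\beta_n}$ for the coarser decomposition, and then applies the inductive hypothesis; all the analytic work is thus outsourced to Alzaareer's lemma. You instead induct on the order of differentiation, carrying an explicit sum formula that expresses each coarse iterated derivative $d^\gamma f$ as a finite sum of admissible fine derivatives precomposed with projections in the direction slots; your analytic engine is the first-order fact (continuity plus continuous factor partials imply $C^1$ with the additive formula) together with Schwarz at the fine level, and joint continuity of each newly formed block derivative is read off the sum formula — a point you correctly identify as the delicate one. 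Your approach is more elementary and self-contained, and the explicit formulas also make the transfer of continuous boundary extensions in the dense-interior case transparent, whereas the paper's argument is shorter and modular. One point to tighten: in the inductive step you should take $i$ \emph{minimal} with $\gamma_i\geq 1$ (or else prove the sum formula for iterated block derivatives in arbitrary order of application), because identifying $D_y\, d^{\gamma-e_i}f$ with the definitional $d^\gamma f$ for a non-minimal $i$ would require commuting \emph{block} derivatives, i.e.\ Schwarz at the coarse level, which is not yet available in the middle of the induction; with $i$ minimal the identification is just the definitional recursion, and only fine-level reorderings — which your hypotheses do license via \cite[Proposition~3.5]{Alz} — are needed.
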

\section{The compact-open {\boldmath$C^\alpha$}-topology}\label{sec-alpha-tops}
As a further preliminary,
we introduce a topology on $C^\alpha(M_1\times \cdots\times M_n,N)$
which parallels the familiar compact-open $C^k$-topology
on $C^k(M,N)$. Basic properties are recorded,
with proofs in Appendix~\ref{appA}.\\[2.3mm]
As usual, $T^0M:=M$, $T^1M:=TM$
and $T^kM:=T(T^{k-1}M)$ for a smooth manifold $M$
with rough boundary and integers $k\geq 2$
(see \cite{GaN}).
\begin{numba}
In \ref{defn-alpha-bundle} -- \ref{maps-to-tvs-1},
$M_1,\ldots, M_n$
will be smooth manifolds with rough boundary,
and $M:=M_1\times\cdots\times M_n$.
In \ref{tangent-maps} -- \ref{maps-to-sub}, we let $N$
be a smooth manifold with rough boundary
and $\alpha\in (\N_0\cup\{\infty\})^n$.
\end{numba}
\begin{numba}\label{defn-alpha-bundle}
We define the \emph{$\beta$-tangent bundle of~$M$}
as
$T^\beta M:=T^{\beta_1}M_1\times \cdots\times T^{\beta_n}M_n$
for $\beta=(\beta_1,\ldots,\beta_n)\in (\N_0)^n$.
\end{numba}
\begin{numba}\label{tangent-maps}
Let $f\colon M\to N$ be a $C^\alpha$-map.
For $\beta=(\beta_1,\ldots,\beta_n)\in (\N_0)^n$ with $\beta\leq \alpha$,
we define
\[
T^\beta(f)\colon T^\beta(M)\to T^{|\beta|}N
\]
recursively, as follows: We first note that, by Lemma~\ref{partial-tangents},
\[
T^{(0,\ldots, 0, \beta_n)}f\colon
M_1\times\cdots \times M_{n-1}\times T^{\beta_n}M_n\to T^{\beta_n}N,
\]
$(x_1,\ldots, x_{n-1},v_n)\mto T^{\beta_n}(f(x_1,\ldots, x_{n-1},\cdot))(v_n)$
is a $C^{(\alpha_1,\ldots,\alpha_{n-1},0)}$-map.
If\vspace{.4mm}
a $C^{(\alpha_1,\ldots,\alpha_{k-1},0,\ldots, 0)}$-map
$g:=T^{(0,\ldots, 0, \beta_k,\ldots,\beta_n)}f\colon
T^{(0,\ldots, 0, \beta_k,\ldots,\beta_n)}M\to T^{\beta_k+\cdots+\beta_n}N$
has already been constructed for
$k\in \{2,\ldots, n\}$,
then the map
\[
T^{(0,\ldots, 0, \beta_{k-1},\ldots,\beta_n)}f\colon
T^{(0,\ldots, 0, \beta_{k-1},\ldots,\beta_n)}M\to \,T^{\beta_{k-1}+\cdots+\beta_n}N
\]
taking $(x_1,\ldots, x_{k-2},v_{k-1},\ldots, v_n)$ to
$T^{\beta_{k-1}}(g(x_1,\ldots, x_{k-2},\cdot,v_k,\ldots,v_n))(v_{k-1})$
is a $C^{(\alpha_1,\ldots,\alpha_{k-2},0,\ldots,0)}$-map
(see Lemmas~\ref{reorder}
and \ref{partial-tangents}).
\end{numba}
\begin{defn}\label{defn-alpha-top}
The \emph{compact-open $C^\alpha$-topology}
on $C^\alpha(M,N)$ is the
initial topology with respect to the mappings
\[
T^\beta\colon C^\alpha(M,N)\to C(T^\beta M, T^{|\beta|}N),\;\;
f\mto T^\beta f
\]
for $\beta\in (\N_0)^n$ with $\beta\leq \alpha$,
using the compact-open topology on $C(T^\beta M,T^{|\beta|}N)$.
\end{defn}
Pushforwards and pullbacks
are continuous.
\begin{la}\label{pull-and-push}
Using compact open $C^\alpha$-topologies, we have:
\begin{itemize}
\item[\rm(a)]
If $L$ is a smooth manifold with rough boundary
and $g\colon N\to L$ a smooth map,
then the following map is continuous:
\[
g_*:=C^\alpha(M,g)\colon C^\alpha(M,N)\to C^\alpha(M,L),\;\,
f\mto g\circ f.
\]
\item[\rm(b)]
Let $L_j$ be a smooth manifold with rough boundary for
$j\in\{1,\ldots, n\}$ and $g_j\colon L_j\to M_j$
be a smooth map. Abbreviate $L:=L_1\times\cdots \times L_n$
and $g:=g_1\times\cdots\times g_n$.
Then the following map is continuous:
\[
g^*:=C^\alpha(g,N)\colon
C^\alpha(M,N)\to C^\alpha(L,N),\;\,
f\mto f\circ g.
\]
\end{itemize}
\end{la}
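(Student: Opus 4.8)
The plan is to exploit the universal property of the initial topology defining the compact-open $C^\alpha$-topology. Since the target spaces $C^\alpha(M,L)$ (in part (a)) and $C^\alpha(L,N)$ (in part (b)) carry, by Definition~\ref{defn-alpha-top}, the initial topologies with respect to the families $(T^\beta)_{\beta\leq\alpha}$, it suffices to show that $T^\beta\circ g_*$ and $T^\beta\circ g^*$ are continuous for each $\beta\in(\N_0)^n$ with $\beta\leq\alpha$. The whole argument then rests on two functoriality identities relating $T^\beta$ to composition, together with the standard fact that pushforward and pullback along continuous maps are continuous for compact-open topologies.

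First I would establish the identities
\[
T^\beta(g\circ f)=T^{|\beta|}g\circ T^\beta f \quad\text{and}\quad T^\beta(f\circ g)=T^\beta f\circ T^\beta g,
\]
where in the second identity $T^\beta g:=T^{\beta_1}g_1\times\cdots\times T^{\beta_n}g_n\colon T^\beta L\to T^\beta M$. Both follow by induction along the recursion in \ref{tangent-maps}, using only functoriality of the ordinary iterated tangent functor, i.e.\ $T^m(a\circ b)=T^m a\circ T^m b$ and $T^iT^j=T^{i+j}$. For the base step (raising the tangent order in the last variable) one writes $(g\circ f)(x_1,\ldots,x_{n-1},\cdot)=g\circ f(x_1,\ldots,x_{n-1},\cdot)$, respectively $(f\circ g)(y_1,\ldots,y_{n-1},\cdot)=f(g_1(y_1),\ldots,g_{n-1}(y_{n-1}),\cdot)\circ g_n$, and applies $T^{\beta_n}$ together with tangent functoriality. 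For the inductive step one feeds the already-established identity for $T^{(0,\ldots,0,\beta_k,\ldots,\beta_n)}$ into the definition of $T^{(0,\ldots,0,\beta_{k-1},\ldots,\beta_n)}$ and again pulls $T^{\beta_{k-1}}$ through the composition; in case (a) the book-keeping relation $T^{\beta_{k-1}}(T^{\beta_k+\cdots+\beta_n}g)=T^{\beta_{k-1}+\cdots+\beta_n}g$ is exactly what is needed.

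With these identities in hand I would conclude as follows. In case (a), since $g$ is smooth the map $T^{|\beta|}g\colon T^{|\beta|}N\to T^{|\beta|}L$ is continuous, so the pushforward $(T^{|\beta|}g)_*\colon C(T^\beta M,T^{|\beta|}N)\to C(T^\beta M,T^{|\beta|}L)$ is continuous for the compact-open topologies, and the first identity reads $T^\beta\circ g_*=(T^{|\beta|}g)_*\circ T^\beta$. In case (b), each $g_j$ is smooth, hence $T^\beta g\colon T^\beta L\to T^\beta M$ is continuous, so the pullback $(T^\beta g)^*\colon C(T^\beta M,T^{|\beta|}N)\to C(T^\beta L,T^{|\beta|}N)$ is continuous, and the second identity reads $T^\beta\circ g^*=(T^\beta g)^*\circ T^\beta$. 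In both cases $T^\beta$ is continuous by the very definition of the compact-open $C^\alpha$-topology, so the composites are continuous; by the universal property of the initial topology, $g_*$ and $g^*$ are continuous.

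The facts about compact-open topologies are standard and require no hypotheses beyond continuity of the maps being composed. The only real work, and the step I expect to be the main obstacle, is the careful induction establishing the two functoriality identities: one must track, at each stage of the recursion in \ref{tangent-maps}, which variables are held fixed and which carry the tangent coordinates, and verify that inserting $g$ (resp.\ the family $g_j$) commutes with the partial tangent operations. This is conceptually clear from naturality of the tangent functor but demands attention to the indices; along the way one should also invoke Lemmas~\ref{reorder} and \ref{partial-tangents} to confirm that the intermediate partial-tangent maps have the asserted differentiability type, so that the recursion is well defined.
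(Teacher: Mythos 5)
Your proposal is correct and follows essentially the same route as the paper's proof: both reduce continuity to the universal property of the initial topology defining the compact-open $C^\alpha$-topology, establish the functoriality identities $T^\beta(g\circ f)=T^{|\beta|}g\circ T^\beta f$ and $T^\beta(f\circ g)=T^\beta f\circ\bigl(T^{\beta_1}g_1\times\cdots\times T^{\beta_n}g_n\bigr)$ by induction along the recursion in \ref{tangent-maps} using functoriality of $T$, and then invoke the standard continuity of pushforward and pullback for compact-open topologies (the paper cites \cite[Lemmas~A.5.3 and A.5.9]{GaN} for exactly these facts).
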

\begin{rem}\label{rem-restriction}
If $L_j$ is a full submanifold of~$M_j$
for $j\in\{1,\ldots, m\}$,
then the inclusion map $g_j\colon L_j\to M_j$, $x\mto x$
is smooth. By Lemma~\ref{pull-and-push}\,(b),
the map
\[
\rho:=C^\alpha(g_1\times\cdots\times g_n,N)\colon
C^\alpha(M,N)\to C^\alpha(L,N)
\]
is continuous, which is the restriction map
$C^\alpha(M,N)\to C^\alpha(L,N)$, $f\mto f|_L$.
\end{rem}
\begin{la}\label{la:ascending-union}
Let $(K_i)_{i\in I}$
be a family
of subsets $K_i\sub M$
whose interiors $K_i^o$
cover $M$,
such that $K_i=K_{i,1}\times\cdots\times K_{i,n}$
for certain full submanifolds
$K_{i,j}\sub M_j$ for $j\in\{1,\ldots, n\}$.
Then the compact-open $C^\alpha$-topology on $C^\alpha(M,N)$
is initial with respect to the restriction maps
$C^\alpha(M,N)\to C^\alpha(K_i,N)$ for $i\in I$.
\end{la}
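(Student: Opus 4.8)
The plan is to show that the compact-open $C^\alpha$-topology $\tau$ on $C^\alpha(M,N)$ coincides with the initial topology $\tau'$ defined by the restriction maps $\rho_i\colon C^\alpha(M,N)\to C^\alpha(K_i,N)$, $f\mapsto f|_{K_i}$, where each $C^\alpha(K_i,N)$ carries its own compact-open $C^\alpha$-topology. One inclusion is immediate: by Remark~\ref{rem-restriction} (via Lemma~\ref{pull-and-push}\,(b)) each $\rho_i$ is continuous for these topologies, so the initial topology $\tau'$ is coarser, $\tau'\subseteq\tau$. For the reverse inclusion it suffices, by Definition~\ref{defn-alpha-top}, to check that each generating map $T^\beta\colon C^\alpha(M,N)\to C(T^\beta M,T^{|\beta|}N)$ (for $\beta\in(\N_0)^n$ with $\beta\leq\alpha$) is $\tau'$-continuous; equivalently, that for every compact $C\subseteq T^\beta M$ and open $W\subseteq T^{|\beta|}N$ the set $\{f\colon T^\beta f(C)\subseteq W\}$ is $\tau'$-open.

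Next I would localize $C$. Let $\pi_\beta\colon T^\beta M\to M$ be the product of the iterated bundle projections $T^{\beta_j}M_j\to M_j$; it is continuous, so $\pi_\beta(C)$ is compact and is covered by finitely many of the sets $K_i^o$, say $K_{i_1}^o,\dots,K_{i_m}^o$. The key geometric point is that over its topological interior $K_{i,j}^o$ in $M_j$ the full submanifold $K_{i,j}$ is an open submanifold of $M_j$; hence $\pi_\beta^{-1}(K_i^o)=T^\beta(K_i^o)$ is open in $T^\beta M$ and contained in $T^\beta K_i$ (here $K_i^o=K_{i,1}^o\times\cdots\times K_{i,n}^o$). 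Consequently the open sets $T^\beta(K_{i_1}^o),\dots,T^\beta(K_{i_m}^o)$ cover $C$.

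Since $C$ is a compact Hausdorff, hence normal, space, the shrinking lemma lets me write $C=C_1\cup\cdots\cup C_m$ with each $C_l$ compact and $C_l\subseteq T^\beta(K_{i_l}^o)\subseteq T^\beta K_{i_l}$. Using that $T^\beta$ commutes with restriction, $(T^\beta f)|_{T^\beta K_{i_l}}=T^\beta(f|_{K_{i_l}})=T^\beta(\rho_{i_l}(f))$, I would rewrite
\[
\{f\colon T^\beta f(C)\subseteq W\}=\bigcap_{l=1}^m\{f\colon T^\beta f(C_l)\subseteq W\}=\bigcap_{l=1}^m\rho_{i_l}^{-1}\bigl((T^\beta)^{-1}(S_l)\bigr),
\]
where $S_l:=\{h\in C(T^\beta K_{i_l},T^{|\beta|}N)\colon h(C_l)\subseteq W\}$ is a subbasic compact-open set. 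By Definition~\ref{defn-alpha-top} each $(T^\beta)^{-1}(S_l)$ is open in $C^\alpha(K_{i_l},N)$, so its $\rho_{i_l}$-preimage is $\tau'$-open; the finite intersection is then $\tau'$-open, as required. Combining the two inclusions gives $\tau=\tau'$.

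I expect the main obstacle to be the geometric bookkeeping underlying the second paragraph: confirming that over the interior $K_i^o$ the tangent towers $T^\beta K_i$ and $T^\beta M$ coincide---so that $\pi_\beta^{-1}(K_i^o)$ really lands in $T^\beta K_i$---together with the companion identity $(T^\beta f)|_{T^\beta K_i}=T^\beta(f|_{K_i})$. Both follow from the definition of full submanifold (\ref{defn-full-sub}) and the fact that $T^\beta$ is a product of iterated tangent functors, but they are best verified chart-wise. The remaining ingredients---compactness of $\pi_\beta(C)$, the shrinking lemma, and transitivity of initial topologies---are routine.
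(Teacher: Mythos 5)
Your proof is correct, and its skeleton matches the paper's: both arguments reduce everything to the generating maps $T^\beta\colon C^\alpha(M,N)\to C(T^\beta M,T^{|\beta|}N)$ from Definition~\ref{defn-alpha-top}, both rest on the geometric fact that the sets $T^\beta(K_i^o)$ are open in $T^\beta M$ and cover it, and both use that $T^\beta$ commutes with restriction, $(T^\beta f)|_{T^\beta K_i}=T^\beta(f|_{K_i})$. The difference lies in how the topological bookkeeping is organized. The paper invokes a general black-box fact about continuous function spaces --- the compact-open topology on $C(X,Y)$ is initial with respect to restriction to any family of subsets whose interiors cover $X$ (citing \cite[Lemma~A.5.11]{GaN}) --- applies it to each $C(T^\beta M,T^{|\beta|}N)$, and concludes by transitivity of initial topologies, which delivers the ``initial'' statement in one stroke. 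You instead split the claim into two inclusions, dispatch the easy one via continuity of the restriction maps (Lemma~\ref{pull-and-push}\,(b)), and prove the hard one by hand: covering $\pi_\beta(C)$ by finitely many $K_{i_l}^o$, shrinking the compact set $C$ inside the open sets $T^\beta(K_{i_l}^o)$ using normality, and chasing subbasic compact-open sets through the restriction maps. In effect you have reproved the cited lemma inline in the special case needed here; this buys a self-contained argument that does not lean on \cite{GaN}, at the cost of length, while the paper's route is shorter and isolates the reusable general principle. One small point worth making explicit in your write-up is that the identification of $T^\beta K_i$ (with its manifold topology) as a subspace of $T^\beta M$, which you need so that compactness of $C_l$ and the compact-open sets $S_l$ mean the same thing on both sides, is exactly Lemma~\ref{top-sub}; otherwise your chart-wise verification is fine.
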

\begin{la}\label{top-sub}
For $j\in \{1,\ldots, n\}$,
let $S_j$ be a full submanifold of~$M_j$.
Abbreviate $S:=S_1\times\cdots\times S_n$.
Then $T^\beta S$ is a full submanifold
of $T^\beta M$
for all $\beta\in (\N_0)^n$,
and the smooth manifold structure on $T^\beta S$
as the $\beta$-tangent bundle
of~$S$ coincides with the smooth manifold
structure as a full submanifold of $T^\beta M$.
Analogous conclusions $($with submanifolds in place of full submanifolds$)$
hold
if $\partial M_j=\emptyset$
for all $j\in\{1,\ldots, n\}$
and $S_j\sub M_j$
is a submanifold.
\end{la}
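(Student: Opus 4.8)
The plan is to strip off one tangent-bundle layer at a time on a single factor, iterate in that factor, and take the product over $j$ at the very end. Since $T^\beta M=T^{\beta_1}M_1\times\cdots\times T^{\beta_n}M_n$ and likewise $T^\beta S=T^{\beta_1}S_1\times\cdots\times T^{\beta_n}S_n$ by Definition~\ref{defn-alpha-bundle}, and since a finite product of full submanifolds is again a full submanifold of the product of the ambient manifolds (a finite product of locally convex subsets with dense interior is locally convex with dense interior, as both local convexity and the density of the interior pass to finite products), it suffices to prove the one-factor statement: for a full submanifold $S$ of a smooth manifold $M$ with rough boundary and $k\in\N$, the iterated tangent bundle $T^kS$ is a full submanifold of $T^kM$, and its tangent-bundle manifold structure coincides with the one inherited as a full submanifold of $T^kM$. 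The product structures will then match automatically, since every chart in sight is a product of one-factor charts.

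I would prove the one-factor statement by induction on $k$, the case $k=0$ being trivial. The inductive step rests on a single claim: if $S$ is a full submanifold of a smooth manifold $M$ with rough boundary, then $TS$ is a full submanifold of $TM$, and the tangent-bundle structure on $TS$ agrees with the full-submanifold structure inherited from $TM$. Applying this claim with $M$ replaced by $T^{k-1}M$ and $S$ by $T^{k-1}S$ (which, by the inductive hypothesis, is a full submanifold of $T^{k-1}M$ carrying the correct structure) yields the assertion for $k$.

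To establish the claim I would argue in charts. Fix $x\in S$ and a chart $\phi\colon U_\phi\to V_\phi\sub E_\phi$ of $M$ around $x$ with $W:=\phi(U_\phi\cap S)$ a locally convex subset of $E_\phi$ with dense interior, as in Definition~\ref{defn-full-sub}. Using the identification $TU=U\times E$ from~\ref{defn-df}, the induced chart $T\phi\colon TU_\phi\to V_\phi\times E_\phi$ of $TM$ carries $TS|_{U_\phi\cap S}$ onto $W\times E_\phi$; here the second factor is all of $E_\phi$ because $S$ is modeled on the full space $E_\phi$, so the tangent fibre of $S$ at each point of $U_\phi\cap S$ is $E_\phi$. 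Now $W\times E_\phi$ is a locally convex subset of $E_\phi\times E_\phi$ with dense interior, which exhibits $TS$ as a full submanifold of $TM$. Writing $\phi_S$ for the chart of $S$ obtained by restricting and corestricting $\phi$, functoriality of the tangent functor applied to the inclusion $S\hookrightarrow M$ shows that the chart of $TS$ obtained by restricting $T\phi$ is exactly the canonical tangent-bundle chart $T\phi_S$; hence the two manifold structures on $TS$ coincide.

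For the submanifold case, assume $\partial M_j=\emptyset$ for all $j$; then every iterated tangent bundle is again boundaryless, so the induction stays entirely within the boundaryless setting. The single claim is replaced by its analogue for submanifolds in the sense of~\ref{submfd}: if $\phi(U_\phi\cap S)=V_\phi\cap F$ for a closed vector subspace $F\sub E_\phi$, then $T\phi$ carries $TS|_{U_\phi\cap S}$ onto $(V_\phi\cap F)\times F=(V_\phi\times E_\phi)\cap(F\times F)$, and $F\times F$ is a closed vector subspace of $E_\phi\times E_\phi$; the same functoriality argument gives matching structures. The step needing the most care throughout is precisely this comparison of structures—verifying that the tangent-bundle structure built from the charts of $S$ agrees with the one inherited from $TM$—which hinges on the fact that charts of a (full) submanifold are restrictions of charts of the ambient manifold, so that the tangent functor intertwines the two constructions.
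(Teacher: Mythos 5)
Your proposal is correct and follows essentially the same route as the paper: the paper's proof reduces to one factor via the decompositions $T^\beta S=T^{\beta_1}S_1\times\cdots\times T^{\beta_n}S_n$ and $T^\beta M=T^{\beta_1}M_1\times\cdots\times T^{\beta_n}M_n$, exactly as you do, and then simply declares the case $n=1$ to be well known. Your chart-level induction on $k$ (tangent chart $T\phi$ carrying $TS|_{U_\phi\cap S}$ onto $W\times E_\phi$, respectively onto $(V_\phi\cap F)\times F$ in the boundaryless case) is a correct, self-contained proof of precisely that one-factor statement, so your argument matches the paper's with the ``well known'' part spelled out.
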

\begin{la}\label{maps-to-sub}
If $S$ is a full submanifold of~$N$
or $\partial N=\emptyset$ and $S\sub N$ is a submanifold,
then the compact-open $C^\alpha$-topology
on $C^\alpha(M,S)$ coincides with
the topology on $C^\alpha(M,S)$ induced by
$C^\alpha(M,N)$.
\end{la}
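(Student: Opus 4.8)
The plan is to exhibit both topologies on $C^\alpha(M,S)$ as initial topologies and to reduce their coincidence to the behaviour of the compact-open topology under passage to a subspace. Let $j\colon S\to N$ be the inclusion map, which is smooth since $S$ is a (full) submanifold. Post-composition with~$j$ gives a map $\iota\colon C^\alpha(M,S)\to C^\alpha(M,N)$, $f\mto j\circ f$; it is well defined because, in local charts, a map which is $C^\alpha$ into $\phi(U_\phi\cap S)$ is $C^\alpha$ into $E_\phi$ (immediate from \ref{defn-C-alpha} in the full submanifold case, and a consequence of Lemma~\ref{la-sub-PL}\,(a) applied to the closed subspace $F$ in the submanifold case). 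The same two facts, read in the converse direction, show that $\iota$ is injective with image $\{g\in C^\alpha(M,N)\colon g(M)\sub S\}$, so that ``the topology induced by $C^\alpha(M,N)$'' is exactly the initial topology with respect to $\iota$. By transitivity of initial topologies and Definition~\ref{defn-alpha-top}, this induced topology is the initial topology with respect to the maps $f\mto T^\beta(\iota(f))\in C(T^\beta M,T^{|\beta|}N)$ for $\beta\in(\N_0)^n$ with $\beta\leq\alpha$, whereas the compact-open $C^\alpha$-topology on $C^\alpha(M,S)$ is the initial topology with respect to the maps $T^\beta\colon C^\alpha(M,S)\to C(T^\beta M,T^{|\beta|}S)$.

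I would then establish the naturality identity $T^\beta(j\circ f)=T^{|\beta|}(j)\circ T^\beta(f)$ for all $f\in C^\alpha(M,S)$ and $\beta\leq\alpha$. As $j$ acts only on the target, post-composition with~$j$ commutes with each of the iterated partial-tangent operations in the domain variables out of which $T^\beta f$ is built in \ref{tangent-maps}; a short induction along the recursion there, using naturality of the ordinary tangent functor, yields the identity. By Lemma~\ref{top-sub} applied with $n=1$ and $M_1=N$ (with submanifolds in place of full submanifolds when $\partial N=\emptyset$), the space $T^{|\beta|}S$ is a (full) submanifold of $T^{|\beta|}N$ with coinciding smooth structure, and $T^{|\beta|}(j)$ is the corresponding inclusion; in particular $T^{|\beta|}S$ carries the subspace topology of $T^{|\beta|}N$. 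Writing $(T^{|\beta|}j)_*$ for post-composition with this inclusion, the identity reads $T^\beta\circ\iota=(T^{|\beta|}j)_*\circ T^\beta$.

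The key topological input is that each $(T^{|\beta|}j)_*\colon C(T^\beta M,T^{|\beta|}S)\to C(T^\beta M,T^{|\beta|}N)$ is a topological embedding for the compact-open topologies. This is the elementary fact that, for any topological space~$X$ and any subspace $A\sub B$, the compact-open topology on $C(X,A)$ coincides with the topology induced by $C(X,B)$: a subbasic set of the induced topology has the form $\{h\in C(X,A)\colon h(K)\sub W\}$ with $K\sub X$ compact and $W\sub B$ open, and since $h(K)\sub A$ this equals $\{h\in C(X,A)\colon h(K)\sub W\cap A\}$, while every relatively open $V\sub A$ is of the form $W\cap A$; hence the two subbases agree. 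I apply this with $X=T^\beta M$, $A=T^{|\beta|}S$ and $B=T^{|\beta|}N$, using that $T^{|\beta|}S$ is a subspace of $T^{|\beta|}N$ by the previous step.

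Finally I would conclude by transitivity of initial topologies. Each composite $(T^{|\beta|}j)_*\circ T^\beta$ is continuous for the compact-open $C^\alpha$-topology, so the induced topology is coarser than the latter. Conversely, since each $(T^{|\beta|}j)_*$ is an embedding, its domain $C(T^\beta M,T^{|\beta|}S)$ carries the initial topology with respect to $(T^{|\beta|}j)_*$; hence $T^\beta\colon C^\alpha(M,S)\to C(T^\beta M,T^{|\beta|}S)$ is continuous for the induced topology because $(T^{|\beta|}j)_*\circ T^\beta$ is (by definition of the induced topology), so the compact-open $C^\alpha$-topology is coarser than the induced one. Hence the two coincide. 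I expect the main obstacle to be the bookkeeping in the naturality identity of the second step: because the recursion in \ref{tangent-maps} removes the domain variables one at a time and is not symmetric in them, one must check at each stage that post-composition with the target map~$j$ passes through the relevant partial tangent functor, rather than invoking a single global functoriality statement.
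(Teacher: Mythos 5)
Your proof is correct and follows essentially the same route as the paper: both reduce the claim to Lemma~\ref{top-sub}, the naturality identity $T^\beta(j\circ f)=T^{|\beta|}(j)\circ T^\beta f$, the fact that pushforward along a topological embedding is a compact-open embedding, and transitivity of initial topologies. The only cosmetic difference is that you prove the compact-open embedding fact by an explicit subbasis argument, where the paper cites \cite[Lemma~A.5.5]{GaN}.
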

\begin{la}\label{maps-to-tvs-1}
If $F$ is a locally convex space,
then $C^\alpha(M,F)$ is a vector subspace of
$F^M$.
The compact-open $C^\alpha$-topology
makes $C^\alpha(M,F)$ a locally convex space.
\end{la}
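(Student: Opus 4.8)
The plan is to verify the two assertions separately, reducing everything to the definition of $C^\alpha$-maps in \ref{defn-C-alpha} and \ref{C-alpha-in-mfd} together with standard properties of the compact-open topology. First I would show that $C^\alpha(M,F)$ is a vector subspace of $F^M$. Since the target $F$ is a locally convex space, I use its global identity chart, so that by \ref{C-alpha-in-mfd} the $C^\alpha$-property of a map is tested purely in domain charts $\phi_1\times\cdots\times\phi_n$. The zero map is $C^\alpha$. For $f,g\in C^\alpha(M,F)$ and $\lambda\in\R$ I would observe that directional derivatives are linear in the map: $D_y(f+g)=D_yf+D_yg$ and $D_y(\lambda f)=\lambda D_yf$ wherever the right-hand sides exist, using only that $F$ is a topological vector space so that limits of sums are sums of limits. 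Iterating yields $d^\beta(f+g)=d^\beta f+d^\beta g$ and $d^\beta(\lambda f)=\lambda\, d^\beta f$ for all $\beta\leq\alpha$ on the interior of the chart domain; these are continuous as sums and scalar multiples of continuous maps, and their continuous extensions over the rough boundary add and scale likewise. Hence $f+g$ and $\lambda f$ are $C^\alpha$, so $C^\alpha(M,F)$ is a vector subspace of $F^M$.

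Next, for the topology the key point I would isolate is that each defining map $T^\beta\colon C^\alpha(M,F)\to C(T^\beta M,T^{|\beta|}F)$ is linear. Identifying $T^{|\beta|}F\cong F^{2^{|\beta|}}$ (iterating $TF=F\times F$), the components of $T^\beta f(v)$ at a point $v\in T^\beta M$ are, in local charts, iterated directional derivatives $d^\gamma f$ with $\gamma\leq\beta$ evaluated at points and direction-tuples determined by $v$. As just noted these depend linearly on $f$, and since the identity chart on $F$ is linear no nonlinearity is introduced on the target side; thus $T^\beta f(v)$ is linear in $f$ for each $v$, i.e. $T^\beta$ is linear.

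Then I would invoke that $C(X,V)$, carrying the compact-open topology, is a locally convex space whenever $V$ is locally convex and $X$ is a topological space, its topology being generated by the seminorms $f\mapsto\sup_{x\in K}q(f(x))$ over compact $K\sub X$ and continuous seminorms $q$ on $V$. Applying this with $X=T^\beta M$ and $V=T^{|\beta|}F\cong F^{2^{|\beta|}}$, a finite product of locally convex spaces and hence locally convex, makes each target locally convex. By Definition~\ref{defn-alpha-top} the compact-open $C^\alpha$-topology is the initial topology with respect to the linear maps $T^\beta$ into these locally convex spaces, and an initial topology on a vector space with respect to linear maps into locally convex spaces is always a locally convex vector topology; here it is generated by the seminorms $f\mapsto\sup_{v\in K}q(T^\beta f(v))$, which are genuine seminorms precisely because $T^\beta$ is linear. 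Hausdorffness follows because the member with $\beta=(0,\ldots,0)$ is the inclusion $C^\alpha(M,F)\to C(M,F)$, $f\mapsto f$, which is injective into the Hausdorff space $C(M,F)$; equivalently, the $\beta=0$ seminorms $f\mapsto q(f(x))$ already separate points since the continuous seminorms of $F$ do.

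The step I expect to be the main obstacle is the linearity of the higher tangent maps $T^\beta$: one must be certain that, in the recursive construction of \ref{tangent-maps}, every component of $T^\beta f$ is genuinely a directional derivative of $f$ (hence linear in $f$) and that the identification $T^{|\beta|}F\cong F^{2^{|\beta|}}$ is compatible with these components. Everything else is either a direct consequence of the definition of $C^\alpha$-maps or a standard property of the compact-open topology.
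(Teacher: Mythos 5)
Your proof is correct and follows essentially the same route as the paper: both arguments reduce everything to linearity of the maps $T^\beta\colon C^\alpha(M,F)\to C(T^\beta M,T^{|\beta|}F)$ and then use that the compact-open $C^\alpha$-topology is the initial topology with respect to these linear maps into locally convex spaces (the paper phrases this as the map $f\mapsto (T^\beta f)_{\beta\leq\alpha}$ into $\prod_{\beta\leq\alpha}C(T^\beta M,T^{|\beta|}F)$ being linear and a homeomorphism onto its image, which is locally convex). One small correction to your ``main obstacle'' remark: in charts the components of $T^\beta f$ are in general finite \emph{sums} of terms of the form $\lambda\bigl(d^{\gamma}f(\ldots)\bigr)$ with $\gamma\leq\beta$ rather than single iterated directional derivatives (compare the explicit formulas in the proof of Lemma~\ref{maps-to-tvs-2}), but since each summand is linear in $f$, the linearity of $T^\beta$ — which is all you need — is unaffected.
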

\begin{la}\label{c-alpha-top-product}
Let $M_1,\ldots, M_n$ be smooth
manifolds with rough boundary, $M:=M_1\times\cdots\times M_n$,
and $\alpha\in (\N_0\cup\{\infty\})^n$.
\begin{itemize}
\item[\rm(a)]
If $F$ is a locally convex space
whose topology is initial with respect to a family
$(\lambda_i)_{i\in I}$
of linear mappings $\lambda_i\colon F\to F_i$
to locally convex spaces~$F_i$,
then the compact-open $C^\alpha$-topology
on $C^\alpha(M,F)$
is initial with respect to the mappings
$((\lambda_i)_*)_{i\in I}\colon C^\alpha(M,F)\to C^\alpha(M,F_i)$.
\item[\rm(b)]
If $F$ is a locally convex space
and $F=\prod_{i\in I}F_i$
for a family
$(F_i)_{i\in I}$
of locally convex spaces,
let $\pr_i\colon F\to F_i$
be the projection onto the $i$th component
and $(\pr_i)_*\colon C^\alpha(M,F)\to C^\alpha(M,F_i)$.
Then\vspace{-.7mm}
\[
\Theta:=((\pr_i)_*)_{i\in I}\colon C^\alpha(M,F)\to\prod_{i\in I}C^\alpha(M,F_i)\vspace{-.9mm}
\]
is an isomorphism of topological vector spaces.
\item[\rm(c)]
Assume that all of $M_1,\ldots, M_n$ are locally compact.
Let $N_i$ be a smooth manifold with rough
boundary for $i\in \{1,2\}$
and $\pr_i\colon N_1\times N_2\to N_i$
be the projection onto the $i$th component.
Using the compact-open $C^\alpha$-topology
on sets of $C^\alpha$-maps, we get a homeomorphism
\[
\Psi:=((\pr_1)_*,(\pr_2)_*)
\colon C^\alpha(M,N_1\times N_2)\to C^\alpha(M,N_1)\times C^\alpha(M,N_2) .
\]
\end{itemize}
\end{la}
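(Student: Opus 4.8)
The plan is to handle the three parts in turn, deriving~(b) from~(a) and isolating the one genuinely topological input needed for~(c). For part~(a), note first that each $\lambda_i$ is continuous and linear, hence smooth, so Lemma~\ref{pull-and-push}(a) shows every $(\lambda_i)_*$ is continuous for the compact-open $C^\alpha$-topologies; thus the initial topology with respect to $((\lambda_i)_*)_{i\in I}$ is coarser than the compact-open $C^\alpha$-topology on $C^\alpha(M,F)$. For the reverse inclusion I would use the locally convex structure from Lemma~\ref{maps-to-tvs-1}: the compact-open $C^\alpha$-topology is generated by the seminorms $f\mapsto \sup_{z\in K}q(T^\beta f(z))$, where $\beta\leq\alpha$, $K\subseteq T^\beta M$ is compact and $q$ runs through the continuous seminorms of the locally convex space $T^{|\beta|}F$ (the compact-open topology on $C(T^\beta M,T^{|\beta|}F)$ being uniform convergence on compact sets into a locally convex target). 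Since $F$ carries the initial locally convex topology determined by the $\lambda_i$, the finite power $T^{|\beta|}F$ carries the initial topology with respect to the $T^{|\beta|}\lambda_i$, so $q\leq\max_{i\in J}p_i\circ T^{|\beta|}\lambda_i$ for some finite $J$ and continuous seminorms $p_i$ on $T^{|\beta|}F_i$. As $\lambda_i$ is linear it commutes with the formation of $T^\beta$, i.e. $T^\beta\circ(\lambda_i)_*=(T^{|\beta|}\lambda_i)_*\circ T^\beta$ (from the recursive definition in~\ref{tangent-maps} and the ordinary chain rule $T(\lambda_i\circ h)=T\lambda_i\circ Th$ applied slotwise). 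Hence each generating seminorm is dominated by the finite maximum of the continuous-for-the-initial-topology seminorms $f\mapsto\sup_{z\in K}p_i(T^\beta((\lambda_i)_*f)(z))$, which gives the reverse inclusion.

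For part~(b), the product $F=\prod_{i\in I}F_i$ is the projective limit of its finite partial products, with limit maps the projections $\pr_i$, so Lemma~\ref{la-sub-PL}(b) shows that $f\colon M\to F$ is $C^\alpha$ if and only if each $\pr_i\circ f$ is $C^\alpha$. Consequently $\Theta$ is a well-defined linear bijection: a family $(f_i)_{i\in I}$ of $C^\alpha$-maps assembles to the $C^\alpha$-map $f:=(f_i)_i$ with $\Theta(f)=(f_i)_i$. Since $F$ carries the initial topology with respect to the $\pr_i$, part~(a) shows the compact-open $C^\alpha$-topology on $C^\alpha(M,F)$ is initial with respect to $((\pr_i)_*)_{i\in I}$, which is precisely the statement that $\Theta$ is a homeomorphism onto $\prod_{i\in I}C^\alpha(M,F_i)$; hence $\Theta$ is an isomorphism of topological vector spaces.

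For part~(c), bijectivity of $\Psi$ is local: $\pr_i$ is smooth, so $\pr_i\circ f$ is $C^\alpha$ whenever $f$ is, while conversely, reading $f$ in product charts of $N_1\times N_2$ its local representative has the local representatives of $\pr_1\circ f$ and $\pr_2\circ f$ as its two components, so Lemma~\ref{la-sub-PL}(b) shows $f$ is $C^\alpha$ as soon as both components are. Continuity of $\Psi$ follows from Lemma~\ref{pull-and-push}(a). For the continuity of $\Psi^{-1}$ I would reduce through the maps $T^\beta$ of Definition~\ref{defn-alpha-top}: the tangent functor preserves products, giving a canonical identification $T^{|\beta|}(N_1\times N_2)=T^{|\beta|}N_1\times T^{|\beta|}N_2$ under which $T^\beta f=(T^\beta(\pr_1\circ f),T^\beta(\pr_2\circ f))$. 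It then suffices to know that for the locally compact space $X:=T^\beta M$ the compact-open topology on $C(X,Y_1\times Y_2)$ is the product of the compact-open topologies on $C(X,Y_1)$ and $C(X,Y_2)$; here local compactness of $M_1,\ldots,M_n$ makes each $T^\beta M$ locally compact, so the tube-lemma argument applies: given $f_0$ with $f_0(K)\subseteq W$ for $W\subseteq Y_1\times Y_2$ open, one covers $K$ by finitely many relatively compact open sets $U_l$ with $\overline{U_l}$ compact and $f_0(\overline{U_l})\subseteq V_{1,l}\times V_{2,l}\subseteq W$, whence the finite intersection of the sets $((\pr_i)_*)^{-1}\lfloor\overline{U_l},V_{i,l}\rfloor$ is a neighbourhood of $f_0$ inside $\lfloor K,W\rfloor$. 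This shows each $T^\beta$ is continuous for the initial topology with respect to $((\pr_1)_*,(\pr_2)_*)$, hence so is $\Psi^{-1}$.

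The main obstacle is the continuity of $\Psi^{-1}$ in part~(c): unlike in~(a)--(b) the target is a manifold rather than a locally convex space, so the seminorm argument is unavailable and one must invoke the genuinely topological product formula for mapping spaces. This is exactly where local compactness of the $M_j$ enters, through the tube lemma, and explains why that hypothesis is imposed in~(c) but not in~(a)--(b). The remaining bookkeeping---naturality of $T^\beta$ with respect to $\pr_i$ and $\lambda_i$, and the reduction of the compact-open $C^\alpha$-topology to the individual maps $T^\beta$---is routine given Definition~\ref{defn-alpha-top} and~\ref{tangent-maps}.
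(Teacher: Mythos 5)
Your proof is correct, but part (c) takes a genuinely different route from the paper, so let me compare. In parts (a) and (b) you are essentially doing what the paper does: it likewise notes that $T^{|\beta|}F=F^{2^{|\beta|}}$ carries the initial topology with respect to the maps $T^{|\beta|}\lambda_i=\lambda_i^{2^{|\beta|}}$, uses the naturality relation $T^\beta\circ(\lambda_i)_*=(T^{|\beta|}\lambda_i)_*\circ T^\beta$, and concludes by transitivity of initial topologies, quoting \cite[Lemma~A.5.4]{GaN} for the fact that the compact-open topology turns an initial family of maps on the target into an initial family of pushforwards; your seminorm estimate simply proves that quoted fact by hand in the linear setting. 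In part (c), however, the paper establishes continuity of $\Psi^{-1}$ by localization inside the $C^\alpha$-framework: local compactness furnishes compact product submanifolds $K_x\sub M$ with $(f_1,f_2)(K_x)$ contained in a product chart $U_{x,1}\times U_{x,2}$, Lemma~\ref{la:ascending-union} reduces continuity to the restriction maps to the $K_x$, and Lemma~\ref{maps-to-sub} together with pushforward by the charts $\phi_{x,i}$ reduces everything to the locally convex case already settled in (b). You instead stay at the level of the defining maps $T^\beta$ and prove the purely point-set fact that for a locally compact Hausdorff domain $X$ the canonical bijection $C(X,Y_1\times Y_2)\to C(X,Y_1)\times C(X,Y_2)$ is a homeomorphism for the compact-open topologies, local compactness of the $M_j$ entering through local compactness of $T^\beta M$; combined with the functoriality $T^\beta(\pr_i\circ f)=T^{|\beta|}\pr_i\circ T^\beta f$ (available from the proof of Lemma~\ref{pull-and-push}), this gives continuity of each $T^\beta\circ\Psi^{-1}$ and hence of $\Psi^{-1}$. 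Your route avoids charts on the target and the reduction to (b) altogether, at the price of re-proving a classical topological lemma by the tube argument; the paper's route recycles its own machinery and never leaves the $C^\alpha$-setting. Both correctly isolate local compactness as the hypothesis that makes (c) work.

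One citation should be repaired: Lemma~\ref{la-sub-PL}\,(b) does not deliver the finite-product statement you rely on for bijectivity in (b) and (c). For a finite index set, the projective system of finite partial products has a greatest element, so the criterion it gives is circular there; and for infinite $I$ your argument reduces $f$ to the maps $\pr_J\circ f$ with $J$ finite, which still requires the finite-product case in order to pass to the individual $\pr_i\circ f$. The fact you need --- a map into a finite product of locally convex spaces (or manifolds) is $C^\alpha$ if and only if its components are --- is \cite[Lemma~3.8]{Alz}, which is exactly what the paper cites for the bijectivity of $\Theta$ and of $\Psi$. With that reference substituted, your argument is complete.
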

Using the multiplication
$\R\times TN\to TN$, $(t,v)\mto tv$
with scalars, we have:
\begin{la}\label{c-alpha-top-mult}
Let $M_1,\ldots, M_n$ be locally compact smooth
manifolds with rough boundary, $M:=M_1\times\cdots\times M_n$,
$\alpha\in (\N_0\cup\{\infty\})^n$,
and $N$ be a smooth manifold with rough
boundary. Then the map
\[
\mu\colon C^\alpha(M,\R)\times C^\alpha(M,TN)\to C^\alpha(M,TN)
\]
determined by $\mu(f,g)(x):=f(x)g(x)$
is continuous.
\end{la}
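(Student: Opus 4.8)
The plan is to exhibit $\mu$ as a composition of two maps whose continuity is already available, thereby avoiding any direct estimate in the compact-open $C^\alpha$-topology. The fiberwise scalar multiplication
\[
m\colon \R\times TN\to TN,\quad (t,v)\mto tv
\]
is a smooth map between manifolds with rough boundary: since $TN\to N$ is a vector bundle, $m$ is intrinsically defined, and in a bundle chart induced by a chart of~$N$ it has the form $(t,(x,v))\mto (x,tv)$, which is smooth. Applying Lemma~\ref{pull-and-push}\,(a) to $m$ (with source manifold $\R\times TN$ and target manifold $TN$), the pushforward
\[
m_*=C^\alpha(M,m)\colon C^\alpha(M,\R\times TN)\to C^\alpha(M,TN)
\]
is therefore continuous.

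I would then package the two arguments of $\mu$ into a single $C^\alpha$-map into the product manifold $\R\times TN$. Because each $M_j$ is locally compact, Lemma~\ref{c-alpha-top-product}\,(c) with $N_1:=\R$ and $N_2:=TN$ yields a homeomorphism
\[
\Psi\colon C^\alpha(M,\R\times TN)\to C^\alpha(M,\R)\times C^\alpha(M,TN),\quad
h\mto ((\pr_1)_*(h),(\pr_2)_*(h)),
\]
so in particular its inverse $\Psi^{-1}$ is continuous and sends a pair $(f,g)$ to the $C^\alpha$-map $x\mto (f(x),g(x))$.

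The two pieces fit together as $\mu=m_*\circ\Psi^{-1}$: for $(f,g)\in C^\alpha(M,\R)\times C^\alpha(M,TN)$ and $x\in M$ one computes
\[
(m_*\circ\Psi^{-1})(f,g)(x)=m(f(x),g(x))=f(x)g(x)=\mu(f,g)(x).
\]
Being a composition of continuous maps, $\mu$ is continuous.

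I do not expect a substantial obstacle; the argument is entirely a matter of assembling the two auxiliary continuity statements. The only point requiring attention is that the detour through the product manifold $\R\times TN$ hinges on the identification $C^\alpha(M,\R\times TN)\isom C^\alpha(M,\R)\times C^\alpha(M,TN)$ of Lemma~\ref{c-alpha-top-product}\,(c), which is valid precisely because all factors $M_j$ are assumed locally compact --- the standing hypothesis of this lemma.
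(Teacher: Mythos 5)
Your proof is correct and coincides with the paper's own argument: the paper likewise writes $\mu=\sigma_*\circ\Psi^{-1}$, where $\sigma_*$ is the pushforward of the smooth scalar multiplication $\R\times TN\to TN$ (continuous by Lemma~\ref{pull-and-push}) and $\Psi$ is the homeomorphism of Lemma~\ref{c-alpha-top-product}\,(c), which indeed requires the local compactness of the $M_j$. Nothing is missing.
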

In \cite{Alz},
Exponential Laws were provided
for function spaces on products of pure manifolds.
The one we need remains valid for
manifolds which need not be pure:
\begin{la}\label{exp-law-not-pure}
Let $N_1,\ldots, N_m$ and $M_1,\ldots, M_n$
be smooth manifolds
with rough boundary $($none of which needs to be pure$)$.
Let $\alpha\!\in \!(\N_0\cup\{\infty\})^m\!$,
$\beta\!\in\! (\N_0\cup\{\infty\})^n$
and $E$ be a locally convex space.
Abbreviate $N:=N_1\times \cdots\times N_m$ and
$M:=M_1\times\cdots\times M_n$.
For $f\in C^{\alpha,\beta}(N\times M,E)$,
we then have
$f_x:=f(x,\cdot)\in C^\beta(M,E)$ for each $x\in N$ and the map
$f^\vee\colon N\to C^\beta(M,E)$, $x\mto f_x$
is $C^\alpha$.
The map
\[
\Phi\colon C^{\alpha,\beta}(N\times M,E)\to
C^\alpha(N,C^\beta(M,E)),\;\,
f\mto f^\vee
\]
is linear and a homeomorphism onto its image.
If $M_j$ is locally compact for all
$j\in\{1,\ldots,n\}$, then $\Phi$ is a homeomorphism.
The inverse map~$\Phi^{-1}$ sends $g\in C^\alpha(N,C^\beta(M,E))$
to the map $g^\wedge$ defined via
$g^\wedge(x,y):=g(x)(y)$.
\end{la}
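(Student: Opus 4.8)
The plan is to reduce the non-pure case to the pure case treated in \cite{Alz} by means of a local chart argument, combined with the projective-limit and gluing tools assembled above. First I would verify the pointwise statement that $f_x\in C^\beta(M,E)$ for each $x\in N$: this is exactly the content of the cited \cite[Lemma~3.3]{Alz} (quoted after Theorem~\ref{thmA}), which asserts that fixing the first block of variables of a $C^{\alpha,\beta}$-map yields a $C^\beta$-map. The linearity of $\Phi$ is immediate from the pointwise definition $f^\vee(x)=f(x,\cdot)$, since addition and scalar multiplication in both $C^{\alpha,\beta}(N\times M,E)$ and $C^\alpha(N,C^\beta(M,E))$ are defined pointwise.

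The substantive work is showing that $\Phi$ is a well-defined map into $C^\alpha(N,C^\beta(M,E))$ and a homeomorphism onto its image. For this I would argue locally. Choosing charts $\phi_i$ on the factors $N_i$ and $\psi_j$ on the factors $M_j$, a map is $C^{\alpha,\beta}$, resp.\ $C^\alpha$ into a mapping space, precisely when its local representatives are (by \ref{C-alpha-in-mfd}), so it suffices to treat the case where each $N_i$ and $M_j$ is a locally convex subset with dense interior of a locally convex space. In that local situation the statement is an exponential law for $C^\alpha$-maps into the fixed locally convex space~$E$; this is the version proved in \cite{Alz}, and the only point to check is that the arguments there, which are phrased for pure manifolds, survive verbatim when the modeling sets carry a rough boundary. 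Here Remark~\ref{ext-ops} and the definition in~\ref{defn-C-alpha} via continuous extension of the derivatives $d^\beta(f|_{U^o})$ from the dense interior let me transfer the pure-case identity to the boundary by continuity, so no genuinely new estimate is needed.

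The main obstacle, and the reason the lemma is stated with care, is the topological claim: $\Phi$ is always a homeomorphism onto its image, but surjectivity (hence $\Phi$ being a homeomorphism onto the full target) requires local compactness of the $M_j$. I would handle the homeomorphism-onto-image part by comparing the two initial topologies: the compact-open $C^\alpha$-topology on the target is built from the tangent maps $T^\gamma$ (Definition~\ref{defn-alpha-top}), and I would match the subbasic seminorms coming from $d^{\gamma}$ on $C^{\alpha,\beta}(N\times M,E)$ with those on $C^\alpha(N,C^\beta(M,E))$, using Lemma~\ref{c-alpha-top-product}\,(a)--(b) to reduce the comparison of topologies to the product structure and Lemma~\ref{maps-to-tvs-1} to ensure we remain in the locally convex category. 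For surjectivity under local compactness, the crucial input is that a map $g\in C^\alpha(N,C^\beta(M,E))$ produces a jointly continuous $g^\wedge$; local compactness of~$M$ is what upgrades separate continuity and the existence of the directional derivatives to joint continuity of $g^\wedge$ and its iterated derivatives, so that $g^\wedge\in C^{\alpha,\beta}$. This is precisely where the pure-case exponential law of \cite{Alz} is invoked in full strength, and where I expect the bulk of the verification to lie; the remaining identities $\Phi^{-1}(g)=g^\wedge$ and $(g^\wedge)^\vee=g$ are then formal.
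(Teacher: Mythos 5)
There is a genuine gap in your reduction step. You propose to localize in charts on \emph{both} the $N_i$ and the $M_j$, claiming via \ref{C-alpha-in-mfd} that "it suffices to treat the case where each $N_i$ and $M_j$ is a locally convex subset with dense interior." Localizing in the source $N$ is legitimate, since being $C^\alpha$ is a local property of $f^\vee$; but localizing in $M$ is not, because the target of $f^\vee$ is the function space $C^\beta(M,E)$ over \emph{all} of $M$. Restricting $f$ to a chart domain of $M$ changes the target to a different space, and to reassemble the statement from chart-local pieces you would need to relate the topology and linear structure of $C^\beta(M,E)$ to those of the spaces over the (overlapping) pieces --- a gluing problem your proposal never addresses. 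You have also misdiagnosed why the non-pure case needs an argument at all: "pure" means modeled on a single locally convex space, not boundaryless. Alzaareer's framework already covers rough boundaries (locally convex subsets with dense interior are built into \ref{defn-C-alpha}), so your appeal to Remark~\ref{ext-ops} and to extending derivatives from $U^o$ to the boundary by continuity is beside the point; what \cite{Alz} does \emph{not} cover is manifolds whose components are modeled on different spaces, and chart-by-chart localization does nothing to resolve this.

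The paper's proof fixes exactly this by decomposing $N$ and $M$ into their \emph{connected components} $(U_i)_{i\in I}$ and $(V_j)_{j\in J}$: these are pure, and --- crucially, unlike chart domains --- pairwise disjoint open sets, so the restriction maps give isomorphisms of topological vector spaces
\[
C^\beta(M,E)\cong\prod_{j\in J}C^\beta(V_j,E),\qquad
C^{\alpha,\beta}(N\times M,E)\cong\prod_{(i,j)}C^{\alpha,\beta}(U_i\times V_j,E),
\]
by Lemma~\ref{la:ascending-union} together with Lemma~\ref{c-alpha-top-product}\,(b) and Lemma~\ref{pull-and-push}; there is no overlap condition to glue along. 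The pure-case exponential law of \cite{Alz} (Theorem~B for the embedding statement, Theorem~4.4 for the homeomorphism when the $V_j$ are locally compact) is then applied componentwise, and a commutative diagram whose vertical arrows are the above isomorphisms transfers the conclusion to $\Phi$. Your instincts about where \cite{Alz} enters and about the role of local compactness for surjectivity are correct, but without the disjoint-component decomposition (or some substitute for it) your argument does not go through.
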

We mention that the $C^\alpha$-topology on $C^\alpha(U,F)$ can be described more
explicitly.
\begin{la}\label{maps-to-tvs-2}
Let $E_j$ be a
locally convex space for $j\in\{1,\ldots, n\}$
and $U_j\sub E_j$ be a locally convex subset with dense
interior.
Let $F$ be a locally convex space,
$\alpha\in (\N_0\cup\{\infty\})^n$,
and $U:=U_1\times\cdots\times U_n$.
Then the compact-open $C^\alpha$-topology
on $C^\alpha(U,F)$
is initial with respect to the maps
\[
d^\beta\colon C^\alpha(U,F)\to C(U\times E_1^{\beta_1}\times\cdots
\times E_n^{\beta_n},F),\;\, f\mto d^\beta f
\]
for $\beta\in(\N_0)^n$ with $\beta\leq\alpha$,
using the compact-open topology on
the ranges.
\end{la}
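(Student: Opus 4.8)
The plan is to show that the two initial topologies coincide by verifying that each defining family of maps is continuous for the other topology. Write $\cT_1$ for the compact-open $C^\alpha$-topology on $C^\alpha(U,F)$, i.e.\ the initial topology with respect to the maps $T^\beta$ for $\beta\leq\alpha$, and $\cT_2$ for the initial topology with respect to the maps $d^\beta$ for $\beta\leq\alpha$. Since an initial topology is the coarsest one rendering its defining maps continuous, it suffices to prove: (i) every $d^\beta$ is $\cT_1$-continuous, whence $\cT_2\sub\cT_1$; and (ii) every $T^\beta$ is $\cT_2$-continuous, whence $\cT_1\sub\cT_2$. Together these give $\cT_1=\cT_2$.

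First I would record the identifications. As in \ref{defn-df} we have $TU_j=U_j\times E_j$, so iterating yields $T^{\beta_j}U_j\cong U_j\times E_j^{2^{\beta_j}-1}$, hence $T^\beta U\cong\prod_{j=1}^n\bigl(U_j\times E_j^{2^{\beta_j}-1}\bigr)$; likewise $T^{|\beta|}F\cong F^{2^{|\beta|}}$. Under these identifications I would establish the key algebraic relationship between $T^\beta f$ and the family $(d^\gamma f)_{\gamma\leq\beta}$: each of the finitely many $F$-components of $T^\beta f$ is a finite sum of maps of the form $d^\gamma f\circ\iota$, where $\gamma\leq\beta$ and $\iota\colon T^\beta U\to U\times E_1^{\gamma_1}\times\cdots\times E_n^{\gamma_n}$ is a continuous linear coordinate-selection map; conversely, $d^\beta f$ itself arises as a single $F$-component of $T^\beta f$ precomposed with the canonical linear inclusion $U\times E_1^{\beta_1}\times\cdots\times E_n^{\beta_n}\hookrightarrow T^\beta U$ (place the given directions in the appropriate slots and set the remaining direction slots to zero). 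Since both $d^\beta f$ and $T^\beta f$ are defined on these locally convex subsets by continuous extension from the dense interiors (\ref{defn-C-alpha}, \ref{defn-df}), it suffices to verify these component identities on the interiors, where they are the classical multivariable formulas obtained from $T^1g=(g\circ\pi,dg)$ and the linearity of $dg$ in its direction argument; they then extend to all of $U$ by continuity. I expect this bookkeeping --- identifying precisely which components of $T^\beta f$ carry which iterated directional derivatives --- to be the main obstacle, and I would carry it out by induction on $|\beta|$, following the one-variable-at-a-time recursion of \ref{tangent-maps}.

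Granting this relationship, (i) is immediate: $d^\beta=P^\beta\circ T^\beta$, where $P^\beta$ first pushes forward along the coordinate projection $T^{|\beta|}F\to F$ onto the relevant component and then pulls back along the linear inclusion into $T^\beta U$. Both pushforward and pullback are continuous for the compact-open topologies (a standard fact; compare Lemma~\ref{pull-and-push} for the $C^\alpha$-setting), so $P^\beta$ is continuous and $d^\beta$ is $\cT_1$-continuous.

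For (ii) I would use that $T^{|\beta|}F\cong F^{2^{|\beta|}}$ is a finite product, so the compact-open topology on $C(T^\beta U,T^{|\beta|}F)$ is initial with respect to the component pushforwards into $C(T^\beta U,F)$; thus it suffices that each $F$-component of $T^\beta f$ depend $\cT_2$-continuously on $f$. By the relationship above, each such component is a finite sum $\sum_\gamma (d^\gamma f)\circ\iota_\gamma$. Here $f\mto d^\gamma f$ is $\cT_2$-continuous by definition of $\cT_2$, pullback along each continuous $\iota_\gamma$ is continuous, and addition in the locally convex space $C(T^\beta U,F)$ is continuous (as $F$ is a locally convex space; cf.\ Lemma~\ref{maps-to-tvs-1}). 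Hence every component map, and so $T^\beta$, is $\cT_2$-continuous, giving $\cT_1\sub\cT_2$ and, with (i), the claim. Note that no local compactness of the $E_j$ is needed, since all function-space operations used --- pushforward, pullback, and addition --- are continuous for the compact-open topology in full generality.
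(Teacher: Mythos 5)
Your proposal is correct and follows essentially the same route as the paper's proof: both directions are handled by expressing $d^\beta f$ as a fixed continuous linear image of $T^\beta f$ precomposed with coordinate-insertion maps, and conversely writing (the components of) $T^\beta f$ as finite sums of maps $d^{\gamma}f$ ($\gamma\leq\beta$) precomposed with fixed coordinate-selection maps, each identity being established by induction on $|\beta|$ and combined with continuity of pushforward and pullback for compact-open topologies. Your component-wise use of $T^{|\beta|}F\cong F^{2^{|\beta|}}$ and of addition in $C(T^\beta U,F)$ is only a cosmetic variant of the paper's continuous linear maps $\lambda_\beta$ and $\lambda_{\beta,a}$ into and out of $T^{|\beta|}F$.
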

\section{(Pre-)Canonical manifold structures}\label{sec-canonical}
In this section, we establish basic
properties of canonical manifolds of mappings,
and pre-canonical ones. We begin with examples.
\begin{example}\label{basic-examples}
 Let $n \in\N$ and $\alpha\in (\N_0\cup \{\infty\})^n$.
 \begin{enumerate}
  \item[(a)] Let $M_1,\ldots, M_n$ be locally compact smooth
manifolds with rough boundary
and $E$ a locally convex space. Then $C^\alpha (M_1 \times \cdots \times M_n,E)$ is a canonical manifold due to Lemma~\ref{exp-law-not-pure}.
The same holds for $C^\alpha (M_1 \times \cdots \times M_n,N)$ if $N$ is a smooth manifold diffeomorphic to $E$,
endowed with the $C^\infty$-manifold structure making $\varphi_* \colon C^\alpha (M,N) \rightarrow C^\alpha (M,E)$ a diffeomorphism, where $\varphi \colon E \rightarrow N$ is a $C^\infty$-diffeomorphism.
  \item[(b)] Familiar examples of mapping groups turn out to be canonical,
notably loop groups $C^k (\mathbb{S}^1,G)$ for $G$ a Lie group, and certain Lie groups of the form $C^k(\R,G)$ discussed in \cite{Hmz,NaW}. We extend these constructions in
Section~\ref{sec-Lie}.
 \end{enumerate}
\end{example}

We will now establish general properties of canonical manifolds.

\begin{numba}\textbf{Conventions} We denote by $\alpha,\beta$ multiindices in $(\N_0\cup \{\infty\})^n$ for some $n \in \N$. Likewise we will usually adopt the shorthand $M \coloneq M_1 \times M_2 \times \cdots \times M_n$ where the $M_i$ are locally compact manifolds (possibly with rough boundary). If $M$ is the domain of definition of the function space $C^\alpha (M,N)$ we will assume that the number of entries of the multiindex $\alpha$ coincides with the number of factors in the product $M$.
\end{numba}

\begin{la}\label{base-cano}
If $C^\alpha (M,N)$ is endowed with a pre-canonical manifold structure, then
the following holds:
\begin{itemize}
\item[\textup{(a)}]
The evaluation map $\ev\colon C^\alpha(M,N)\times M\to N$, $\ev(\gamma,x):=\gamma(x)$ is $C^{\infty,\alpha}$.
\item[\textup{(b)}]
Pre-canonical manifold structures are unique in the following sense:
If we write $C^\alpha(M,N)'$ for $C^\alpha(M,N)$ with another pre-canonical manifold structure,
then $\id\colon C^\alpha(M,N)\to C^\alpha(M,N)'$, $\gamma\mto\gamma$ is a $C^\infty$-diffeomorphism.
\item[\textup{(c)}]
Let $S\sub N$ be a submanifold such that the set $C^\alpha(M,S)$ is a submanifold of $C^\alpha(M,N)$.
Then the submanifold structure on $C^\alpha(M,S)$ is pre-canonical.
\end{itemize}
\end{la}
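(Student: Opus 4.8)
Lemma~\ref{base-cano} has three parts; I will prove each by exploiting the defining property of a pre-canonical structure, namely the bijection~\eqref{pre-can-bij} relating $C^\beta(L,C^\alpha(M,N))$ and $C^{\beta,\alpha}(L\times M,N)$ for all choices of domain $L$ and multiindex $\beta$. The key move throughout is to feed a well-chosen $L$ and $\beta$ into this equivalence.

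For part~(a), I would apply the pre-canonical condition with $L:=C^\alpha(M,N)$ and $\beta:=(\infty,\dots,\infty)$ (so that $C^\beta(L,\cdot)=C^\infty(L,\cdot)$), testing it against the identity map $g:=\id_{C^\alpha(M,N)}$. Since $\id$ is trivially $C^\infty$, the condition forces $g^\wedge\colon C^\alpha(M,N)\times M\to N$ to be $C^{\infty,\alpha}$. But $g^\wedge(\gamma,x)=g(\gamma)(x)=\gamma(x)=\ev(\gamma,x)$, so $g^\wedge=\ev$, giving the claim immediately.

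For part~(b), I would apply the pre-canonical condition for $C^\alpha(M,N)'$ with $L:=C^\alpha(M,N)$ and $g:=\id$, noting that $g^\wedge=\ev$ is $C^{\infty,\alpha}$ by part~(a) applied to the structure on $C^\alpha(M,N)$ (which is pre-canonical by hypothesis). The equivalence for the primed structure then reads off that $\id\colon C^\alpha(M,N)\to C^\alpha(M,N)'$ is $C^\infty$. Running the same argument with the roles of the two structures swapped yields that the inverse $\id\colon C^\alpha(M,N)'\to C^\alpha(M,N)$ is also $C^\infty$, so $\id$ is a $C^\infty$-diffeomorphism. One should check that the argument is symmetric in the two structures, which it is, since both are assumed pre-canonical.

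For part~(c), the goal is to verify the defining equivalence for the submanifold structure on $C^\alpha(M,S)$: for any $L$, $\beta$ and $h\colon L\to C^\alpha(M,S)$, that $h$ is $C^\beta$ iff $h^\wedge\colon L\times M\to N$ is $C^{\beta,\alpha}$ (with values landing in $S$). The natural strategy is to compose with the inclusion $\iota\colon S\hookrightarrow N$ and reduce to the ambient pre-canonical structure on $C^\alpha(M,N)$, using that $h\colon L\to C^\alpha(M,S)$ is $C^\beta$ iff $\iota_*\circ h\colon L\to C^\alpha(M,N)$ is $C^\beta$ — this is precisely where the hypothesis that $C^\alpha(M,S)$ is a \emph{submanifold} of $C^\alpha(M,N)$ enters, since $C^\beta$-maps into a submanifold are detected by composition with the inclusion. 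I would then invoke the pre-canonical equivalence for $C^\alpha(M,N)$ applied to $\iota_*\circ h$, whose $\wedge$-transpose is $\iota\circ h^\wedge$, and relate $C^{\beta,\alpha}$-ness of $\iota\circ h^\wedge$ into $N$ to that of $h^\wedge$ into $S$ via Lemma~\ref{maps-to-sub} (or its manifold-level analogue). I expect this last part to be the main obstacle: the clean reduction to the inclusion requires that $C^\alpha(M,S)$ sits inside $C^\alpha(M,N)$ as a submanifold in a way compatible both at the level of $C^\beta$-maps into it and at the level of the transposed $C^{\beta,\alpha}$-maps, and one must be careful that the submanifold charts on $C^\alpha(M,S)$ interact correctly with the exponential-law machinery rather than merely set-theoretically.
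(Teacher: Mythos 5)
Your proposal is correct and takes essentially the same approach as the paper: parts (a) and (b) are verbatim the paper's argument, and part (c) follows the same two-step reduction (smoothness of the inclusion $C^\alpha(M,S)\to C^\alpha(M,N)$ in one direction, detection of $C^\beta$-maps into the submanifold by composition with the inclusion in the other, combined with corestriction of $C^{\beta,\alpha}$-maps into $S\sub N$). The compatibility worry at the end of (c) is unfounded — no interaction between submanifold charts and the exponential law is needed, only the fact that maps with values in a submanifold are $C^{\beta,\alpha}$ if and only if their corestrictions are, which in charts is Lemma~\ref{la-sub-PL}\,(a) rather than the topological Lemma~\ref{maps-to-sub} you cite.
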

\begin{proof}
(a) Since $\id\colon C^\alpha(M,N)\to C^\alpha(M,N)$ is~$C^\infty$ and $C^\alpha(M,N)$
is endowed with a pre-canonical manifold structure, it follows that $\id^\wedge\colon
C^\alpha(M,N)\times M\to N$, $(\gamma,x)\mto \id(\gamma)(x)=\gamma(x)=\ev(\gamma,x)$
is $C^{\infty,\alpha}$.

(b) The map $f:=\id\colon C^\alpha(M,N)\to C^\alpha(M,N)'$ satisfies
$f^\wedge=\ev$ where $\ev\colon C^\alpha(M,N)\times M\to N$ is~$C^{\infty,\alpha}$, by~(a). Since $C^\alpha(M.N)'$ is endowed with a pre-canonical manifold structure, it follows that~$f$ is~$C^\infty$. By the same reasoning,
$f^{-1}=\id\colon C^\alpha(M,N)'\to C^\alpha(M,N)$ is~$C^\infty$.

(c) As $C^\alpha(M,S)$ is a submanifold of $C^\alpha(M,N)$, the inclusion $\iota\colon C^\alpha(M,S)\to C^\alpha(M,N)$ is~$C^\infty$.
Likewise, the inclusion map $j\colon S\to N$ is~$C^\infty$.
Let $L =L_1 \times \cdots \times L_k$ be a product of smooth manifolds
(possibly with rough boundary)
modeled on locally convex spaces and $f\colon L\to C^\alpha(M,S)$ be a map.
If~$f$ is~$C^\beta$, then $\iota\circ f$ is~$C^\beta$, entailing that $(\iota\circ f)^\wedge\colon L\times M\to N$, $(x,y)\mto f(x)(y)$ is~$C^{\beta,\alpha}$.
As the image of this map is contained in~$S$, which is a submanifold of~$N$, we deduce that $f^\wedge=(\iota\circ f)^\wedge|^S$ is~$C^{\beta,\alpha}$.
For the converse, assume that $f^\wedge\colon L\times M\to S$ is $C^{\beta,\alpha}$. Then also $(\iota\circ f)^\wedge=j\circ (f^\wedge)\colon L\times M\to N$ is $C^{\beta,\alpha}$.
Hence $\iota\circ f\colon L\to C^\alpha(M,N)$ is~$C^\beta$ (the manifold structure on the range being pre-canonical). As $\iota\circ f$ is a $C^\beta$-map with image in $C^\alpha(M,S)$ which is a submanifold
of $C^\alpha(M,N)$, we deduce that~$f$ is~$C^\beta$.
\end{proof}

\begin{rem}
Note that due to Lemma \ref{base-cano}\,(a), the evaluation on a canonical manifold is a $C^{\infty,\alpha}$-map whence it is at least continuous. 
For a $C^k$-manifold~$M$
which is $C^k$-regular\footnote{Meaning that the topology
on~$M$ is initial with respect to $C^k(M,\R)$.
This holds if $M$ is a regular topological space
and all modeling spaces are $C^k$-regular, see \cite{GaN}.}
and a locally convex space $E\not=\{0\}$,
it is well known that for the compact-open $C^k$-topology the evaluation $\ev \colon C^k (M,E)\times M\rightarrow E$ is continuous
if and only if $M$ is locally compact. A similar statement holds for the compact-open $C^\alpha$-topology. Using a chart for~$N$ and
cut-off functions,
we deduce that the evaluation of
$C^\alpha (M,N)$ is discontinuous
if $M$ fails to be locally compact,
provided $N$ is not discrete
and $M$ is $C^{|\alpha|}$-regular;
then $C^\alpha(M,N)$ cannot admit a canonical
manifold structure.
\end{rem}

We now turn to smoothness properties of the composition map. 

\begin{la}
Assume that
$C^{|\alpha|+s}(N,L)$, $C^\alpha (M,N)$, and $C^\alpha (M,L)$ are endowed
with pre-canonical manifold structures. Then the
composition map 
$$\comp\colon C^{|\alpha|+s}(N,L) \times C^\alpha (M,N) \rightarrow C^\alpha (M,L),\quad (f,g) \mapsto f\circ g$$
is a $C^{\infty,s}$-map, for every $s\in \N_0\cup\{\infty\}$. 
\end{la}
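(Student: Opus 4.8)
The plan is to exploit the pre-canonical structure on the codomain $C^\alpha(M,L)$ to trade the assertion about $\comp$ for one about an ordinary $C^{\infty,s,\alpha}$-map into~$L$, and then to identify the latter with a composite of the two evaluation maps, whose regularity is already supplied by Lemma~\ref{base-cano}\,(a). First I would check that $\comp$ is well defined: for $f\in C^{|\alpha|+s}(N,L)$ and $g\in C^\alpha(M,N)$ the map~$f$ is in particular $C^{|\alpha|}$, so $f\circ g\in C^\alpha(M,L)$ by the Chain Rule for $C^\alpha$-maps \cite[Lemma~3.16]{Alz}. Since $C^\alpha(M,L)$ carries a pre-canonical manifold structure, $\comp$ is $C^{\infty,s}$ if and only if the associated map
\[
\comp^{\wedge}\colon C^{|\alpha|+s}(N,L)\times C^\alpha(M,N)\times M\to L,\quad (f,g,x)\mto f(g(x))
\]
is $C^{\infty,s,\alpha}$ (this is Definition~\ref{defn-can} with $m=2$, $L_1:=C^{|\alpha|+s}(N,L)$, $L_2:=C^\alpha(M,N)$ and $\beta:=(\infty,s)$). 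It thus suffices to prove the latter.

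Next I would write $\comp^{\wedge}$ as a composite of evaluation maps. By Lemma~\ref{base-cano}\,(a), the evaluation map $\ev_N\colon C^\alpha(M,N)\times M\to N$, $(g,x)\mto g(x)$, is $C^{\infty,\alpha}$, and the evaluation map $\ev_L\colon C^{|\alpha|+s}(N,L)\times N\to L$, $(f,y)\mto f(y)$, is $C^{\infty,|\alpha|+s}$ (here~$N$ is a single factor, carrying the single-entry multiindex $|\alpha|+s$). Consequently the map
\[
\Psi\colon C^{|\alpha|+s}(N,L)\times C^\alpha(M,N)\times M\to C^{|\alpha|+s}(N,L)\times N,\quad (f,g,x)\mto (f,g(x))
\]
is $C^{\infty,\infty,\alpha}$ (its first component is a projection, its second is $\ev_N$ precomposed with a projection), and $\comp^{\wedge}=\ev_L\circ\Psi$.

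It then remains to read off the multiindex of $\ev_L\circ\Psi$, which I would do in local charts, reducing to $C^\beta$-maps between locally convex spaces so that \cite[Lemma~3.16]{Alz} and Lemma~\ref{ingrisch} become available. The decisive point is that $\ev_L$ is smooth in its first (the $C^{|\alpha|+s}(N,L)$-)slot and only $C^{|\alpha|+s}$ in its second (the $N$-)slot: derivatives in the $f$-direction pass harmlessly through the smooth slot, so $\comp^{\wedge}$ is $C^\infty$ in~$f$; derivatives in the remaining variables $g,x_1,\ldots,x_n$ are fed entirely through the $N$-slot, where at most $|\alpha|+s$ of them are admissible. Since a $C^{\infty,s,\alpha}$-statement requires only up to $s$ derivatives in~$g$ and up to $\alpha_j$ derivatives in~$x_j$, the total number fed into the $N$-slot is at most $s+\alpha_1+\cdots+\alpha_n=s+|\alpha|=|\alpha|+s$, never exceeding the available budget; the chain rule then yields that $\comp^{\wedge}$ is $C^{\infty,s,\alpha}$, as required.

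The main obstacle will be exactly this last bookkeeping: tracking how a mixed derivative of $\comp^{\wedge}$ of order $\leq(\infty,s,\alpha)$ decomposes, under the chain rule (Fa\`a di Bruno), into derivatives of $\ev_L$ in its $N$-slot (of total order $\leq|\alpha|+s$) paired with derivatives of~$\ev_N$ (of order $\leq s$ in~$g$ and $\leq\alpha_j$ in~$x_j$), and verifying that continuity of all these pieces assembles into continuity of the relevant $d^\beta\comp^{\wedge}$. The arithmetic identity $s+|\alpha|=|\alpha|+s$ is precisely what makes the derivative budget of~$\ev_L$ in the $N$-slot suffice, and is the reason the hypothesis is $C^{|\alpha|+s}(N,L)$ rather than anything weaker.
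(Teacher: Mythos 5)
Your proposal is correct and follows essentially the same route as the paper's proof: reduce via pre-canonicality of $C^\alpha(M,L)$ to showing $\comp^\wedge$ is $C^{\infty,s,\alpha}$, write $\comp^\wedge(f,g,x)=\ev(f,\ev(g,x))$ with the two evaluation maps supplied by Lemma~\ref{base-cano}\,(a), and conclude by the chain rule \cite[Lemma~3.16]{Alz}. Your explicit bookkeeping of how the derivative orders $s+|\alpha|\leq|\alpha|+s$ feed into the outer evaluation is just an expanded version of what the paper compresses into its final appeal to the chain rule.
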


\begin{proof}
Since $C^\alpha (M,L)$ is pre-canonical, $\comp$ is $C^{\infty,s}$ if and only if 
$$\comp^\wedge \colon  C^{|\alpha|+s}(N,L) \times C^\alpha (M,N) \times M \rightarrow L,\quad (f,g,x)\mapsto f(g(x))$$
is a $C^{\infty,s,\alpha}$-map. The formula shows that $\comp^\wedge (f,g,x) = \ev(f,\ev(g,x))$, where the outer evaluation map is
$C^{\infty, |\alpha|+s}$ and the inner one $C^{\infty,\alpha}$,
by Lemma
\ref{base-cano}\,(a),
as $C^{|\alpha|+s}(N,L)$ and $C^\alpha (M,N)$ are pre-canonical manifolds.
Using the chain rule \cite[Lemma~3.16]{Alz},
we deduce that $\comp^\wedge$ is $C^{\infty,s,\alpha}$.
\end{proof}

\begin{cor}\label{cor:pushforward}
 If $C^{\alpha} (M,N)$ and $C^\alpha (M,L)$ are endowed with pre-canonical manifold structures, then the pushforward $f_\ast \colon C^{\alpha} (M,N) \rightarrow C^\alpha (M,L),\ g \mapsto  f\circ g$ is a $C^s$-map for every $f \in C^{|\alpha|+s}(N,L)$.
\end{cor}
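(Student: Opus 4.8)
The plan is to read off the pushforward $f_\ast$ as a partial map of the composition map handled in the preceding lemma. Concretely, for fixed $f\in C^{|\alpha|+s}(N,L)$ one has $f_\ast(g)=f\circ g=\comp(f,g)$, so that $f_\ast=\comp(f,\cdot)$ is obtained simply by freezing the first argument of $\comp$ at the value~$f$.

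First I would invoke the preceding lemma, which yields that
\[
\comp\colon C^{|\alpha|+s}(N,L)\times C^\alpha(M,N)\to C^\alpha(M,L)
\]
is $C^{\infty,s}$. Then I would apply the elementary partial-evaluation property of maps on products of manifolds: if a map is $C^{\beta,\alpha}$, then fixing the variables in the first group produces a map which is $C^\alpha$ in the remaining variables (the fact quoted after Theorem~\ref{thmA}, see \cite[Lemma~3.3]{Alz}). Here the first slot, carrying the multiindex~$\infty$, is frozen at~$f$, while the second slot carries the multiindex~$s$; the conclusion is that $\comp(f,\cdot)$ is $C^s$ in its remaining variable. Since $\comp(f,\cdot)=f_\ast$, this is exactly the assertion of the corollary, for each such~$f$.

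I expect no genuine obstacle here: the whole content is carried by the preceding lemma, and the corollary is merely its restriction to a fixed first argument. The only point that must be stated carefully is the bookkeeping of multiindices, namely that freezing the single $C^\infty$-slot of a $C^{(\infty),(s)}$-map leaves a $C^{(s)}=C^s$-map; this is immediate from the definition of $C^\alpha$-maps in~\ref{defn-C-alpha}, where the directional derivatives are organized variable-group by variable-group.
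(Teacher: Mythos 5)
Your reduction does not work under the hypotheses actually stated in Corollary~\ref{cor:pushforward}: the preceding lemma assumes that \emph{all three} spaces $C^{|\alpha|+s}(N,L)$, $C^\alpha(M,N)$ and $C^\alpha(M,L)$ carry pre-canonical manifold structures, whereas the corollary assumes this only for $C^\alpha(M,N)$ and $C^\alpha(M,L)$. This is not a cosmetic difference. A pre-canonical structure (Definition~\ref{defn-can}) is only defined when the source is a product of finite-dimensional manifolds, and the corollary is invoked later precisely in situations where $N$ and $L$ are infinite-dimensional, so that $C^{|\alpha|+s}(N,L)$ carries no such structure: smoothness of the group inversion $C^\alpha(M,\iota)$ in Lemma~\ref{la-is-gp}\,(b), of the bundle projection $(\pi_{TM})_*$ in \ref{numba:TVBun}, and of $\Sigma_*$ and $\theta_*$ in Lemma~\ref{la:lift:locadd}. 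So your argument, which freezes the first argument of a map that is only known to be $C^{\infty,s}$ \emph{as a map on the product manifold}, proves the assertion only under strictly stronger hypotheses, and the corollary would then be unusable for its intended applications. (Your other ingredient, the partial-evaluation fact from \cite[Lemma~3.3]{Alz} and the multiindex bookkeeping, is fine; the problem is solely the unjustified appeal to the lemma.)

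The repair is to freeze $f$ \emph{before} any manifold structure on $C^{|\alpha|+s}(N,L)$ is needed, i.e.\ to rerun the lemma's proof directly for $f_*$: since $C^\alpha(M,L)$ is pre-canonical, $f_*$ is $C^s$ if and only if $(f_*)^\wedge\colon C^\alpha(M,N)\times M\to L$, $(g,x)\mapsto f(g(x))=f(\ev(g,x))$ is $C^{s,\alpha}$. By Lemma~\ref{base-cano}\,(a), $\ev\colon C^\alpha(M,N)\times M\to N$ is $C^{\infty,\alpha}$, hence $C^{s,\alpha}$; and since $f$ is $C^{|\alpha|+s}=C^{|(s,\alpha)|}$, the Chain Rule \cite[Lemma~3.16]{Alz} yields that $f\circ\ev$ is $C^{s,\alpha}$. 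This uses only the two pre-canonical structures assumed in the corollary; neither a manifold structure on $C^{|\alpha|+s}(N,L)$ nor any differentiability in the $f$-variable is required.
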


\begin{cor}\label{cor:PB}
 Let $C^{|\alpha|+s}(N,L)$ and $C^\alpha (M,L)$ be endowed with pre-canonical manifold structures. For a $C^\alpha$-map $g \colon M \rightarrow N$ the pullback $g^\ast\colon C^{|\alpha|+s}(N,L) \rightarrow C^\alpha (M,L), \ f \mapsto  f\circ g$ is smooth for every $s \in \N_0$.
\end{cor}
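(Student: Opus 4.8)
The plan is to exploit the pre-canonical structure on the codomain $C^\alpha(M,L)$ to reduce smoothness of the pullback to a $C^{\infty,\alpha}$-statement for the associated map of several variables, and then to recognize that map as a composite built from the evaluation map of $C^{|\alpha|+s}(N,L)$ and the fixed map~$g$. Crucially, this route never requires a manifold structure on $C^\alpha(M,N)$, which is not among the hypotheses, so we cannot simply invoke the preceding composition lemma.

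First I would observe that, since $C^\alpha(M,L)$ is pre-canonical, $g^*$ is $C^\infty$ if and only if
\[
(g^*)^\wedge\colon C^{|\alpha|+s}(N,L)\times M\to L,\quad (f,x)\mapsto f(g(x))
\]
is $C^{\infty,\alpha}$. Next I would rewrite this map through the evaluation map $\ev\colon C^{|\alpha|+s}(N,L)\times N\to L$, namely $(g^*)^\wedge(f,x)=\ev(f,g(x))$, so that $(g^*)^\wedge=\ev\circ(\id\times g)$ with $\id\times g\colon (f,x)\mapsto (f,g(x))$. By Lemma~\ref{base-cano}\,(a), applied to the pre-canonical manifold $C^{|\alpha|+s}(N,L)$, the map $\ev$ is $C^{\infty,|\alpha|+s}$.

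The remaining step is to show that $\id\times g$ is $C^{\infty,\alpha}$ and that composing it into $\ev$ preserves the differentiability class. The first component $(f,x)\mapsto f$ is a projection, hence $C^{\infty,\alpha}$, while the second component $(f,x)\mapsto g(x)$ is constant in~$f$ and $C^\alpha$ in~$x$, hence $C^{\infty,\alpha}$ (e.g.\ by Lemma~\ref{ingrisch}); thus $\id\times g$ is $C^{\infty,\alpha}$. I would then apply the chain rule for $C^\alpha$-maps \cite[Lemma~3.16]{Alz}. The decisive point is the index bookkeeping: differentiating $(g^*)^\wedge$ up to $\beta\leq\alpha$ times in the $M$-directions feeds at most $|\beta|\leq|\alpha|$ derivatives into the $N$-slot of $\ev$, whereas arbitrarily many derivatives may be taken in the $C^{|\alpha|+s}(N,L)$-direction; since $\ev$ is $C^{\infty,|\alpha|+s}$ and $|\alpha|+s\geq|\alpha|$, every derivative of $\ev$ that arises is available and depends continuously on its arguments. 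Hence $(g^*)^\wedge$ is $C^{\infty,\alpha}$ and $g^*$ is smooth.

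The main obstacle will be exactly this bookkeeping in the chain-rule step: one must confirm that the total order of differentiation transmitted to the $N$-argument of $\ev$ never exceeds $|\alpha|$, so that the finite loss of regularity absorbed by passing from $|\alpha|+s$ to $|\alpha|$ is enough. This mirrors the argument already used for the composition lemma, where $\ev$ was composed with itself; here the situation is easier, since $g$ is a \emph{fixed} $C^\alpha$-map rather than a variable ranging over a mapping manifold, which is precisely why no pre-canonical structure on $C^\alpha(M,N)$ is needed.
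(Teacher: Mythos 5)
Your proof is correct, but it does not follow the paper's (implicit) route, and the difference is worth recording. The paper states Corollary~\ref{cor:PB} without proof, as a consequence of the unlabeled composition lemma immediately preceding it: freezing the second argument of $\comp\colon C^{|\alpha|+s}(N,L)\times C^\alpha(M,N)\to C^\alpha(M,L)$ at~$g$ yields $g^*$, and partial maps of a $C^{\infty,s}$-map are $C^\infty$ in the first variable. That specialization, however, requires all three mapping spaces --- including $C^\alpha(M,N)$ --- to carry pre-canonical structures, which is a hypothesis the corollary does not make; you spotted exactly this and built your argument so as to avoid it. Your direct proof (pre-canonicity of the codomain $C^\alpha(M,L)$ reduces smoothness of $g^*$ to $(g^*)^\wedge=\ev\circ(\id\times g)$ being $C^{\infty,\alpha}$; then Lemma~\ref{base-cano}\,(a) gives that $\ev$ is $C^{\infty,|\alpha|+s}$, the fixed map $g$ contributes only $C^\alpha$-dependence in the $M$-variables, and the chain rule \cite[Lemma~3.16]{Alz} applies since at most $|\alpha|\leq|\alpha|+s$ derivatives ever reach the $N$-slot of $\ev$) is in substance the proof of the composition lemma with the middle variable frozen. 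What each approach buys: the paper's derivation is shorter but proves the corollary only under an extra, unstated hypothesis (a pre-canonical structure on $C^\alpha(M,N)$), whereas your argument establishes the corollary under its stated, weaker hypotheses; the core ingredients --- differentiability of the evaluation map from Lemma~\ref{base-cano}\,(a) and the chain rule --- are the same in both.
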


The chain rule also allows the following result to be deduced.
\begin{la}\label{la:pb}
 Let $C^\alpha(M,N)$ and $C^\alpha(L,N)$ be endowed with pre-canonical manifold structures where $\alpha = (\alpha_1,\ldots, \alpha_n)$, $M=M_1 \times \cdots \times M_n$ and $L=L_1\times \cdots \times L_n$. Assume that $g_i \colon L_i \rightarrow M_i$ is a $C^{\alpha_i}$-map for $i\in\{1,\ldots,n\}$. Then the pullback
 $$g^* \colon C^\alpha (M,N) \rightarrow C^\alpha (L,N), \;\,
 f \mapsto f \circ (g_1 \times \cdots \times g_n)$$
with $g:=g_1\times\cdots\times g_n$ is smooth.
\end{la}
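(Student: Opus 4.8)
The plan is to exploit the pre-canonical manifold structure on the range $C^\alpha(L,N)$ to reduce the smoothness of $g^*$ to a $C^{\infty,\alpha}$-statement about an associated map of several variables, and then to recognise that map as a composition of the evaluation map with a product of the $g_i$, whose regularity is governed by the chain rule for $C^\alpha$-maps.

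First I would record that $g^*$ is well defined: since $g=g_1\times\cdots\times g_n$ is a product of the $C^{\alpha_i}$-maps $g_i$, the chain rule \cite[Lemma~3.16]{Alz} shows that $f\circ g\in C^\alpha(L,N)$ for every $f\in C^\alpha(M,N)$. Because $C^\alpha(L,N)$ carries a pre-canonical structure, I would apply its defining property (Definition~\ref{defn-can}), tested against the single smooth manifold $C^\alpha(M,N)$ (that is, $m=1$) with exponent $\beta=\infty$: the map $g^*\colon C^\alpha(M,N)\to C^\alpha(L,N)$ is $C^\infty$ if and only if its associated map
\[
(g^*)^\wedge\colon C^\alpha(M,N)\times L\to N,\quad (f,y_1,\ldots,y_n)\mto g^*(f)(y_1,\ldots,y_n)
\]
is $C^{\infty,\alpha}$.

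The crucial computation is then
\[
(g^*)^\wedge(f,y_1,\ldots,y_n)=f(g_1(y_1),\ldots,g_n(y_n))=\ev(f,g_1(y_1),\ldots,g_n(y_n)),
\]
so that $(g^*)^\wedge=\ev\circ(\id_{C^\alpha(M,N)}\times g_1\times\cdots\times g_n)$. By Lemma~\ref{base-cano}\,(a) the evaluation map $\ev\colon C^\alpha(M,N)\times M\to N$ is $C^{\infty,\alpha}$, i.e.\ $C^{(\infty,\alpha_1,\ldots,\alpha_n)}$ with respect to the factorisation $C^\alpha(M,N)\times M_1\times\cdots\times M_n$. The inner map acts on separate variables: it is the identity (hence $C^\infty$) in the $C^\alpha(M,N)$-slot and the $C^{\alpha_i}$-map $g_i$ in the $L_i$-slot, so it is $C^{(\infty,\alpha_1,\ldots,\alpha_n)}$. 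Applying the chain rule for $C^\alpha$-maps \cite[Lemma~3.16]{Alz}, exactly as in the proof of the composition lemma above, yields that $(g^*)^\wedge$ is $C^{\infty,\alpha}$, and hence $g^*$ is smooth.

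The step I expect to be delicate is the invocation of the chain rule with the correct orders. One must verify that substituting the $C^{\alpha_i}$-map $g_i$ into the variable in which $\ev$ is only $C^{\alpha_i}$ forces no loss of differentiability, and that the resulting regularity holds jointly in all $n+1$ variable groups rather than merely separately; this is precisely what the multivariable chain rule of \cite{Alz} provides. Unlike the composition lemma, where nested evaluations forced the bound $|\alpha|+s$ on the outer factor, here the orders of $\ev$ and of the $g_i$ match exactly, so no such deficit arises and the identification above goes through directly.
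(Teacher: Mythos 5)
Your proposal is correct and follows essentially the same route as the paper's proof: using the pre-canonical structure on $C^\alpha(L,N)$ to reduce smoothness of $g^*$ to showing that $(g^*)^\wedge(f,\ell)=\ev(f,g(\ell))$ is $C^{\infty,\alpha}$, which follows from Lemma~\ref{base-cano}\,(a) and the chain rule. The paper states this more tersely, but the decomposition, the key lemma, and the role of the chain rule are the same.
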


\begin{proof}
Due to the chain rule, the pullback $g^*$ makes sense. Since $C^\alpha (L,N)$ is
pre-canonical, $g^*$ will be smooth if  
 $(g^*)^\wedge\colon  (f,\ell) \mto
 \ev(f,\ev( (g_1 \times \cdots \times g_n),\ell))$ is a $C^{\infty,\alpha}$-map. Again, this 
is a consequence of Lemma \ref{base-cano}\,(a). 
\end{proof}

The key point was the differentiability of the evaluation map together with a suitable chain rule. Thus, by essentially the same proof, one obtains from the chain rule \cite[Lemma 3.16]{Alz} the following statement whose proof we omit.

\begin{prop}
 Assume that all the manifolds of mappings occurring in the following are endowed with pre-canonical manifold structures. Further, we let $\beta = (\beta_1 ,\ldots , \beta_n) \in (\N_0\cup \{\infty\})^n$ such that for multiindices $\alpha^i \in (\N_0\cup \{\infty\})^{m_i}$, $i\in \{1,\ldots,n\}$ we have $\beta_i = |\alpha^i|+\sigma_i$ for some
$\sigma_i \in \N_0\cup\{\infty\}$. Let now $N = \prod_{1\leq i\leq n} N_i$ and
$M^i:=M^i_1\times\cdots\times M^i_{m_i}$
for certain locally compact manifolds $M^i_j$
with rough boundary
$($with $j\in \{1,\ldots, m_i\})$.
Then for $\sigma= (\sigma_1,\ldots,\sigma_n)$
and $\alpha=(\alpha^1,\ldots, \alpha^n)$, the composition map
  \begin{align*}
   C^\beta (N,L)\times \prod_{1\leq i \leq n} C^{\alpha^i} (M^i,N_i) &\rightarrow C^\alpha (M^1\times \cdots \times M^n,L),\\
    (f,g_1,\ldots,g_n) &\mapsto f \circ (g_1\times \cdots \times g_n)
  \end{align*}
 is a $C^{\infty,\sigma}$-map.
\end{prop}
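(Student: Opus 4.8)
The plan is to follow the proof of the two-factor composition lemma above almost line for line, the only genuinely new ingredient being the bookkeeping of differentiation orders across the $n$ factors. Write $M:=M^1\times\cdots\times M^n$. Since $C^\alpha(M,L)$ carries a pre-canonical manifold structure, the map $\comp$ is $C^{\infty,\sigma}$ if and only if its associated map of all variables,
\[
\comp^\wedge\colon C^\beta(N,L)\times\prod_{i=1}^n C^{\alpha^i}(M^i,N_i)\times M\to L,
\]
\[
(f,g_1,\ldots,g_n,x_1,\ldots,x_n)\mto f(g_1(x_1),\ldots,g_n(x_n))\quad(x_i\in M^i),
\]
is $C^{\infty,\sigma,\alpha}$. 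First I would rewrite $\comp^\wedge$ as a nested evaluation,
\[
\comp^\wedge(f,g_1,\ldots,g_n,x_1,\ldots,x_n)=\ev_N\bigl(f,\ev_{N_1}(g_1,x_1),\ldots,\ev_{N_n}(g_n,x_n)\bigr),
\]
so that it factors as an outer evaluation precomposed with the map $\Psi$ collecting the identity in $f$ and the inner evaluations $\ev_{N_i}$.

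By Lemma~\ref{base-cano}\,(a), applied to the pre-canonical manifold $C^\beta(N,L)$, the outer evaluation $\ev_N\colon C^\beta(N,L)\times N\to L$ is $C^{\infty,\beta}$, hence $C^\infty$ in $f$ and $C^{\beta_i}$ in the $i$th slot $N_i$ of $N=N_1\times\cdots\times N_n$. Likewise, each inner evaluation $\ev_{N_i}\colon C^{\alpha^i}(M^i,N_i)\times M^i\to N_i$ is $C^{\infty,\alpha^i}$, and thus $C^\infty$ in $g_i$ and $C^{\alpha^i}$ in $x_i$.

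The conclusion then comes from the chain rule for $C^\alpha$-maps \cite[Lemma~3.16]{Alz}, applied to the factorization $\comp^\wedge=\ev_N\circ\Psi$, once the differentiation orders are tracked. The decisive point — and exactly the reason the hypothesis $\beta_i=|\alpha^i|+\sigma_i$ is imposed — is that the variables $(g_i,x_i)$ enter $\comp^\wedge$ only through the $i$th slot $N_i$ of the outer evaluation. Hence differentiating $\comp^\wedge$ up to order $\sigma_i$ in $g_i$ together with up to order $\alpha^i$ in $x_i$ pulls down at most $\sigma_i+|\alpha^i|=\beta_i$ directional derivatives of $\ev_N$ in its $N_i$-slot, which $\ev_N$ supplies because it is $C^{\beta_i}$ there. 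The budget is allocated per factor, since distinct pairs $(g_i,x_i)$ and $(g_j,x_j)$ for $i\neq j$ feed the distinct, independent slots $N_i$ and $N_j$ and therefore never demand mixed high-order derivatives in a single slot. Together with the $C^\infty$-dependence in $f$, this yields that $\comp^\wedge$ is $C^{\infty,\sigma,\alpha}$, as required.

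I expect the main obstacle to be organizational rather than conceptual: one must arrange the iterated directional derivatives so that the per-slot order counting above is rigorous in the presence of mixed (Faà-di-Bruno-type) terms. It may well be cleanest to first apply Lemma~\ref{ingrisch} to break each $C^{\alpha^i}$- and $C^{\beta_i}$-condition into its separate one-directional components before invoking the chain rule, and to use Schwarz' theorem (as in Lemma~\ref{reorder}) to commute derivatives in $g_i$ with those in $x_i$. This is precisely the routine step the paper suppresses when it remarks that the statement follows ``by essentially the same proof.''
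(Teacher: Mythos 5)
Your proposal is correct and takes essentially the same approach as the paper: the paper omits the proof of this proposition, remarking only that it follows ``by essentially the same proof'' as the two-factor composition lemma, i.e., by the reduction via pre-canonicity of $C^\alpha(M^1\times\cdots\times M^n,L)$ to showing that $\comp^\wedge$ is $C^{\infty,\sigma,\alpha}$, the factorization of $\comp^\wedge$ through nested evaluation maps, Lemma~\ref{base-cano}\,(a), and the chain rule \cite[Lemma~3.16]{Alz} --- which is precisely your argument. Your per-slot counting of derivative orders justifying the hypothesis $\beta_i=|\alpha^i|+\sigma_i$ is exactly the content the paper leaves implicit.
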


The above discussion shows that composition, pushforward, and pullback maps inherit differentiability and continuity properties.
The following variant will be used
in the construction process of canonical manifold structures.

\begin{prop}\label{fstar-gen}
Let $K$ be a compact smooth manifold such that $C^\alpha(K,M)$ and $C^\alpha(K,N)$ admit canonical manifold structures.
If $\Omega\sub K\times M$ is an open subset and $f\colon \Omega \to N$ is a $C^{|\alpha|+k}$-map,
then
\[
\Omega':=\{\gamma\in C^\alpha(K,M)\colon \graph(\gamma)\sub\Omega\}
\]
is an open subset of $C^\alpha(K,M)$ and
\[
f_\star\colon \Omega \to C^\alpha(K,N),\;\, \gamma\mto f\circ (\id_K,\gamma)
\]
is a $C^k$-map.
\end{prop}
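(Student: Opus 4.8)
The plan is to handle the two assertions separately, treating first the openness of $\Omega'$ and then the smoothness of $f_\star$, the latter being the substantial point.

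For the openness of $\Omega'$ I would reduce to the underlying compact-open topology. By Definition~\ref{defn-alpha-top} the map $T^0\colon C^\alpha(K,M)\to C(K,M)$, $\gamma\mapsto\gamma$ into the compact-open topology is continuous, and $\Omega'$ is precisely the $T^0$-preimage of $\{\gamma\in C(K,M)\colon\graph(\gamma)\sub\Omega\}$; hence it suffices to prove that this latter set is open in $C(K,M)$. For this I would run the standard tube-lemma argument, which is available here because $K$ is \emph{compact}: given $\gamma_0$ with $\graph(\gamma_0)\sub\Omega$, the graph $(\id_K,\gamma_0)(K)$ is compact, so I can cover it by finitely many boxes $U_i\times V_i\sub\Omega$ with $U_i\sub K$ and $V_i\sub M$ open and $\gamma_0(U_i)\sub V_i$, refine the $U_i$ to compact sets $C_i\sub U_i$ still covering $K$, and check that $\bigcap_i\{\gamma\colon\gamma(C_i)\sub V_i\}$ is an open neighbourhood of $\gamma_0$ contained in the set: for $x\in C_i$ one has $(x,\gamma(x))\in U_i\times V_i\sub\Omega$.

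For the smoothness of $f_\star$ the key leverage is that $C^\alpha(K,N)$ is \emph{canonical}. By Definition~\ref{defn-can}, applied with $m=1$, $L=\Omega'$ (an open submanifold of $C^\alpha(K,M)$) and $\beta=k$, the map $f_\star$ is $C^k$ if and only if its adjoint $(f_\star)^\wedge\colon\Omega'\times K\to N$, $(\gamma,x)\mapsto f(x,\gamma(x))$, is $C^{k,\alpha}$. I would write this adjoint as $f\circ h$, where $h\colon\Omega'\times K\to\Omega\sub K\times M$ is given by $h(\gamma,x):=(x,\gamma(x))$; note that $h$ indeed lands in $\Omega$ by the very definition of $\Omega'$. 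The first component of $h$ is the projection onto $K$, which is smooth, and the second component is the evaluation $(\gamma,x)\mapsto\gamma(x)$, which is $C^{\infty,\alpha}$ by Lemma~\ref{base-cano}\,(a) (restricted to the open set $\Omega'\times K$). Since a map into a product is $C^{k,\alpha}$ exactly when its components are, and $C^{\infty,\alpha}$ implies $C^{k,\alpha}$, the map $h$ is $C^{k,\alpha}$.

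It then remains to compose with $f$, and here the hypothesis that $f$ is $C^{|\alpha|+k}$ is used exactly: since $|\alpha|+k=|(k,\alpha)|$ is the total order of the multiindex $(k,\alpha)$, the Chain Rule for $C^\alpha$-maps \cite[Lemma~3.16]{Alz} (applied in charts as in~\ref{C-alpha-in-mfd}) yields that the composite $f\circ h=(f_\star)^\wedge$ of the $C^{(k,\alpha)}$-map $h$ with the $C^{|(k,\alpha)|}$-map $f$ is again $C^{(k,\alpha)}=C^{k,\alpha}$. Canonicity of $C^\alpha(K,N)$ then upgrades this to $f_\star$ being $C^k$, as desired. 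The main obstacle I anticipate is precisely this order-bookkeeping in the chain rule: verifying that differentiating $f\circ h$ up to multi-order $(k,\alpha)$ consumes at most $|\alpha|+k$ ordinary derivatives of $f$ (one per directional derivative taken), so that the finite differentiability class $C^{|\alpha|+k}$ of $f$ is exactly what is needed; everything else, including the openness of $\Omega'$, is routine once the $C^{\infty,\alpha}$-smoothness of the evaluation map is in hand.
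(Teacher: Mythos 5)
Your proposal is correct and follows essentially the same route as the paper: openness of $\Omega'$ is reduced to the compact-open topology on $C(K,M)$, and smoothness of $f_\star$ is obtained by combining canonicity of $C^\alpha(K,N)$ with the $C^{\infty,\alpha}$-smoothness of the evaluation map (Lemma~\ref{base-cano}\,(a)) and the Chain Rule \cite[Lemma~3.16]{Alz} applied to $(f_\star)^\wedge(\gamma,x)=f(x,\gamma(x))$. The only cosmetic difference is that you prove the openness of $\{\gamma\in C(K,M)\colon\graph(\gamma)\sub\Omega\}$ directly via a tube-lemma argument, whereas the paper cites the fact that for compact $K$ the compact-open topology coincides with the graph topology.
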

\begin{proof}
By compactness of $K$, the compact-open topology on $C(K,M)$ coincides with the graph topology (see, e.g., \cite[Proposition A.6.25]{GaN}).
Thus $\{\gamma\in C(K,M)\colon \graph(\gamma)\sub \Omega\}$ is open in $C(K,M)$. As a consequence, $\Omega'$ is open in $C^\alpha(K,M)$.
By Lemma~\ref{base-cano}\,(a), the evaluation $\ev\colon C^\alpha(K,M)\times K\to M$ is $C^{\infty,\alpha}$ and hence~$C^{k,\alpha}$,
whence also $C^\alpha(K,M)\times K\to K\times M$, $(\gamma,x)\mto (x,\gamma(x))$ is $C^{k,\alpha}$. Since $f$ is $C^{|\alpha|+k}$, the Chain Rule \cite[Lemma 3.16]{Alz} shows that
\[
(f_\star)^\wedge\colon \Omega' \times K\to N, \;\,
(\gamma,x)\mto f_\star(\gamma)(x)
=f(x,\gamma(x))
\]
is $C^{k,\alpha}$. So $f_\star$ is~$C^k$,
as the manifold structure on $C^\alpha(K,N)$ is canonical.
\end{proof}

For later use we record several observations on stability of (pre-)canonical structures under pushforward by diffeomorphisms.
\begin{la}\label{la:ISO1} 
Let $N_1$ and $N_2$ be smooth manifolds and $\alpha\in (\N_0\cup\{\infty\})^n,\beta\in(\N_0\cup\{\infty\})^m$. 
 \begin{itemize}
  \item[\textup{(a)}] If $C^\alpha(M,N_1)$ and $C^\alpha(M,N_2)$ are endowed with
$($pre-$)$canonical manifold structures, then the smooth manifold structure
on $C^\alpha(M,N_1 \times N_2)$ which turns the bijection $C^\alpha(M,N_1 \times N_2)\rightarrow C^\alpha(M,N_1)\times C^\alpha(M,N_2)$ sending a mapping to the pair of component functions into a $C^\infty$-diffeomorphism, is $($pre-$)$canonical.
  \item[\textup{(b)}] If $\psi \colon N_1 \rightarrow N_2$ is a $C^\infty$-diffeomorphism and $C^\alpha (M,N_2)$ is a $($pre-$)$canonical manifold, then the smooth manifold structure
on $C^\alpha(M,N_1)$
turning the\linebreak
bijection $$\psi_* \colon C^\alpha (M,N_1) \rightarrow C^\alpha (M,N_2), \;\, f\mapsto \psi \circ f$$ into a diffeomorphism is $($pre-$)$canonical.
  \item[\textup{(c)}] Let $C^\alpha (M,N)$ be endowed with a pre-canonical manifold structure and assume that both $C^\beta (L,C^\alpha(M,N))$ and $C^{\beta,\alpha}(L\times M,N)$ are smooth manifolds making the bijection 
  $$\Phi \colon C^{\beta,\alpha}(L\times M,N) \rightarrow C^\beta (L,C^\alpha(M,N)),\quad f \mapsto f^\vee$$
  a $C^\infty$-diffeomorphism. Then
$C^\beta (L,C^\alpha(M,N))$ is pre-canonical if and only if $C^{\beta,\alpha}(L\times M,N)$ is pre-canonical.
  \end{itemize}
  \end{la}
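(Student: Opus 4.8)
The plan is to verify, in each case, the defining property of a (pre-)canonical structure from Definition~\ref{defn-can}: given a product $P=P_1\times\cdots\times P_p$ of smooth manifolds with rough boundary and $\gamma\in(\N_0\cup\{\infty\})^p$, one must show that a test map into the mapping space in question is $C^\gamma$ exactly when its adjoint is $C^{\gamma,\alpha}$ (resp.\ $C^{\gamma,\beta}$, $C^{\gamma,\beta,\alpha}$). In all three parts the strategy is the same: transport the $C^\gamma$-condition through the defining $C^\infty$-diffeomorphism, then rewrite the resulting adjoint so that the pre-canonicalness hypothesis on the given space(s) applies verbatim. For the sharper ``canonical'' statements in (a) and (b), I would additionally identify the underlying manifold topology with the compact-open $C^\alpha$-topology using homeomorphisms already recorded.

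For (a), write $\theta\colon C^\alpha(M,N_1\times N_2)\to C^\alpha(M,N_1)\times C^\alpha(M,N_2)$ for the defining bijection; its components are the pushforwards $(\pr_{N_i})_*$. Given $g\colon P\to C^\alpha(M,N_1\times N_2)$, the map $g$ is $C^\gamma$ iff $\theta\circ g$ is $C^\gamma$ iff each $g_i:=(\pr_{N_i})_*\circ g$ is $C^\gamma$; since $C^\alpha(M,N_i)$ is pre-canonical this is equivalent to each $g_i^\wedge=\pr_{N_i}\circ g^\wedge$ being $C^{\gamma,\alpha}$, which by the componentwise characterization of maps into the product manifold $N_1\times N_2$ is equivalent to $g^\wedge$ being $C^{\gamma,\alpha}$. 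For the canonical case, $\theta$ coincides with the homeomorphism $\Psi$ of Lemma~\ref{c-alpha-top-product}(c) (all $M_j$ being locally compact by the standing convention), so the manifold topology is the compact-open $C^\alpha$-topology. Part (b) is even more direct: since $\psi$ is a $C^\infty$-diffeomorphism, post-composition by $\psi$ preserves $C^{\gamma,\alpha}$-ness, with inverse given by post-composition with $\psi^{-1}$ via the Chain Rule \cite[Lemma~3.16]{Alz}. Thus $g$ is $C^\gamma$ iff $\psi_*\circ g$ is $C^\gamma$ iff (pre-canonicalness of $C^\alpha(M,N_2)$) $(\psi_*\circ g)^\wedge=\psi\circ g^\wedge$ is $C^{\gamma,\alpha}$ iff $g^\wedge$ is $C^{\gamma,\alpha}$, and Lemma~\ref{pull-and-push}(a) applied to both $\psi$ and $\psi^{-1}$ identifies the topologies.

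Part (c) is the heart of the lemma. Fix $P$ and $\gamma$ and a map $h\colon P\to C^\beta(L,C^\alpha(M,N))$. Using that $\Phi$ is a $C^\infty$-diffeomorphism, $h$ is $C^\gamma$ iff $\Phi^{-1}\circ h$ is $C^\gamma$; since $C^{\beta,\alpha}(L\times M,N)$ is assumed pre-canonical, this holds iff $(\Phi^{-1}\circ h)^\wedge\colon P\times L\times M\to N$ is $C^{\gamma,\beta,\alpha}$. The computation $(\Phi^{-1}\circ h)^\wedge(z,w,y)=h(z)(w)(y)=(h^\wedge)^\wedge(z,w,y)$ shows this map is exactly the double adjoint of $h^\wedge\colon P\times L\to C^\alpha(M,N)$, under the associativity identification $(P\times L)\times M=P\times L\times M$. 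Now I would apply the pre-canonicalness of $C^\alpha(M,N)$ to the test domain $P\times L$ with multiindex $(\gamma,\beta)$: the map $h^\wedge$ is $C^{\gamma,\beta}$ iff $(h^\wedge)^\wedge$ is $C^{\gamma,\beta,\alpha}$. Chaining the equivalences yields ``$h$ is $C^\gamma$ iff $h^\wedge$ is $C^{\gamma,\beta}$'', i.e.\ $C^\beta(L,C^\alpha(M,N))$ is pre-canonical. The reverse implication is the mirror image: start from $h'\colon P\to C^{\beta,\alpha}(L\times M,N)$, pass through $\Phi$ and the assumed pre-canonicalness of $C^\beta(L,C^\alpha(M,N))$, observe that $((\Phi\circ h')^\wedge)^\wedge=(h')^\wedge$, and peel off the $M$-adjunction once more via pre-canonicalness of $C^\alpha(M,N)$.

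The main obstacle is bookkeeping rather than analysis: in part (c) one must track carefully that the two a priori different adjunctions---first uncurrying the $P$-variable, then the $(L,M)$ or $M$ variables---agree as set maps and carry matching multiindices under the reassociation of the iterated product, so that pre-canonicalness of $C^\alpha(M,N)$ can be invoked on precisely the correct factor. The only genuinely external inputs are the componentwise characterization of $C^{\gamma,\alpha}$-maps into a product manifold used in (a), which reduces in charts to the product case for locally convex spaces (Lemma~\ref{la-sub-PL}(b)), and the continuity statements of Lemmas~\ref{c-alpha-top-product}(c) and \ref{pull-and-push}(a) needed to upgrade ``pre-canonical'' to ``canonical''.
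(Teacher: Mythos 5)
Your proposal is correct and follows essentially the same route as the paper's proof: in all three parts you transport the test map across the defining $C^\infty$-diffeomorphism, invoke the pre-canonicalness hypothesis on the given space(s), and in (c) identify $((\Phi\circ f)^\wedge)^\wedge$ with $f^\wedge$ exactly as the paper does. The only differences are cosmetic — you split the equivalence in (c) into two directions where the paper chains it in one pass, and you make the topology identification in the canonical case of (a) explicit via Lemma~\ref{c-alpha-top-product}\,(c) (for the componentwise characterization in (a) the natural citation is {\rm\cite[Lemma~3.8]{Alz}} rather than Lemma~\ref{la-sub-PL}\,(b), but the fact used is the same).
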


  \begin{proof}
Let $L = L_1 \times \cdots \times L_m$ be a product of manifolds.

(a) A map $f=(f_1,f_2)\colon L\to C^\alpha(M,N_1)\times C^\alpha(M,N_2)$ is~$C^\beta$ if and only if~$f_1$ and $f_2$ are~$C^\beta$.
As the manifold structures are (pre-)canonical, this holds if and only if
$f_i^\wedge\colon L\times M\to M_i$ is $C^{\beta,\alpha}$ for $i\in \{1,2\}$. However, this holds if and only if $f^\wedge=(f_1^\wedge,f_2^\wedge)$ is $C^{\beta,\alpha}$.

(b) A map $f \colon L \rightarrow C^\alpha(M,N_1)$ is $C^\beta$ if and only if $\psi_*\circ f$ is $C^\beta$. Since $C^\alpha(M,N_2)$ is pre-canonical, this is the case if and only if $(\psi_*\circ f)^\wedge = \psi\circ f^\wedge$ is $C^{\beta,\alpha}$. As $\psi$ is a smooth diffeomorphism we deduce from the chain rule that this is the case if and only if $f^\wedge$ is of class $C^{\beta,\alpha}$. Thus $C^{\alpha}(M,N_1)$ is pre-canonical. 
If $C^{\alpha}(M,N_2)$ is even canonical, the $C^\alpha$-topology is transported by the diffeomorphism $\psi_*$ to the $C^\alpha$-topology on $C^\alpha (M,N_1)$. Hence the manifold $C^\alpha (M,N_1)$ is also canonical in this case. 

(c) By construction, a map $f \colon K\rightarrow C^{\beta,\alpha}(L\times M,N)$ is
of class $C^\gamma$ (for some multiindex $\gamma$) if and only if $\Phi\circ f = (f(\cdot))^\vee$ is $C^\gamma$ as a mapping to $C^\beta (L,C^\alpha(M,N))$. As $C^\alpha (M,N)$ is pre-canonical, we observe that $(\Phi\circ f)^\wedge \colon K \times L \rightarrow  C^\alpha(M,N)$ is $C^{\gamma,\beta}$ if and only if $((\Phi\circ f)^\wedge)^\wedge = f^\wedge \colon K \times L\times M \rightarrow N$ is a $C^{\gamma,\beta,\alpha}$-map. Hence $C^{\beta,\alpha}(L\times M,N)$ is pre-canonical (i.e.\ $f$ is $C^\gamma$ if and only if $f^\wedge$ is $C^{\gamma,\beta,\alpha}$) if and only if $C^{\beta}(L,C^{\alpha}(M,N))$ is pre-canonical.
\end{proof}

\begin{la}\label{la:ISO2}
Fix $\alpha\in (\N_0\cup\{\infty\})^n$ and a permutation $\sigma$ of $\{1,\ldots,n\}$. Denote by $\phi_\sigma \colon M_1 \times \cdots \times M_n \to Q \coloneq M_{\sigma (1)} \times \cdots \times M_{\sigma (n)}$ the diffeomorphism taking $(x_i)_{i=1}^n$ to
$(x_{\sigma (i)})_{i=1}^n$. 
\begin{itemize}
  \item[\textup{(a)}] If $C^{\alpha\circ\sigma}(Q,N)$ and $C^\alpha(M,N)$
are smooth manifolds such that the bijection 
$$\phi^*_\sigma\colon C^{\alpha\circ\sigma}(Q,N) \rightarrow C^\alpha(M,N), \quad f \mapsto f\circ \phi_\sigma$$ 
from {\rm\ref{reorder}} becomes a diffeomorphism, then $C^\alpha(M,N)$ is
$($pre-$)$canonical if and only if $C^{\alpha\circ\sigma}(Q,N)$ is $($pre-$)$canonical. 
 \item[\textup{(b)}] If $C^\alpha (M,N)$ and $C^{\alpha\circ\sigma} (Q,N)$
are endowed with pre-canonical manifold structures,
then $\phi_\sigma^*$ is a $C^\infty$-diffeomorphism.
  \item[\textup{(c)}] If $\psi_i \colon L_i \rightarrow M_i$ is a smooth diffeomorphism for every $i \in \{1,\ldots,n\}$ and $C^{\alpha}(M,N)$ is $($pre-$)$canonical, then the smooth manifold structure on $C^\alpha(L,N)$ turning the bijection 
  $$(\psi_1\times \cdots \times \psi_n)^* \colon C^\alpha (M,N) \rightarrow C^\alpha (L,N)$$ into a diffeomorphism is $($pre-$)$canonical.
 \end{itemize}
\end{la}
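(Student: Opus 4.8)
The plan is to treat parts (a)--(c) uniformly: each concerns a bijection between two mapping spaces induced by a \emph{diffeomorphism of the domains}, and the task in each case is to transport the defining adjunction of (pre-)canonicity (Definition~\ref{defn-can}) from one space to the other. The common engine is that, for a pre-canonical structure, a test map $g$ is $C^\beta$ if and only if $g^\wedge$ is $C^{\beta,\alpha}$, so everything reduces to understanding how passage to $g^\wedge$ interacts with the domain map; this is controlled by Lemma~\ref{base-cano}\,(a) (smoothness of the evaluation), Lemma~\ref{reorder} (reordering of variables), and the Chain Rule. In each part I would prove one implication and obtain the converse by symmetry, replacing $\sigma$ by $\sigma^{-1}$ (resp. $\psi_i$ by $\psi_i^{-1}$).

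I would begin with part~(b), which is the cleanest and serves as the model computation. Since $C^\alpha(M,N)$ is pre-canonical, $\phi_\sigma^*$ is $C^\infty$ if and only if $(\phi_\sigma^*)^\wedge\colon C^{\alpha\circ\sigma}(Q,N)\times M\to N$ is $C^{\infty,\alpha}$. I rewrite $(\phi_\sigma^*)^\wedge(f,p)=f(\phi_\sigma(p))=\ev(f,\phi_\sigma(p))$, i.e. $(\phi_\sigma^*)^\wedge=\ev\circ(\id\times\phi_\sigma)$, where $\ev\colon C^{\alpha\circ\sigma}(Q,N)\times Q\to N$ is $C^{\infty,\alpha\circ\sigma}$ by Lemma~\ref{base-cano}\,(a). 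As $\id\times\phi_\sigma$ merely reorders the $M$-variables by $\sigma$ while fixing the $C^{\alpha\circ\sigma}(Q,N)$-slot, Lemma~\ref{reorder}, applied to the permutation $\tilde\sigma$ of the $1+n$ factors that fixes the first and permutes the rest by $\sigma$ (for which $(\infty,\alpha)\circ\tilde\sigma=(\infty,\alpha\circ\sigma)$), shows $(\phi_\sigma^*)^\wedge$ is $C^{\infty,\alpha}$; hence $\phi_\sigma^*$ is $C^\infty$. The same argument with $\sigma^{-1}$ (using the evaluation on $C^\alpha(M,N)\times M$) makes $(\phi_\sigma^*)^{-1}$ smooth, so $\phi_\sigma^*$ is a $C^\infty$-diffeomorphism.

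For part~(a) I would establish the pre-canonical equivalence first. Given a test map $g\colon L\to C^\alpha(M,N)$, the hypothesis that $\phi_\sigma^*$ is a diffeomorphism of the given structures shows $g$ is $C^\beta$ iff $(\phi_\sigma^*)^{-1}\circ g\colon L\to C^{\alpha\circ\sigma}(Q,N)$ is $C^\beta$; by pre-canonicity of $C^{\alpha\circ\sigma}(Q,N)$ this holds iff $((\phi_\sigma^*)^{-1}\circ g)^\wedge$ is $C^{\beta,\alpha\circ\sigma}$. A direct computation gives $((\phi_\sigma^*)^{-1}\circ g)^\wedge(x,q)=g^\wedge(x,q_{\sigma^{-1}(1)},\ldots,q_{\sigma^{-1}(n)})$, so Lemma~\ref{reorder} (in both directions, via $\sigma$ and $\sigma^{-1}$) shows this is equivalent to $g^\wedge$ being $C^{\beta,\alpha}$; chaining the equivalences gives pre-canonicity of $C^\alpha(M,N)$, and the converse is symmetric. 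The remaining, and main, obstacle is the \emph{topology} for the canonical case: I must show $\phi_\sigma^*$ is a homeomorphism from the compact-open $C^{\alpha\circ\sigma}$-topology to the compact-open $C^\alpha$-topology. Since $\phi_\sigma$ is a reordering and \emph{not} a product map, Lemma~\ref{pull-and-push}\,(b) does not apply directly; instead I would establish the naturality formula $T^\beta(f\circ\phi_\sigma)=(T^{\beta\circ\sigma}f)\circ R_\beta$ for $\beta\leq\alpha$, where $R_\beta\colon T^\beta M\to T^{\beta\circ\sigma}Q$ is the reordering homeomorphism of the tangent-bundle factors (this follows from the recursive construction in~\ref{tangent-maps} and Schwarz' Theorem, as in~\ref{reorder}). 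Writing the compact-open $C^\alpha$-topology as the initial topology with respect to the maps $T^\beta$ (Definition~\ref{defn-alpha-top}), the formula yields $T^\beta\circ\phi_\sigma^*=(R_\beta)^*\circ T^{\beta\circ\sigma}$, with $(R_\beta)^*$ continuous and $T^{\beta\circ\sigma}$ one of the defining maps on the source (as $\beta\circ\sigma\leq\alpha\circ\sigma$); continuity of $\phi_\sigma^*$, and by symmetry of its inverse, follows.

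Finally, part~(c) runs parallel to~(a) but is in fact simpler, because $\psi:=\psi_1\times\cdots\times\psi_n$ is a \emph{product} of diffeomorphisms and so leaves the multiindex unchanged. For the pre-canonical claim I would reduce a test map $g\colon P\to C^\alpha(L,N)$ to $((\psi^*)^{-1}\circ g)^\wedge=g^\wedge\circ(\id_P\times\psi^{-1})$ and invoke the Chain Rule for $C^\alpha$-maps (in place of Lemma~\ref{reorder}) to see that precomposition with the factorwise diffeomorphism $\id_P\times\psi^{-1}$ preserves $C^{\beta,\alpha}$ in both directions; chaining gives pre-canonicity of $C^\alpha(L,N)$. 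For the canonical case the topology now comes essentially for free: since $\psi$ and $\psi^{-1}=\psi_1^{-1}\times\cdots\times\psi_n^{-1}$ are product maps, Lemma~\ref{pull-and-push}\,(b) shows that $\psi^*$ and $(\psi^*)^{-1}$ are continuous for the compact-open $C^\alpha$-topologies, so $\psi^*$ is a homeomorphism and transports the compact-open topology correctly. I expect the only genuinely delicate point in the whole lemma to be the reordering naturality formula underlying the topological half of~(a).
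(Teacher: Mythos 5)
Your proposal is correct and follows essentially the same route as the paper's proof: pre-canonicity reduces the smoothness statements to wedge maps, which are then handled by Lemma~\ref{base-cano}\,(a) together with Schwarz' Theorem as in \ref{reorder} (the paper invokes the chain rule and Lemma~\ref{ingrisch} in part~(b), to the same effect), with Lemma~\ref{pull-and-push}\,(b) giving the topology statement in (c) and reordering-invariance of the compact-open $C^\alpha$-topology giving it in (a), which the paper merely asserts from Definition~\ref{defn-alpha-top} while you spell it out. One small correction to your naturality formula in (a): since the recursion in \ref{tangent-maps} takes partial tangents in a fixed order of the variables, one has $T^\beta(f\circ\phi_\sigma)=\kappa\circ(T^{\beta\circ\sigma}f)\circ R_\beta$ only up to a canonical flip diffeomorphism $\kappa$ of $T^{|\beta|}N$ coming from Schwarz' Theorem; as pushforward by the fixed diffeomorphism $\kappa$ is a homeomorphism for the compact-open topology, this does not affect your continuity argument.
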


\begin{proof}
(a) Assume that $C^\alpha(M,N)$ is (pre-)canonical. Then $f \colon K\rightarrow C^{\alpha\circ\sigma}(Q,N)$ is $C^\beta$ if and only if $\phi_\sigma^*\circ f$
is so. Now we deduce from $C^\alpha(M,N)$ being pre-canonical that this is equivalent to $(\phi_\sigma^*\circ f)^\wedge = f^\wedge \circ (\id_K\times
\phi_\sigma) \colon K \times M \rightarrow N$ being a $C^{\beta,\alpha}$-map.
Exploiting the Theorem of Schwarz \cite[Proposition~3.5]{Alz},
this is equivalent to $f^\wedge$ being $C^{\beta,\alpha\circ\sigma}$.
Thus
$C^{\alpha\circ \sigma}(Q,N)$ is pre-canonical. The converse assertion for $C^{\alpha \circ \sigma} (M,N)$ follows verbatim by replacing $\phi_\sigma$ with its inverse. Note that if one of the manifolds is even canonical, it follows directly from the definition of the $C^\alpha$-topology, Definition \ref{defn-alpha-top}, that reordering the factors induces a homeomorphism of the $C^{\alpha}$- and $C^{\alpha \circ \sigma}$-topology. Hence we see that one of the manifolds is canonical if and only if the other is so.

(b) Note that the inverse of $\phi_\sigma^*$ is $(\phi_\sigma^{-1})^*$ whence the situation is symmetric and it suffices to prove that $\phi_\sigma^*$ (and by an analogous argument also its inverse) is smooth. As $C^\alpha (M,N)$ is pre-canonical, smoothness of $\phi_\sigma^*$ is equivalent to $(\phi^*_\sigma)^\wedge \colon C^{\alpha\circ\sigma}(Q,N)
\times M \rightarrow N,\ (f,m) \mapsto \ev(f,\phi_\sigma (m))$ being a $C^{\infty,\alpha}$-mapping.
This follows from Lemma~\ref{base-cano}\,(a),
the chain rule, and Lemma~\ref{ingrisch}.

(c) Replacing $\phi_\sigma$ with $\psi_1\times \cdots \times
\psi_n$, the argument is analogous to (b). If $C^\alpha(M,N)$ is canonical, then the $C^\alpha$-topology pulls back to the $C^\alpha$-topology
under the diffeomorphism, by Lemma~\ref{pull-and-push}.
\end{proof}

An exponential law is available for pre-canonical smooth manifold structures.
\begin{prop}\label{explaw-precan}
Let $L_1,\ldots, L_m$ and $N$ be smooth manifolds with rough boundary, and $M_1,\ldots, M_n$ be locally compact smooth manifolds with rough boundary.
Assume that $C^\alpha(M,N)$ is endowed with a pre-canonical smooth manifold structure and also
$C^\beta(L,C^\alpha(M,N)$ and $C^{\beta,\alpha}(L\times M,N)$ are endowed with pre-canonical smooth manifold structures.
Then the bijection
\[
\Phi\colon C^{\beta,\alpha}(L\times M,N)\to C^\beta(L,C^\alpha(M,N))
\]
from {\rm(\ref{pre-can-bij})} is a $C^\infty$-diffeomorphism.
\end{prop}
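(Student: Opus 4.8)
The plan is to establish that both $\Phi$ and its inverse $\Phi^{-1}\colon g\mapsto g^\wedge$ are $C^\infty$; since $\Phi$ is already known to be a bijection (being the map from (\ref{pre-can-bij})), this immediately yields the asserted $C^\infty$-diffeomorphism, with no separate topological argument needed. The only tools I would use are the defining property of pre-canonical structures, applied repeatedly to strip off one adjunction at a time, together with the smoothness of evaluation maps recorded in Lemma~\ref{base-cano}\,(a). Throughout, abbreviate $M=M_1\times\cdots\times M_n$ and $L=L_1\times\cdots\times L_m$.

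For $\Phi$ itself the reduction is entirely formal. Since its target $C^\beta(L,C^\alpha(M,N))$ is pre-canonical, $\Phi$ is $C^\infty$ if and only if its adjoint
\[
\Phi^\wedge\colon C^{\beta,\alpha}(L\times M,N)\times L\to C^\alpha(M,N),\quad (f,\ell)\mapsto f^\vee(\ell)=f(\ell,\cdot)
\]
is $C^{\infty,\beta}$. As $C^\alpha(M,N)$ is in turn pre-canonical, the latter holds if and only if
\[
(\Phi^\wedge)^\wedge\colon C^{\beta,\alpha}(L\times M,N)\times L\times M\to N,\quad (f,\ell,y)\mapsto f(\ell,y)
\]
is $C^{\infty,\beta,\alpha}$. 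But this is nothing other than the evaluation map of the pre-canonical manifold $C^{\beta,\alpha}(L\times M,N)$, whose argument functions are defined on the product $L\times M$, and so it is $C^{\infty,(\beta,\alpha)}=C^{\infty,\beta,\alpha}$ by Lemma~\ref{base-cano}\,(a). Hence $\Phi$ is $C^\infty$.

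For the inverse I would use that the target $C^{\beta,\alpha}(L\times M,N)$ is pre-canonical, so that $\Phi^{-1}$ is $C^\infty$ if and only if
\[
(\Phi^{-1})^\wedge\colon C^\beta(L,C^\alpha(M,N))\times L\times M\to N,\quad (g,\ell,y)\mapsto g^\wedge(\ell,y)=g(\ell)(y)
\]
is $C^{\infty,\beta,\alpha}$. Writing $\ev_1\colon C^\beta(L,C^\alpha(M,N))\times L\to C^\alpha(M,N)$ and $\ev_2\colon C^\alpha(M,N)\times M\to N$ for the two evaluation maps, one has the factorization $(\Phi^{-1})^\wedge(g,\ell,y)=\ev_2(\ev_1(g,\ell),y)$, where $\ev_1$ is $C^{\infty,\beta}$ and $\ev_2$ is $C^{\infty,\alpha}$ by Lemma~\ref{base-cano}\,(a). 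I expect this last reduction to be the main (though routine) obstacle: exactly as in the proof that composition is a $C^{\infty,s}$-map, one feeds the $C^\alpha(M,N)$-valued, $C^{\infty,\beta}$-map $\ev_1$ into the \emph{function} slot of $\ev_2$ and invokes the Chain Rule for $C^\alpha$-maps \cite[Lemma~3.16]{Alz} to conclude that the composite is jointly $C^{\infty,\beta,\alpha}$. Crucially, no loss of order occurs here, because $\ev_2$ is smooth in its function slot, so the orders $\beta$ (in $\ell$) and $\alpha$ (in $y$) are both preserved; the only care required is the standard bookkeeping that a map which is $C^{\infty,\beta}$ in $(g,\ell)$ and constant in $y$ is $C^{\infty,\beta,\alpha}$. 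Once $(\Phi^{-1})^\wedge$ is seen to be $C^{\infty,\beta,\alpha}$, pre-canonicity of $C^{\beta,\alpha}(L\times M,N)$ gives that $\Phi^{-1}$ is $C^\infty$, and the proof is complete.
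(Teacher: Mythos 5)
Your proof is correct, but it takes a genuinely different route from the paper's. The paper's proof is a two-line transport-and-uniqueness argument: it equips $C^\beta(L,C^\alpha(M,N))$ with the manifold structure obtained by declaring $\Phi$ a $C^\infty$-diffeomorphism, observes that this transported structure is pre-canonical by Lemma~\ref{la:ISO1}\,(c), and then concludes from the uniqueness of pre-canonical structures (Lemma~\ref{base-cano}\,(b)) that it coincides with the given one, so that $\Phi$ is a diffeomorphism for the given structures. You instead verify directly that $\Phi$ and $\Phi^{-1}$ are $C^\infty$, by repeatedly applying the defining adjunction property of pre-canonical structures and the smoothness of evaluation maps from Lemma~\ref{base-cano}\,(a); in effect, you unfold the proofs of the two lemmas the paper cites into a self-contained argument. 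Both are valid at the paper's level of rigor — your chain-rule step for $\Phi^{-1}$ is exactly the technique the paper itself uses in its composition lemma. What the paper's route buys is brevity and reuse of established machinery; what yours buys is independence from Lemma~\ref{la:ISO1}\,(c) and the uniqueness lemma. One small streamlining of your argument: the chain rule can be avoided entirely for $\Phi^{-1}$, since $(\Phi^{-1})^\wedge=(\ev_1)^\wedge$, where $\ev_1$ is $C^{\infty,\beta}$ by Lemma~\ref{base-cano}\,(a), so applying the defining property of the pre-canonical structure on $C^\alpha(M,N)$ to the map $\ev_1$ itself (with test domain $C^\beta(L,C^\alpha(M,N))\times L_1\times\cdots\times L_m$ and multiindex $(\infty,\beta)$) yields directly that $(\ev_1)^\wedge$ is $C^{\infty,\beta,\alpha}$, making the whole proof purely formal.
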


\begin{proof}
If we give $C^\beta(L,C^\alpha(M,N))$ the smooth manifold structure making $\Phi$ a $C^\infty$-diffeomorphism,
then this structure is pre-canonical
by Lemma~\ref{la:ISO1}\,(c).
It therefore coincides with the given pre-canonical
smooth manifold structure thereon,
up to the choice of modeling spaces (Lemma~\ref{base-cano}\,(b)).
\end{proof}

\noindent
There is a natural identification of tangent vectors for pre-canonical manifolds,
in good cases.
If $C^\alpha (M,N)$ is pre-canonical, an element
$v \in T_f C^\alpha (M,N)$ corresponds to
an equivalence class of curves $\gamma_v \colon I \rightarrow C^\alpha (M,N)$ on some open interval $I$ around $0$ such that $\gamma_v(0)=f$ and $\dot{\gamma}_v(0)=v$. As $C^\alpha (M,N)$ is pre-canonical, the map $\gamma^\wedge_v \colon I \times M \rightarrow N$ is $C^{1,\alpha}$. Hence $T\ve_m(v)=T\ve_m(\dot{\gamma}_v(0))\in TN$
is $C^\alpha$ in $m\in M$, where we use
the point evaluation
$\ve_m\colon C^\alpha(M,N)\to N$,
$f\mto f(m)$
at~$m$.
We thus obtain a map 
\begin{align}\label{tangent:iso}
 \Psi \colon TC^\alpha (M,N) \rightarrow C^\alpha (M,TN),\ v \mapsto (m \mapsto T\ve_m(v)).
\end{align}
Under additional assumptions, one can show that $\Psi$ is a diffeomorphism,
allowing tangent vectors $v\in TC^\alpha(M,N)$
to be identified with $\Psi(v)$.
We will encounter a setting in which this statement becomes true in the next section (see Theorem \ref{thm:tangentident}).
\section{Constructions for compact domains}\label{sec-compact}
We now construct and study manifolds of $C^\alpha$-mappings on compact domains.
The results of this section subsume Theorem~\ref{thmA}.
They generalize constructions for
$C^{k,\ell}$-functions in \cite[Appendix A]{AGS}.

\begin{numba}\label{bundleconv}
Let $N$ be a smooth manifold, $\alpha \in(\N_0\cup\{\infty\})^n$ and $M = M_1 \times \cdots \times M_n$ be a locally compact smooth manifold with rough boundary.
If $\pi\colon E\to N$ is a smooth vector bundle over~$N$ and $f\colon M\to N$ is a $C^\alpha$-map, then we define 
\[
\Gamma_f \coloneq \{\tau\in C^\alpha(M,E)\colon \pi\circ \tau=f\}
\]
with the topology induced by $C^\alpha(M,E)$. Pointwise operations turn $\Gamma_f$ into a vector space. Let us prove that $\Gamma_f$ is a locally convex space. To this end, we cover $N$ with open sets $(U_i)_{i \in I}$ on which the restriction $E|_{U_i} \cong U_i \times E_i$ (with $E_i$ a suitable locally convex space) is trivial. Combining continuity of $f$ and local compactness of $M$ we can find families $\mathcal{K}_j$ of full compact submanifolds of $M_j$ with the following properties: 
The interiors of the sets in $\mathcal{K}_j$ cover $M_j$. There is a set $\mathcal{K} \subseteq \prod_{1 \leq j   \leq n} \mathcal{K}_j$ such that for every $K = K_1 \times \cdots \times K_n \in \mathcal{K}$ we have $f(K) \subseteq U_{i_K}$ for some $i_K \in I$ and the interiors of the submanifolds in $\mathcal{K}$ cover~$M$. Hence we deduce from Lemma \ref{la:ascending-union} that the map   
\begin{align*}
 \Psi\colon C^\alpha (M,E) \to\prod_{K \in \mathcal{K}} C^\alpha (K,E),\;\, \sigma\mto (\sigma|_K)_{K \in \mathcal{K}}
\end{align*}
is a topological embedding. Now by construction $\Gamma_f$ is contained in the open subset $\{G \in C^\alpha (M,E) \mid G(K)\subseteq \pi^{-1}(U_{i_K}), \forall K \in \mathcal{K}\}$. Restricting $\Psi$ to this subset we obtain a topological embedding
\begin{align}\label{top:emb}
 e \colon \Gamma_f \rightarrow \prod_{K \in \mathcal{K}} C^\alpha (K,\pi^{-1}(U_{i_K})) \cong \prod_{K \in \mathcal{K}} C^\alpha (K,U_{i_K}) \times C^\alpha (K,E_{i_K}),
\end{align}
where the identification exploits Lemma~\ref{c-alpha-top-product}
and the fact that pushforwards with smooth diffeomorphisms induce homeomorphisms of the $C^\alpha$-topology (see Lemma \ref{pull-and-push}).
The image of $e$ are precisely the mappings which coincide on the intersections of the compact sets $K$ (see \eqref{identify-image} and the explanations there). Hence we can exploit that point evaluations are continuous on $C^\alpha (K,E_{i_K})$ by \cite[Proposition 3.17]{Hmz} to see that the image of $e$ is a closed vector subspace of $\prod_{K \in \mathcal{K}} \{f|_K\} \times C^\alpha (K,E_{i_K})$. As the space on the right hand side is locally convex, we deduce that the co-restriction of $e$ onto its image is an isomorphism of locally convex spaces. Thus $\Gamma_f$ is a locally convex topological vector space. 

We will sometimes write $\Gamma_f (E)$ instead of $\Gamma_f$ to emphasize the dependence on the bundle $E$.
\end{numba}

The previous setup allows an essential Exponential Law
to be deduced.
\begin{la}\label{expsect}
In the situation of {\rm\ref{bundleconv}},
let $\beta\in (\N_0\cup\{\infty\})^m$ and $g\colon L\to \Gamma_f$ be a map,
where
$L_1,\ldots, L_m$ are smooth manifolds
with rough boundary and $L:=L_1\times\cdots\times L_m$.
Then $g$ is $C^\beta$ if and only if
\[
g^\wedge\colon L\times M\to E,\quad (x,y)\mto g(x)(y)
\]
is a $C^{\beta,\alpha}$-map.
\end{la}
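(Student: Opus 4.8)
The plan is to reduce the claim to the exponential law for maps into a locally convex space (Lemma~\ref{exp-law-not-pure}), transported through the topological embedding $e$ constructed in~\ref{bundleconv}. Recall that $e$ identifies $\Gamma_f$ with a closed vector subspace of $F:=\prod_{K\in\mathcal{K}}C^\alpha(K,U_{i_K})\times C^\alpha(K,E_{i_K})$; for $\tau\in\Gamma_f$ the trivialization $E|_{U_{i_K}}\cong U_{i_K}\times E_{i_K}$ turns $\tau|_K$ into the pair $(f|_K,\sigma_K)$, so that on $\Gamma_f$ the $U_{i_K}$-components are constantly $f|_K$ and only the ``vertical'' parts $\sigma_K\in C^\alpha(K,E_{i_K})$ vary.

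First I would use that, by~\ref{bundleconv}, $e$ is an isomorphism of locally convex spaces onto a closed vector subspace of $F$; hence $g$ is $C^\beta$ if and only if $e\circ g\colon L\to F$ is $C^\beta$ (cf.\ Lemma~\ref{la-sub-PL}\,(a)), and a map into a product of locally convex spaces is $C^\beta$ if and only if each component is. Since the $U_{i_K}$-components of $e\circ g$ are constant, this amounts to requiring that the maps $g_K\colon L\to C^\alpha(K,E_{i_K})$, sending $x$ to the vertical part of $g(x)|_K$ in the chosen trivialization, be $C^\beta$ for every $K\in\mathcal{K}$. Each $K=K_1\times\cdots\times K_n$ is a product of compact, hence locally compact, manifolds, so Lemma~\ref{exp-law-not-pure} applies and gives: $g_K$ is $C^\beta$ if and only if $(g_K)^\wedge\colon L\times K\to E_{i_K}$ is $C^{\beta,\alpha}$.

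It then remains to relate the family $\bigl((g_K)^\wedge\bigr)_{K\in\mathcal{K}}$ to $g^\wedge$. By construction $(g_K)^\wedge$ is precisely the $E_{i_K}$-component of the restriction $g^\wedge|_{L\times K}$ read in the trivialization $\pi^{-1}(U_{i_K})\cong U_{i_K}\times E_{i_K}$, whose $U_{i_K}$-component is $(x,y)\mapsto f(y)$ and hence $C^{\beta,\alpha}$ since $f$ is $C^\alpha$; thus $g^\wedge|_{L\times K}$ is $C^{\beta,\alpha}$ exactly when $(g_K)^\wedge$ is. If $g$ is $C^\beta$, all $(g_K)^\wedge$ and hence all restrictions $g^\wedge|_{L\times K}$ are $C^{\beta,\alpha}$; as the interiors of the $K\in\mathcal{K}$ cover $M$ and being $C^{\beta,\alpha}$ is a local condition checked in charts (\ref{C-alpha-in-mfd}), this yields that $g^\wedge$ is $C^{\beta,\alpha}$. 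Conversely, if $g^\wedge$ is $C^{\beta,\alpha}$, then its restriction to each full submanifold $L\times K$ is $C^{\beta,\alpha}$ (Remark~\ref{rem-restriction} together with the chain rule), so every $g_K$ is $C^\beta$ and therefore $g$ is $C^\beta$.

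The step I expect to be most delicate is the bookkeeping tying the two local pictures together: confirming that the component maps $g_K$ extracted from the embedding $e$ coincide with the vertical parts of the restrictions $g^\wedge|_{L\times K}$ in the chosen trivializations, and that the locality of the $C^{\beta,\alpha}$-property (via the covering of $M$ by the interiors of the $K$) genuinely reassembles the local pieces into a $C^{\beta,\alpha}$-map on all of $L\times M$. Once these identifications are fixed, the two applications of the exponential law are routine.
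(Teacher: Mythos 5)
Your proof is correct and follows essentially the same route as the paper's: identify $\Gamma_f$ via the embedding $e$ from \ref{bundleconv} with a closed subspace of a product of spaces $C^\alpha(K,E_{i_K})$, reduce to the component maps $g_K$ via Lemma~\ref{la-sub-PL}, apply the exponential law over each compact piece $K$, and glue using that the interiors of the $K\in\mathcal{K}$ cover $M$. The only cosmetic differences are that you invoke Lemma~\ref{exp-law-not-pure} where the paper cites Alzaareer's exponential law directly, and that you spell out the base/vertical bookkeeping (and the restriction argument for the converse) which the paper leaves implicit.
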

\begin{proof}
With the notation as in \ref{bundleconv} we identify $\Gamma_f$ via $e$ with a closed subspace of the locally convex space $\prod_{K \in \mathcal{K}} C^\alpha (K, E_{i_K})$ (the identification will be suppressed in the notation). Thus Lemma \ref{la-sub-PL} (a) implies that the map $g$ is $C^\beta$ if and only if the components $g_K \colon L \rightarrow C^\alpha (K,E_{i_K})$ are $C^\beta$-maps. By the Exponential Law \cite[Theorem~4.4]{Alz}, the latter holds if and only if the mappings
\[
(g_K)^\wedge\colon L\times K\to E_{i_K},\quad (x,y)\mto g(x)(y)
\]
are of class $C^{\beta,\alpha}$. Since the interiors of sets $K \in \mathcal{K}$ cover $M$, we deduce that this is the case if and only if 
$g^\wedge$ is of class $C^{\beta,\alpha}$. 
\end{proof}
\begin{rem}
If all fibres of~$E$ are Fr\'{e}chet spaces and $K$ is $\sigma$-compact and locally compact, then $\Gamma_F$ is a Fr\'{e}chet space; if all fibres of~$E$ are Banach spaces,
$K$ is compact, and $|\alpha|<\infty$,
then $\Gamma_f$ is a Banach space. To see this, note that we can choose the family $\mathcal{K}$ in \ref{bundleconv} countable (resp., finite). 
Supressing again the identification,
\[
\psi\colon \Gamma_f\to\prod_{j\in J}C^\alpha(K_j,F_j),\quad \tau\mto (\tau|_{K_j})_{j\in J}
\]
is linear and a topological embedding with closed image. If all $F_j$ are Fr\'{e}chet
spaces, so is each $C^\alpha(K_j,F_j)$ (cf., e.g., \cite{GaN}) and hence also $\Gamma_f$. If all $F_j$ are Banach spaces and
$|\alpha|$ as well as $J$ is finite, then each $C^\alpha(K_j,F_j)$ is a Banach space
(cf.\ \emph{loc.\ cit.})
and hence also $\Gamma_f$.
\end{rem}
Observe that the exponential law for $\Gamma_f$ gives this space the
defining property of a pre-canonical manifold (and the only reason we do not call it pre-canonical is that it is only a subset of $C^\alpha (M,E)$). In particular, the proof of Lemma \ref{base-cano}\,(a) carries over and yields:

\begin{la}\label{evalsect}
In the situation of {\rm \ref{bundleconv}}, the evaluation map
\[
\ev\colon\Gamma_f\times M \to E,\quad (\tau,x)\mto \tau(x)
\]
is $C^{\infty,\alpha}$.
\end{la}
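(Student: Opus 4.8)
The plan is to reduce the statement to the exponential law of Lemma~\ref{expsect}, mirroring exactly the proof of Lemma~\ref{base-cano}\,(a). First I record that, by the construction carried out in~\ref{bundleconv}, $\Gamma_f$ is a locally convex space. Consequently the identity map $\id_{\Gamma_f}\colon\Gamma_f\to\Gamma_f$ is a $C^\infty$-map, being the identity self-map of a locally convex space (more generally, any continuous linear self-map of a locally convex space is smooth).

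Next I apply Lemma~\ref{expsect} in the single-factor situation $m=1$, taking $L:=\Gamma_f$, the multiindex $\beta:=\infty$ (of length one), and $g:=\id_{\Gamma_f}$. Since $g$ is $C^\infty$, i.e.\ $C^\beta$, the lemma yields that
\[
g^\wedge\colon \Gamma_f\times M\to E,\quad (\tau,x)\mto g(\tau)(x)
\]
is a $C^{\infty,\alpha}$-map. It then remains only to observe that $g^\wedge(\tau,x)=\id_{\Gamma_f}(\tau)(x)=\tau(x)=\ev(\tau,x)$, so that $\ev=g^\wedge$ is $C^{\infty,\alpha}$, as asserted.

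I expect essentially no obstacle at this stage. The substantive content has been front-loaded into two earlier results: the verification in~\ref{bundleconv} that $\Gamma_f$ is a locally convex space (via the topological embedding $e$ onto a closed subspace of a product $\prod_{K\in\cK}C^\alpha(K,E_{i_K})$), and the exponential law of Lemma~\ref{expsect} itself (whose proof reduces, through the same embedding, to Alzaareer's exponential law \cite[Theorem~4.4]{Alz}). Given these, the only steps left are the trivial bookkeeping that $g^\wedge$ coincides pointwise with $\ev$ and that the identity map on $\Gamma_f$ is smooth, both of which are immediate. This is precisely the sense in which ``the proof of Lemma~\ref{base-cano}\,(a) carries over'': the defining exponential-law property of a pre-canonical manifold is here supplied by Lemma~\ref{expsect}, with the locally convex space $\Gamma_f$ playing the role of the mapping manifold.
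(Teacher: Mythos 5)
Your proof is correct and is exactly the paper's argument: the paper proves this lemma by observing that Lemma~\ref{expsect} gives $\Gamma_f$ the defining exponential-law property of a pre-canonical manifold, so the proof of Lemma~\ref{base-cano}\,(a) carries over — i.e., apply the exponential law to $\id_{\Gamma_f}$ (smooth, since $\Gamma_f$ is a locally convex space by \ref{bundleconv}) with $\beta=\infty$ and note $\id^\wedge=\ev$. No gaps; this matches the intended proof step for step.
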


\begin{la}\label{Gammfunct}
Let $\pi_1\colon E_1 \to N$ and $\pi_2\colon E_2\to N$ be smooth vector bundles
over a smooth manifold~$N$. Let $\alpha\in(\N_0\cup\{\infty\})^m$ and $f\colon M\to N$
be a $C^\alpha$-map on a
product $M=M_1\times \cdots\times M_n$
of smooth manifolds with rough boundary.
Then the following holds:
\begin{itemize}
\item[\textup{(a)}]
If $\psi\colon E_1\to E_2$ is a mapping of smooth vector bundles over~$\id_M$,
then $\psi\circ\tau\in\Gamma_f(E_2)$ for each $\tau\in\Gamma_f(E_1)$ and
\[
\Gamma_f(\psi)\colon \Gamma_f(E_1)\to\Gamma_f(E_2),\quad \tau\mto \psi\circ \tau
\]
is a continuous linear map.
\item[\textup{(b)}]
$\Gamma_f(E_1\oplus E_2)$ is canonically isomorphic to $\Gamma_f(E_1)\times \Gamma_f(E_2)$.
\end{itemize}
\end{la}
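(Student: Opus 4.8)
The plan is to prove both parts by reducing everything to the already-established structure of $\Gamma_f$ as a closed subspace of a product of function spaces, and to the exponential law of Lemma~\ref{expsect}. For part~(a), I would first check that $\psi\circ\tau$ lands in $\Gamma_f(E_2)$: since $\psi$ is a bundle map over $\id_M$, we have $\pi_2\circ\psi=\pi_1$, so $\pi_2\circ(\psi\circ\tau)=\pi_1\circ\tau=f$, giving $\psi\circ\tau\in\Gamma_f(E_2)$ provided $\psi\circ\tau$ is still $C^\alpha$. The latter follows because $\psi$ is a smooth map between the total spaces $E_1\to E_2$ (a bundle morphism is in particular smooth), so $\psi\circ\tau\in C^\alpha(M,E_2)$ by the Chain Rule \cite[Lemma~3.16]{Alz}. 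Linearity of $\Gamma_f(\psi)$ is immediate from fibrewise linearity of $\psi$ and the pointwise vector-space operations on $\Gamma_f$.

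The remaining content of~(a) is continuity. My preferred approach is to deduce it from Lemma~\ref{expsect} (the exponential law for $\Gamma_f$), exactly as one deduces smoothness of pushforwards for pre-canonical manifolds. Concretely, $\Gamma_f(\psi)$ is continuous (indeed it is even enough to establish that it is $C^\infty$, which is stronger and implies continuity for a linear map between locally convex spaces) if and only if its composition with the exponential-law identification is well-behaved; applying Lemma~\ref{expsect} to the identity map on $\Gamma_f(E_1)$ shows $\ev\colon\Gamma_f(E_1)\times M\to E_1$ is $C^{\infty,\alpha}$ by Lemma~\ref{evalsect}, and then $(\tau,x)\mapsto\psi(\ev(\tau,x))=\psi(\tau(x))$ is $C^{\infty,\alpha}$ by the Chain Rule since $\psi$ is smooth. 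This map is precisely $(\Gamma_f(\psi))^\wedge$, so Lemma~\ref{expsect} yields that $\Gamma_f(\psi)$ is $C^\infty$, hence continuous; being linear, it is a continuous linear map as claimed. As an alternative, purely topological route, one could use the embedding $e$ from \eqref{top:emb} and argue that $\psi$ is covered locally by the trivializations, reducing to continuity of pushforwards on each $C^\alpha(K,E_{i_K})$ via Lemma~\ref{pull-and-push}\,(a); but the exponential-law argument is cleaner and avoids bookkeeping with the cover $\mathcal{K}$.

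For part~(b), the natural candidate isomorphism is the map sending $\tau\in\Gamma_f(E_1\oplus E_2)$ to the pair $(\pr_1\circ\tau,\pr_2\circ\tau)$, where $\pr_i\colon E_1\oplus E_2\to E_i$ are the bundle projections of the Whitney sum; its inverse sends $(\tau_1,\tau_2)$ to $x\mapsto(\tau_1(x),\tau_2(x))$. Both directions are bundle maps over $\id_M$ (the projections $\pr_i$ and the diagonal-type inclusion), so by part~(a) each induced map $\Gamma_f(\pr_i)$ is continuous and linear, and the two maps are mutually inverse by a pointwise check. It therefore remains only to see that the codomain $\Gamma_f(E_1)\times\Gamma_f(E_2)$ carries the right topology, i.e.\ that the pair map is a homeomorphism and not merely a continuous linear bijection. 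This follows from Lemma~\ref{c-alpha-top-product}\,(b), which identifies $C^\alpha(M,F_1\times F_2)$ with $C^\alpha(M,F_1)\times C^\alpha(M,F_2)$ as topological vector spaces, combined with the fact that $\Gamma_f(E_i)$ carries the topology induced from $C^\alpha(M,E_i)$ and that $E_1\oplus E_2$ restricts fibrewise to the product; the compatibility of the two constructions over the trivializing cover is exactly what \eqref{top:emb} records.

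The main obstacle I anticipate is not the algebra, which is routine, but the continuity/topology bookkeeping: ensuring that the induced maps respect the subspace topologies that $\Gamma_f(E_i)$ inherit from $C^\alpha(M,E_i)$, and that the product identification in~(b) is a topological (not merely linear-algebraic) isomorphism. Routing continuity through the exponential law Lemma~\ref{expsect} and invoking Lemma~\ref{c-alpha-top-product}\,(b) for the product structure handles both points uniformly and keeps the argument short; the alternative of working directly with the embedding~$e$ and the cover $\mathcal{K}$ would require verifying that $\psi$ is compatible with the chosen trivializations, which is where the real care would be needed.
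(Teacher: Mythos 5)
Part (a) of your proposal is correct, but you route continuity differently from the paper: you combine Lemma~\ref{evalsect}, the chain rule, and the exponential law of Lemma~\ref{expsect} to conclude that $\Gamma_f(\psi)$ is even $C^\infty$, hence continuous. The paper instead observes that $\Gamma_f(\psi)$ is a restriction of the pushforward $C^\alpha(M,\psi)\colon C^\alpha(M,E_1)\to C^\alpha(M,E_2)$, which is continuous by Lemma~\ref{pull-and-push}\,(a); since both section spaces carry the topologies induced by the ambient function spaces, continuity is immediate. Both arguments are sound; the paper's is shorter and purely topological, while yours yields smoothness of $\Gamma_f(\psi)$ as a by-product.

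In part (b) there is a genuine flaw. The inverse map $(\tau_1,\tau_2)\mapsto \big(x\mapsto(\tau_1(x),\tau_2(x))\big)$ is \emph{not} of the form $\Gamma_f(\text{bundle map})$: its domain is the product $\Gamma_f(E_1)\times\Gamma_f(E_2)$ of two section spaces rather than a single $\Gamma_f(E)$, so part (a) cannot be applied to it, and there is no ``diagonal-type inclusion'' bundle map inducing it. The missing idea — and the paper's proof — is to write the inverse as $(\sigma,\tau)\mapsto \Gamma_f(\iota_1)(\sigma)+\Gamma_f(\iota_2)(\tau)$, where $\iota_1\colon E_1\to E_1\oplus E_2$, $v\mapsto(v,0)$, and $\iota_2\colon E_2\to E_1\oplus E_2$, $w\mapsto(0,w)$, are bundle maps over the identity; then part (a) together with continuity of addition in the topological vector space $\Gamma_f(E_1\oplus E_2)$ gives continuity of the inverse, and the homeomorphism follows with no further topology bookkeeping. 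Your fallback argument, identifying the topologies directly, can be repaired but not as written: you would need Lemma~\ref{c-alpha-top-product}\,(c) rather than (b) (the total spaces $E_1$, $E_2$ are manifolds, not locally convex spaces); you must verify that $E_1\oplus E_2$ is a submanifold of $E_1\times E_2$ so that Lemma~\ref{maps-to-sub} shows $C^\alpha(M,E_1\oplus E_2)$ carries the topology induced by $C^\alpha(M,E_1\times E_2)$; and \eqref{top:emb} does not record the compatibility you invoke — it concerns the embedding of $\Gamma_f$ for a single bundle over a trivializing cover. The paper's two-line argument avoids all of this machinery.
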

\begin{proof}
(a) If $\tau\in \Gamma_f(E_1)$, then $\psi\circ\tau\colon M\to E_2$ is $C^\alpha$ by the chain rule and $\pi_2\circ
\psi\circ \tau =\pi_1\circ\tau=f$, whence $\psi\circ \tau\in \Gamma_f(E_2)$. Evaluating at points we see that the map
$\Gamma_f(\psi)$ is linear; being a restriction of the continuous map $C^\alpha(M,\psi)\colon
C^\alpha(M,E_1)\to C^\alpha(M,E_2)$ (see Lemma \ref{pull-and-push}), it is continuous.

(b) If $\rho_j\colon E_1\oplus E_2\to E_j$
is the map taking $(v_1,v_2)\in E_1 \times E_2$ to $v_j$ for $j\in \{1,2\}$
and $\iota_j\colon E_j\to E_1\oplus E_2$ is the map taking $v_j\in E_j$
to $(v_1,0)$ and $(0,v_2)$, respectively, then
\[
(\Gamma_f(\rho_1),\Gamma_f(\rho_2))\colon \Gamma_f(E_1\oplus E_2)\to\Gamma_f(E_1)\times
\Gamma_f(E_2)
\]
is a continuous linear map which is a homeomorphism as it has the continuous map $(\sigma,\tau)\mto \Gamma_f(\iota_1)(\sigma)+
\Gamma_f(\iota_2)(\tau)$ as its inverse.
\end{proof}
\subsection*{Construction of the canonical manifold structure}
Having constructed spaces of $C^\alpha$-sections as model spaces, we are now in a position to construct the canonical manifold structure on $C^\alpha(K,M)$,
assuming that~$M$ is covered by local additions and $K$ is compact. 
\begin{defn}\label{def-loa}
Let $M$ be a smooth manifold. A \emph{local addition} is a smooth map
\[
\Sigma \colon U \to M,
\]
defined on an open neighborhood $U \subseteq TM$ of the zero-section
$0_M:=\{0_p\in T_pM\colon p\in M\}$
such that $\Sigma(0_p)=p$ for all $p\in M$,
\[
U':=\{(\pi_{TM}(v),\Sigma(v))\colon v\in U\}
\]
is open in $M\times M$ (where $\pi_{TM}\colon TM\to M$ is the bundle
projection) and the map
\[
\theta:=(\pi_{TM},\Sigma)\colon U \to U'
\]
is a $C^\infty$-diffeomorphism. If
\begin{equation}\label{bettersigma}
T_{0_p}(\Sigma|_{T_pM})=\id_{T_pM}\;\,
\mbox{for all $p\in M$,}
\end{equation}
we say that the local addition $\Sigma$ is \emph{normalized}.
\end{defn}
Until Lemma~\ref{la:cano:mfdmap}, we fix the following setting, which allows a canonical
manifold structure on $C^\alpha(K,M)$ to be constructed.
\begin{numba}\label{thesetA}
We consider a product
$K=K_1 \times K_2 \times \cdots \times K_n$
of compact smooth manifolds with rough boundary,
a smooth manifold $M$ which admits a local addition $\Sigma \colon TM\supseteq U \rightarrow M$,
and $\alpha \in (\N_0 \cup \{\infty\})^n$.
\end{numba}
\begin{numba}\textbf{Manifold structure on $\boldmath{C^\alpha (K,M)}$ if $\boldmath{M}$ admits a local addition}\label{numba:mfdstruct}\\
For $f\in C^\alpha(K,M)$, let $\Gamma_f\coloneq\{\tau\in C^\alpha(K,TM)\colon \pi_{TM}\circ \tau=f\}$ be the locally convex space constructed in \ref{bundleconv}. Then
\begin{align}
O_f&\coloneq \Gamma_f\cap C^\alpha(K,U) \quad \text{is an open subset of~$\Gamma_f$,} \notag\\
O_f'&\coloneq\{g\in C^\alpha(K,M)\colon (f,g)(K)\subseteq U'\} \quad \text{is an open subset of $C^\alpha(K,M)$, and} \notag\\
\phi_f&\colon O_f\to O'_f,\quad\tau\mto \Sigma\circ \tau \label{thephif}
\end{align}
is a homeomorphism with inverse $g\mto \theta^{-1}\circ (f,g)$.
By the preceding, if also $h\in C^\alpha(K,M)$, then $\psi\coloneq\phi_h^{-1}\circ \phi_f$
has an open (possibly empty) domain $D\subseteq \Gamma_f$ and is a smooth map $D\to\Gamma_h$
by Lemma~\ref{expsect}, as $\psi^\wedge\colon D\times K\to TM$,
\[
(\tau,x)\mto (\phi_h^{-1}\circ \phi_f)(\tau)(x)=\theta^{-1}(h(x),\Sigma(\tau(x)))
=\theta^{-1}(h(x),\Sigma(\ve(\tau,x)))
\]
is a $C^{\infty,\alpha}$-map (exploiting that the evaluation map $\ve\colon \Gamma_f\times K\to TM$ is $C^{\infty,\alpha}$, by Lemma~\ref{evalsect}).
Hence $C^\alpha(K,M)$ endowed with the $C^\alpha$-topology has a smooth manifold structure for which each of the maps
$\phi_f^{-1}$ is a local chart.
\end{numba}
We now prove that the manifold structure on $C^\alpha (K,M)$ is canonical.
Together with Lemma~\ref{base-cano}\,(b), this implies
that the smooth manifold structure on $C^\alpha(K,M)$ constructed in \ref{numba:mfdstruct}
is independent of the choice of local addition.
\begin{la}\label{la:cano:mfdmap}
The manifold structure on $C^\alpha (K,M)$ constructed in {\rm \ref{numba:mfdstruct}} is canonical.
\end{la}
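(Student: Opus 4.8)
The plan is to verify the two defining properties of a canonical manifold structure (Definition~\ref{defn-can}) for the structure built in~\ref{numba:mfdstruct}: first that its underlying topology is the compact-open $C^\alpha$-topology, and second that for every $m\in\N$, every $\beta\in(\N_0\cup\{\infty\})^m$, and every product $L=L_1\times\cdots\times L_m$ of smooth manifolds with rough boundary, a map $g\colon L\to C^\alpha(K,M)$ is $C^\beta$ if and only if $g^\wedge\colon L\times K\to M$ is $C^{\beta,\alpha}$. The topology condition is immediate: by construction in~\ref{numba:mfdstruct} the manifold $C^\alpha(K,M)$ was endowed with the $C^\alpha$-topology, the charts $\phi_f$ being homeomorphisms onto open subsets, so only the exponential-law equivalence requires work.

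For the exponential law, the key observation is that the charts $\phi_f\colon O_f\to O_f'$ reduce the question to the already-established Exponential Law for sections, Lemma~\ref{expsect}. I would argue as follows. Since $C^\beta$-ness of $g$ and $C^{\beta,\alpha}$-ness of $g^\wedge$ are both local in $L$ (and, for the latter, the condition localizes over $K$ as well), it suffices to check the equivalence on a neighborhood of an arbitrary point $x_0\in L$. Shrinking, I may assume $g(L)\subseteq O_f'$ for $f:=g(x_0)$, so that $g$ factors through the chart: $g=\phi_f\circ h$ with $h:=\phi_f^{-1}\circ g\colon L\to O_f\subseteq\Gamma_f$. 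Because $\phi_f$ is a diffeomorphism onto an open set and $\Gamma_f$ carries the subspace structure making it a chart domain, $g$ is $C^\beta$ if and only if $h$ is $C^\beta$ as a map into $\Gamma_f$. By Lemma~\ref{expsect}, $h$ is $C^\beta$ if and only if $h^\wedge\colon L\times K\to TM$ is $C^{\beta,\alpha}$. It then remains to relate $h^\wedge$ to $g^\wedge$ via the local addition: one has $g^\wedge(x,y)=g(x)(y)=\Sigma(h(x)(y))=\Sigma(h^\wedge(x,y))$ and conversely $h^\wedge(x,y)=\theta^{-1}(f(y),g^\wedge(x,y))$, the latter using the inverse $g\mapsto\theta^{-1}\circ(f,g)$ of $\phi_f$ recorded in~\ref{numba:mfdstruct}.

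The crux is therefore showing that $h^\wedge$ is $C^{\beta,\alpha}$ if and only if $g^\wedge$ is $C^{\beta,\alpha}$, which is precisely where the smoothness of $\Sigma$ and $\theta$ enters. One direction uses that $\Sigma$ is smooth: composing the $C^{\beta,\alpha}$-map $h^\wedge$ with the smooth $\Sigma$ yields a $C^{\beta,\alpha}$-map by the Chain Rule for $C^\alpha$-maps (\cite[Lemma~3.16]{Alz}), so $g^\wedge=\Sigma\circ h^\wedge$ is $C^{\beta,\alpha}$. The converse uses that $\theta^{-1}$ is smooth together with the auxiliary map $(x,y)\mapsto(f(y),g^\wedge(x,y))$, which is $C^{\beta,\alpha}$ because $f$ is $C^\alpha$ and $g^\wedge$ is $C^{\beta,\alpha}$; again the Chain Rule gives that $h^\wedge=\theta^{-1}\circ(f\circ\pr_K,g^\wedge)$ is $C^{\beta,\alpha}$.

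\textbf{The main obstacle} I anticipate is bookkeeping rather than a conceptual difficulty: carefully justifying the localization step so that the global equivalence follows from the local one near each $x_0$, and ensuring the charts $\phi_f$ actually cover enough of $g(L)$ after shrinking (this uses that $O_f'$ is open and $g$ is at least continuous, which holds once the topology is identified with the $C^\alpha$-topology). A secondary subtlety is that the Chain Rule in~\cite{Alz} must be applied with the manifold-valued, several-variable $C^\alpha$-conventions of~\ref{C-alpha-in-mfd}, so I would phrase the compositions with $\Sigma$ and $\theta^{-1}$ in local charts to stay within the hypotheses of \cite[Lemma~3.16]{Alz}; since all the maps in sight are either smooth ($\Sigma,\theta^{-1}$) or $C^{\beta,\alpha}$, no regularity is lost. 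Once these points are handled, the equivalence is exactly the pre-canonical condition, and combined with the topology identification this establishes that the structure is canonical.
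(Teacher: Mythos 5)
Your overall strategy is essentially the paper's: work in the chart $\phi_f$, apply the section exponential law (Lemma~\ref{expsect}), and pass between $h^\wedge$ and $g^\wedge$ via $\Sigma$ and $\theta^{-1}$ using the chain rule \cite[Lemma~3.16]{Alz}. The only structural difference is in the forward direction: the paper first proves that $\ev\colon C^\alpha(K,M)\times K\to M$ is $C^{\infty,\alpha}$ (by the same chart computation $\ev(\phi_f(\tau),x)=\Sigma(\ve(\tau,x))$) and then writes $g^\wedge=\ev\circ(g\times\id_K)$, whereas you localize in~$L$; both are fine, and in the forward direction your localization is legitimate since $g$, being $C^\beta$, is continuous into the manifold topology, which is the $C^\alpha$-topology by construction in~\ref{numba:mfdstruct}.

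There is, however, one genuine gap, in the converse direction ($g^\wedge$ of class $C^{\beta,\alpha}$ $\Rightarrow$ $g$ of class $C^\beta$). Your localization needs $W:=g^{-1}(O_f')$ to be a neighbourhood of $x_0$, and for this you invoke continuity of $g$, justified by saying it ``holds once the topology is identified with the $C^\alpha$-topology.'' That justification is circular: identifying the manifold topology with the compact-open $C^\alpha$-topology says nothing about continuity of~$g$, which at this stage is known only through the hypothesis on $g^\wedge$. The correct repair, which is what the paper does, is to note that $g^\wedge$ is continuous and $K$ is compact, so $g$ is continuous as a map into $C(K,M)$ with the compact-open topology (see \cite[Proposition~A.5.17]{GaN}); then $W=(\psi_f\circ g)^{-1}(C(K,U'))$, where $\psi_f\colon C(K,M)\to C(K,M\times M)$, $h\mapsto (f,h)$ is continuous and $C(K,U')$ is open in the compact-open topology, again by compactness of~$K$. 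The point is that $O_f'$ is open already in the compact-open topology, which is coarser than the $C^\alpha$-topology; openness only in the $C^\alpha$-topology would not suffice, since continuity of $g$ into that finer topology is not available from the hypothesis. With this step filled in, your argument goes through and coincides with the paper's proof.
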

\begin{proof}
We first show that the evaluation map $\ev\colon C^\alpha_f(K,M)\times K\to M$ is $C^{\infty,\alpha}$. 
It suffices to show that $\ev(\phi_f(\tau),x)$ is $C^{\infty,\alpha}$ in $(\tau,x)\in O_f\times K$ for all $f\in C^\alpha(K,M)$. This follows from
\[
\ev(\phi_f(\tau),x)=\Sigma(\tau(x))=\Sigma(\ve(\tau,x)),
\]
where $\ve\colon\Gamma_f\times K\to TM$, $(\tau,x)\mto\tau(x)$
is $C^{\infty,\alpha}$ by Lemma~\ref{evalsect}.
Now let $\beta \in(\N_0\cup\{\infty\})^m$ and $h\colon N\to C^\alpha(K,M)$ be a map, where
$N=N_1\times\cdots\times N_n$ is a
product of smooth manifolds with rough boundary.
If $h$ is~$C^\beta$, then $h^\wedge=\ev\circ (h\times\id_K)$
is~$C^{\beta,\alpha}$.
Conversely, let $h^\wedge$ be a $C^{\beta,\alpha}$-map,
then $h$ is continuous as a map to $C(K,M)$ with the compact-open topology
(see \cite[Proposition~A.6.17]{GaN})
and $h(x)=h^\wedge(x,\cdot)\in C^\alpha(K,M)$ for each $x\in N$.
Given $x\in N$, let $f:=h(x)$. Then $\psi_f\colon C(K,M)\to C(K,M)\times C(K,M)\cong C(K,M\times M)$,
$g\mto (f,g)$ is a continuous map.
Since $\psi_f(g)$ is $C^\alpha$ if and only if~$g$ is $C^\alpha$, we see that
\begin{eqnarray*}
W &:=& h^{-1}(O_f')=h^{-1}(\psi_f^{-1}(C^\alpha(K,U')))=(\psi_f\circ h)^{-1}(C^\alpha(K,U'))\\
&=&(\psi_f\circ h)^{-1}(C(K,U'))
\end{eqnarray*}
is an open $x$-neighborhood in~$N$. As the map $(\phi_f^{-1}\circ h|_W)^\wedge\colon
W\times K\to TM$,
\[
(y,z)\mto ((\phi_f)^{-1}\circ h|_W)^\wedge(y,z)=(\theta^{-1}\circ (f,h(y)))(z)=\theta^{-1}(f(z),h^\wedge(y,z))
\]
is $C^{\beta,\alpha}$ by \cite[Lemma 3.16]{Alz}, the map $\phi_f^{-1}\circ h|_W\colon W\to\Gamma_f$ (and hence also $h|_W$)
is~$C^k$, by Lemma~\ref{expsect}.
\end{proof}

\begin{prop}\label{prop:cano:covered}
Let $K=K_1\times\cdots\times K_n$ be a
product of compact smooth manifolds with rough boundary
and $M$ be a manifold covered by local additions. For every $\alpha \in (\N_0 \cup\{\infty\})^n$, the set $C^\alpha (K,M)$ can be endowed with a canonical manifold structure.
\end{prop}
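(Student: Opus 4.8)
The plan is to reduce to the case already settled in Lemma~\ref{la:cano:mfdmap}, where the target carries a global local addition, and then glue. By hypothesis we may write $M=\bigcup_{j\in J}M_j$ as the union of an upward directed family of open submanifolds $M_j$, each admitting a local addition. Since $K=K_1\times\cdots\times K_n$ is a product of compact manifolds, it is compact, so \ref{numba:mfdstruct} and Lemma~\ref{la:cano:mfdmap} provide a canonical smooth manifold structure on each $C^\alpha(K,M_j)$. First I would identify $C^\alpha(K,M_j)$ with the subset $\{f\in C^\alpha(K,M)\colon f(K)\subseteq M_j\}$ and observe that it is open in the compact-open $C^\alpha$-topology: as $K$ is compact and $M_j$ is open, this is already a subbasic open set for the coarser compact-open $C^0$-topology. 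These sets cover $C^\alpha(K,M)$, because $f(K)$ is compact for each $f$, hence contained in finitely many $M_j$, and upward directedness yields a single $M_\ell\supseteq f(K)$.

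The conceptual heart is the compatibility of the canonical structures on overlaps. For this I would first record the auxiliary fact that, whenever $C^\alpha(K,N)$ is canonical and $N'\subseteq N$ is an open submanifold, the open-submanifold structure on $C^\alpha(K,N')=\{f\colon f(K)\subseteq N'\}$ is again canonical. This is immediate from the definitions: $g\colon L\to C^\alpha(K,N')$ is $C^\beta$ iff it is $C^\beta$ into the open submanifold $C^\alpha(K,N)$, iff $g^\wedge\colon L\times K\to N$ is $C^{\beta,\alpha}$ by canonicity of $C^\alpha(K,N)$, iff $g^\wedge\colon L\times K\to N'$ is $C^{\beta,\alpha}$ since $N'$ is open in $N$; the topology matches by Lemma~\ref{maps-to-sub}. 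Given $i,j$, choose $M_\ell\supseteq M_i\cup M_j$ by upward directedness and apply this fact inside the canonical manifold $C^\alpha(K,M_\ell)$ to the open submanifolds $M_i$, $M_j$ and $M_i\cap M_j$. Together with the essential uniqueness of canonical structures (Lemma~\ref{base-cano}\,(b)), this shows that the two structures induced on the overlap $C^\alpha(K,M_i\cap M_j)$ from $C^\alpha(K,M_i)$ and from $C^\alpha(K,M_j)$ both coincide with its open-submanifold structure in $C^\alpha(K,M_\ell)$, hence with one another. The charts from \ref{numba:mfdstruct} therefore glue to a smooth manifold structure on $C^\alpha(K,M)$ whose underlying topology is the compact-open $C^\alpha$-topology (and which is Hausdorff since that topology is).

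Finally I would verify the defining bijection for the glued structure, fixing a product $L=L_1\times\cdots\times L_m$, a multiindex $\beta$, and a map $g\colon L\to C^\alpha(K,M)$. Both implications are proved by localizing in $L$. If $g$ is $C^\beta$, then by continuity each $x_0\in L$ has a neighborhood $W$ with $g(W)\subseteq C^\alpha(K,M_j)$ for some $j$, where canonicity of $C^\alpha(K,M_j)$ gives that $g^\wedge$ is $C^{\beta,\alpha}$ on $W\times K$; since $C^{\beta,\alpha}$ is a local condition, $g^\wedge$ is $C^{\beta,\alpha}$. Conversely, if $g^\wedge$ is $C^{\beta,\alpha}$, then $g$ is a well-defined map into $C^\alpha(K,M)$ (each $g(x)=g^\wedge(x,\cdot)$ is $C^\alpha$ by \cite[Lemma~3.3]{Alz}), and for fixed $x_0$ the compact slice $\{x_0\}\times K$ is mapped by the continuous $g^\wedge$ into some open $M_j$. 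The step I expect to be the main technical obstacle, and the one place compactness of $K$ is truly essential, is promoting this to a tube $W\times K$ mapping into $M_j$; this is exactly the tube lemma. On such a $W$ one co-restricts $g^\wedge$ to the open submanifold $M_j$ and invokes canonicity of $C^\alpha(K,M_j)$ to conclude that $g|_W$ is $C^\beta$, then glues over $L$. This establishes that the glued structure is pre-canonical, and since its topology is the compact-open $C^\alpha$-topology, it is canonical.
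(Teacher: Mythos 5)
Your proof is correct and follows essentially the same route as the paper: cover $C^\alpha(K,M)$ by the open sets $C^\alpha(K,M_j)$, endow each with the canonical structure from Lemma~\ref{la:cano:mfdmap}, use uniqueness of canonical structures together with compatibility on overlaps (the paper gets this from Lemma~\ref{base-cano}\,(b) and~(c), you from a directly proved open-submanifold version of the same fact), and glue. The only difference is one of detail, not of method: you spell out, via the tube lemma and localization in $L$, the final verification that the glued structure is pre-canonical, which the paper compresses into the remark that each step of the ascending union being canonical implies ``the same holds for the union.''
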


\begin{proof}
 Let $(M_j,\Sigma_j)_{j\in J}$ be an upward directed family of open submanifolds $M_j$ with local additions $\Sigma_i$ whose union coincides with $M$. 
 As $K$ is compact, we observe that the sets $C^\alpha (K,M_j) \coloneq \{f \in C^{\alpha} (K,M) \mid f(K) \subseteq M_j\}$ are open in the $C^\alpha$-topology.
 Following Lemma \ref{la:cano:mfdmap}, we can endow every $C^\alpha (K,M_j)$ with a canonical manifold structure. Now if $M_j \subseteq M_\ell$, Lemma \ref{base-cano}\,(c) implies that also the submanifold structure induced by the inclusion $C^{\alpha}(K,M_j) \subseteq C^\alpha (K,M_\ell)$ is canonical. Thus uniqueness of canonical structures, Lemma \ref{base-cano}\,(b), shows that the submanifold structure must coincide with the canonical structure constructed on $C^\alpha (K,M_j)$ via \ref{numba:mfdstruct}.  
 As $C^\alpha (K,M) = \bigcup_{j \in J} C^\alpha (K,M_j)$ and each
 step of the ascending union is canonical,
 the same holds for the union.
\end{proof}

\subsection*{The tangent bundle of the manifold of mappings}

In the rest of this section, we identify the tangent bundle of $C^\alpha (K,M)$ as the manifold $C^\alpha (K,TM)$ (under the assumption that $K$ is compact and $M$ covered by local additions). To explain the idea, let us have a look at $C^\alpha (K,TM)$.

\begin{numba}\label{numba:TVBun}
 Consider a smooth manifold $M$ covered by local additions. Then also $TM$ is covered by local additions, cf.\ \cite[A.11]{AGS} for the construction. Thus for $K$ a compact manifold $C^\alpha (K,M)$ and $C^\alpha (K,TM)$ are canonical manifolds.
 If we denote by $\pi \colon TM \rightarrow M$ the bundle projection, Corollary \ref{cor:pushforward} shows that the pushforward $\pi_\ast \colon C^\alpha(K,TM) \rightarrow C^\alpha (K,M)$ is smooth. The fibres of $\pi_\ast$ are the locally convex spaces $\pi_\ast^{-1} (f)=\Gamma_f$ from \ref{bundleconv}. We deduce that $\pi_\ast \colon C^\alpha (K,TM) \rightarrow C^\alpha (K,M)$ is a vector bundle (see Theorem \ref{thm:tangentident} for a detailed proof).  
\end{numba}

We will first identify the fibres of the tangent bundle.

\begin{numba}\label{numba:curves:tan}
The tangent space $T_f C^\alpha (K,M)$ is given by equivalence classes $[t\mapsto c(t)]$ of $C^1$-curves $c \colon ]{-\varepsilon}, \varepsilon[\, \rightarrow C^\alpha (K,M)$ with $c (0)= f$, where the equivalence relation $c \sim c'$ holds for two such curves if and only if $\dot{c}(0) = \dot{c}'(0)$.
Since the manifold structure is canonical (Lemma \ref{prop:cano:covered}) we see that $c$ is $C^1$ if and only if the adjoint map $c^\wedge \colon ]{-\varepsilon},\varepsilon[\, \times K \rightarrow N$ is a $C^{1,\alpha}$-map.
The exponential law shows that the derivative of $c$ corresponds to the (partial) derivative of $c^\wedge$, i.e.\ the mapping $\Psi$ from \eqref{tangent:iso} restricts to a bijection
\begin{align}\label{eq:PHI}
\Psi_f \colon T_\gamma C^\ell (K,M) &\rightarrow \Gamma_f = \{h \in C^\ell (K,TM)\mid \pi\circ h =f\},\\  [c] &\mapsto (k \mapsto [t \mapsto c^\wedge (t,k)]).\notag
\end{align}
\end{numba} 

We wish to glue the bijections on the fibres to identify the tangent manifold as the bundle from \ref{numba:TVBun}. To this end, we recall a fact from \cite[Lemma A.14]{AGS}:

\begin{numba}\label{numba:norm:locadd}
 If a manifold $M$ admits a local addition, it also admits a normalized local addition.
\end{numba}
Hence we may assume without loss of generality that the local additions in the following are normalized. Moreover, we will write $\varepsilon_x\colon C^\alpha(K,M) \rightarrow M$ for the point evaluation in $x \in K$. Then the tangent bundle of $C^\alpha(K,M)$ can be described as follows. 

\begin{thm}\label{thm:tangentident}
Let $K=K_1\times\cdots\times K_n$ be a product
of compact smooth manifolds with rough boundary
and~$M$ be covered by local additions. Then 
\[
(\pi_{TM})_*\colon C^\ell(K,TM)\to C^\ell(K,M)
\]
is a smooth vector bundle with fibre $\Gamma_f$ over $f\in C^\ell(K,M)$.
For each $v\in T(C^\ell(K,M))$, we have $\Psi(v):=(T\ve_x(v))_{x\in K}\in C^\alpha(K,TM)$
and the map \eqref{tangent:iso},
\[
\Psi\colon TC^\alpha(K,M)\to C^\alpha(K,TM),\quad v\mto \Psi(v)
\]
is an isomorphism of smooth vector bundles $($over the identity$)$.
\end{thm}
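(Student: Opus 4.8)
I will prove the two assertions in turn: that $(\pi_{TM})_*$ carries a smooth vector bundle structure with fibre $\Gamma_f$, and that the intrinsic map $\Psi$, $v\mto(T\ve_x(v))_{x\in K}$, is an isomorphism onto it. Throughout I fix a local addition $\Sigma\colon TM\supseteq U\to M$, which I may assume normalized (\ref{numba:norm:locadd}), write $\theta:=(\pi_{TM},\Sigma)\colon U\to U'$ for the associated diffeomorphism, and use freely that both $C^\alpha(K,M)$ and $C^\alpha(K,TM)$ are canonical manifolds (Proposition~\ref{prop:cano:covered} and \ref{numba:TVBun}), that $(\pi_{TM})_*$ is smooth with $(\pi_{TM})_*^{-1}(f)=\Gamma_f$ a locally convex space (Corollary~\ref{cor:pushforward} and \ref{bundleconv}), and that each $\Gamma_f$ satisfies the exponential law of Lemma~\ref{expsect} and has $C^{\infty,\alpha}$ evaluation (Lemma~\ref{evalsect}). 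The strategy is to build vector bundle trivializations out of the fibrewise (``vertical'') derivative of $\Sigma$, and then to read off $\Psi$ in these trivializations.

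The linear-algebra input is the following. For $w\in U$ with $p:=\pi_{TM}(w)$, the \emph{vertical derivative}
\[
d_v\Sigma(w;\xi):=\frac{d}{dt}\Big|_{t=0}\Sigma(w+t\xi)\in T_{\Sigma(w)}M
\]
is a continuous linear map $T_pM\to T_{\Sigma(w)}M$. Restricting $T_w\theta$ to the vertical subspace $\ker(T_w\pi_{TM})\isom T_pM$ (on which the first component $T\pi_{TM}$ vanishes) and using that $\theta$ is a diffeomorphism shows that $d_v\Sigma(w;\cdot)$ is a topological isomorphism onto $T_{\Sigma(w)}M$. Its inverse assembles into a \emph{smooth} map $\Lambda$ on the fibre product $\{(w,\zeta)\in U\times TM\colon \pi_{TM}(\zeta)=\Sigma(w)\}$: differentiating the smooth map $\theta^{-1}$ in its second $M$-variable produces a vertical tangent vector which, under $\ker(T\pi_{TM})\isom TM$, equals $(d_v\Sigma(w;\cdot))^{-1}(\zeta)$. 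I expect this to be the main obstacle, since one must obtain smoothness of the fibrewise inverse without invoking an inverse function theorem; the identification of $\Lambda$ with a partial derivative of $\theta^{-1}$ is exactly what circumvents this.

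Now I construct the trivializations. Given $f_0\in C^\alpha(K,M)$ with chart $\phi_{f_0}\colon O_{f_0}\to O'_{f_0}$ from \ref{numba:mfdstruct}, set $\tau_0:=\phi_{f_0}^{-1}(g)\in O_{f_0}$ for $g\in O'_{f_0}$ and define
\[
\Xi_{f_0}\colon (\pi_{TM})_*^{-1}(O'_{f_0})\to O'_{f_0}\times\Gamma_{f_0},\quad
\tau\mto\bigl(\pi_{TM}\circ\tau,\ [x\mto\Lambda(\tau_0(x),\tau(x))]\bigr).
\]
This is fibrewise linear, and its second component lies in $\Gamma_{f_0}$. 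It is smooth by the exponential law of Lemma~\ref{expsect}: its adjoint $(\tau,x)\mto\Lambda(\tau_0(x),\tau(x))$ is $C^{\infty,\alpha}$ by the Chain Rule \cite[Lemma~3.16]{Alz}, because $\Lambda$ is smooth and the maps $(\tau,x)\mto\tau(x)$ and $(\tau,x)\mto\tau_0(x)=\ev(\phi_{f_0}^{-1}((\pi_{TM})_*\tau),x)$ are $C^{\infty,\alpha}$ (using Lemma~\ref{evalsect} together with smoothness of $(\pi_{TM})_*$ and of the chart $\phi_{f_0}^{-1}$). The inverse $(g,Y)\mto[x\mto d_v\Sigma(\tau_0(x);Y(x))]$ is smooth for the same reason, so each $\Xi_{f_0}$ is a diffeomorphism; the transition maps, being composites of fibrewise linear diffeomorphisms, are smooth and fibrewise linear. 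As the $O'_{f_0}$ cover $C^\alpha(K,M)$, this exhibits $(\pi_{TM})_*$ as a smooth vector bundle with fibre $\Gamma_f$ over $f$.

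Finally, $\Psi$ is well defined into $\Gamma_f\sub C^\alpha(K,TM)$, covers the identity, and restricts on fibres to the linear bijections $\Psi_f$ of \eqref{eq:PHI} (\ref{numba:curves:tan}). To identify it as a bundle isomorphism I compute it in the tangent chart $T(\phi_{f_0}^{-1})$ on $TC^\alpha(K,M)$, with coordinates $(\tau_0,X)\in O_{f_0}\times\Gamma_{f_0}$, against $\Xi_{f_0}$. Writing $v=T\phi_{f_0}(\tau_0,X)$ and differentiating through $\ve_x$ gives $\Psi(v)(x)=\frac{d}{dt}\big|_{t=0}\Sigma(\tau_0(x)+tX(x))=d_v\Sigma(\tau_0(x);X(x))$; applying $\Xi_{f_0}$ and using that $\Lambda$ inverts $d_v\Sigma$ fibrewise yields $\Xi_{f_0}(\Psi(v))=(\phi_{f_0}(\tau_0),X)$. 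Thus in these coordinates $\Psi$ is $(\tau_0,X)\mto(\phi_{f_0}(\tau_0),X)$, a diffeomorphism with manifestly smooth inverse $(g,Y)\mto(\phi_{f_0}^{-1}(g),Y)$; normalization of $\Sigma$ makes the fibre component literally the identity. Hence $\Psi$ is a local diffeomorphism which, being a bijection covering $\id$ and carrying $T(\phi_{f_0}^{-1})$-trivializations to $\Xi_{f_0}$-trivializations by $\phi_{f_0}\times\id$, is a diffeomorphism and an isomorphism of smooth vector bundles over the identity.
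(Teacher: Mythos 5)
Your proposal is correct, but it proves the theorem by a genuinely different route than the paper. The paper's proof first uses a \emph{normalized} local addition to compute $T\ve_x T\phi_f(0,\tau)=\tau(x)$, so that $\Psi$ is fibrewise a linear bijection onto $\Gamma_f$, and then establishes smoothness via the chart identity $\Psi\circ T\phi_f=\phi_{0\circ f}\circ \Theta_f$, where $\phi_{0\circ f}$ is a chart of $C^\alpha(K,TM)$ coming from the lifted local addition $\Sigma_{TM}$ on $TM$ and $\Theta_f$ is the diffeomorphism induced by the vertical-lift/flip map $\Theta$ of \cite[Appendix A]{AGS}; the vector bundle structure on $(\pi_{TM})_*$ is then obtained \emph{a posteriori}, by transporting the bundle structure of $TC^\alpha(K,M)$ through $\Psi$. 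You instead build trivializations $\Xi_{f_0}$ of $(\pi_{TM})_*$ directly, with the fibrewise inverse $\Lambda$ of the vertical derivative $d_v\Sigma$ as the key ingredient, and then read off $\Psi$ in coordinates as $(\tau_0,X)\mapsto(\phi_{f_0}(\tau_0),X)$. Your identification of $\Lambda$ with the partial derivative of $\theta^{-1}$ in its second variable (composed with the inverse of the vertical lift) is exactly the right way to obtain smoothness of the fibrewise inverse without any inverse function theorem; this is the same analytic content that the paper leaves encapsulated in the assertion that $\Sigma_{TM}$ is a local addition on $TM$, so the two arguments unpack one mechanism in different packaging. What your route buys: the bundle structure on $(\pi_{TM})_*$ is established independently of $\Psi$, with explicit trivializations, and neither normalization nor the explicit form of $\Sigma_{TM}$ and the canonical flip is needed (in your chart formula the fibre component is the identity because $\Lambda$ cancels $d_v\Sigma$ exactly, irrespective of normalization, so your closing remark about normalization is redundant rather than needed). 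What the paper's route buys: it reuses the \cite{AGS} appendix wholesale, so the only genuinely new work is the pointwise verification of the chart identity. Two small repairs you should make: (i) the theorem only assumes that $M$ is \emph{covered} by local additions, so you cannot fix a global $\Sigma$ on $TM$ at the outset; as in the paper, first note that the open sets $C^\alpha(K,M_j)$ cover $C^\alpha(K,M)$, that $(\pi_{TM})_*^{-1}(C^\alpha(K,M_j))=C^\alpha(K,TM_j)$, and that both assertions are local over the base, which reduces everything to a single local addition; (ii) in the claim that $d_v\Sigma(w;\cdot)$ is an isomorphism onto $T_{\Sigma(w)}M$, make the surjectivity explicit via $(T_w\theta)^{-1}(\{0\}\times T_{\Sigma(w)}M)=\ker(T_w\pi_{TM})$, since injectivity of the restriction alone would not suffice in the locally convex setting.
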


If we wish to emphasize the dependence on $M$, we write $\Psi_M$
instead of~$\Psi$.

\begin{proof}
Since $M$ is covered by local additions, there is a family of open submanifolds (ordered by inclusion) $(M_j)_{j\in J}$ which admit local additions $\Sigma_j$. 
Now by compactness of $K$ the image of $f \in C^\alpha (K,M)$ is always contained in some $M_j$ and similarly for $\tau \in \Gamma_f$ we then have $\tau(K) \subseteq \pi^{-1}(M_j) =TM_j$, where $\pi \coloneq \pi_{TM}$ is the bundle projection of $TM$. As the family $(M_j)_j$ of open manifolds exhausts $M$, we have $C^\alpha(K,M) = \bigcup_{j\in J} C^\alpha (K,M_j)$ and all of these subsets are open. Hence it suffices to prove that $\Psi$ restricts to a bundle isomorphism for every $M_j$. In other words we may assume without loss of generality that $M$ admits a local addition $\Sigma$. 
Given $f\in C^\alpha(K,M)$, the map $\phi_f\colon O_f\to O_f'\subseteq C^\alpha(K,M)$
is a $C^\infty$-diffeomorphism with $\phi_f(0)=f$, whence
$
T\phi_f(0,\cdot)\colon\Gamma_f\to T_f(C^\alpha(K,M))
$
is an isomorphism of topological vector spaces. For $\tau\in\Gamma_f$,
we have for $x\in K$
\begin{eqnarray*}
T\ve_xT\phi_f(0,\tau)&=&T\ve_x([t\mto \Sigma\circ (t\tau)])
=[t\mto \Sigma(t\tau(x))]\\
&=& [t\mto\Sigma|_{T_{f(x)}M}(t\tau(x))]
=T\Sigma|_{T_{f(x)}M}(\tau(x))=\tau(x),
\end{eqnarray*}
as $\Sigma$ is assumed normalized. Thus $\Psi(T\phi_f(0,\tau))=\tau\in\Gamma_f\subseteq
C^\alpha(K,TM)$, whence $\Psi(v)\in \Gamma_f\subseteq C^\alpha(K,TM)$ for each $v\in T_f(C^\alpha(K,M))$ and $\Psi$ takes $T_f(C^\alpha(K,M))$ bijectively
and linearly onto~$\Gamma_f$. Now the manifolds $T(C^\alpha(K,M))$ and $C^\alpha(K,TM)$ are the disjoint union of the sets $T_f(C^\alpha(K,M))$ and $\Gamma_f=\pi_\ast^{-1}(\{f\})$,
respectively, we see that $\Psi$ is a bijection. If we can show that $\Psi$ is a $C^\infty$-diffeomorphism, $\pi_\ast\colon
C^\alpha(K,TM)\to C^\alpha(K,M)$ will be a smooth vector bundle over $C^\alpha(K,M)$ (like $T(C^\alpha(K,M))$). Finally, $\Psi$ will then be an isomorphism of smooth vector bundles over~$\id_M$.\\[2.3mm]
For the proof we recall some results from the Appendix of \cite{AGS}: Denote by $0 \colon M \rightarrow TM$ the zero-section and by $0_M \coloneq 0(M)$ its image. Let now $\lambda_p \colon T_p M \rightarrow TM$ be the canonical inclusion and $\kappa \colon T^2M \rightarrow T^2M$ the canonical flip (given in charts by $(x,y,u,v) \mapsto (x,u,y,v)$) then \cite[Lemma A.20\,(b)]{AGS} yields a natural isomorphism $\Theta \colon TM \oplus TM \rightarrow \pi_{T^2M}^{-1} (0_M) \subseteq T^2M, \Theta(v,w) = \kappa (T\lambda_{\pi(v)}(v,w))$. On the level of function spaces\footnote{While the results in \cite{AGS} were only established for the case of $C^{k,\ell}$-mappings, they carry over (together with their proofs) without any change to the more general case of the $C^\alpha$-mappings considered here.}
$\Theta$ induces a diffeomorphism (cf.\ \cite[Lemma A.20\,(e)]{AGS})  
$$\Theta_f\colon O_f\to O_{0\circ f},\quad \gamma \mapsto \Theta \circ (0\circ f,\gamma).$$
Here for $f \in C^\alpha (K,M)$ we have considered the composition $0\circ f \in C^\alpha (K,TM)$. Then the sets $S_f:=T\phi_f(O_f\times \Gamma_f)$ form an open cover of $T(C^\alpha(K,M))$ for $f\in C^\alpha(K,M)$, whence the sets $\Psi(S_f)$ form a cover of $C^\alpha(K,TM)$ by sets which are open as $\Psi(S_f)
=(\phi_{0\circ f}\circ \phi_f)(O_f\times \Gamma_f)=\phi_{0\circ f}(O_{0\circ f})$. Hence it suffices to prove that the bijective map $\Psi$ restricts to a $C^\infty$-diffeomorphism on these open sets. In other words it suffices to show that
\[
\Phi\circ T\phi_f=\phi_{0\circ f}\circ \Theta_f
\]
for each $f\in C^\ell(K,M)$ (as all other mappings in the formula are smooth diffeomorphisms). Now
\[
T\phi_f(\sigma,\tau)=[t\mto\Sigma\circ (\sigma+t\tau)]
\]
for all $(\sigma,\tau)\in O_f\times\Gamma_f$, and thus we can rewrite $\Psi(T\phi_f(\sigma,\tau))$ as
\begin{align*}
& ([t\mto \Sigma(\sigma(x)+t\tau(x))])_{x\in K}
= ([t\mto (\Sigma\circ \lambda_{f(x)})(\sigma(x)+t\tau(x))])_{x\in K}\\
=& (T(\Sigma\circ \lambda_{f(x)})(\sigma(x),\tau(x)))_{x\in K}
= (\Sigma_{TM}((\kappa \circ T\lambda_{f(x)})(\sigma(x),\tau(x))))_{x\in K}\\
=& ((\Sigma_{TM}\circ \Theta_f)(\sigma,\tau)(x))_{x\in K}
=(\phi_{0\circ f}\circ \Theta_f)(\sigma,\tau). 
\end{align*}
Thus the desired formula holds and shows that $\Psi$ is a $C^\infty$-diffeomorphism. This concludes the proof.
\end{proof}

\begin{rem}\label{alsothis}
Assume that the local additions $\Sigma\colon U_i\to M_i$ covering $M$ are normalized. Then the proof of Theorem~\ref{thm:tangentident}
shows that
\[
\Psi\circ T\phi_f(0,\cdot)\colon\Gamma_f\to C^\alpha(K,TM)
\]
is the inclusion map $\tau\mto \tau$,
for each $f\in C^\alpha(K,M)$ (where $\phi_f$ is as in (\ref{thephif})).\smallskip
\end{rem}

Using canonical manifold structures, we have:
\begin{cor}\label{formtang}
Let $K=K_1\times\cdots\times K_n$ be a product
of compact smooth manifolds with rough boundary,
$\alpha\in (\N_0\cup\{\infty\})^n$
and $g\colon M\to N$ be a $C^{|\alpha|+1}$-map between smooth manifolds~$M$
and $N$ covered by local additions.
Then the tangent map of the $C^1$-map
\[
g_*\colon C^\alpha(K,M)\to C^\alpha(K,N),\quad f\mto g\circ f
\]
is given by $T(g_*)=\Psi_N^{-1}\circ (Tg)_*\circ \Psi_M$.
For each $f\in C^\alpha(K,M)$, we have $\Psi_M(T_f(C^\alpha(K,M)))=\Gamma_f(TM)$,
$\Psi_N(T_{g\circ f}(C^\alpha(K,N)))=\Gamma_{g\circ f}(TN)$
and $(Tg)_*$ restricts to the map
\begin{equation}\label{nearly}
\Gamma_f(TM)\to\Gamma_{g\circ f}(TN),\quad \tau\mto Tg\circ \tau
\end{equation}
which is continuous linear and corresponds to $T_f(g_*)$.
\end{cor}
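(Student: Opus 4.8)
The plan is to reduce everything to the naturality of point evaluation under pushforward, combined with the explicit description $\Psi(v)=(T\varepsilon_x(v))_{x\in K}$ of the bundle isomorphisms furnished by Theorem~\ref{thm:tangentident}.

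First I would record the well-definedness and regularity statements. Since $M$ and $N$ are covered by local additions, so are $TM$ and $TN$ (see \ref{numba:TVBun}), whence $C^\alpha(K,TM)$ and $C^\alpha(K,TN)$ carry canonical manifold structures and $\Psi_M,\Psi_N$ are vector bundle isomorphisms over the identity by Theorem~\ref{thm:tangentident}; in particular the identifications $\Psi_M(T_f(C^\alpha(K,M)))=\Gamma_f(TM)$ and $\Psi_N(T_{g\circ f}(C^\alpha(K,N)))=\Gamma_{g\circ f}(TN)$ are immediate. As $g\in C^{|\alpha|+1}(M,N)$, Corollary~\ref{cor:pushforward} (with $s=1$) gives that $g_*$ is $C^1$, so $T(g_*)$ is defined; moreover $Tg\in C^{|\alpha|}(TM,TN)$, so $(Tg)_*$ maps $C^\alpha(K,TM)$ into $C^\alpha(K,TN)$ and is continuous by Corollary~\ref{cor:pushforward} (with $s=0$). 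That $(Tg)_*$ carries $\Gamma_f(TM)$ into $\Gamma_{g\circ f}(TN)$ follows from the naturality $\pi_{TN}\circ Tg=g\circ\pi_{TM}$ of the bundle projections, and the restricted map is $\tau\mapsto Tg\circ\tau$.

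The heart of the argument is the identity $T(g_*)=\Psi_N^{-1}\circ(Tg)_*\circ\Psi_M$. For $x\in K$, let $\varepsilon_x^M\colon C^\alpha(K,M)\to M$ and $\varepsilon_x^N\colon C^\alpha(K,N)\to N$ denote the (smooth) point evaluations. The key observation is the commuting relation $\varepsilon_x^N\circ g_*=g\circ\varepsilon_x^M$, which holds because both sides send $f$ to $g(f(x))$. Applying the tangent functor and the chain rule (valid since $\varepsilon_x^N$ is smooth and $g_*$ is $C^1$ on the one side, and $g$ is $C^{|\alpha|+1}$ and $\varepsilon_x^M$ is smooth on the other) yields $T\varepsilon_x^N\circ T(g_*)=Tg\circ T\varepsilon_x^M$. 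Evaluating at $v\in T_f(C^\alpha(K,M))$ and using $\Psi(v)=(T\varepsilon_x(v))_{x\in K}$ gives, for every $x\in K$,
\[
\Psi_N(T(g_*)(v))(x)=T\varepsilon_x^N(T(g_*)(v))=Tg\big(T\varepsilon_x^M(v)\big)=\big(Tg\circ\Psi_M(v)\big)(x)=\big((Tg)_*\Psi_M(v)\big)(x).
\]
Since $x$ was arbitrary, $\Psi_N\circ T(g_*)=(Tg)_*\circ\Psi_M$, which is the asserted formula after applying $\Psi_N^{-1}$.

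Finally, linearity and continuity of the restricted map $\Gamma_f(TM)\to\Gamma_{g\circ f}(TN)$, $\tau\mapsto Tg\circ\tau$, and the fact that it corresponds to $T_f(g_*)$, drop out of the formula: restricting $T(g_*)=\Psi_N^{-1}\circ(Tg)_*\circ\Psi_M$ to the fibre over $f$ expresses $T_f(g_*)$ (a continuous linear map, as $g_*$ is $C^1$) as the conjugate of $(Tg)_*|_{\Gamma_f(TM)}$ by the topological vector space isomorphisms $\Psi_M|_{T_f}$ and $\Psi_N|_{T_{g\circ f}}$; hence $(Tg)_*|_{\Gamma_f(TM)}$ is itself continuous and linear. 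I expect no serious obstacle here: the only point requiring care is the legitimacy of applying the chain rule to the evaluation square when $g_*$ is merely $C^1$, which is unproblematic because the outer factor $\varepsilon_x^N$ is smooth.
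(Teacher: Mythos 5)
Your proof is correct and takes exactly the route the paper intends: Corollary~\ref{formtang} is stated without an explicit proof as a direct consequence of Theorem~\ref{thm:tangentident}, and your argument supplies precisely the missing details in the natural way — smoothness of the point evaluations $\varepsilon_x$ on the canonical manifolds, the relation $\varepsilon_x^N\circ g_*=g\circ\varepsilon_x^M$ combined with the chain rule, the explicit formula $\Psi(v)=(T\varepsilon_x(v))_{x\in K}$, and Corollary~\ref{cor:pushforward} (with $s=1$ and $s=0$) for the regularity of $g_*$ and $(Tg)_*$. The fibrewise statements then follow from the conjugation formula as you say, with no gap.
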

Moreover, the identification of the tangent bundle allows us to lift local additions (cf.\ \cite[Remark A.17]{AGS}).

\begin{la}\label{la:lift:locadd}
Let $K=K_1\times \cdots\times K_n$ be a product
of compact smooth manifolds with rough boundary,
$\alpha \in (\N_0 \cup \{\infty\})^n$ and $M$ a manifold covered by local additions. 
Then the canonical manifold $C^\alpha (K,M)$ is covered by local additions.
\end{la}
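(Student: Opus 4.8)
The plan is to build a local addition on $C^\alpha(K,M)$ by transporting a local addition of $M$ through the tangent-bundle identification $\Psi_M\colon TC^\alpha(K,M)\to C^\alpha(K,TM)$ supplied by Theorem~\ref{thm:tangentident}. First I would reduce to the case of a single local addition. Since $M$ is covered by an upward directed family $(M_j)_{j\in J}$ of open submanifolds admitting local additions and $K$ is compact, we have $C^\alpha(K,M)=\bigcup_{j\in J}C^\alpha(K,M_j)$ as an upward directed union of open submanifolds. It therefore suffices to show: if $M$ admits a local addition $\Sigma\colon U\to M$, then $C^\alpha(K,M)$ admits a local addition. (This simultaneously yields the second assertion of Theorem~\ref{thmA}\,(a).) So assume $\Sigma\colon U\to M$ is a local addition and write $\theta:=(\pi_{TM},\Sigma)\colon U\to U'$ for the associated $C^\infty$-diffeomorphism onto the open set $U'\sub M\times M$.

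Next I would set up the relevant model manifolds. The manifolds $TM$ and $M\times M$ admit local additions (cf.\ \cite[A.11]{AGS} and \ref{numba:TVBun} for $TM$, and $\Sigma\times\Sigma$ for $M\times M$), hence so do their open subsets $U$ and $U'$ by restriction. By Lemma~\ref{la:cano:mfdmap} the sets $C^\alpha(K,U)$ and $C^\alpha(K,U')$ carry canonical manifold structures, and since $K$ is compact they are open submanifolds of $C^\alpha(K,TM)$ and $C^\alpha(K,M\times M)$, respectively. As $\theta$ is a smooth diffeomorphism, Corollary~\ref{cor:pushforward} applied to $\theta$ and to $\theta^{-1}$ shows that the pushforward $\theta_*\colon C^\alpha(K,U)\to C^\alpha(K,U')$, $\tau\mto\theta\circ\tau$ is a $C^\infty$-diffeomorphism (its inverse being $(\theta^{-1})_*$). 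Composing with the identification $C^\alpha(K,M\times M)\isom C^\alpha(K,M)\times C^\alpha(K,M)$ (Lemma~\ref{la:ISO1}\,(a) together with Lemma~\ref{c-alpha-top-product}\,(c), valid as the $K_i$ are compact hence locally compact), the map $\theta_*$ corresponds to a $C^\infty$-diffeomorphism onto the open subset $\{(g,h)\in C^\alpha(K,M)^2\colon (g,h)(K)\sub U'\}$.

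I would then define the candidate local addition $\widetilde{\Sigma}:=\Sigma_*\circ\Psi_M$ on $\widetilde{U}:=\Psi_M^{-1}(C^\alpha(K,U))$, an open subset of $TC^\alpha(K,M)$, where $\Sigma_*\colon C^\alpha(K,U)\to C^\alpha(K,M)$, $\tau\mto\Sigma\circ\tau$ is smooth by Corollary~\ref{cor:pushforward}. Because $\Psi_M$ is a vector-bundle isomorphism over the identity and $0\circ f\in C^\alpha(K,U)$ for every $f$ (as the zero-section $0_M$ of $TM$ lies in $U$), the set $\widetilde{U}$ is an open neighbourhood of the zero-section, and $\widetilde{\Sigma}(0_f)=\Sigma_*(\Psi_M(0_f))=\Sigma_*(0\circ f)=f$. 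Since $\Psi_M$ intertwines the bundle projection $\pi$ of $TC^\alpha(K,M)$ with $(\pi_{TM})_*$ (Theorem~\ref{thm:tangentident}), for $v\in\widetilde{U}$ with $\tau:=\Psi_M(v)\in\Gamma_f$ one has $\theta\circ\tau=(\pi_{TM}\circ\tau,\Sigma\circ\tau)=(\pi(v),\widetilde{\Sigma}(v))$, so that $(\pi,\widetilde{\Sigma})$ equals the identification composed with $\theta_*\circ\Psi_M$. The latter is a composite of diffeomorphisms onto open sets, whence $(\pi,\widetilde{\Sigma})$ is a $C^\infty$-diffeomorphism onto the open subset $\{(g,h)\colon (g,h)(K)\sub U'\}$ of $C^\alpha(K,M)\times C^\alpha(K,M)$. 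By Definition~\ref{def-loa}, $\widetilde{\Sigma}$ is a local addition.

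The step demanding the most care is the verification of the identity $(\pi,\widetilde{\Sigma})=(\text{identification})\circ\theta_*\circ\Psi_M$: one must unwind $\Psi_M$ so that $\widetilde{\Sigma}(v)=\Sigma\circ\Psi_M(v)$ while simultaneously recognizing $\pi(v)=\pi_{TM}\circ\Psi_M(v)$, and then match $\theta\circ\tau=(\pi_{TM}\circ\tau,\Sigma\circ\tau)$ with the pair of component functions under Lemma~\ref{c-alpha-top-product}\,(c). Once the smoothness of $\theta_*$ from Corollary~\ref{cor:pushforward} and the bundle-isomorphism property of $\Psi_M$ from Theorem~\ref{thm:tangentident} are available, this identification is essentially bookkeeping, and the substantive analytic inputs have already been secured.
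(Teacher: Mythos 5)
Your proposal is correct and follows essentially the same route as the paper's proof: reduce to the case of a single local addition via the directed union of open submanifolds $C^\alpha(K,M_j)$, transport $\Sigma$ by pushforward using Corollary~\ref{cor:pushforward}, use the diffeomorphism $\theta_*$ together with the identification $C^\alpha(K,M\times M)\isom C^\alpha(K,M)\times C^\alpha(K,M)$, and verify the zero-section condition pointwise. The only difference is presentational: you make the conjugation by the bundle isomorphism $\Psi_M$ from Theorem~\ref{thm:tangentident} fully explicit, whereas the paper leaves that identification implicit.
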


\begin{proof}
 Consider first the case that $M$ admits a local addition $\Sigma \colon U \rightarrow M$ with $\theta= (\pi_{TM} , \Sigma) \colon U \rightarrow U' \subseteq M \times M$ the associated diffeomorphism. 
 Since also $TM$ admits a local addition, we have canonical manifold structures on $C^\alpha (K,TM)$ and $C^\alpha (K,M \times M ) \cong C^\alpha (K,M) \times C^\alpha (K,M)$. Now $K$ is compact, whence $C^\alpha (K,U) \subseteq C^\alpha (K,TM)$ is an open submanifold, whence canonical by Lemma \ref{base-cano}\,(c). In particular, $\Sigma_* \colon C^\alpha(K,U) \rightarrow C^\alpha(K,M)$ and $\theta_* \colon C^\alpha (K,U) \rightarrow C^\alpha (K,U') \subseteq C^\alpha (K,M\times M)$ are smooth by Corollary \ref{cor:pushforward}. As also the inverse of $\theta$ is smooth, we deduce that $\theta_*$ is again a diffeomorphism mapping $C^\alpha (K,U)$ to $C^\alpha (K,U')$ and we can identify the latter manifold with an open subset of $C^\alpha (K,M) \times C^\alpha (K,M)$ containing the diagonal. 
 Hence we only need to verify that $0_f \in T_f C^\alpha (K,TM)$ is mapped to $f$. However, using the point evaluation $\ve_x (\Sigma_* (0_f))= \Sigma (0(f(x))) = f(x)$ (where $0$ is again the zero-section of $TM$), we obtain the desired equality pointwise and thus also on the level of functions. This proves that $C^\alpha (K,M)$ admits a local addition if $M$ admits a local addition.
 
 If now $M$ is covered by open submanifolds $(M_j)_{j\in J}$ each admitting a local addition, it suffices to see that $C^\alpha (K,M_j)$ is an  open submanifold of $C^\alpha (K,M)$ which admits a local addition by the above considerations. Thus $C^\alpha (K,M)$ is covered by the open submanifolds $(C^\alpha (K,M_j))_{j \in J}$ and as each of those admits a local addition, $C^\alpha (K,M)$ is covered by local additions.  
\end{proof}

\begin{prop}\label{prop:currying}
Let $K=K_1\times\cdots\times K_m$ and $L=L_1\times\cdots\times L_n$
be products of compact manifolds with rough boundary and $M$ be a manifold covered by local additions. Fix $\alpha \in (\N_0 \cup \{\infty\})^n,\beta\in (\N_0 \cup \{\infty\})^m$.
Then $C^{\beta,\alpha} (L \times K , M)$, $C^\alpha(K,M)$
and $C^\beta (L,C^\alpha (K,M))$ admit canonical manifold
structures.
Using these,
the bijection $C^{\beta,\alpha} (L \times K , M) \rightarrow C^\beta (L,C^\alpha (K,M))$ is a $C^\infty$-diffeomorphism.
\end{prop}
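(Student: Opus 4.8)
The plan is to reduce the claimed diffeomorphism to the exponential law for pre-canonical manifold structures, namely Proposition~\ref{explaw-precan}, which already asserts that $\Phi\colon C^{\beta,\alpha}(L\times K,M)\to C^\beta(L,C^\alpha(K,M))$ is a $C^\infty$-diffeomorphism once all three mapping spaces carry pre-canonical smooth manifold structures and $K$ consists of locally compact factors. Since the $K_j$ are compact (hence locally compact), the only remaining task is to verify that each of the three relevant spaces actually \emph{admits} a (pre-)canonical structure in the present setting, where $M$ is merely covered by local additions rather than admitting a global one.

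First I would establish the existence of canonical structures on the three spaces. The domain $L\times K=L_1\times\cdots\times L_n\times K_1\times\cdots\times K_m$ is again a product of compact smooth manifolds with rough boundary, so Proposition~\ref{prop:cano:covered} applies directly and gives a canonical manifold structure on $C^{\beta,\alpha}(L\times K,M)$. The same proposition, applied to the product $K$, gives a canonical structure on $C^\alpha(K,M)$. For the third space, I would combine two facts: by Lemma~\ref{la:lift:locadd} the canonical manifold $C^\alpha(K,M)$ is itself \emph{covered by local additions}, and $L$ is a product of compact manifolds with rough boundary, so Proposition~\ref{prop:cano:covered} applies once more to yield a canonical structure on $C^\beta(L,C^\alpha(K,M))$. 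This last step is the crucial structural ingredient, since it is what lets us iterate the construction: without Lemma~\ref{la:lift:locadd} the inner mapping space would not be known to fall under the hypotheses of Proposition~\ref{prop:cano:covered}.

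With all three structures in hand (and canonical structures being in particular pre-canonical), I would then invoke Proposition~\ref{explaw-precan} with the roles $N\rightsquigarrow M$, the inner domain $M\rightsquigarrow K$ (whose factors are compact, hence locally compact, as required), and the outer domain $L\rightsquigarrow L$. This immediately yields that $\Phi$, and hence its inverse $g\mapsto g^\wedge$, is a $C^\infty$-diffeomorphism. Alternatively, one can argue via Lemma~\ref{la:ISO1}\,(c): transport the canonical structure along the bijection $\Phi$ to obtain a pre-canonical structure, then use uniqueness of pre-canonical structures (Lemma~\ref{base-cano}\,(b)) to conclude that this transported structure agrees with the canonical one up to a $C^\infty$-diffeomorphism.

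I expect the main obstacle to be purely bookkeeping rather than conceptual: one must be careful that the multiindices and factor orderings match up correctly under the identification $L\times K$, and that the local compactness hypothesis in Proposition~\ref{explaw-precan} is genuinely met (it is, since compact manifolds are locally compact). The genuinely substantive input—that the inner space $C^\alpha(K,M)$ is covered by local additions and therefore admits a canonical structure—has already been discharged in Lemma~\ref{la:lift:locadd} and Proposition~\ref{prop:cano:covered}, so the proof of this proposition is essentially an assembly of those results together with the exponential law.
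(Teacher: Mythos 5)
Your proposal is correct and follows essentially the same route as the paper's own proof: Proposition~\ref{prop:cano:covered} applied to $K$ and to $L\times K$, then Lemma~\ref{la:lift:locadd} combined with Proposition~\ref{prop:cano:covered} again for $C^\beta(L,C^\alpha(K,M))$, and finally Proposition~\ref{explaw-precan} to obtain the diffeomorphism. The paper's argument is exactly this assembly, so there is nothing to add.
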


\begin{proof}
We apply
Proposition~\ref{prop:cano:covered}
to obtain canonical manifold structures on $C^\alpha (K,M)$
and $C^{\beta,\alpha} (L\times K,M)$.
By Lemma \ref{la:lift:locadd}, $C^\alpha (K,M)$ is covered by local additions. Hence we may apply Proposition \ref{prop:cano:covered} again to obtain
a canonical manifold structure on
$C^\beta(L,C^\alpha(K,M))$.
By Proposition \ref{explaw-precan},
the bijection $C^{\beta,\alpha} (L \times K , M) \rightarrow C^\beta (L,C^\alpha (K,M))$
is a diffeomorphism.
\end{proof}
\section{Lie groups of Lie group-valued mappings}\label{sec-Lie}
We now prove Theorem~\ref{thmB},
starting with observations.
\begin{la}\label{la-is-gp}
Let $M_1,\ldots, M_n$
be locally compact smooth manifolds with rough boundary,
$G$ be a Lie group, and $\alpha\in (\N_0\cup\{\infty\})^n$.
Setting $M:=M_1\times \cdots\times M_n$,
the following holds:
\begin{itemize}
\item[\rm(a)]
$C^\alpha(M,G)$
is a group.
\item[\rm(b)]
If a pre-canonical smooth manifold structure
exists on $C^\alpha(M,G)$,
then it makes $C^\alpha(M,G)$ a Lie group.
Moreover, it turns the point evaluation
$\ve_x\colon C^\alpha(M,G)\to G$,
$f\mto f(x)$
into a smooth group homomorphism
for each $x\in M$.
\end{itemize}
\end{la}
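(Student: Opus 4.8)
The plan is to handle the two parts separately: part~(a) is purely pointwise, while part~(b) rests on the defining property of a pre-canonical structure together with the differentiability of the evaluation map established in Lemma~\ref{base-cano}\,(a). For part~(a), I would equip $C^\alpha(M,G)$ with the pointwise operations, so that the product of $f,g\in C^\alpha(M,G)$ is $x\mto m(f(x),g(x))$ and the inverse of $f$ is $x\mto \iota(f(x))$, where $m\colon G\times G\to G$ and $\iota\colon G\to G$ denote the smooth multiplication and inversion of $G$; the neutral element is the constant map $x\mto e$. The only thing to verify is that these operations do not leave $C^\alpha(M,G)$: since $(f,g)\colon M\to G\times G$ is $C^\alpha$ whenever $f$ and $g$ are (a map into a product manifold being $C^\alpha$ if and only if its components are, working in product charts), the chain rule for $C^\alpha$-maps \cite[Lemma~3.16]{Alz} shows that $m\circ(f,g)$ and $\iota\circ f$ are again $C^\alpha$, while the constant map is smooth and hence $C^\alpha$. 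All group axioms then hold because they hold pointwise in $G$.

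For part~(b), assume a pre-canonical smooth manifold structure on $C^\alpha(M,G)$. To prove that multiplication $\mu\colon C^\alpha(M,G)\times C^\alpha(M,G)\to C^\alpha(M,G)$ is smooth, I would invoke the defining property of a pre-canonical structure with $m=2$, $L_1=L_2=C^\alpha(M,G)$ and $\beta=(\infty,\infty)$: the map $\mu$ is $C^\infty$ if and only if
\[
\mu^\wedge\colon C^\alpha(M,G)\times C^\alpha(M,G)\times M\to G,\quad (f,g,x)\mto f(x)g(x)
\]
is $C^{\infty,\infty,\alpha}$. Writing $\mu^\wedge(f,g,x)=m(\ev(f,x),\ev(g,x))$ and using Lemma~\ref{base-cano}\,(a), according to which the evaluation $\ev\colon C^\alpha(M,G)\times M\to G$ is $C^{\infty,\alpha}$, the chain rule \cite[Lemma~3.16]{Alz} yields that $\mu^\wedge$ is $C^{\infty,\infty,\alpha}$, exactly as in the proof of the composition lemma. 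The same recipe handles inversion with $m=1$: by pre-canonicity, $\gamma\mto\gamma^{-1}$ is smooth once $(f,x)\mto\iota(\ev(f,x))$ is seen to be $C^{\infty,\alpha}$, which again follows from Lemma~\ref{base-cano}\,(a) and the chain rule. Smoothness of multiplication and inversion makes $C^\alpha(M,G)$ a Lie group.

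It remains to treat the point evaluations. Fix $x\in M$. The map $\ve_x$ is a group homomorphism because $\ve_x(fg)=(fg)(x)=f(x)g(x)=\ve_x(f)\ve_x(g)$, i.e.\ the homomorphism property is inherited pointwise from $G$. For smoothness I would note that $\ve_x$ is obtained from the $C^{\infty,\alpha}$-map $\ev$ of Lemma~\ref{base-cano}\,(a) by freezing the second variable at $x$; fixing one block of arguments in a map of class $C^{\infty,\alpha}$ produces a $C^\infty$-map in the remaining block (the second-variable analogue of \cite[Lemma~3.3]{Alz}, available after reordering the variables via~\ref{reorder}), so $\ve_x$ is smooth. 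This completes both parts.

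The substantive content is entirely contained in Lemma~\ref{base-cano}\,(a); once the evaluation is known to be $C^{\infty,\alpha}$, everything reduces to the $C^\alpha$-chain rule. The main point requiring care is the bookkeeping of the partial regularities: one must justify that the extra $g$-block, on which $\ev(f,x)$ does not depend, may be inserted without harm, so that $(f,g,x)\mto f(x)$ is genuinely $C^{\infty,\infty,\alpha}$ and not merely $C^{\infty,\alpha}$ in $(f,x)$, and that the chain rule in Alzaareer's form indeed combines the $\infty$-, $\infty$- and $\alpha$-blocks as asserted. These steps are routine but should be stated explicitly to avoid a gap.
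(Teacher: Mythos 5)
Your proof is correct, and part (a) coincides with the paper's argument: the paper likewise forms $(f,g)\colon M\to G\times G$ (citing \cite[Lemma~3.8]{Alz} for its $C^\alpha$-property) and applies the chain rule \cite[Lemma~3.16]{Alz} to $\mu\circ(f,g)$ and $\iota\circ f$. In part (b) you take a mildly different route through the paper's apparatus: you invoke the defining property of pre-canonicity directly, with $L_1=L_2=C^\alpha(M,G)$ and $\beta=(\infty,\infty)$, and then verify by hand that $\mu^\wedge(f,g,x)=m(\ev(f,x),\ev(g,x))$ is $C^{\infty,\infty,\alpha}$; the paper instead identifies $C^\alpha(M,G)\times C^\alpha(M,G)$ with $C^\alpha(M,G\times G)$ as a pre-canonical manifold (Lemma~\ref{la:ISO1}\,(a)) and observes that multiplication and inversion are then the pushforwards $C^\alpha(M,\mu)$ and $C^\alpha(M,\iota)$, which are smooth by Corollary~\ref{cor:pushforward}. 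The two routes are equivalent in substance, since Corollary~\ref{cor:pushforward} is itself proved by exactly your evaluation-plus-chain-rule technique resting on Lemma~\ref{base-cano}\,(a); what the paper's packaging buys is that, after the identification with $C^\alpha(M,G\times G)$, multiplication is a pushforward in a single function-space variable, so the dummy-variable bookkeeping you rightly flag never arises. Your version needs, in addition, the (routine, but worth stating, as you note) facts that $C^{(\infty,\infty)}$-maps on a product of two manifolds are precisely the smooth maps (the remark preceding Lemma~\ref{ingrisch}) and that a $C^{\infty,\alpha}$-map remains $C^{\infty,\infty,\alpha}$ after insertion of a variable block on which it does not depend. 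Your treatment of $\ve_x$ — a homomorphism because the group operations are pointwise, and smooth because $\ve_x=\ev(\cdot,x)$ with $\ev$ of class $C^{\infty,\alpha}$ by Lemma~\ref{base-cano}\,(a) — is identical to the paper's.
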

\begin{proof}
(a)
The group inversion $\iota\colon G\to G$
is smooth, whence $\iota\circ f$
is $C^\alpha$ for all $f\in C^\alpha(M,G)$
(by the Chain Rule \cite[Lemma 3.16]{Alz},
applied in local charts).
Let $\mu\colon G\times G\to G$
be the smooth group multiplication
and $f,g\in C^\alpha(M,G)$.
Then $(f,g)\colon M\to G\times G$
is $C^\alpha$ by \cite[Lemma~3.8]{Alz}.
By the Chain Rule,
$fg=\mu\circ (f,g)$
is $C^\alpha$.\\[.7mm]
(b) The group inversion in $C^\alpha(M,G)$
is the map $C^\alpha(M,\iota)$
and hence smooth, by Corollary~\ref{cor:pushforward}.
Identifying $C^\alpha(M,G)\times C^\alpha(M,G)$
with $C^\alpha(M,G\times G)$ as a smooth manifold
(as in Lemma~\ref{la:ISO1}\,(a)), the group multiplication
of $C^\alpha(M,G)$ is the map $C^\alpha(M,\mu)$
and hence smooth.
The group multiplication in $C^\alpha(M,G)$
being pointwise, $\ve_x$ is a homomorphism
of groups for each $x\in M$.
By Lemma~\ref{base-cano}\,(a),
$\ev\colon C^\alpha(M,G)\times M\to G$
is $C^{\infty,\alpha}$.
Thus $\ve_x=\ev(\cdot,x)$ is smooth.
\end{proof}
Another concept is useful, with notation as in~\ref{lie-functor}.
\begin{defn}\label{defn-expected}
Let $M_1,\ldots, M_n$
be locally compact smooth manifolds with rough boundary,
$G$ be a Lie group, and $\alpha\in (\N_0\cup\{\infty\})^n$.
For $x\in M:=M_1\times\cdots\times M_n$,
let $\ve_x\colon C^\alpha(M,G)\to G$
be the point evaluation.
A smooth manifold
structure on $C^\alpha(M_1\times\cdots\times M_n,G)$
making it a Lie group is said to be \emph{compatible with evaluations}
if $\ve_x$ is smooth for each $x\in M$,
we have $\phi(v):=(L(\ve_x)(v))_{x\in M}\in C^\alpha(M,L(G))$
for each $v\in L(C^\alpha(M,G))$, and the Lie algebra homomorphism
\[
\phi\colon L(C^\alpha(M,G))\to C^\alpha(M,L(G)),\;\; v\mto \phi(v)
\]
so obtained is an isomorphism of topological vector spaces.
\end{defn}
\begin{rem}
In the case that $n=1$ and $\alpha=\infty$,
compatibility with evaluations was introduced
in \cite[Proposition~1.9 and page 19]{NaW}
(in different words), assuming that $G$ is regular.
Likewise, $G$ is assumed regular
in \cite[Proposition~3.1]{Ham},
where the case $n=1$, $\alpha\in \N_0\cup\{\infty\}$
is considered.
\end{rem}
\begin{la}\label{wedge-is-lie}
Let $M_1,\ldots, M_n$ and $N_1,\ldots, N_m$
be locally compact smooth manifolds with rough boundary,
$\alpha\in (\N_0\cup\{\infty\})^m$,
$\beta\in (\N_0\cup\{\infty\})^n$,
$M:=M_1\times\cdots\times M_n$,
$N:=N_1\times\cdots\times N_m$,
and $G$ be a Lie group.
Assume that $C^\beta(M,G)$ is endowed with a pre-canonical
smooth manifold structure
which is compatible with evaluations
and that $C^\alpha(N,C^\beta(M,G))$,
whose definition uses the latter structure,
is endowed with a pre-canonical
smooth manifold structure which is compatible with evaluations.
Endow $C^{\alpha,\beta}(N\times M,G)$
with the smooth manifold structure turning
the bijection
\[
\Phi\colon C^{\alpha,\beta}(N\times M,G)\to C^\alpha(N,C^\beta(M,G)),\;\,
f\mto f^\vee
\]
into a $C^\infty$-diffeomorphism.
Then
the preceding smooth manifold structure on $C^{\alpha,\beta}(N\times M,G)$
is pre-canonical and compatible with evaluations.
\end{la}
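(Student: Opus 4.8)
The plan is to establish the two assertions, pre-canonicity and compatibility with evaluations, separately: the first is immediate from the abstract transfer lemma, while the second reduces to identifying a chain of natural isomorphisms with the point-evaluation map.

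For pre-canonicity I would invoke Lemma~\ref{la:ISO1}\,(c) under the dictionary matching its $(N,M,L,\alpha,\beta)$ with our $(G,M,N,\beta,\alpha)$. Then its hypotheses read exactly as ours: $C^\beta(M,G)$ is pre-canonical, and $\Phi$ is a $C^\infty$-diffeomorphism onto the pre-canonical manifold $C^\alpha(N,C^\beta(M,G))$. Hence part~(c) gives that $C^{\alpha,\beta}(N\times M,G)$ is pre-canonical. As $N\times M=N_1\times\cdots\times N_m\times M_1\times\cdots\times M_n$ is a product of locally compact manifolds with rough boundary, Lemma~\ref{la-is-gp}\,(b) then shows that this structure makes $C^{\alpha,\beta}(N\times M,G)$ a Lie group and renders each point evaluation $\ve_{(y,x)}$ a smooth group homomorphism, which is the first requirement of Definition~\ref{defn-expected}.

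For compatibility with evaluations I would assemble four isomorphisms of topological vector spaces. Writing $\ve^M_x$, $\ve^N_y$ for the point evaluations of $C^\beta(M,G)$ and of $C^\alpha(N,C^\beta(M,G))$, let $\phi_M\colon L(C^\beta(M,G))\to C^\beta(M,L(G))$ and $\phi_N\colon L(C^\alpha(N,C^\beta(M,G)))\to C^\alpha(N,L(C^\beta(M,G)))$ be the isomorphisms provided by the two compatibility hypotheses. First, $\Phi$ is a group homomorphism (the operations being pointwise), hence a Lie group isomorphism, so $L(\Phi)$ is an isomorphism of Lie algebras. Second, $\phi_N$ is an isomorphism by hypothesis. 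Third, the pushforward $(\phi_M)_*=C^\alpha(N,\phi_M)$ is linear and continuous by Lemma~\ref{pull-and-push}\,(a) with continuous inverse $C^\alpha(N,\phi_M^{-1})$, hence an isomorphism onto $C^\alpha(N,C^\beta(M,L(G)))$. Fourth, the exponential law Lemma~\ref{exp-law-not-pure} with $E:=L(G)$ gives a linear homeomorphism $\Phi_E\colon C^{\alpha,\beta}(N\times M,L(G))\to C^\alpha(N,C^\beta(M,L(G)))$, $h\mapsto h^\vee$ (a homeomorphism onto since the $M_j$ are locally compact). Composing yields an isomorphism of topological vector spaces
\[
\phi:=\Phi_E^{-1}\circ(\phi_M)_*\circ\phi_N\circ L(\Phi)\colon L(C^{\alpha,\beta}(N\times M,G))\to C^{\alpha,\beta}(N\times M,L(G)).
\]

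The remaining and main step is to verify that this $\phi$ is precisely the map $v\mapsto(L(\ve_{(y,x)})(v))_{(y,x)}$ of Definition~\ref{defn-expected}; this is where care is needed, though it is pure bookkeeping once the correct factorization is in hand. The key identity is $\ve_{(y,x)}=\ve^M_x\circ\ve^N_y\circ\Phi$, immediate from $\ve_{(y,x)}(f)=f(y,x)=f^\vee(y)(x)$; applying the Lie functor gives $L(\ve_{(y,x)})=L(\ve^M_x)\circ L(\ve^N_y)\circ L(\Phi)$. For $v\in L(C^{\alpha,\beta}(N\times M,G))$ put $u:=L(\Phi)(v)$; using the defining formulas $\phi_N(u)(y)=L(\ve^N_y)(u)$ and $\phi_M(w)(x)=L(\ve^M_x)(w)$ and that $(\phi_M)_*$ is post-composition, one computes
\[
\bigl((\phi_M)_*\circ\phi_N\bigr)(u)(y)(x)=\phi_M\bigl(\phi_N(u)(y)\bigr)(x)=L(\ve^M_x)\bigl(L(\ve^N_y)(u)\bigr)=L(\ve_{(y,x)})(v).
\]
Since $\Phi_E^{-1}$ is the uncurrying $H\mapsto H^\wedge$, evaluating $\phi(v)$ at $(y,x)$ returns exactly $L(\ve_{(y,x)})(v)$. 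Thus $\phi$ lands in $C^{\alpha,\beta}(N\times M,L(G))$ and agrees with the point-evaluation map, and being a composition of isomorphisms it is an isomorphism of topological vector spaces; this establishes compatibility with evaluations. (That $\phi$ is a Lie algebra homomorphism is automatic, as each $\ve_{(y,x)}$ is a group homomorphism and the bracket on the target is pointwise.)
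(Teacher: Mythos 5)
Your proof is correct and follows essentially the same route as the paper: pre-canonicity via Lemma~\ref{la:ISO1}\,(c), then compatibility with evaluations by composing the identical chain of isomorphisms (the paper's $\Xi\circ\Psi_*\circ\Theta\circ L(\Phi)$ is exactly your $\Phi_E^{-1}\circ(\phi_M)_*\circ\phi_N\circ L(\Phi)$) and identifying the composite with the point-evaluation map. The only cosmetic difference is in that last identification: the paper computes with geometric tangent vectors (curves $\gamma$ with $L(\Phi)(v)=[\Phi\circ\gamma]$), whereas you use functoriality of $L$ applied to the factorization $\ve_{(y,x)}=\ve^M_x\circ\ve^N_y\circ\Phi$ of the smooth point-evaluation homomorphisms; the two computations are equivalent.
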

\begin{proof}
By Lemma~\ref{la:ISO1}\,(c),
the $C^\infty$-manifold structure on $C^{\alpha,\beta}(N\times M,G)$
is pre-canonical, whence the latter is a Lie group.
The $C^\infty$-diffeomorphism
$\Phi$ is a homomorphism of groups.
Hence
\[
L(\Phi)\colon L(C^{\alpha,\beta}(N\times M,G))\to L(C^\alpha(N,C^\beta(M,G)))
\]
is an isomorphism of topological Lie algebras.
Consider the point evaluations $\ve_x\colon C^\alpha(N,C^\beta(M,G))\to C^\beta(M,G)$,
$\ve_{(x,y)}\colon C^{\alpha,\beta}(N\times M,G)\to G$
and
$\ve_y\colon C^\beta(M,G)\to G$
for $x\in N$, $y\in M$.
By hypothesis, we have isomorphisms of topological Lie algebras
\[
\Psi\colon L(C^\beta(M,G))\to C^\beta(M,L(G)),\;\, w\mto (L(\ve_y)(w))_{y\in M}
\]
and $\Theta\colon L(C^\alpha(N,C^\beta(M,G)))\to C^\alpha(N,L(C^\beta(M,G)))$,
$v\mto (L(\ve_x)(v))_{x\in N}$. Then also
\[
\Psi_*\colon C^\alpha(N,L(C^\beta(M,G)))\to C^\alpha(N,C^\beta(M,L(G))),\;\,
f\mto \Psi\circ f
\]
is an isomorphism of topological Lie algebras and so is
\[
\Xi\colon C^\alpha(N,C^\beta(M,L(G))\to C^{\alpha,\beta}(N\times M,L(G)),\;\,
f\mto f^\wedge,
\]
by the Exponential Law (Lemma~\ref{exp-law-not-pure}).
Hence
\[
\phi:=\Xi\circ \Psi_*\circ \Theta\circ L(\Phi)\colon L(C^{\alpha,\beta}(N\times M,G))\to
C^{\alpha,\beta}(M\times N, L(G))
\]
is an isomorphism of topological Lie algebras.
Regard $v\in L(C^{\alpha,\beta}(N\times M,G))$
as a geometric tangent vector $[\gamma]$
for a smooth curve $\gamma\colon \;]{-\ve},\ve[\to
C^{\alpha,\beta}(N\times M,G)$ with $\gamma(0)=e$.
Then $L(\Phi)(v)=[\Phi\circ\gamma]$
and $\Theta(L(\Phi)(v))=([\ve_x\circ \Phi\circ \gamma])_{x\in N}=:g$.
Thus
\begin{eqnarray*}
\phi(v)(x,y)\!\!\! &\!=\! &\!\!  \Psi_*(g)(x)(y)
=(\Psi\circ g)(x)(y)=\Psi([\ve_x\circ \Phi\circ\gamma])(y)\\
\!\! &\! =\! &\!\! L(\ve_y)([\ve_x\circ \Phi\circ\gamma])
=[\ve_x\circ\ve_y\circ\Phi\circ\gamma]
=[t\mto \ve_x(\ve_y(\Phi(\gamma(t))))]\\
\!\! & \! =\! & \!\! [t\mto \gamma(t)(x,y)]
= L(\ve_{(x,y)})([\gamma])=L(\ve_{(x,y)})(v).
\end{eqnarray*}
We deduce that $(L(\ve_{(x,y)})(v))_{(x,y)\in N\times M}=\phi(v)
\in C^{\alpha,\beta}(N\times M, L(G))$.
Since $\phi$ is an isomorphism of topological Lie algebras,
the Lie group structure on $C^{\alpha,\beta}(N\times M,G)$
is compatible with evaluations.
\end{proof}
\begin{la}\label{expected-lie-regular}
Let $M_1,\ldots, M_n$
be locally compact smooth manifolds with rough boundary,
$M:=M_1\times\cdots\times M_n$,
$\alpha\in (\N_0\cup\{\infty\})^n$,
and $G$ be a Lie group.
Assume that $C^\alpha(M,G)$ is endowed with a pre-canonical
smooth manifold structure
which is compatible with evaluations.
If the Lie group $G$ is $C^r$-regular
for some $r\in \N_0\cup\{\infty\}$,
then also the Lie group $C^\alpha(M,G)$ is $C^r$-regular.
\end{la}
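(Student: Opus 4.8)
The plan is to build the evolution in $\mathcal{G}:=C^\alpha(M,G)$ pointwise out of the evolution in $G$, exploiting that the point evaluations are smooth homomorphisms whose differentials jointly separate tangent vectors. By compatibility with evaluations (Definition~\ref{defn-expected}), $\phi\colon L(\mathcal{G})\to C^\alpha(M,L(G))$, $v\mapsto(L(\ve_x)(v))_{x\in M}$, is an isomorphism of topological Lie algebras. Given a $C^r$-curve $\gamma\colon[0,1]\to L(\mathcal{G})$, I first pass to $\phi\circ\gamma\in C^r([0,1],C^\alpha(M,L(G)))$ and, via the exponential law (Lemma~\ref{exp-law-not-pure}) and reordering of factors (\ref{reorder}), to the family $x\mapsto\gamma^\wedge(\cdot,x)\in C^r([0,1],L(G))$, which is $C^\alpha$ in $x$; here $\gamma^\wedge(t,x)=\phi(\gamma(t))(x)=L(\ve_x)(\gamma(t))$. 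As $G$ is $C^r$-regular, hence $C^r$-semiregular and with $\Evol^G$ smooth, I set $\eta_x:=\Evol^G(\gamma^\wedge(\cdot,x))$ and define the candidate $\eta$ by $\eta(t)(x):=\eta_x(t)$.

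To establish semiregularity of $\mathcal{G}$, I note first that $\mathcal{G}$ is a Lie group (Lemma~\ref{la-is-gp}) and therefore admits a local addition (built from left translations and a chart at $e$); since $[0,1]$ is compact, $C^{r+1}([0,1],\mathcal{G})$ carries a canonical manifold structure by Proposition~\ref{prop:cano:covered}. The map $x\mapsto\eta_x$ is the composition of the $C^\alpha$-map $x\mapsto\gamma^\wedge(\cdot,x)$ with the smooth map $\Evol^G$, hence $C^\alpha$ into $C^{r+1}([0,1],G)$; reordering and the exponential law then place $\eta$ in $C^{r+1}([0,1],\mathcal{G})$. That $\eta$ solves $\dot\eta(t)=\eta(t).\gamma(t)$ with $\eta(0)=e$ is verified by applying $T\ve_x$: each $\ve_x$ is a smooth homomorphism (Lemma~\ref{la-is-gp}(b)), so
\[
T\ve_x(\eta(t).\gamma(t))=\ve_x(\eta(t)).L(\ve_x)(\gamma(t))=\eta_x(t).\gamma^\wedge(t,x)=\dot\eta_x(t)=T\ve_x(\dot\eta(t))
\]
for all $x\in M$. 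Since $\phi$ is injective, the family $(T\ve_x)_{x\in M}$ is injective on $L(\mathcal{G})$, and on an arbitrary tangent space $T_g\mathcal{G}$ after transport by left translation; hence the displayed equalities force $\dot\eta(t)=\eta(t).\gamma(t)$, and $\eta(0)=e$ holds pointwise. Uniqueness is obtained the same way: any solution $\tilde\eta$ satisfies $\ve_x\circ\tilde\eta=\Evol^G(\gamma^\wedge(\cdot,x))=\eta_x$ for every $x$, so $\tilde\eta=\eta$. Thus $\mathcal{G}$ is $C^r$-semiregular with $\Evol^{\mathcal{G}}(\gamma)=\eta$.

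It remains to show $\Evol^{\mathcal{G}}\colon C^r([0,1],L(\mathcal{G}))\to C^{r+1}([0,1],\mathcal{G})$ is smooth. Because $C^{r+1}([0,1],\mathcal{G})$ is canonical and $\mathcal{G}$ is pre-canonical, this is equivalent to the map
\[
\Theta\colon C^r([0,1],L(\mathcal{G}))\times[0,1]\times M\to G,\quad(\gamma,t,x)\mapsto\Evol^G(\gamma^\wedge(\cdot,x))(t)
\]
being $C^{\infty,r+1,\alpha}$. I would factor $\Theta(\gamma,t,x)=F(\Lambda(\gamma,x),t)$ with $F(\sigma,t):=\Evol^G(\sigma)(t)$ and $\Lambda(\gamma,x):=\gamma^\wedge(\cdot,x)$. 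Here $F\colon C^r([0,1],L(G))\times[0,1]\to G$ is $C^{\infty,r+1}$, being the composition of $\Evol^G\times\id_{[0,1]}$ with the evaluation $C^{r+1}([0,1],G)\times[0,1]\to G$, which is $C^{\infty,r+1}$ by Lemma~\ref{base-cano}(a). The map $\Lambda\colon C^r([0,1],L(\mathcal{G}))\times M\to C^r([0,1],L(G))$ is $C^{\infty,\alpha}$: its adjoint in the $[0,1]$-slot is $(\gamma,x,t)\mapsto\ev_M(\ev^{[0,1]}(\phi\circ\gamma,t),x)$, a composition of the linear homeomorphism $\gamma\mapsto\phi\circ\gamma$ with the evaluations $\ev^{[0,1]}$ and $\ev_M$, each $C^{\infty,\sbull}$ by Lemma~\ref{base-cano}(a) (their targets being locally convex, so canonical). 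The chain rule \cite[Lemma~3.16]{Alz}, combined with Lemma~\ref{ingrisch} to recombine the multiindices, then gives that $\Theta$ is $C^{\infty,r+1,\alpha}$.

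I expect the main obstacle to lie in two places. Conceptually, the passage from the pointwise solutions $\eta_x$ to a genuine solution $\eta$ of the evolution equation in $\mathcal{G}$ rests entirely on the injectivity of $(T\ve_x)_{x\in M}$, i.e.\ on compatibility with evaluations; without it one could produce the pointwise family but not identify it as a tangent curve of $\mathcal{G}$. Technically, the delicate point is the smoothness step for \emph{non-compact} $M$: one must never treat $C^\alpha(M,-)$ with a nonlinear, non-mapping-space target, since canonical structures for non-compact domains are not available in general. The factorization through $\Theta$ circumvents this, as every mapping manifold actually used either has compact domain $[0,1]$ (so Proposition~\ref{prop:cano:covered} applies) or a locally convex target (so it is canonical by Example~\ref{basic-examples}(a)); the remaining work is the careful, but routine, multiindex bookkeeping in the chain rule.
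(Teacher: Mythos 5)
Your proof is correct, and its skeleton is the same as the paper's: both identify $L(C^\alpha(M,G))$ with $C^\alpha(M,L(G))$ via compatibility with evaluations, use the exponential law and a flip of factors to turn a $C^r$-curve in $C^\alpha(M,L(G))$ into a $C^\alpha$-family of $C^r$-curves in $L(G)$, push forward with $\Evol^G$, and return the same way; your $\eta$ is exactly the paper's candidate $\cE(\gamma)=(\beta\circ\Evol_*\circ\,\theta\circ\psi\circ\phi_*)(\gamma)$. The two proofs differ in how the key assertions are certified, and both of your substitutes are sound. For smoothness, the paper composes globally smooth maps between mapping manifolds --- the isomorphisms $\phi_*$, $\psi$, $\theta$, the pushforward $\Evol_*$ (smooth by Corollary~\ref{cor:pushforward}, both structures being pre-canonical), and the diffeomorphism $\beta$ furnished by Lemmas~\ref{la:ISO1}\,(c) and \ref{la:ISO2}\,(a) --- so the mixed-multiindex chain-rule work stays hidden inside previously proven lemmas; you instead descend through the adjoint criteria to the $G$-valued map $\Theta$ and verify $C^{\infty,r+1,\alpha}$ by hand via the factorization $\Theta(\gamma,t,x)=F(\Lambda(\gamma,x),t)$, which is the same mathematics done inline. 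For the identification of the candidate as the evolution map (and hence for $C^r$-semiregularity), the paper cites \cite[Lemma~10.1]{Sem} to reduce everything to the identity $\ve_x\circ\cE(\gamma)=\Evol(L(\ve_x)\circ\gamma)$; you instead verify the initial value problem directly by applying $T\ve_x$ and noting that $(T\ve_x)_{x\in M}$ is injective on each tangent space $T_gC^\alpha(M,G)$ (left-translate to $L(C^\alpha(M,G))$ and use injectivity of $\phi$), which is a correct, self-contained replacement that also yields uniqueness of solutions. What your route buys is independence from the external reference \cite{Sem}; what the paper's buys is that no delicate chain-rule bookkeeping appears in the proof itself.
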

\begin{proof}
Consider the smooth evolution map
$\Evol\colon C^r([0,1],\cg)\to C^{r+1}([0,1],G)$,
where $\cg:=L(G)$.
For $x\in M$, let $\ve_x\colon C^\alpha(M,G)\to G$,
$f\mto f(x)$ be evaluation at~$x$.
By hypothesis,
$\phi\colon L(C^\alpha(M,G))
\to C^\alpha(M,\cg)$,
$v\mto (L(\ve_x)(v))_{x\in M}$
is an isomorphism of topological Lie algebras.
Then also
\[
\phi_* \colon C^r([0,1],L(C^\alpha(M,G)))\to
C^r([0,1],C^\alpha(M,\cg)),\;
f\mto\phi\circ f
\]
is an isomorphism of topological Lie algebras.
By Example~\ref{basic-examples},
the smooth manifold structures on all of the locally convex spaces
$C^r([0,1], C^\alpha(M,\cg))$,
\[
C^{r,\alpha}([0,1]\times M,\cg),\;\,
C^{\alpha,r}(M\times [0,1],\cg),\;\,\mbox{and}\;\,
C^\alpha(M,C^r([0,1],\cg))
\]
are canonical. By Lemma~\ref{exp-law-not-pure},
the Lie algebra homomorphism
\[
\psi\colon C^r([0,1],C^\alpha(M,\cg))\to C^{r,\alpha}
([0,1]\times M,\cg),\;\, f\mto f^\wedge
\]
is an isomorphism of topological
Lie algebras. Flipping the factors $[0,1]$ and $M$
(with Lemma~\ref{la:ISO2}\,(b))
and using the Exponential Law
again, we obtain
an isomorphism of topological Lie algebras
\[
\theta\colon C^{r,\alpha}([0,1]\times M,\cg)
\to C^{\alpha}(M,C^r([0,1],\cg))
\]
determined by $\theta(f)(x)(t)=f(t,x)$.
By Theorem~\ref{thmA}, $C^{r+1}([0,1],C^\alpha(M,G))$
has a canonical smooth manifold structure.
Using Lemma~\ref{la:ISO1}\,(c), Lemma~\ref{la:ISO2}\,(a), and
Lemma~\ref{la:ISO1}\,(c) in turn,
we can give $C^\alpha(M,C^{r+1}([0,1],G))$
a pre-canonical smooth manifold structure making the map
\[
\beta\colon C^\alpha(M,C^{r+1}([0,1],G))\to
C^{r+1}([0,1],C^\alpha(M,G))
\]
determined by $\beta(f)(t)(x)=f(x)(t)$
a $C^\infty$-diffeomorphism.
The structures being pre-canonical,
\[
\Evol_* \colon
C^\alpha(M,C^r([0,1],\cg))\to
C^\alpha(M,C^{r+1}([0,1],G)),\;\,
f\mto \Evol\circ f
\]
is smooth. Hence also
$\cE:=\beta\circ
\Evol_*  \circ \, \theta \circ \psi \circ \, \phi_*$
is smooth as a map
\[
C^r([0,1], L(C^\alpha(M,G)))\to
C^{r+1}([0,1],C^\alpha(M,G)).
\]
It remains to show that $\cE$
is the evolution map of $C^\alpha(M,G)$.
As the $L(\ve_x)$ separate points
on ${\mathfrak h}:=L(C^\alpha(M,G))$ for $x\in M$,
it suffices to show that\linebreak
$\ve_x\circ \cE(\gamma)=\Evol(L(\ve_x)\circ \gamma)$
for all $\gamma\in C^r([0,1],{\mathfrak h})$ and $x\in M$
(see \cite[Lemma~10.1]{Sem}).
Note that $(\phi\circ\gamma)(t)(x)=L(\ve_x)(\gamma(t))$,
whence
\[
((\psi\circ\theta)(\phi\circ\gamma))(x)(t)=L(\ve_x)(\gamma(t))
\]
and $\big(\!\Evol_*((\psi\circ\theta)(\phi\circ\gamma))\big)(x)=
\Evol(((\psi\circ\theta)(\phi\circ\gamma))(x))
=\Evol(L(\ve_x)\circ\gamma)$. So
$(\ve_x\circ \cE(\gamma))(t)=
(\Evol_*  \circ \, \theta \circ \psi \circ \phi_*)(\gamma)(x)(t)
=\Evol (L(\ve_x)\circ \gamma)(t)$.\vspace{1.9mm}
\end{proof}
We establish Theorem~\ref{thmB}
in parallel with the first conclusion
of the following proposition,
starting with two basic cases:\\[2.3mm]
Case 1:
The manifolds $M_1,\ldots, M_n$ are compact;\\[2.3mm]
Case 2: $M$ is $1$-dimensional with finitely many connected components.\vspace{.9mm}
\begin{prop}\label{amendment}
In Theorem~{\rm\ref{thmB}},
the Lie group structure on $C^\alpha(M,G)$
is compatible with evaluations,
writing $M:=M_1\times\cdots\times M_n$.
Moreover, there is a unique
canonical pure smooth manifold
structure on $C^\alpha(M,G)$
which is\linebreak
modeled on $C^\alpha(M,L(G))$.
\end{prop}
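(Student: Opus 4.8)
The plan is to establish the first conclusion in parallel with Theorem~\ref{thmB}, proving it in two basic cases and then combining them; throughout I write $\cg:=L(G)$ and let $\ve_x\colon C^\alpha(M,G)\to G$, $f\mapsto f(x)$ be the point evaluations. In the first case, where all $M_j$ are compact, $K:=M$ is compact and $G$ (being a Lie group) admits a local addition, so $C^\alpha(K,G)$ carries its canonical structure by Proposition~\ref{prop:cano:covered} and is a Lie group with smooth point evaluations by Lemma~\ref{la-is-gp}(b). Here compatibility with evaluations should fall out of Theorem~\ref{thm:tangentident}: the bundle isomorphism $\Psi\colon TC^\alpha(K,G)\to C^\alpha(K,TG)$, $v\mapsto(T\ve_x(v))_{x\in K}$, restricts over the neutral element $e$ (the constant map $x\mapsto e$) to a linear homeomorphism $T_eC^\alpha(K,G)\to\Gamma_e=\{\tau\in C^\alpha(K,TG):\pi_{TG}\circ\tau=e\}$; the left trivialization $TG\cong G\times\cg$ carries $\Gamma_e$ onto $C^\alpha(K,\cg)$, and since $\Psi(v)(x)=T\ve_x(v)=L(\ve_x)(v)\in\cg$ for $v\in T_eC^\alpha(K,G)$, the map $\phi\colon v\mapsto(L(\ve_x)(v))_x$ is exactly this isomorphism.

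The second basic case, where $M$ is one-dimensional, non-compact, with finitely many components and $n=1$, $\alpha\geq r+1$, is the main obstacle: the compact-domain machinery of Section~\ref{sec-compact} is unavailable and one must exploit $C^r$-regularity of $G$ instead. Treating each component separately, I would realize the structure underlying Theorem~\ref{thmB} by transporting, through $\eta\mapsto(\eta(x_0),\delta^\ell(\eta))$ and $C^r$-regularity (Lemma~\ref{reparametrize}), the product structure of a space modeled on $\cg\times C^{\alpha-1}(M,\cg)\cong C^\alpha(M,\cg)$, with each point evaluation $\ve_x$ smooth. Differentiating this chart at $e$ carries a tangent vector $v$ with $L(\ve_x)(v)=\xi(x)$ to $(\xi(x_0),\xi')$, which under the isomorphism $C^\alpha(M,\cg)\cong\cg\times C^{\alpha-1}(M,\cg)$, $f\mapsto(f(x_0),f')$, is precisely $\xi$; hence again $\phi\colon v\mapsto(L(\ve_x)(v))_x$ is the modeling isomorphism and compatibility holds.

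For general $M$ I would reorder the factors by Lemma~\ref{la:ISO2} so that the compact ones form a block $K$, and build $C^\alpha(M,G)$ from the inside out via the iterated exponential law. The compact block gives a Lie group by the first case, which is $C^r$-regular by Lemma~\ref{expected-lie-regular}; each remaining one-dimensional factor (with index $\geq r+1$) is adjoined by the second case applied to this regular group, and at every stage Lemma~\ref{wedge-is-lie} transports the ``pre-canonical and compatible with evaluations'' property back to the product form, while Lemma~\ref{expected-lie-regular} restores $C^r$-regularity for the next step. As reordering intertwines $\ve_x$ with $\ve_{\phi_\sigma(x)}$, it preserves compatibility, and the resulting structure is the canonical one of Theorem~\ref{thmB}(a); this establishes the first conclusion.

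For the second conclusion, compatibility with evaluations provides a topological-vector-space isomorphism $\phi\colon L(C^\alpha(M,G))\to C^\alpha(M,\cg)$. Since $C^\alpha(M,G)$ is a Lie group, its left translations identify every tangent space with $L(C^\alpha(M,G))$, so the manifold is pure and modeled on $C^\alpha(M,\cg)=C^\alpha(M,L(G))$. Uniqueness then follows from Lemma~\ref{base-cano}(b): any two canonical pure structures modeled on this single space are related by the identity $C^\infty$-diffeomorphism and therefore coincide. I expect the genuine difficulty to lie entirely in the second case, namely in producing the regularity-based charts and verifying that point evaluations are smooth with the stated derivative, since everything else is bookkeeping through the exponential law.
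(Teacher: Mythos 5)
Your architecture is essentially the paper's: the compact case via Theorem~\ref{thm:tangentident} (the fibre of $\Psi$ over the constant map $e$ identifies $L(C^\alpha(K,G))$ with $\Gamma_e=C^\alpha(K,L(G))$), the one-dimensional case via the logarithmic derivative and $C^r$-regularity, the general case by reordering (Lemma~\ref{la:ISO2}; the paper records the preservation of evaluation-compatibility under reordering separately, in Lemma~\ref{reorder-eval}) and gluing the blocks through the exponential law, Lemma~\ref{wedge-is-lie} and Lemma~\ref{expected-lie-regular}, and the pure-structure claim via left translations plus Lemma~\ref{base-cano}\,(b). Your one structural deviation --- nesting the compact block innermost, in the form $C^\beta(N,C^\gamma(K,G))$, rather than the paper's outermost nesting $C^\gamma(K,C^\beta(N,G))$ --- is harmless, since Lemma~\ref{expected-lie-regular} supplies the $C^r$-regularity of $C^\gamma(K,G)$ that your order requires.

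The genuine gap: you never establish that the structure you build in the one-dimensional case is pre-canonical (let alone canonical). You verify, in sketch, that the transported structure has smooth point evaluations and the asserted derivative at~$e$, i.e.\ compatibility with evaluations; but pre-canonicity is a separate, substantial claim, and it is load-bearing twice. First, Lemma~\ref{wedge-is-lie} --- which you invoke each time a one-dimensional factor is adjoined --- assumes that \emph{both} the inner structure and the outer nested structure $C^\alpha(N,C^\beta(M,G))$ are pre-canonical and compatible with evaluations; in your ordering the outer structure is precisely the one furnished by your second case, so this hypothesis is unverified. Second, your concluding assertion that ``the resulting structure is the canonical one of Theorem~\ref{thmB}(a)'' can only be justified by uniqueness of (pre-)canonical structures, Lemma~\ref{base-cano}\,(b), which again requires knowing that your construction is pre-canonical; otherwise you have proved compatibility with evaluations for a possibly different smooth structure than the one the proposition speaks about. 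This is exactly where the paper does its real work: most of the proof of Lemma~\ref{basic-case-3} is devoted to showing that the log-derivative structure is canonical, by writing $\Omega^1_{C^{k-1}}(M,\cg)$ as the projective limit of the spaces $\Omega^1_{C^{k-1}}(K,\cg)$ over compact connected full submanifolds~$K$ and applying Lemma~\ref{specialized-PL} and Lemma~\ref{la:ascending-union}; Lemma~\ref{ind-correct-top} then does the corresponding work in the inductive steps. A smaller point in the same place: your model $\cg\times C^{\alpha-1}(M,\cg)$ with chart $\eta\mapsto(\eta(x_0),\delta^\ell(\eta))$ tacitly uses a global trivialization of $TM$ and a globally defined primitive; since $M$ may be a long ray, the paper instead works with $\Omega^1_{C^{k-1}}(M,\cg)\times G$ and glues solutions over the directed family of $\sigma$-compact connected open submanifolds.
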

The final assertion is clear:
Starting with any canonical structure on $C^\alpha(M,G)$
and a chart $\phi\colon U_\phi\to V_\phi\to E_\phi$
around the constant map~$e$,
using left translations (which are $C^\infty$-diffeomorphisms)
we can create charts around every $f\in C^\alpha(M,G)$
which are modeled on the given $E_\phi$.
We can therefore select a subatlas making
$C^\alpha(M,G)$ a pure smooth manifold.
Since $E_\phi$ is isomorphic to $L(C^\alpha(M,G))$,
which is isomorphic to $E:=C^\alpha(M,L(G))$
as a locally convex space (by compatibility with evaluations),
we can replace $E_\phi$ with~$E$.
The pure canonical
structure modeled on~$E$ is unique, since $\id_{C^\alpha(M,G)}$
is a $C^\infty$-diffeomorphism for any two canonical structures
(cf.\ Lemma~\ref{base-cano}\,(b)).
\begin{la}\label{basic-case-2}
Let $M_1,\ldots, M_n$
be compact smooth manifolds with rough boundary,
$G$ be a Lie group and $\alpha\in (\N_0\cup\{\infty\})^n$.
Abbreviate $M:=M_1\times \cdots\times M_n$.
Then $C^\alpha(M,G)$
admits a canonical smooth manifold
structure which is compatible with evaluations.
If $G$ is $C^r$-regular for $r\in \N_0\cup\{\infty\}$,
then so is $C^\alpha(M,G)$.
\end{la}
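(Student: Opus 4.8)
The plan is to layer the group structure on top of the manifold structure for compact domains that has already been constructed. First I would record that any Lie group $G$ modeled on a locally convex space admits a local addition: transporting a chart of $G$ around the neutral element $e$ by left translations produces an open neighbourhood of the zero-section in $TG$ together with the required diffeomorphism onto an open subset of $G\times G$. In particular $G$ is covered by local additions, so Proposition~\ref{prop:cano:covered} endows $C^\alpha(M,G)$ with a canonical smooth manifold structure. By Lemma~\ref{la-is-gp}\,(b) this (in particular pre-canonical) structure makes $C^\alpha(M,G)$ a Lie group and turns every point evaluation $\ve_x\colon C^\alpha(M,G)\to G$ into a smooth group homomorphism, which already secures the smoothness of the $\ve_x$ demanded in Definition~\ref{defn-expected}.

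The heart of the argument is compatibility with evaluations, and here I would invoke Theorem~\ref{thm:tangentident} with $K:=M$ and target manifold $G$, giving the vector bundle isomorphism $\Psi\colon TC^\alpha(M,G)\to C^\alpha(M,TG)$, $v\mapsto(T\ve_x(v))_{x\in M}$, over the identity. Restricting $\Psi$ to the fibre over the neutral element $e\in C^\alpha(M,G)$ (the constant map with value the identity of $G$) yields a topological isomorphism from $L(C^\alpha(M,G))=T_eC^\alpha(M,G)$ onto $\Gamma_e=\{h\in C^\alpha(M,TG)\colon \pi_{TG}\circ h=e\}$. Since $\pi_{TG}\circ h=e$ forces $h(x)\in T_eG=\cg$ for all $x$, and $T_eG$ is a closed linear fibre of $TG$, Lemma~\ref{maps-to-sub} identifies $\Gamma_e$ as a locally convex space with $C^\alpha(M,\cg)$ (the linear structure matching because both carry pointwise operations). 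Under this identification the restriction of $\Psi$ to $T_e$ is precisely the map $\phi(v)=(L(\ve_x)(v))_{x\in M}$ of Definition~\ref{defn-expected}, because $T_e\ve_x=L(\ve_x)$; hence $\phi$ is an isomorphism of topological vector spaces. Finally $\phi$ is a homomorphism of Lie algebras: each $\ve_x$ is a group homomorphism, so each $L(\ve_x)$ is a Lie algebra homomorphism, and the bracket on $C^\alpha(M,\cg)$ is pointwise. This verifies every clause of Definition~\ref{defn-expected}.

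For the regularity statement I would simply observe that the structure produced above is pre-canonical and compatible with evaluations, so Lemma~\ref{expected-lie-regular} applies verbatim: if $G$ is $C^r$-regular, then so is $C^\alpha(M,G)$. The only point requiring genuine care --- and thus the main obstacle --- is the identification in the second paragraph: one must check that the fibre $\Gamma_e$ of $\Psi$ really is $C^\alpha(M,\cg)$ both topologically and as a Lie algebra, and that the restriction $\Psi|_{T_e}$ coincides on the nose with $\phi$ (equivalently, that $T_e\ve_x$ equals $L(\ve_x)$). Everything else reduces to the already-established theory for compact domains.
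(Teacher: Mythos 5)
Your proposal is correct and follows essentially the same route as the paper's own proof: obtain the canonical structure from the compact-domain construction (the paper cites Theorem~\ref{thmA}, you cite Proposition~\ref{prop:cano:covered}, noting explicitly that a Lie group admits a local addition via left translation), then use the bundle isomorphism $\Psi$ of Theorem~\ref{thm:tangentident} restricted to the fibre over the constant map $x\mapsto e$ to identify $L(C^\alpha(M,G))$ with $\Gamma_\theta=C^\alpha(M,L(G))$, with Lemma~\ref{maps-to-sub} supplying the topology and Lemma~\ref{expected-lie-regular} the regularity statement. Your additional explicit verifications (Lemma~\ref{la-is-gp}\,(b) for the Lie group structure and the pointwise-bracket computation showing $\phi$ is a Lie algebra homomorphism) are details the paper leaves implicit, and they are correct.
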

\begin{proof}
By Theorem~\ref{thmA},
$C^\alpha(M,G)$
admits a canonical smooth manifold structure.
Let $\theta\colon M\to G$ be the constant map
$x\mto e$.
By Theorem~\ref{thm:tangentident},
the diffeomorphism $(T\ve_x)_{x\in M}$
maps
$L(C^\alpha(M,G))=T_\theta(C^\alpha(M,G))$
onto
\[
\Gamma_\theta=\{\tau\in C^\alpha(M,TG)\colon
\pi_{TG}\circ \tau=\theta\}=C^\alpha(M,L(G)).
\]
By Lemma~\ref{maps-to-sub},
$C^\alpha(M,TG)$ induces on
$C^\alpha(M,L(G))$ the compact-open $C^\alpha$-topology.
Thus, the Lie group structure on
$C^\alpha(M,G)$
is compatible with evaluations.
For the last assertion,
see Lemma~\ref{expected-lie-regular}.
\end{proof}
\begin{la}\label{basic-case-3}
Let $M$ be a $1$-dimensional smooth
manifold with rough boundary,
such that $M$ has only
finitely many connected
components $($which need not be $\sigma$-compact$)$.
Let $r\in \N_0\cup\{\infty\}$,
$G$ be a $C^r$-regular Lie group,
and $k \in\N\cup\{\infty\}$
such that $k\geq r+1$.
Then $C^k(M,G)$
admits a canonical smooth manifold structure
which makes it a $C^r$-regular Lie group
and is compatible with evaluations.
\end{la}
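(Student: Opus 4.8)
The plan is to reduce the assertion to the compact case (Lemma~\ref{basic-case-2}) and to the regularity transfer of Lemma~\ref{expected-lie-regular} by parametrizing $C^k(M,G)$ through base-point values together with a logarithmic derivative. Since $M$ has finitely many connected components $M^{(1)},\ldots,M^{(p)}$, the set $C^k(M,G)$ is canonically the finite product $\prod_{i=1}^p C^k(M^{(i)},G)$, and all four properties at stake (being canonical, being a Lie group, compatibility with evaluations, and $C^r$-regularity) pass to finite direct products directly from the definitions and the fact that products of $C^r$-regular groups are $C^r$-regular. I may therefore assume $M$ is connected. If $M$ is compact it is a circle or a closed interval and the claim is exactly Lemma~\ref{basic-case-2}. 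Thus the core case is that $M$ is a connected, non-compact (hence simply connected) $1$-manifold with rough boundary, possibly a long ray or long line.

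In that case I would fix an interior base point $t_0$ and, using that a connected $1$-manifold is orientable, trivialize $TM$ by a nowhere-vanishing vector field $\partial$. Writing $\cg:=L(G)$, this gives a logarithmic derivative $\delta^\ell\colon C^k(M,G)\to C^{k-1}(M,\cg)$, $\delta^\ell(\eta)(t)=\eta(t)^{-1}.(\partial\eta)(t)$ (landing in $C^{k-1}$ since one differentiation lowers the order by one and the group operations are smooth), and the map
\[
\Phi\colon C^k(M,G)\to G\times C^{k-1}(M,\cg),\quad \eta\mapsto(\eta(t_0),\delta^\ell(\eta)).
\]
The first task is to verify that $\Phi$ is a bijection: given $(g,\gamma)$, the initial value problem $\delta^\ell\eta=\gamma$, $\eta(t_0)=g$ has a unique $C^k$-solution on every $\sigma$-compact open subinterval about $t_0$ by Lemma~\ref{reparametrize} (here $k-1\geq r$ because $k\geq r+1$, and $C^r$-regularity entails $C^r$-semiregularity), and uniqueness lets these local solutions be patched over all of $M$; for a long line one continues over an exhausting upward-directed family of $\sigma$-compact subintervals. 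I would then transport the product manifold structure of $G\times C^{k-1}(M,\cg)$ to $C^k(M,G)$, declaring $\Phi$ a $C^\infty$-diffeomorphism; note $C^{k-1}(M,\cg)$ is a locally convex space and a canonical manifold by Example~\ref{basic-examples}(a), as $M$ is locally compact.

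The substantive step is to show the transported structure is canonical. For $L=L_1\times\cdots\times L_m$ and $\beta\in(\N_0\cup\{\infty\})^m$, a map $h\colon L\to C^k(M,G)$ is $C^\beta$ for the new structure iff $\ve_{t_0}\circ h$ is $C^\beta$ and $\delta^\ell\circ h$ is $C^\beta$; as $C^{k-1}(M,\cg)$ is canonical, the latter holds iff $(\delta^\ell\circ h)^\wedge\colon L\times M\to\cg$ is $C^{\beta,k-1}$. I must match this with the pre-canonical condition that $h^\wedge\colon L\times M\to G$ be $C^{\beta,k}$. The forward implication is routine: restricting $h^\wedge$ to $t=t_0$ and applying $\delta^\ell$ fibrewise (inversion, one $M$-derivative, multiplication) via the chain rule \cite[Lemma~3.16]{Alz} yields the two conditions. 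The backward implication is the heart: one must reconstruct $h^\wedge$ from its value at $t_0$ and $(\delta^\ell\circ h)^\wedge$ with the correct joint regularity in $(x,t)$. On each compact subinterval this is achieved exactly as in Lemma~\ref{expected-lie-regular}, by writing the reconstruction as $\Evol_*$ composed with exponential-law isomorphisms (Lemma~\ref{exp-law-not-pure}) and a factor flip (Lemma~\ref{la:ISO2}); smoothness of $\Evol$, i.e.\ $C^r$-regularity of $G$, is what makes this map $C^\beta$, and $k-1\geq r$ keeps the output in $C^k$. Since being $C^{\beta,k}$ is a local condition in $M$ (Lemma~\ref{la:ascending-union}), global regularity of $h^\wedge$ follows, covering the non-$\sigma$-compact case; combined with a check that $\Phi$ is a homeomorphism for the compact-open $C^k$-topology, this shows the structure is canonical. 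I expect this reconstruction-with-parameters step, and its patching over a non-$\sigma$-compact domain, to be the main obstacle.

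Finally, canonical implies pre-canonical, so Lemma~\ref{la-is-gp}(b) makes $C^k(M,G)$ a Lie group with all point evaluations $\ve_x$ smooth, and Lemma~\ref{expected-lie-regular} will deliver $C^r$-regularity once compatibility with evaluations is established. For the latter I would compute $T_e\Phi$ and observe that it equals $\Phi_0\circ\phi$, where $\phi(v)=(L(\ve_x)(v))_{x\in M}$ and $\Phi_0\colon C^k(M,\cg)\to\cg\times C^{k-1}(M,\cg)$, $\xi\mapsto(\xi(t_0),\partial\xi)$ is the linearized logarithmic derivative, itself a TVS-isomorphism by linear integration. Since $T_e\Phi$ is an isomorphism, so is $\phi$, giving compatibility with evaluations and completing the proof through Lemma~\ref{expected-lie-regular}.
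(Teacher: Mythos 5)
There is a genuine gap, and it sits exactly at the case this lemma was formulated to cover. Your construction begins by trivializing $TM$ ``by a nowhere-vanishing vector field $\partial$'', justified by orientability. For $\sigma$-compact connected $M$ (then $M$ is diffeomorphic to an interval) this is fine, but the lemma explicitly allows non-$\sigma$-compact $M$ --- the paper stresses right after Theorem~\ref{thmB} that $M$ may be a long ray or a long line --- and those manifolds admit \emph{no} nowhere-vanishing vector field at all, for any smoothing. Orientability does hold (they are simply connected), but the implication ``orientable line bundle $\Rightarrow$ trivial'' uses paracompactness (one needs a metric or a partition of unity to select a unit section). Concretely, suppose $X$ were a nowhere-vanishing $C^1$-field on the closed long ray $[0,\omega_1)$; replacing $X$ by $-X$ if necessary, it points in the increasing direction everywhere. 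The maximal integral curve through an interior point is order-increasing; since every countable subset of $[0,\omega_1)$ has an upper bound in $[0,\omega_1)$, the image of the curve lies in a compact initial segment $[0,\beta^*]$ and the curve converges to the point $\beta^*$ of the manifold. A finite maximal existence time contradicts the escape lemma (the curve stays in a compact set), and an infinite one contradicts $X(\beta^*)\neq 0$: in a chart around $\beta^*$ the local representative of $X$ is bounded away from $0$ near $\beta^*$, so the curve would pass $\beta^*$ in finite time. The long line contains a closed long ray as a full submanifold, so it is not parallelizable either. Hence your map $\delta^\ell\colon C^k(M,G)\to C^{k-1}(M,\cg)$, and with it $\Phi$, simply does not exist in the non-$\sigma$-compact cases, and everything downstream (bijectivity, transport of structure, canonicity, the computation of $T_e\Phi$) has nothing to act on there.

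This is precisely why the paper formulates the logarithmic derivative as a $\cg$-valued one-form, $\delta_M(f)=\kappa\circ Tf\in\Omega^1_{C^{k-1}}(M,\cg)$, which needs no trivialization, and then controls $\Omega^1_{C^{k-1}}(M,\cg)$ as the projective limit of the spaces $\Omega^1_{C^{k-1}}(K,\cg)$ over compact connected full submanifolds $K\ni x_0$ (each such $K$ \emph{is} an interval, where trivializing is harmless); the Lie group structure, its $C^r$-regularity and compatibility with evaluations are quoted from \cite[Theorem~3.5]{Ham}, and the paper's own contribution is the canonicity proof via Lemma~\ref{specialized-PL} and the compact case (Lemma~\ref{basic-case-2}). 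Your remaining architecture --- parametrization by base value plus logarithmic derivative, an $\Evol$-based reconstruction with parameters on compact subintervals where $C^r$-regularity enters, locality of the $C^{\beta,k}$-property, and linearization of $\Phi$ at the identity for compatibility with evaluations --- is sound and is essentially the same mechanism made self-contained; splitting off the compact connected case at the start is also a good move (it cleanly disposes of the circle). But to close the gap you must replace $C^{k-1}(M,\cg)$ by $\Omega^1_{C^{k-1}}(M,\cg)$ throughout and reduce that space to compact subintervals, at which point you have essentially reconstructed the paper's argument.
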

\begin{proof}
We first assume that~$M$ is connected.
Let $\cg:=L(G)$ be the Lie algebra of~$G$.
If $N$ is a full submanifold of~$M$,
we write
$\Omega_{C^{k-1}}^1(N,\cg)\sub C^{k-1}(TN,\cg)$
for the locally convex space of $\cg$-valued $1$-forms
on~$N$, of class $C^{k-1}$.
Using the Maurer-Cartan form
\[
\kappa\colon TG\to \cg,\quad v\mto\pi_{TG}(v)^{-1}.v,
\]
a $\cg$-valued
$1$-form
\[
\delta_N(f):=\kappa\circ T f\in \Omega^1_{C^{k-1}}(N,\cg)
\]
can be associated to each $f\in C^k(N,G)$,
called its left logarithmic derivative.
Fix $x_0\in M$.
For every $\sigma$-compact, connected,
full submanifold $N\sub M$ such that $x_0\in N$,
there exists a $C^\infty$-diffeomorphism
$\psi\colon I\to N$
for some non-degenerate interval $I\sub \R$,
such that $0\in I$ and $\psi(0)=x_0$.
Then the diagram
\[
\begin{array}{rcl}
C^k(N,G) & \stackrel{\delta_N}{\longrightarrow} & \Omega^1_{C^{k-1}}(N,\cg)\\[.5mm]
\psi^* \downarrow \;\;\;\; &  & \;\;\;\; \downarrow \theta \\[.1mm]
C^k(I,G) & \stackrel{\delta^\ell}{\longrightarrow} & C^{k-1}(I,\cg),
\end{array}
\]
is commutative,
where $\psi^*\colon C^k(N,G)\to C^k(I,G)$, $f\mto f\circ\psi$
and the vertical map $\theta$ on the right-hand side,
which takes $\omega$ to $\omega\circ \dot{\psi}$,
are bijections.
For each $\omega\in \Omega^1_{C^{k-1}}(N,\cg)$,
there is a unique $f\in C^k(N,G)$
such that $f(x_0)=e$ and $\delta_N(f)=\omega$:
In fact, Lemma~\ref{reparametrize}
yields a unique
$\eta \in C^k(I,G)$ with $\eta(0)=e$
and $\delta^\ell(\eta)=\theta(\omega)$;
then $f:=(\psi^*)^{-1}(\eta)$
is as required. We set
$\Evol_N(\omega):= f$.\\[2.3mm]
If $\omega \in \Omega^1_{C^{k-1}}(M,\cg)$,
we have $\Evol_L(\omega|_{TL})=\Evol_N(\omega|_{TN})|_L$
for all $\sigma$-compact, connected open
submanifolds $N,L$ of~$M$ such that
$L\sub N$. As such submanifolds~$N$
form a cover of~$M$ which is directed
under inclusion,
we can define $f\colon M\to G$
piecewise via $f(x):=\Evol_N(\omega|_{TN})(x)$
if $x\in N$ and obtain a well-defined $C^k$-map
$f\colon M\to G$ such that $\delta_M(f)=\omega$.
Thus
\[
\delta_M(C^k(M,G))=\Omega^1_{C^{k-1}}(M,\cg),
\]
which is a submanifold of $\Omega^1_{C^{k-1}}(M,\cg)$.
Let $\cK$ be the set of all
connected, compact full submanifolds
$K\sub M$ such that $x_0\in K$.
By the preceding, $\delta_K(C^k(K,G))=\Omega^1_{C^{k-1}}(K,\cg)$,
which is a submanifold of $\Omega^1_{C^{k-1}}(K,\cg)$.
Since
\begin{equation}\label{union-of-K}
M=\bigcup_{K\in\cK}K^o,\vspace{-.7mm}
\end{equation}
\cite[Theorem~3.5]{Ham}
provides a smooth manifold structure
on $C^k(M,\cg)$ which makes it a $C^r$-regular
Lie group, is compatible with evaluations,
and turns
\[
\psi\colon C^k(M,G)\to \Omega^1_{C^{k-1}}(M,\cg)\times G,\;\,
f\mto (\delta_M(f),f(x_0))
\]
into a $C^\infty$-diffeomorphism.
It remains to show that the smooth manifold structure
is canonical.
To prove the latter, we first note that $\cK$ is directed under inclusion.
In fact, if $K_1,K_2\in\cK$,
then $K_1\cup K_2$ is contained
in a $\sigma$-compact, connected
open submanifold~$N$ of $M$
(a union of chart domains diffeomorphic to convex
subsets of~$\R$, around finitely many points in
the compact set $K_1\cup K_2$).
Pick a $C^\infty$-diffeomorphism
$\psi\colon I\to N$ as above.
Then $\psi^{-1}(K_1)$ and $\psi^{-1}(K_2)$
are compact intervals containing $0$,
whence so is their union.
Thus $K_1\cup K_2$ is a connected, compact
full submanifold of $N$ and hence of~$M$.\\[2mm]
For $K,L\in\cK$ with $K\sub L$,
let $r_{K,L}\colon \Omega^1_{C^{k-1}}(L,\cg)\to \Omega^1_{C^{k-1}}(K,\cg)$
be the restriction map.
As a consequence of Lemma~\ref{la:ascending-union}
and (\ref{union-of-K}),
\[
\Omega^1_{C^{k-1}}(M,\cg)={\pl}_{K\in\cK}\Omega^1_{C^{k-1}}(K,\cg)\vspace{-.7mm}
\]
holds as a locally convex space, using 
the restriction maps $r_K\colon \Omega^1_{C^{k-1}}(M,\cg)\to
\Omega^1_{C^{k-1}}(K,\cg)$ as the limit maps.
For $K\in\cK$, let $\rho_K\colon C^k(M,G)\to C^k(K,G)$
be the restriction map;
endow $C^k(K,G)$ with its canonical smooth manifold
structure
(as in Lemma~\ref{basic-case-2}),
which is compatible with evaluations
(the ``ordinary'' Lie group structure
in~\cite{Ham}).
Then
\[
\psi_K\colon C^k(K,G)\to \Omega^1_{C^{k-1}}(K,\cg)\times G,\;\,
f\mto (\delta_K(f),f(x_0))
\]
is a $C^\infty$-diffeomorphism
(see \cite[proof of Theorem~3.5]{Ham}).
Note that $\rho_K=\psi_K^{-1}\circ (r_K\times \id_G)\circ\psi$
is smooth on $C^k(M,G)$,
using the above Lie group structure
making $\psi$ a $C^\infty$-diffeomorphism.
Let $\alpha\in(\N_0\cup\{\infty\})^m$,
$L_1,\ldots, L_m$ be smooth manifolds with rough boundary,
$L:=L_1\times\cdots\times L_m$
and $f\colon L\to C^k(M,G)$ be a map.
If $f$ is $C^\alpha$, then also $\rho_K\circ f$
is $C^\alpha$. Since $C^k(K,G)$ is canonical,
the map
\[
f^\wedge|_{L\times K}=
(\rho_j\circ f)^\wedge\colon L\times K\to G
\]
is $C^{\alpha,k}$. Using (\ref{union-of-K}),
we deduce that $f^\wedge$ is $C^{\alpha,k}$.
If, conversely, $f^\wedge$ is $C^{\alpha,k}$,
then $(\rho_K\circ f)^\wedge=f^\wedge|_{L\times K}$
is $C^{\alpha,k}$. The smooth manifold structure on $C^k(K,G)$
being canonical, we deduce that $\rho_K\circ f$
is $C^\alpha$.
The hypotheses of Lemma~\ref{specialized-PL} being satisfied
with $A:=\cK$, $C^k(M,G)$ in place of~$M$,
$M_K:=C^k(K,G)$,
$F:=\Omega^1_{C^{k-1}}(M,\cg)$,
$F_K:=\Omega^1_{C^{k-1}}(K,\cg)$, and $N:=G$,
we see that $f$ is~$C^\alpha$.
The smooth manifold structure on $C^k(M,G)$
is therefore pre-canonical.
The topology on the projective limit
$\Omega^1_{C^{k-1}}(M,\cg)$ is initial
with respect to the limit maps
$r_K$,
whence the topology on $\Omega^1_{C^{k-1}}(M,\cg)\times G$
is initial with resspect to the maps $r_K\times\id_G$.
Since $\psi$ is a homeomorphism,
we deduce that
the topology $\cO$ on the Lie group $C^k(M,G)$
is initial with respect to the maps $(r_K\times\id_G)\circ \psi=
\psi_K\circ \rho_K$.
Since $\psi_K$ is a homeomorphism,
$\cO$ is initial just as well
with respect to the family $(\rho_K)_{K\in\cK}$.
But also the compact-open $C^k$-topology
$\cT$ on $C^k(M,G)$
is initial with respect to this family of maps
(see Lemma~\ref{la:ascending-union}),
whence $\cO=\cT$ and $C^k(M,G)$ is canonical.\\[2mm]
If $M$ has finitely many components $M_1,\ldots, M_n$,
we give $C^k(M,G)$ the smooth manifold
structure turning the bijection\vspace{-.7mm}
\[
\rho\colon C^k(M,G)\to\prod_{j=1}^n C^k(M_j,G),\quad f\mto
(f|_{M_j})_{j=1}^n\vspace{-.7mm}
\]
into a $C^\infty$-diffeomorphism.
Let $\rho_j$ be its $j$th component.
Since $\rho$ is a homeomorphism for the compact-open $C^k$-topologies
(cf.\ Lemma~\ref{la:ascending-union})
and an isomorphism of groups, the preceding smooth
manifold structure makes $C^k(M,G)$
a Lie group and is compatible with the compact-open $C^k$-topology.
As each of the Lie groups $C^k(M_j,G)$ is $C^r$-regular,
also their direct product (and thus $C^k(M,G)$) is $C^r$-regular.
Since $\rho=(\rho_j)_{j=1}^n$
is an isomorphism of Lie groups,
\[
(L(\rho_1),\ldots L(\rho_n))\colon L(C^k(M,G))\to L(C^k(M_1,G))\times\cdots\times
L(C^k(M_n,G))
\]
is an isomorphism of topological Lie algebras.
For $x\in M_j$, the point evaluation
$\ve_x\colon C^k(M,G)\to G$ is smooth,
as the point evaluation
$\bar{\ve}_x\colon C^k(M_j,G)\to G$
is smooth and $\ve_x=\bar{\ve}_x\circ \rho_j$.
We know that $\phi_j(v):=(L(\bar{\ve}_x)(v))_{x\in M_j}\in C^k(M_j,\cg)$
for all $v\in L(C^k(M_j,G))$ and that
$\phi_j\colon L(C^k(M_j, G))\to C^k(M_j,\cg)$
is an isomorphism of topological Lie algebras.
For each $v\in L(C^k(M,G))$,
we have
\[
(L(\ve_x)(v))_{x\in M_j}=(L(\bar{\ve}_x)(L(\rho_j)(v)))_{x\in M_j}
=\phi_j(L(\rho_j)(v))\in C^k(M_j,\cg)
\]
for $j\in\{1,\ldots, n\}$, whence $\phi(v):=(L(\ve_x)(v))_{x\in M}\in C^k(M,\cg)$.
Let us show that the Lie algebra homomorphism
$\phi\colon L(C^k(M,G))\to C^k(M,\cg)$
is a homeomorphism.
Lemma~\ref{la:ascending-union} entails that the map\vspace{-.7mm}
\[
r=(r_j)_{j=1}^n\colon C^k(M,\cg)\to \prod_{j=1}^n C^k(M_j,\cg),\;\,
f\mto (f|_{M_j})_{j=1}^n\vspace{-.7mm}
\]
is a homeomorphism. By the preceding,
$r\circ\phi=(\phi_1\times\cdots\times \phi_n)\circ (L(\rho_j))_{j=1}^n$
is a homeomorphism, whence so is $\phi$.
Thus, the Lie group structure on $C^k(M,G)$
is compatible with evaluations.
If $\alpha$, $L=L_1\times\cdots\times L_m$
and $f\colon L\to C^k(M,G)$
are as above and $f$ is $C^\alpha$,
then $f^\wedge$ is $C^{\alpha, k}$ by the above argument.
If, conversely, $f^\wedge$ is $C^{\alpha, k}$,
then $f^\wedge|_{L\times M_j}$
is $C^{\alpha,k}$, whence $(f^\wedge|_{L\times M_j})^\vee=\rho_j\circ f$
is $C^\alpha$ for all $j\in\{1,\ldots, n\}$.
As a consequence, $\rho\circ f$ is $C^\alpha$
and thus also~$f$.
We have shown that the smooth manifold structure
on $C^k(M,G)$ is pre-canonical and
hence canonical, as
compatibility with the compact-open $C^k$-topology
was already established.
\end{proof}
Another lemma is useful.
\begin{la}\label{ind-correct-top}
Let $N_1,\ldots, N_m$ and $M_1,\ldots, M_n$
be locally compact smooth manifolds
with rough boundary, $\alpha\in(\N_0\cup\{\infty\})^m$,
$\beta\in (\N_0\cup\{\infty\})^m$,
and $G$ be a Lie group.
Abbreviate $N:=N_1\times\cdots\times N_m$
and $M:= M_1\times \cdots\times M_n$.
Assume that $C^\beta(M,G)$
has a pre-canonical smooth manifold structure,
using which
$C^\alpha(N,C^\beta(M,G))$
has a canonical smooth manifold structure.
Endow\linebreak
$C^{\alpha,\beta}(N\times M,G)$
with the pre-canonical smooth manifold structure
turning
\[
\Phi\colon C^{\alpha,\beta}(N\times M,G)\to C^\alpha(N, C^\beta(M,G)),\;\,
f\mto f^\vee
\]
into a $C^\infty$-diffeomorphism.
Assume that there exists a family $(K_i)_{i\in I}$
of compact full submanifolds $K_i$ of $N$
whose interiors cover $N$,
with the following properties:
\begin{itemize}
\item[\rm(a)]
For each $i\in I$, we have $K_i=K_{i,1}\times \cdots\times
K_{i,m}$ with certain compact full submanifolds $K_{i,\ell}\sub N_\ell$;
and
\item[\rm(b)]
$C^\beta(M,C^\alpha(K_i,G))$
admits a canonical smooth manifold structure
for each $i\in I$,
using the canonical smooth manifold
structure on $C^\alpha(K_i,G)$
provided by Theorem~{\rm \ref{thmA}}.
\end{itemize}
Then the pre-canonical
manifold structure on $C^{\alpha,\beta}(N\times M,G)$
is canonical.
\end{la}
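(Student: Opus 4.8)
Since the manifold structure on $C^{\alpha,\beta}(N\times M,G)$ is pre-canonical by hypothesis, Definition~\ref{defn-can} shows that only one thing remains to be verified: that its underlying topology $\cT_{\mathrm{pre}}$ is the compact-open $C^{\alpha,\beta}$-topology $\cT_{\mathrm{co}}$. Equivalently, I must show that $\Phi$ is a homeomorphism when $C^{\alpha,\beta}(N\times M,G)$ carries $\cT_{\mathrm{co}}$ and $C^\alpha(N,C^\beta(M,G))$ carries the canonical (hence compact-open $C^\alpha$-) topology. The plan is to present both topologies as initial topologies with respect to restriction maps to the compact pieces $K_i\times M$, and then to reduce the comparison on each piece to a topological exponential law over the \emph{compact} factor $K_i$.

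First I would reduce to the pieces. Each $K_i\times M=K_{i,1}\times\cdots\times K_{i,m}\times M_1\times\cdots\times M_n$ is a product of full submanifolds by~(a), and the interiors $K_i^o\times M$ cover $N\times M$ because the $K_i^o$ cover~$N$; hence Lemma~\ref{la:ascending-union} shows that $\cT_{\mathrm{co}}$ is initial with respect to the restrictions $\rho_i\colon f\mapsto f|_{K_i\times M}$ into $C^{\alpha,\beta}(K_i\times M,G)$ with its compact-open topology. Applying Lemma~\ref{la:ascending-union} to the cover $(K_i)$ of~$N$, the canonical topology on $C^\alpha(N,C^\beta(M,G))$ is initial with respect to the restrictions $g\mapsto g|_{K_i}$ into $C^\alpha(K_i,C^\beta(M,G))$; transporting through $\Phi$ and using $f^\vee|_{K_i}=(f|_{K_i\times M})^\vee$, the topology $\cT_{\mathrm{pre}}$ is initial with respect to the maps $\Phi_i\circ\rho_i$, where $\Phi_i\colon C^{\alpha,\beta}(K_i\times M,G)\to C^\alpha(K_i,C^\beta(M,G))$, $h\mapsto h^\vee$. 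Consequently $\cT_{\mathrm{pre}}=\cT_{\mathrm{co}}$ will follow once each $\Phi_i$ is shown to be a homeomorphism for the compact-open topologies, since then the families $(\rho_i)$ and $(\Phi_i\circ\rho_i)$ define the same initial topology.

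It remains to treat a single compact piece, and here the compactness of $K_i$ is what makes the argument go through. By Definition~\ref{defn-alpha-top} together with Schwarz's theorem (Lemma~\ref{reorder}), interchanging the two blocks of factors is a homeomorphism $C^{\alpha,\beta}(K_i\times M,G)\to C^{\beta,\alpha}(M\times K_i,G)$ for the compact-open topologies. The advantage of passing to this order is that the \emph{inner} domain is now the compact factor $K_i$: by Theorem~\ref{thmA} the space $C^\alpha(K_i,G)$ carries its canonical structure, and by Lemma~\ref{la:lift:locadd} it is covered by local additions, whereas $C^\beta(M,G)$ need not be. Moreover assumption~(b) furnishes the canonical structure on $C^\beta(M,C^\alpha(K_i,G))$, whose underlying topology is the compact-open $C^\beta$-topology. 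I would then identify $C^{\beta,\alpha}(M\times K_i,G)$, $C^\beta(M,C^\alpha(K_i,G))$ and $C^\alpha(K_i,C^\beta(M,G))$ by the corresponding currying and evaluation-order-swap bijections and show these are homeomorphisms for the compact-open topologies. Since the factor-interchange and $\Phi_i$ fit into a commutative square with the currying map $C^{\beta,\alpha}(M\times K_i,G)\to C^\beta(M,C^\alpha(K_i,G))$ and the swap $C^\alpha(K_i,C^\beta(M,G))\to C^\beta(M,C^\alpha(K_i,G))$, the homeomorphism property of $\Phi_i$ follows from that of these two maps.

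The main obstacle is precisely this last topological exponential law over the compact factor $K_i$ with a Lie-group target and a merely locally compact~$M$. I would resolve it by reducing to the vector-valued exponential law Lemma~\ref{exp-law-not-pure}: choosing local-addition charts on $G$ and on the canonical manifold $C^\alpha(K_i,G)$ (the latter available by Lemma~\ref{la:lift:locadd}), the local models of the spaces involved are section spaces $\Gamma_f$, which by Theorem~\ref{thm:tangentident} are built from $C^\alpha(K_i,\cdot)$- and $C^\beta(M,\cdot)$-valued functions; on these locally convex spaces the currying and swap maps are exactly the homeomorphisms provided by Lemma~\ref{exp-law-not-pure}, valid because the $M_j$ are locally compact and $K_i$ is compact. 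Feeding this back up through Lemma~\ref{la:ascending-union} identifies $\cT_{\mathrm{pre}}$ with $\cT_{\mathrm{co}}$, so the pre-canonical structure on $C^{\alpha,\beta}(N\times M,G)$ is canonical.
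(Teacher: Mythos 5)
Your opening reduction is sound and parallels the paper's setup: pre-canonicality is given, so only the topology comparison remains; Lemma~\ref{la:ascending-union} applied to the pieces $K_i\times M$ of $N\times M$ and to the pieces $K_i$ of $N$ correctly reduces everything to showing that each $\Phi_i\colon C^{\alpha,\beta}(K_i\times M,G)\to C^\alpha(K_i,C^\beta(M,G))$ is a homeomorphism for the compact-open topologies, and your commutative square (interchange, currying over~$M$, swap) is the right shape. The gap lies in how you propose to prove that the currying and the swap are homeomorphisms. Localizing in local-addition (section-space) charts and invoking the vector-valued exponential law, Lemma~\ref{exp-law-not-pure}, cannot work here, because every space in your square still involves the non-compact factor~$M$, either as a domain or inside the target manifold $C^\beta(M,G)$. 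Section-space charts of the kind built in \ref{numba:mfdstruct} and Theorem~\ref{thm:tangentident} exist only over compact domains; for non-compact domains the would-be chart ranges $O_f'$ are not open in the compact-open topology, which is precisely why Theorem~\ref{thmC} and Section~\ref{sec-box} pass to finer topologies. The pre-canonical structure on $C^\beta(M,G)$ and the canonical structure on $C^\beta(M,C^\alpha(K_i,G))$ from hypothesis~(b) are abstract hypotheses with no description by such charts, so there are no local models on which Lemma~\ref{exp-law-not-pure} could be applied. Moreover, for the swap the problem is not merely technical: given the interchange and the currying, the swap is a homeomorphism if and only if $\Phi_i$ is, so treating it as independently available is circular unless a genuinely different argument is supplied.

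That different argument is exactly what the paper provides, in two steps missing from your proposal. First, hypothesis~(b) enters only through the uniqueness of pre-canonical structures (Lemma~\ref{base-cano}\,(b)): transporting the canonical structure of $C^\alpha(K_i,C^\beta(M,G))$ (Theorem~\ref{thmA}) along the chain $C^\alpha(K_i,C^\beta(M,G))\cong C^{\alpha,\beta}(K_i\times M,G)\cong C^{\beta,\alpha}(M\times K_i,G)\cong C^\beta(M,C^\alpha(K_i,G))$ yields a pre-canonical structure at each step (Lemmas~\ref{la:ISO1}\,(c) and \ref{la:ISO2}\,(a)), which by uniqueness must coincide with the canonical structure assumed in~(b); this, not any chart computation, is what makes the swap $\Psi_i$ a homeomorphism onto the compact-open $C^\beta$-topology. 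You quote~(b) only for its underlying topology and never invoke uniqueness, so in your argument (b) does no work. Second, one needs Lemma~\ref{la:ascending-union} a second time, now on the $M$-side, restricting to doubly compact pieces $K_i\times L_j$ with $L_j$ a product of compact full submanifolds of the $M_k$; there all spaces carry canonical structures by Theorem~\ref{thmA}, so the exchange and currying bijections $\Theta_{i,j}$ are $C^\infty$-diffeomorphisms, hence homeomorphisms (Proposition~\ref{explaw-precan}, Lemma~\ref{la:ISO2}\,(b)), and the resulting composites are restriction maps to $K_i\times L_j$, which also define the compact-open $C^{\alpha,\beta}$-topology. Your currying over non-compact~$M$ could be repaired by this double-restriction device, but the swap cannot be obtained without the uniqueness argument.
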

\begin{proof}
Let $\cO$ be the topology on
$C^{\alpha,\beta}(N\times M,G)$,
equipped with its pre-canonical smooth manifold structure.
Using Theorem~\ref{thmA}, for $i\in I$ we endow
$C^\alpha(K_i,C^\beta(M,G))$
with a canonical smooth manifold structure;
the underlying topology is the compact-open $C^\alpha$-topology.
The given smooth manifold structure
on $C^\alpha(N,C^\beta(M,G))$ being canonical,
its underlying topology is the compact-open
$C^\alpha$-topology,
which is initial with respect to the
restriction maps
\[
\rho_i\colon C^\alpha(N,C^\beta(M,G))\to C^\alpha(K_i,C^\beta(M,G))
\]
for $i\in I$.
We have bijections
\[
C^\alpha(K_i,C^\beta(M,G))\cong
C^{\alpha,\beta}(K_i\times M,G)\cong
C^{\beta,\alpha}(M\times K_i,G)\cong C^\beta(M, C^\alpha(K_i,G))
\]
using in turn the Exponential Law
(in the form (\ref{pre-can-bij})),
a flip in the factors (cf.\ Lemma~\ref{la:ISO2}\,(a)),
and again the Exponential Law.
If, step by step,
we transport the
smooth manifold structure from the left to the right,
we obtain a pre-canonical smooth manifold structure in each step
(see Lemmas~\ref{la:ISO1}\,(c) and \ref{la:ISO2}\,(a)).
As pre-canonical structures are unique,
the pre-canonical structure obtained on $C^\beta(M,C^\alpha(K_i,G))$
must coincide with the canonical structure
which exists by hypothesis. Hence, using this canonical structure,
the map
\[
\Psi_i\colon C^\alpha(K_i,C^\beta(M,G))\to C^\beta(M,C^\alpha(K_i,G))
\]
determined by $\Psi(f)(y)(x)=f(x)(y)$
is a $C^\infty$-diffeomorphism.
Let $\cL_k$ be the set
of compact full submanifolds
of $M_k$ for $k\in\{1,\ldots,n\}$.
Write $\cL_1\times\cdots\times \cL_n=:J$.
If $j\in J$,
then $j=(L_{j,1},\ldots, L_{j,n})$
with certain compact full submanifols $L_{j,k}\sub M_k$;
we define $L_j:=L_{j,1}\times\cdots\times L_{j,n}$.
By Lemma~\ref{la:ascending-union},
the topology on $C^\beta(M,C^\alpha(K_i,G))$
is initial with respect to the restriction maps
\[
r_{i,j}\colon C^\beta(M,C^\alpha(K_i,G))\to C^\beta(L_j,C^\alpha(K_i,G)),
\]
using the compact-open $C^\alpha$-topology
on the range which underlies the canonical
smooth manifold structure given by Theorem~\ref{thmA}.
Let $\Theta_{i,j}$ be the composition of the bijections
\[
C^\beta(L_j,C^\alpha(K_i,G))\to C^{\beta,\alpha}(L_j\times K_i,G)
\to C^{\alpha,\beta}(K_i\times L_j,G);
\]
thus $\Theta_{i,j}(f)(x,y)=f(y)(x)$.
As each of the domains and ranges admits a canonical
smooth manifold structure (by Theorem~\ref{thmA}), all of the maps
have to be homeomorphisms
(see Proposition~\ref{explaw-precan}
and Lemma~\ref{la:ISO2}\,(b)).
Thus $\Theta_{i,j}$ is a homeomorphism.
By transitivity of initial topologies, $\cO$ is initial
with respect to the mappings
\[
\rho_{i,j}:=\Theta_{i,j}\circ r_{i,j}\circ \Psi_i\circ \rho_i\circ \Phi
\;\;
\mbox{for $i\in I$ and $j\in J$,}
\]
which are the restriction maps $C^{\alpha,\beta}(N\times
M,G)\to C^{\alpha,\beta}(K_i\times L_j,G)$.
Also the compact-open $C^{\alpha,\beta}$-topology
on $C^{\alpha,\beta}(N\times M,G)$
is initial with respect to
the maps $\rho_{i,j}$, and hence coincides with~$\cO$.
The given pre-canonical smooth manifold
structure on $C^{\alpha,\beta}(N\times M,G)$
therefore is canonical.
\end{proof}
\begin{la}\label{reorder-eval}
Let $M_1,\ldots, M_n$ be locally compact,
smooth manifold with rough boundary,
$M:=M_1\times\cdots\times M_n$
$\alpha\in (\N_0\cup\{\infty\})^n$,
and $G$ be a Lie group.
Assume that the group $C^\alpha(M,G)$
is endowed with a smooth manifold structure which makes it a Lie group
and is compatible with evaluations.
Let $\sigma$ be a permutation of $\{1,\ldots, n\}$ and
$Q:=M_{\sigma(1)}\times\cdots \times M_{\sigma(n)}$.
Consider $\phi_\sigma\colon M\to Q$, $x\mto x\circ\sigma$.
Then the smooth manifold $($and Lie group$)$ structure
on the group $C^{\alpha\circ \sigma}(Q,G)$
making the bijective group homomorphism
\[
(\phi_\sigma)^*\colon C^{\alpha\circ\sigma}(Q,G)\to C^\alpha(M,G),\;\, f\mto f\circ\phi_\sigma
\]
a $C^\infty$-diffeomorphism
is compatible with evaluations.
\end{la}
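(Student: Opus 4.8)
The plan is to transport the compatibility with evaluations of $C^\alpha(M,G)$ across the group isomorphism and $C^\infty$-diffeomorphism $P:=(\phi_\sigma)^*\colon C^{\alpha\circ\sigma}(Q,G)\to C^\alpha(M,G)$, exploiting that the point evaluations on $Q$ differ from those on $M$ only by a permutation of the arguments. Write $\cg:=L(G)$, and for $x\in M$, $y\in Q$ let $\ve_x^M\colon C^\alpha(M,G)\to G$ and $\ve_y^Q\colon C^{\alpha\circ\sigma}(Q,G)\to G$ denote the point evaluations. The basic observation is the identity
\[
\ve_y^Q=\ve_{\phi_\sigma^{-1}(y)}^M\circ P\qquad(y\in Q),
\]
which holds because $P(f)=f\circ\phi_\sigma$ gives $\ve_{\phi_\sigma^{-1}(y)}^M(P(f))=f(\phi_\sigma(\phi_\sigma^{-1}(y)))=f(y)=\ve_y^Q(f)$. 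Since $P$ is smooth and each $\ve_x^M$ is smooth by hypothesis (compatibility with evaluations for $C^\alpha(M,G)$), each $\ve_y^Q$ is smooth; this already gives the first requirement of Definition~\ref{defn-expected}.

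Next I would pass to Lie algebras. All maps in the displayed identity are smooth group homomorphisms ($P$ is a group isomorphism, and point evaluations are homomorphisms because the multiplication is pointwise), so functoriality of $L$ yields
\[
L(\ve_y^Q)=L(\ve_{\phi_\sigma^{-1}(y)}^M)\circ L(P)\qquad(y\in Q),
\]
where $L(P)\colon L(C^{\alpha\circ\sigma}(Q,G))\to L(C^\alpha(M,G))$ is an isomorphism of topological Lie algebras. Writing $\phi^M\colon L(C^\alpha(M,G))\to C^\alpha(M,\cg)$, $w\mapsto(L(\ve_x^M)(w))_{x\in M}$ for the isomorphism of topological vector spaces supplied by compatibility of $C^\alpha(M,G)$, and setting $w:=L(P)(v)$ for $v\in L(C^{\alpha\circ\sigma}(Q,G))$, the identity shows that the candidate family
\[
\phi^Q(v):=(L(\ve_y^Q)(v))_{y\in Q}
\]
is precisely the reparametrization $\phi^M(w)\circ\phi_\sigma^{-1}$ of the map $\phi^M(w)\in C^\alpha(M,\cg)$.

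It remains to see that this reparametrization is an isomorphism onto $C^{\alpha\circ\sigma}(Q,\cg)$. Since $\cg$ is a locally convex space, $C^\alpha(M,\cg)$ and $C^{\alpha\circ\sigma}(Q,\cg)$ carry canonical manifold structures by Example~\ref{basic-examples}, and by Lemma~\ref{la:ISO2}\,(b) (together with \ref{reorder} and Lemma~\ref{pull-and-push}\,(b) for well-definedness and bicontinuity) the pullback $\phi_\sigma^*\colon C^{\alpha\circ\sigma}(Q,\cg)\to C^\alpha(M,\cg)$ is a linear $C^\infty$-diffeomorphism; let $R:=(\phi_\sigma^*)^{-1}$, so that $R(h)=h\circ\phi_\sigma^{-1}$ and $R$ is an isomorphism of topological vector spaces. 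The computation above reads $\phi^Q=R\circ\phi^M\circ L(P)$, a composite of isomorphisms of topological vector spaces. Hence $\phi^Q(v)\in C^{\alpha\circ\sigma}(Q,\cg)$ for every $v$, and $\phi^Q$ is an isomorphism of topological vector spaces; it is a Lie algebra homomorphism since each $L(\ve_y^Q)$ is one and the bracket on $C^{\alpha\circ\sigma}(Q,\cg)$ is pointwise. By Definition~\ref{defn-expected}, the Lie group structure on $C^{\alpha\circ\sigma}(Q,G)$ is compatible with evaluations.

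The argument is essentially bookkeeping; the only point requiring care is the identification of the reparametrization map $R$ as an isomorphism of topological vector spaces and the verification that it sends $\phi^M(w)$ into $C^{\alpha\circ\sigma}(Q,\cg)$, which is where the permutation invariance of the $C^\alpha$-classes (Schwarz' theorem, via \ref{reorder} and Lemma~\ref{la:ISO2}) enters. Since all ingredients are already available, I expect no genuine obstacle beyond keeping the directions of $\sigma$ and $\sigma^{-1}$ straight throughout.
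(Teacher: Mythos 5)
Your proposal is correct and follows essentially the same route as the paper: the paper's proof also rests on the identity $\ve_x\circ(\phi_\sigma)^*=\bar{\ve}_{\phi_\sigma(x)}$ and then factors the candidate map as $\bar{\phi}=\psi^{-1}\circ\phi\circ L((\phi_\sigma)^*)$, where $\psi\colon C^{\alpha\circ\sigma}(Q,L(G))\to C^\alpha(M,L(G))$, $f\mapsto f\circ\phi_\sigma$ is an isomorphism of topological vector spaces by Example~\ref{basic-examples} and Lemma~\ref{la:ISO2}\,(b) --- exactly your $R\circ\phi^M\circ L(P)$ with $R=\psi^{-1}$. Your write-up merely makes explicit a few points the paper leaves implicit (smoothness of the evaluations $\ve_y^Q$ and the Lie algebra homomorphism property), which is harmless.
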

\begin{proof}
The map
$\psi\colon C^{\alpha\circ\sigma}(Q,L(G))\to C^\alpha(M,L(G))$,
$f\mto f\circ \phi_\phi$
is an isomorphism of topological vector spaces,
by Example~\ref{basic-examples} and Lemma~\ref{la:ISO2}\,(b).
Write $\bar{\ve}_y\colon C^{\alpha\circ\sigma}(Q,G)\to G$
for the point evaluation at $y\in Q$
and $\ve_x\colon C^\alpha(M,G)\to G$
for the point evaluation at $x\in M$.
For $v\in L(C^\alpha(M,G))$, let $\phi(v):=(L(\ve_x(v))_{x\in M}$.
Then $\ve_x\circ (\phi_\sigma)^*=\bar{\ve}_{\phi_\sigma(x)}$.
As a consequence,
\[
\bar{\phi}(v):=(L(\bar{\ve}_y)(v))_{y\in Q}=
(\psi^{-1}\circ \phi\circ L((\phi_\sigma)^*))(v)
\in C^{\alpha\circ\sigma}(Q,L(G))
\]
for all $v\in L(C^{\alpha\circ\sigma}(Q,G))$.
Moreover,
$\bar{\phi}=(\psi^{-1})^*\circ \phi\circ L((\phi_\sigma)^*)$
is an isomorphism of topological
vector spaces, being a composition of such.
\end{proof}
{\bf Proof of Theorem~\ref{thmB} and Proposition~\ref{amendment}.}
Step~1. We first assume that $M_j$
is $1$-dimensional with finitely many components
for all $j\in\{1,\ldots, n\}$,
and prove the assertions by induction on~$n$.
The case $n=1$ was treated in Lemma~\ref{basic-case-3}.
We may therefore assume that $n\geq 2$ and assume that
the conclusions hold for $n-1$ factors.
We abbreviate $k:=\alpha_1$, $\beta:=(\alpha_2,\ldots, \alpha_n)$,
and $L:=M_2\times\cdots\times M_n$.
By the inductive hypothesis, $C^\beta(L,G)$
admits a canonical smooth manifold structure which makes it
a $C^r$-regular Lie group
and is compatible with evaluations.
By the induction base, $C^k(M_1,C^\beta(L,G))$
admits a canonical smooth manifold structure
making it a $C^r$-regular Lie group.
Since $C^\beta(L,G)$ is canonical, the group homomorphism
\[
\Phi\colon C^{k,\beta}(M_1\times L,G)\to C^k(M_1,C^\beta(L,G)),\;\,
f\mto f^\vee
\]
is a bijection (see (\ref{explaw-precan})). We endow
\[
C^\alpha(M,G)=C^{k,\beta}(M_1\times L,G)
\]
with the smooth manifold structure
turning $\Phi$ into a $C^\infty$-diffeomorphism.
By Lemma~\ref{wedge-is-lie}, this structure is pre-canonical,
makes $C^\alpha(M,G)$ Lie group,
and is compatible with evaluations.
The Lie group $C^\alpha(M,G)$ is $C^r$-regular, as $\Phi$
is an isomorphism of Lie groups.
Let $C_1,\ldots, C_\ell$
be the connected components of $M_1$.
Let $\cK$ be the set
of compact, full submanifolds $K$ of~$M_1$.
Then the interiors $K^o$ cover~$M_1$
(as the interiors of connected,
compact full submanifolds
cover each connected component
of $M_1$, by the proof of Lemma~\ref{basic-case-3}).
Now $C^k(K,G)$
admits a canonical smooth manifold structure making it a $C^r$-regular Lie group,
by Lemma~\ref{basic-case-2}.
Thus $C^\beta(L,C^k(K,G))$ admits a canonical
smooth manifold structure, by the inductive hypothesis.
By Lemma~\ref{ind-correct-top},
the pre-canonical smooth manifold
structure on $C^\alpha(M,G)$ is canonical.\\[1mm]
Step 2 (the general case).
Let $M_1,\ldots, M_n$
be arbitrary.
Using~Lemma~\ref{la:ISO2}\,(a), we may re-order the factors
and assume that there
exists an $m\in \{0,\ldots,n\}$
such that $M_j$ is compact for all $j\in \{1,\ldots,n\}$
with $j\leq m$, while $M_j$ is $1$-dimensional
with finitely many components for
all $j\in\{1,\ldots,n\}$ such that $j>m$.
If $m=0$, we have the special case just settled.
If $m=n$, then all conclusions hold by Lemma~\ref{basic-case-2}.
We may therefore assume that $1\leq m<n$.
We abbreviate $K:=M_1\times\cdots\times M_m$
and $N:=M_{m+1}\times\cdots\times M_n$.
Let $\gamma:=(\alpha_1,\ldots,\alpha_m)$
and $\beta:=(\alpha_{m+1},\ldots,\alpha_n)$.
By Step~1, $C^\beta(N,G)$
admits a canonical smooth manifold structure
which makes it a
$C^r$-regular Lie group and is compatible
with evaluations.
By Lemma~\ref{basic-case-2},
$C^\gamma(K,C^\beta(N,G))$
admits a canonical smooth manifold
structure which makes it a $C^r$-regular Lie group
and is compatible with evaluations.
We give $C^\alpha(M,G)=C^{\gamma,\beta}(K\times N,G)$
the smooth manifold structure making the bijection
\[
\Phi\colon C^{\gamma,\beta}(K\times N,G)\to C^\gamma(K,C^\beta(N,G)),\;\,
f\mto f^\vee
\]
a $C^\infty$-diffeomorphism.
By Lemma~\ref{wedge-is-lie}, this smooth manifold structure
is pre-canonical, makes $C^\alpha(M,G)$ a Lie group,
and is compatible with evaluations.
The Lie group $C^\alpha(M,G)$ is $C^r$-regular as $\Phi$
is an isomorphism of Lie groups.
Now $C^\gamma(K,G)$ admits a canonical
smooth manifold structure, which makes it
a $C^r$-regular Lie group (Lemma~\ref{basic-case-2}).
By Step~1, $C^\beta(N,C^\gamma(K,G))$
admits a canonical smooth manifold structure.
The pre-canonical smooth manifold
structure on $C^\alpha(M,G)$
is therefore canonical,
by Lemma~\ref{ind-correct-top}. $\square$\\[2.3mm]
The following result complements Theorem~\ref{thmB}.
Under a restrictive hypothesis, it provides
a Lie group structure without recourse to
regularity.
\begin{prop}\label{basic-case-1}
Let $M_1,\ldots, M_n$
be locally compact smooth manifolds with rough boundary,
$\alpha\in (\N_0\cup\{\infty\})^k$
and $G$ be a Lie group
that is $C^\infty$-diffeomorphic
to a locally convex space~$E$.
Abbreviate $M:=M_1\times\cdots\times M_n$.
Then $C^\alpha(M,G)$
admits a canonical $C^\infty$-manifold
structure, which is compatible with evaluations.
If~$G$ is $C^r$-regular for some $r\in\N_0\cup\{\infty\}$,
then also $C^\alpha(M,G)$
is $C^r$-regular.
\end{prop}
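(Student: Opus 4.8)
The plan is to deduce all three conclusions from results already in hand, the only genuine work being compatibility with evaluations; there the hypothesis that $G$ is a single global chart will let me compute the comparison map explicitly. Let $\varphi\colon E\to G$ be the given $C^\infty$-diffeomorphism. For existence and the Lie group property I would simply invoke Example~\ref{basic-examples}\,(a) with $N:=G$: it furnishes a canonical (hence pre-canonical) smooth manifold structure on $C^\alpha(M,G)$, namely the one transported from the locally convex space $C^\alpha(M,E)$ along the $C^\infty$-diffeomorphism $\Lambda_*\colon C^\alpha(M,G)\to C^\alpha(M,E)$, $f\mapsto \varphi^{-1}\circ f$ (with inverse $g\mapsto \varphi\circ g$). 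Being pre-canonical, this structure makes $C^\alpha(M,G)$ a Lie group with smooth point evaluations $\ve_x$, by Lemma~\ref{la-is-gp}\,(b).

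For compatibility with evaluations I would argue as follows. Write $e$ for the constant map $x\mapsto e_G$, set $a:=\varphi^{-1}(e_G)$ and $\hat a:=\Lambda_*(e)$, and use the pointwise identity $\ve_x=\varphi\circ\ve_x^E\circ\Lambda_*$, where $\ve_x^E\colon C^\alpha(M,E)\to E$ is the (continuous linear) point evaluation. Differentiating at $e$, and using that the vector part of the tangent map of the linear map $\ve_x^E$ is again $\ve_x^E$, I expect to obtain, for $v\in L(C^\alpha(M,G))$ and $w:=d\Lambda_*|_e(v)\in C^\alpha(M,E)$,
\[
L(\ve_x)(v)=d\varphi_a\big(w(x)\big)\qquad(x\in M),
\]
where $d\varphi_a\colon E\to L(G)$ is the vector part of $T_a\varphi$, a topological isomorphism. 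This shows at once that $\phi(v)=(L(\ve_x)(v))_{x\in M}=d\varphi_a\circ w$ lies in $C^\alpha(M,L(G))$, and that $\phi$ factors as
\[
\phi=(d\varphi_a)_*\circ d\Lambda_*|_e,\qquad (d\varphi_a)_*(w):=d\varphi_a\circ w .
\]

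Here $d\Lambda_*|_e$ is an isomorphism of topological vector spaces because $\Lambda_*$ is a diffeomorphism and $T_{\hat a}C^\alpha(M,E)$ is canonically $C^\alpha(M,E)$, while $(d\varphi_a)_*$ is one because $d\varphi_a$ is a linear homeomorphism and pushforward along it and its inverse is continuous by Lemma~\ref{pull-and-push}\,(a). Hence $\phi$ is an isomorphism of topological vector spaces; since each $\ve_x$ is a group homomorphism and the bracket on $C^\alpha(M,L(G))$ is pointwise, $\phi$ is automatically a homomorphism of Lie algebras, so an isomorphism of topological Lie algebras. This is exactly compatibility with evaluations in the sense of Definition~\ref{defn-expected}.

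Finally, for the regularity statement I would invoke Lemma~\ref{expected-lie-regular} verbatim: the $M_j$ are locally compact and $C^\alpha(M,G)$ now carries a pre-canonical structure compatible with evaluations, so $C^r$-regularity of $G$ transfers to $C^\alpha(M,G)$. The one step requiring care is upgrading $\phi$ from a continuous linear bijection to a \emph{topological} isomorphism; the global chart $\Lambda_*$ is precisely what makes this routine, by reducing $\phi$ to the explicit composite above. In the absence of such a chart (the setting of Theorem~\ref{thmB}) this is exactly the point that forces the heavier arguments of the previous sections, where one must instead work through the local model $\Gamma_e$ and the identification~$\Psi$.
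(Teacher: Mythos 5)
Your proposal is correct and follows essentially the same route as the paper's proof: existence and the Lie group property via Example~\ref{basic-examples} and Lemma~\ref{la-is-gp}, compatibility with evaluations by computing the comparison map through the global chart as a composite of topological isomorphisms, and $C^r$-regularity via Lemma~\ref{expected-lie-regular}. The only cosmetic difference is that the paper first normalizes the chart, replacing $\psi\colon G\to E$ by $(d\psi|_{\cg})^{-1}\circ\psi\colon G\to\cg$ so that its differential at $e$ is $\id_{\cg}$, which makes the comparison map literally the differential of the induced pushforward, whereas you keep the factor $(d\varphi_a)_*$ explicit in the factorization.
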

\begin{proof}
By Example~\ref{basic-examples},
$H:=C^\alpha(M,G)$ admits a canonical smooth manifold
structure and this structure makes it a Lie
group (see Lemma~\ref{la-is-gp}).
Let $\psi\colon G\to E$ be a $C^\infty$-diffeomorphism
such that $\psi(e)=0$. Abbreviating $\cg:=L(G)$
and $\ch:=L(H)$,
the map $\alpha:=d\psi|_{\cg}\colon \cg \to E$
is an isomorphism of topological vector spaces.
Then also $\phi:=\alpha^{-1}\circ\psi\colon G\to E$
is a $C^\infty$-diffeomorphism such that $\phi(e)=0$;
moreover, $d\phi|_{\cg}=\id_{\cg}$. Now
\[
\phi_*\colon C^\alpha(M,G)\to C^\alpha(M,\cg), \;\,
f\mto\phi\circ f
\]
is a $C^\infty$-diffeomorphism, and thus
$\beta:=d(\phi_*)|_{\ch}\colon \ch\to C^\alpha(M,\cg)$ is an isomorphism
of topological vector spaces. For $x\in M$,
let $\ve_x\colon H\to G$
and $e_x\colon C^\alpha(M,\cg)\to \cg$
be the respective point evaluation at~$x$.
We show that $\beta(v)=(L(\ve_x)(v))_{x\in M}$
for each $v\in \ch$, whence the Lie group
structure on~$H$ is compatible with evaluations.
Regard $v=[\gamma]$ as a geometric tangent vector.
As $L(\ve_x)(v)\in\cg$, we have
\begin{eqnarray*}
L(\ve_x)(v) &=& d\phi(L(\ve_x)(v))=d(\phi\circ\ve_x)(v)
=\frac{d}{dt}\Big|_{t=0}(\phi\circ \ve_x\circ \gamma)(t)\\
&=&\frac{d}{dt}\Big|_{t=0}(e_x\circ \phi_*\circ\gamma)(t)
=e_x\frac{d}{dt}\Big|_{t=0}(\phi_*\circ \gamma)(t)=
d(\phi_*)(v)(x),
\end{eqnarray*}
since $(\phi\circ\ve_x\circ\gamma)(t)=\phi(\gamma(t)(x))=(\phi\circ \gamma(t))(x)=e_x (\phi_*(\gamma(t)))
=(e_x\circ \phi_*\circ\gamma)(t)$
and $e_x$ is continuous and linear.
For the final assertion, see
Lemma~\ref{expected-lie-regular}.
\end{proof}
\section{Manifolds of maps with finer topologies}\label{sec-box}
We now turn to manifold structures
on $C^\alpha(M,N)$ for non-compact~$M$,
which are modeled on suitable spaces of compactly supported
$C^\alpha$-functions.
Notably, a proof for Theorem~\ref{thmC} will be provided.
Such manifold structures need not be compatible
with the compact-open $C^\alpha$-topology,
and need not be pre-canonical.
But we can essentially reduce their structure
to the case of canonical structures
for compact domains,
using box products of manifolds
as a tool. We recall pertinent concepts from~\cite{Rou}.
\begin{numba}\label{fine-box-top}
If $I$ is a non-empty set and $(M_i)_{i\in I}$
a family of $C^\infty$-manifolds modeled
on locally convex spaces, then the
\emph{fine box topology} $\cO_{\fb}$
on the
cartesian product $P:=\prod_{i\in I}M_i$
is defined as
the final topology with respect to the mappings\vspace{-.7mm}
\begin{equation}\label{para-box}
\Theta_\phi \colon \bigoplus_{i\in I}V_i:=
\left(\bigoplus_{i\in I}E_i\right)\cap\prod_{i\in I}V_i\to P,
\;\, (x_i)_{i\in I}\mto(\phi_i^{-1}(x_i))_{i\in I},\vspace{-.7mm}
\end{equation}
for $\phi:=(\phi_i)_{i\in I}$ ranging through the
families of charts $\phi_i\colon U_i\to V_i\sub E_i$
of~$M_i$ such that $0\in V_i$;
here $E_\phi:=\bigoplus_{i\in I}E_i$
is endowed with the locally convex direct sum topology,
and the left-hand side $V_\phi$ of (\ref{para-box}),
which is an open subset of~$E_\phi$,
is endowed with the topology induced by~$E_\phi$.
Let $U_\phi:=\Theta_\phi(V_\phi)$.
Thus
\[
U_\phi=\Big\{(y_i)_{i\in I}\in \prod_{i\in I}U_i\colon
\mbox{$y_i\not=\phi_i^{-1}(0)$ for only finitely many $i\in I$}\Big\}.\vspace{-.7mm}
\]
Note that the projection $\pr_i\colon P\to M_i$
is continuous for each $i\in I$, entailing that
the fine box topology is Hausdorff.
In fact, using the continuous linear projection
$\pi_i\colon E_\phi\to E_i$ onto the $i$th component,
we deduce from the continuity of $\pr_i\circ\, \Theta_\phi=\phi_i^{-1}\circ \pi_i|_{V_\phi}$
for each $\phi$
that $\pr_i$ is continuous.
\end{numba}
\begin{numba}\label{basics-fine-box}
Let $\phi$ be as before and $\psi$ be an analogous family
of charts
$\psi_i\colon R_i\to S_i\sub F_i$.
If $\phi_i^{-1}(0)=\psi_i^{-1}(0)$ for all
but finitely many $i\in I$, then
\[
(\Theta_\phi)^{-1}(U_\phi\cap U_\psi)=\bigoplus_{i\in I}\phi_i(U_i\cap R_i),\vspace{-.7mm}
\]
which is an
open $0$-neighbourhood in $\bigoplus_{i\in I}E_i$.
The transition map
\[
(\Theta_\phi)^{-1}\circ \, \Theta_\psi\colon \bigoplus_{i\in I}\psi_i(U_i\cap R_i)
\to \bigoplus_{i\in I}\phi_i(U_i\cap R_i),\;\,
(x_i)_{i\in I}\mto ((\phi_i\circ \psi_i^{-1})(x_i))_{i\in I}\vspace{-.7mm}
\]
is $C^\infty$
(as follows from
\cite[Proposition~7.1]{Mea})
and in fact a $C^\infty$-diffeomorphism,
and hence a homeomorphism,
since $\Theta_\psi^{-1}\circ \, \Theta_\phi$
is the inverse map.
If $\phi_i^{-1}(0)\not=\psi_i^{-1}(0)$ for
infinitely many $i\in I$,
then $(\Theta_\phi)^{-1}(U_\phi\cap U_\psi)=\emptyset$
and the transition map trivially is a homeomorphism.
Using a standard agrument, we now deduce that
$U_\phi=\Theta_\phi(V_\phi)$ is open in $(P,\cO_{\fb})$
for all~$\phi$
and $\Theta_\phi$ is a homeomorphism onto its image
(see, e.g., \cite[Exercise~A.3.1]{GaN}).
By the preceding,
the maps $\Phi_\phi:=(\Theta_\phi|^{U_\phi})^{-1}\colon U_\phi\to V_\phi\sub E_\phi$
are smoothly compatible 
and hence form an atlas for a $C^\infty$-manifold
structure on~$P$. Following~\cite{Rou},
we write $P^{\fb}$ for~$P$, endowed with the topology~$\cO_{\fb}$
and the smooth manifold structure just described,
and call $P^{\fb}$ the \emph{fine box product}.
\end{numba}
Some auxiliary results are needed. We use notation as in \ref{numba:mfdstruct}
and Theorem~\ref{thmC}.
\begin{la}\label{operations-with-Gamma}
Let $M:=M_1\times \cdots \times M_n$
be a product of locally compact smooth manifolds
with rough boundary,
$N$ be a smooth manifold, $\alpha\in (\N_0\cup\{\infty\})^n$
and $f\in C^\alpha(M,N)$.
\begin{itemize}
\item[\rm(a)]
If $M_1,\ldots, M_n$ are compact,
then the following bilinear map is continuous:\vspace{-.4mm}
\[
C^\alpha(M,\R)\times \Gamma_f\to\Gamma_f,\;\,
(h,\tau)\mto h\tau\;\,\mbox{with $(h\tau)(x)=h(x)\tau(x)$.}\vspace{-.9mm}
\]
\item[\rm(b)]
If $M_1,\ldots, M_n$ are paracompact,
$L\sub M$ is a compact subset
and $K:=K_1\times\cdots\times K_n$
with compact full submanifolds $K_j\sub M_j$
for $j\in\{1,\ldots,n\}$,
then the linear map
$\Gamma_{f,L}\to \Gamma_{f|_K}$, $\tau\mto \tau|_K$ is continuous.
\item[\rm(c)]
If $M_1,\ldots, M_n$ are paracompact,
$K:=K_1\times\cdots\times K_n$
with compact full submanifolds $K_j\sub M_j$
for $j\in\{1,\ldots,n\}$
and $L\sub K$ be compact.
Then
$r\colon \Gamma_{f,L}\to \Gamma_{f|_K,L}$,
$\tau\mto \tau|_K$ is an isomorphism of topological vector spaces.
\end{itemize}
\end{la}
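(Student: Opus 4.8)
The plan is to derive all three parts from continuity properties already available for full $C^\alpha$-mapping spaces, together with the gluing behaviour of $C^\alpha$-sections. For (a), I would first note that the operation preserves $\Gamma_f$: for $h\in C^\alpha(M,\R)$ and $\tau\in\Gamma_f$ one has $\pi_{TN}(h(x)\tau(x))=\pi_{TN}(\tau(x))=f(x)$, scalar multiplication keeping a tangent vector in its fibre, so $h\tau\in\Gamma_f$; continuity is then inherited, as the map in (a) is the (co)restriction to the subspaces (carrying the induced topology) of the continuous bilinear map $\mu$ of Lemma~\ref{c-alpha-top-mult}, which applies since the compact $M_j$ are locally compact. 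For (b), I would use that the restriction map $\rho\colon C^\alpha(M,TN)\to C^\alpha(K,TN)$, $\tau\mapsto\tau|_K$, is continuous by Remark~\ref{rem-restriction} (each $K_j$ being a full submanifold of $M_j$); since $\pi_{TN}\circ(\tau|_K)=f|_K$ for $\tau\in\Gamma_{f,L}$, the map $\rho$ restricts to a continuous linear map $\Gamma_{f,L}\to\Gamma_{f|_K}$.

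The substance is (c). The map $r$ is the corestriction of the map from (b): if $\tau$ vanishes on $M\setminus L$, then $\tau|_K$ vanishes on $K\setminus L$, so $r(\tau)\in\Gamma_{f|_K,L}$, and $r$ is continuous and linear. Injectivity is immediate: since $L\subseteq K$ gives $M=K\cup(M\setminus L)$, any $\tau\in\Gamma_{f,L}$ with $\tau|_K=0$ vanishes identically. For surjectivity I would use the extension-by-zero operator $E$ sending $\sigma\in\Gamma_{f|_K,L}$ to the section $\tilde\sigma$ defined by $\tilde\sigma|_K=\sigma$ and $\tilde\sigma(x)=0_{f(x)}$ for $x\in M\setminus K$; one checks $\pi_{TN}\circ\tilde\sigma=f$ and $\tilde\sigma|_{M\setminus L}=0$, and that $E$ and $r$ are mutually inverse as set maps.

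The main obstacle is to verify that $\tilde\sigma$ is genuinely $C^\alpha$ and that $r^{-1}=E$ is continuous; both rest on the support condition. Since $\sigma$ vanishes on the relatively open set $K\setminus L$, which contains the topological boundary $\partial K$ of $K$ in $M$ (as $L$ lies in the interior of $K$), the section $\sigma$ together with all its iterated directional derivatives vanishes on a $K$-neighbourhood of $\partial K$. Working in product charts and using the locality of the $C^\alpha$-condition (\ref{C-alpha-in-mfd}), the two prescriptions then agree with all derivatives on an open set straddling $\partial K$ and glue to a $C^\alpha$-map, so $\tilde\sigma\in\Gamma_{f,L}$. For the topology I would invoke Lemma~\ref{la:ascending-union}: the subspace topology on $\Gamma_{f,L}$ is initial with respect to restrictions to a covering family of products of compact full submanifolds of $M$, and because every element of $\Gamma_{f,L}$ is supported in $L\subseteq K^{o}$, these restrictions are continuous functions of $\tau|_K$ alone. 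Hence $r$ is a topological embedding, and being a continuous linear bijection that is a homeomorphism onto its image, it is an isomorphism of locally convex spaces.
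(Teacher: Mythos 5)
Your proofs of (a) and (b) are exactly the paper's (restriction of the multiplication map from Lemma~\ref{c-alpha-top-mult}, respectively of the restriction map from Remark~\ref{rem-restriction}), and in (c) your overall strategy --- zero-extension as the set-theoretic inverse, plus Lemma~\ref{la:ascending-union} and the support condition to show that $r$ is an embedding --- is also the paper's. However, the decisive step is under-justified as you state it: for an \emph{arbitrary} covering family of products of compact full submanifolds, your claim that ``these restrictions are continuous functions of $\tau|_K$ alone'' does not follow from the support condition. The support condition shows that $\tau|_C$ is \emph{determined} by $\tau|_K$ for each member $C$ of the family, but for a member $C$ meeting both $L$ and $M\setminus K$, continuity of this dependence is essentially the continuity of $r^{-1}$, i.e.\ the very thing being proved; the argument is circular. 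The paper resolves this by choosing the auxiliary family adaptedly: besides $K$ itself it takes, for each $x$ in the open set $M\setminus K$, a product $K_x$ of compact full submanifolds with $x\in K_x^o$ and $K_x\sub M\setminus K$. Then every restriction map in the initial family other than $\rho\colon \tau\mto\tau|_K$ is \emph{constant} on $\Gamma_{f,L}$ (its value is the zero section along $f$), so those maps can be discarded without changing the initial topology, and the topology on $\Gamma_{f,L}$ is initial with respect to $\rho$ alone. Your argument closes once you make the same adapted choice (each member equal to $K$ or disjoint from $L$).

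Second, your parenthetical ``as $L$ lies in the interior of $K$'' is not among the hypotheses; the lemma assumes only that $L\sub K$ is compact. Some such assumption is, however, genuinely needed: take $M=\R$, $N=\R$, $f\equiv 0$, $K=[0,1]$, $L=[0,1/2]$, and let $\sigma$ be the restriction to $K$ of a smooth function supported in $[-1/4,1/4]$ with value $1$ at~$0$; then $\sigma\in\Gamma_{f|_K,L}$, but any $\tau\in\Gamma_{f,L}$ with $\tau|_K=\sigma$ vanishes on $]{-\infty},0[$ and hence, by continuity, at~$0$, contradicting $\tau(0)=\sigma(0)\not=0$. So $r$ is not surjective when $L$ meets the topological boundary of $K$ in $M$. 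Note that the paper's own proof tacitly needs $L\sub K^o$ in the same places (its piecewise-defined $\tau$ would not even be continuous, and the interiors of $K$ and the $K_x$ only cover $M\setminus \partial K$), and in the sole application of part (c) --- the proof of Theorem~\ref{thmC}, where $L_i=\Supp(h_i)\sub K_i^o$ for a partition of unity subordinate to $(K_i^o)_{i\in I}$ --- the condition is satisfied. So your extra assumption is the right one, but you should present it as a needed strengthening of the hypotheses rather than quote it as given.
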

\begin{proof}
(a) The bilinear map is a restriction
of the continuous mapping\linebreak
$\mu\colon C^\alpha(M,\R)\times C^\alpha(M,TN)\to
C^\alpha(M,TN)$ from Lemma~\ref{c-alpha-top-mult}.\\[1mm]
(b) The map is a restriction
of the restriction map $C^\alpha(M,TN)\to C^\alpha(K,TN)$,
which is continuous (see Remark~\ref{rem-restriction}).\\[1mm]
(c) For each $x$ in the open subset
$M\setminus K$ of~$M$,
there exist compact full submanifolds
$K_{x,j}\sub M_j$ for $j\in\{1,\ldots,n\}$
such that $K_x:=K_{x,1}\times\cdots\times K_{x,n}\sub M\setminus K$
and $x\in K_x^o$.
Lemma~\ref{la:ascending-union} implies that
the compact-open $C^\alpha$-topology on $\Gamma_{f,L}$
is initial with respect to the restriction maps
$\rho \colon \Gamma_{f,L}\to C^\alpha(K,TN)$
and $\rho_x\colon \Gamma_{f,L}\to C^\alpha(K_x,TN)$
for $x\in M\setminus K$.
As each $\rho_x$ is constant (its value
is the function $K_x\in y\mto 0_{f(y)}\in T_{f(y)}N$),
it can be omitted without affecting the initial topology.
The topology on $\Gamma_{f,K}$ is therefore
initial with respect to $\rho$,
and hence also with the co-restriction
$r$ of $\rho$. Thus $r$ is a topological embedding
and hence an homeomorphism, as $r(\tau)=\sigma$
can be achieved for $\sigma\in\Gamma_{f|_K,L}$
if we define
$\tau\colon M\to TN$ piecewise via $\tau(x):=\sigma(x)$ if $x\in K$,
$\tau(x):=0_{f(x)}\in T_{f(x)}N$ if $x\in M\setminus L$.
Being linear, $r$ is an isomorphism of topological
vector spaces.\vspace{.8mm}
\end{proof}
{\bf Proof of Theorem~\ref{thmC}.}
For $j\in \{1,\ldots, n\}$,
let $(K_{j,i})_{i\in I_j}$ be a
locally finite family of 
compact, full submanifolds $K_{j,i}$
of~$M_j$ whose interiors cover~$M_j$.
Let $I:=I_1\times \cdots\times I_n$.
Then the sets $K_i:=K_{1,i_1}\times \cdots\times K_{n,i_n}$
form a locally finite family
of compact full submanifolds of~$M$
whose interiors cover~$M$,
for $i=(i_1,\ldots, i_n)\in I$.
The map
\[
\rho\colon C^\alpha(M,N)\to\prod_{i\in I}C^\alpha(K_i,N),\;\,
f\mto (f|_{K_i})_{i\in I}\vspace{-1.3mm}
\]
is injective with image
\begin{equation}\label{identify-image}
\hspace*{-1.7mm}\im(\rho)\hspace*{-.3mm}=\hspace*{-.5mm}\Big\{\hspace{-.1mm}(f_i)_{i\in I}\hspace*{-.5mm}\in
\hspace*{-.5mm}\prod_{i\in I}\!C^\alpha(K_i,N)
\colon \!(\forall \hspace*{.1mm}i,j\in I)\,(\forall x\in K_i\cap K_j)\,
f_i(x)\hspace*{-.3mm}=\hspace*{-.3mm}f_j(x)\hspace{-.1mm}\Big\}\hspace*{-.1mm}.\vspace{-.7mm}
\end{equation}
In fact, the inclusion ``$\sub$''
is obvious. If $(f_i)_{i\in I}$
is in the set on the right-hand side,
then a piecewise definition,
$f(x):=f_i(x)$ if $x\in K_i$,
gives a well-defined function $f\colon M\to N$
which is $C^\alpha$ since $f|_{(K_i)^o}=f_i|_{(K_i)^o}$
is $C^\alpha$ for each $i\in I$.
Then $\rho(f)=(f_i)_{i\in I}$.\\[2mm]
For each $i\in I$,
endow $C^\alpha(K_i,N)$
with the canonical smooth manifold structure,
as in Theorem~\ref{thmA}, modeled
on the set $\{\Gamma_f\colon f\in C^\alpha(K_i,N)\}$
of the locally convex spaces
$\Gamma_f:=\{\tau\in C^\alpha(K_i,TN)\colon \pi_{TN}\circ
\tau=f\}$
for $f\in C^\alpha(K_i,N)$.
Let
$\Sigma\colon TN\supseteq U\to N$
be a local addition for~$N$;
as in Section~\ref{sec-compact}, write
$U':=\{(\pi_{TN}(v),\Sigma(v))\colon v\in U\}$
and $\theta:=(\pi_{TN}|_U,\Sigma)\colon U\to U'$.
For $f\in C^\alpha(K_i,N)$,
consider
$O_f:=\Gamma_f\cap C^\alpha(K_i,U)$,
$O_f':=\{g\in C^\alpha(K_i,N)\colon (f,g)\in C^\alpha(K_i,U')\}$,
and $\phi_f\colon O_f\to O_f'$, $\tau\mto \Sigma\circ \tau$
as in Section~\ref{sec-compact}.
For $f\in C^\alpha(M,N)$,
let
$\Gamma_f$
be the set of all $\tau\in C^\alpha(M,TN)$
such that $\pi_{TN}\circ
\tau=f$ and
\[
\{x\in M\colon \tau(x)\not= 0_{f(x)}\in T_{f(x)}N\}
\]
is relatively compact in~$M$.
Define $O_f:=\Gamma_f\cap C^\alpha(M,U)$
and let~$O_f'$ be the set of all
$g\in C^\alpha(M,N)$ such that
\[
(f,g)\in C^\alpha(M,U')\;\;\mbox{and}\;\; \mbox{$g|_{M\setminus K}=f|_{M\setminus K}$
for some compact subset $K\sub M$.}
\]
Then $\phi_f\colon O_f\to O_f'$, $\tau\mto \Sigma\circ \tau$
is a bijection with $(\phi_f)^{-1}(g)=\theta^{-1}\circ (f,g)$.
The linear map
\[
s\colon \Gamma_f\to \bigoplus_{i\in I}\Gamma_{f|_{K_i}},\quad
\tau\mto (\tau|_{K_i})_{i\in I}
\]
is continuous on $\Gamma_{f,L}$ for each
compact subset $L\sub M$ (see Lemma~\ref{operations-with-Gamma}\,(b))
and hence continuous on the locally convex direct
limit $\Gamma_f$.
As above, we see that
\begin{equation}\label{id-image-2}
\im(s)=\{(\tau_i)_{i\in I}\in\bigoplus_{i\in I}\Gamma_{f|_{K_i}}\colon
(\forall i,j\in I)\,(\forall x\in K_i\cap K_j)\; \tau_i(x)=\tau_j(x)\},
\end{equation}
which is a closed vector subspace of $\bigoplus_{i\in I}
\Gamma_{f|_{K_i}}$.
We now show that $s$ is a homeo\-{}morphism onto its
image. In fact, $s$
admits a continuous
linear left inverse.
To see this,
pick a $C^\infty$-partition of unity
$(h_i)_{i\in I}$ on~$M$
subordinate to $(K_i^o)_{i\in I}$;
then $L_i:=\Supp(h_i)$ is a closed subset of $K_i$
and thus compact.
The multiplication operator
$\beta_i\colon \Gamma_{f|_{K_i}}\to \Gamma_{f|_{K_i},L_i}$,
$\tau\mto h_i\tau$
is continuous linear (by
Lemma~\ref{operations-with-Gamma}\,(a)).
Moreover, the restriction operator
$s_i\colon \Gamma_{f,L_i}\to\Gamma_{f|_{K_i},L_i}$
is an isomorphism of topological vector spaces
(Lemma~\ref{operations-with-Gamma}\,(c)).
Thus $s_i^{-1}\circ \beta_i\colon \Gamma_{f|_{K_i}}\to\Gamma_{f,L_i}\sub \Gamma_f$
is a continuous linear map. By the universal property
of the locally convex direct sum, also the linear map
\[
\sigma\colon \bigoplus_{i\in I}\Gamma_{f|_{K_i}}\to \Gamma_f,\;\,
(\tau_i)_{i\in I}\mto \sum_{i\in I}(s_i^{-1}\circ \beta_i)(\tau_i)
\]
is continuous.
We easily verify that
$\sigma\circ s=\id_{\Gamma_f}$.\\[2mm]
Abbreviate $\phi_i:=(\phi_{f|_{K_i}})^{-1}$
and $\phi:=(\phi_i)_{i\in I}$.
We now use the $C^\infty$-diffeomorphism
\[
\Theta_\phi\colon
\bigoplus_{i\in I} O_{f|_{K_i}}\to U_\phi,\;
(\tau_i)_{i\in I}\mto (\phi_i^{-1}(\tau_i))_{i\in I}=
(\Sigma \circ \tau_i)_{i\in I}
\]
from \ref{fine-box-top}, the inverse of which is the chart
\[
\Phi_\phi\colon U_\phi\to
\bigoplus_{i\in I}O_{f|_{K_i}}, \;\,
(g_i)_{i\in I}\mto
(\phi_i(g_i))_{i\in I}\vspace{-.7mm}
\]
of $\prod_{i\in I}^{\fb}C^\alpha(K_i,N)$
around $(f|_{K_i})_{i\in I}$.
For $(\tau_i)_{i\in I}\in\bigoplus_{i\in I}O_{f|_{K_i}}$,
we have
\[
\Theta_\phi((\tau_i)_{i\in I})
\in \im(\rho)\;\;\Leftrightarrow\;\;
(\tau_i)_{i\in I}\in  \im(s).
\]
In fact, for $i,j\in I$ and $x\in K_i\cap K_j$
we have
$\Sigma(\tau_i(x))=\Sigma(\tau_j(x))$
if and only if $\tau_i(x)=\tau_j(x)$, from
which the assertion follows in view of (\ref{identify-image})
and (\ref{id-image-2}).
Thus
\[
\Phi_\phi(\im(\rho)\cap U_\phi)
=\im(s)\cap \bigoplus_{i\in I}O_{f|_{K_i}},\vspace{-.9mm}
\]
showing that $\im(\rho)$
is a submanifold of $\prod_{i\in I}^{\fb}C^\alpha(K_i,N)$.
Let
\[
\Psi_\phi\colon \im(\rho)\cap U_\phi
\to \im(s)\cap \bigoplus_{i\in I}O_{f|_{K_i}},\;\;
(g_i)_{i\in I}\mto \Phi_\phi((g_i)_{i\in I})
\]
be the corresponding submanifold
chart for $\im(\rho)$.
Then
\[
\rho(O_f')=\im(\rho)\cap U_\phi\quad\mbox{and}\quad
s(O_f)=\im(s)\cap \bigoplus_{i\in I}O_{f|_{K_i}}.\vspace{-.9mm}
\]
Hence $(\phi_f)^{-1}=s^{-1}\circ \Psi_\phi\circ \rho|_{O_f'}\colon O_f'\to O_f$
is a chart for the smooth manifold structure on $C^\alpha(M,N)$
modeled on $\cE$ (the set of all $\Gamma_f$)
which makes\linebreak
$\rho\colon C^\alpha(M,N)\to \im(\rho)$
a $C^\infty$-diffeomorphism.
Note that the smooth manifold structure
on $C^\alpha(M,N)$ which is modeled on~$\cE$
and makes $\rho$ a $C^\infty$-diffeomorphism
is uniquely determined by these properties.
Thus, it is independent of the choice of~$\Sigma$.
On the other hand, the $(\phi_f)^{-1}$
form a $C^\infty$-atlas for a given
local addition~$\Sigma$.
As the definition of the $\phi_f$
does not involve the cover $(K_i)_{i\in I}$,
the smooth manifold structure just constructed
is independent of the choice of $(K_i)_{i\in I}$. $\,\square$
\appendix
\section{Details for Sections~\ref{secprels} and \ref{sec-alpha-tops}}\label{appA}
In this appendix, we provide proofs
for preliminaries in Sections~\ref{secprels} and \ref{sec-alpha-tops}.\\[2mm]
{\bf Proof of Lemma~\ref{reparametrize}.}
The right-hand side $(t,y)\mto y.\gamma(t)$
of the differential equation $\dot{y}(t)=y(t).\gamma(t)$
is $C^k$, whence its solution $\eta$ will
be $C^{k+1}$, if it exists.\\[1.3mm]
To verify existence and uniqueness
of~$\eta$, we may assume that $I$
is a non-degenerate compact interval
with initial point $0$ or endpoint $0$,
since $I$ is covered by such intervals.
Thus, let $I$ be a line segment joining $0$
and $\tau\not=0$. Define $\xi\colon [0,1]\to \cg$
via $\xi(t):=\tau \gamma(t\tau)$.
By the Chain Rule, a $C^1$-function
$\eta\colon I\to G$ with $\eta(0)=e$
satisfies $\delta^\ell\eta=\gamma$
if and only if $\theta\colon [0,1]\to G$, $t\mto \eta(t \tau)$
satisfies $\delta^\ell\theta=\xi$.
The assertion now follows
from the case $I=[0,1]$,
which holds by $C^r$-semiregularity. $\,\square$\\[2mm]
{\bf\small Proof for Lemma~\ref{la-sub-PL}.}
(a) Let $\lambda\colon Y\to F$ be the inclusion map,
which is continuous linear and thus smooth.
If $f|^Y$ is $C^\alpha$,
then also $f=\lambda\circ f|^Y$ is $C^\alpha$,
by the Chain Rule \cite[Lemma~3.16]{Alz}.
Conversely, assume that $f$ is $C^\alpha$
and $f(U)\sub Y$.
It suffices to deduce that $f|^Y$
is $C^\alpha$ if $\alpha\in (\N_0)^n$.
The proof is by induction on~$|\alpha|$,
and establishes in parallel that $d^\beta (f|^Y)=(d^\beta f)|^Y$
for all $\beta\leq\alpha$.
If $|\alpha|=0$, the conclusion holds
since $f|^Y$ is continuous.
If $|\alpha|\geq 1$,
let $j\in \{1,\ldots,n\}$
be minimal with $\alpha_j>0$.
Then $d^\beta(f|^Y)$ exists
for all $\beta\leq\alpha$ such that $\beta_j\leq \alpha_j-1$,
and equals $(d^\beta f)|^Y$.
If $\beta\leq \alpha$ with $\beta_j=\alpha_j$,
let $x\in U^o$ and $y_i\in E_i^{\beta_i}$
for $i\in \{j,\ldots, n\}$.
Then all difference quotients needed to define
\[
d^\beta f(x,0,\ldots, 0,y_j,y_{j+1},\ldots,y_n)
\]
are linear combinations of function values
of $d^{\beta-e_j}f$ and hence in~$Y$.
Since $Y$ is closed,
the limit
$d^\beta f(x,0,\ldots, 0,y_j,y_{j+1},\ldots,y_n)$
is in $Y$ as well, and this remains valid
for $x\in U$, by density of $U^o$ in~$U$.
Thus $(d^\beta f)|^Y$ is a continuous function
which extends $d^\beta (f|^Y_{U^o})$.
We deduce that $f|^Y$ is $C^\alpha$
and $d^\beta(f|^Y)=(d^\beta f)|^Y$.\\[1mm]
(b) If $f$ is $C^\alpha$, then also $\lambda_a\circ f$,
using that $\lambda_a$ is continuous linear and thus smooth.
Conversely, assume that $\lambda_a\circ f$ is $C^\alpha$
for all $a\in A$. Then
\[
Y:=\{(x_a)_{a\in A}\in\prod_{a\in A}F_a\colon (\forall a\leq b)\;
x_a=\lambda_{a,b}(x_b)\}
\]
is a closed vector subspace of $\prod_{a\in A}F_a$
and the map
\[
\lambda\colon F\to Y,\quad x\mto (\lambda_a(x))_{a\in A}
\]
is an isomorphism of topological vector spaces.
Let $\pr_a\colon Y\to F_a$ be the projection
onto the $a$th component.
Since $\pr_a\circ\lambda\circ f=\lambda_a\circ f$
is $C^\alpha$ for all $a\in A$,
the map $\lambda\circ f$ is $C^\alpha$
to $\prod_{a\in A}F_a$ by \cite[Lemma~3.8]{Alz}.
By (a), $\lambda\circ f$
is $C^\alpha$ also as a map to~$Y$. Thus
$f=\lambda^{-1}\circ (\lambda\circ f)$
is $C^\alpha$. $\,\square$\\[2mm]
{\bf Proof of Lemma~\ref{specialized-PL}.}
If $f$ is $C^\alpha$, then $\rho_a\circ f$
is $C^\alpha$ for each $a\in A$, the map $\rho_a$
being smooth.
Assume that, conversely, $\rho_a\circ f$ is smooth
for each $a\in A$.
Write $\psi=(\psi_1,\psi_2)$
with $\psi_1\colon M\to F$ and $\psi_2\colon M\to N$.
Since $\psi_a$ is smooth,
$\psi_a\circ \rho_a\circ f=(\lambda_a\times\id_N)\circ \psi\circ f$ is $C^\alpha$,
whence so is its second component
$\psi_2\circ f$
(see \cite[Lemma~3.8]{Alz}).
Also the first component
$\lambda_a\circ\psi_1\circ f$
is $C^\alpha$ for each $a\in A$,
whence $\psi_1\circ f$
is $C^\alpha$ by Lemma~\ref{la-sub-PL}\,(b).
Hence $\psi\circ f$ is $C^\alpha$,
by \cite[Lemma~3.8]{Alz},
and hence so is $f=\psi^{-1}\circ (\psi\circ f)$. $\,\square$\\[2mm]
{\bf Proof of Lemma~\ref{ingrisch}.}
The proof is by induction on
$m:=m_1+\cdots+m_n$. If $m=n$,
there is nothing to show.
Assume that $m>n$.
After a permutation of $E_1,\ldots, E_n$, we may assume
that $m_n\geq 2$ (cf.\ Lemma~\ref{reorder}).
Let $(\beta_1,\ldots, \beta_{n-1})\in \prod_{i=1}^{n-1}
(\N_0\cup\{\infty\})^{m_i}$,
$\beta_n=(\beta_{n,1},\ldots,\beta_{n,m_n-1})\in (\N_0\cup\{\infty\})^{m_n-1}$
such that $|\beta_i|\leq \alpha_i$
for all $i\in\{1,\ldots,n\}$.
Abbreviate $\beta_n':=(\beta_{n,1},\ldots,\beta_{n,m_n-2})$.
For all $k,\ell\in \N_0$
such that $k+\ell\leq \beta_{n,m_n-1}$,
the map $f$ is $C^{\beta_1,\ldots,\beta_{n-1},\beta_n',k,\ell}$.
Hence
\[
f\colon \prod_{i=1}^{n-1}\prod_{j=1}^{m_i}
U_{i,j}\times U_{n,1}\times
\cdots\times U_{n,m_n-2}\times (U_{n,m_n-1}\times U_{n,m_n})\to F
\]
is $C^{\beta_1,\ldots,\beta_n}$,
by \cite[Lemma~3.12]{Alz}.
By the inductive hypothesis, $f$ is $C^\alpha$. $\,\square$\\[2mm]
The following lemma fills in the details
for \ref{tangent-maps}.
\begin{la}\label{partial-tangents}
Let $M_1,\ldots, M_n$ and $N$ be smooth manifolds
with rough boundary, $M:=M_1\times\cdots\times M_n$
and $f\colon M\to N$ be a $C^\alpha$-map with $\alpha\in (\N_0\cup\{\infty\})^n$.
Then $f(\bar{x},\cdot)\colon M_n\to N$
is $C^{\alpha_n}$ for each $\bar{x}\in \bar{M}:=M_1\times \cdots \times M_{n-1}$
and
\[
h_k\colon
M_1\times\cdots\times M_{n-1}\times T^k(M_n)\to T^kN,\;\,
(\bar{x},v)\mto T^k(f(\bar{x},\cdot))(v)
\]
is a $C^{\alpha-ke_n}$-map
for all $k\in \N_0$ such that $k\leq \alpha_n$.
\end{la}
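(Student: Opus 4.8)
The plan is to treat the two assertions separately, proving the statement about $h_k$ by induction on~$k$. The first assertion, that $f(\bar x,\cdot)\colon M_n\to N$ is $C^{\alpha_n}$ for each fixed $\bar x\in\bar M$, is an instance of the ``partial map'' principle and follows directly from \cite[Lemma~3.3]{Alz}: freezing the first $n-1$ arguments of a $C^\alpha$-map leaves a $C^{\alpha_n}$-map in the remaining variable. In particular $f(\bar x,\cdot)$ is $C^k$ for every $k\leq\alpha_n$, so the iterated tangent map $T^k(f(\bar x,\cdot))$ is defined and the maps $h_k$ make sense; for fixed $\bar x$ one also has that $h_k(\bar x,\cdot)=T^k(f(\bar x,\cdot))$ is $C^{\alpha_n-k}$, which is the expected restriction of the claimed regularity.

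The key observation for the second assertion is that $h_{k+1}$ is the \emph{partial tangent map} of $h_k$ in the last variable. Indeed, since $T^{k+1}(f(\bar x,\cdot))=T\bigl(T^k(f(\bar x,\cdot))\bigr)$ and $T^k(f(\bar x,\cdot))=h_k(\bar x,\cdot)$, one has $h_{k+1}(\bar x,w)=T\bigl(h_k(\bar x,\cdot)\bigr)(w)$ for $w\in T^{k+1}M_n=T(T^kM_n)$. Thus the whole statement reduces to a single lemma: if $g\colon M_1\times\cdots\times M_{n-1}\times P\to Q$ is $C^{(\gamma,\ell)}$ with $\ell\geq 1$, then the partial tangent map $(\bar x,w)\mapsto T(g(\bar x,\cdot))(w)$ from $M_1\times\cdots\times M_{n-1}\times TP$ to $TQ$ is $C^{(\gamma,\ell-1)}$. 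Granting this, the base case $k=0$ is trivial (as $h_0=f$ is $C^\alpha$), and the inductive step applies the lemma to $g=h_k$ with $\gamma=(\alpha_1,\ldots,\alpha_{n-1})$ and $\ell=\alpha_n-k\geq 1$ (using $k+1\leq\alpha_n$), yielding that $h_{k+1}$ is $C^{(\gamma,\alpha_n-k-1)}=C^{\alpha-(k+1)e_n}$.

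It remains to prove the partial-tangent lemma, which is the heart of the argument. Since being $C^\alpha$ is a local condition checked in charts (see~\ref{C-alpha-in-mfd}), and charts of $TP$ and $TQ$ are induced from those of $P$ and $Q$ with $T$ of a locally convex subset identified with the corresponding product (cf.\ \ref{defn-df} and~\ref{top-sub}), I would pass to the vector-space setting. There $P$ becomes a locally convex subset $U\subseteq E$ with dense interior, $TP\cong U\times E$, and the partial tangent map takes the form
\[
(\bar x,u,y)\mapsto\bigl(g(\bar x,u),\,d^{e_n}g(\bar x,u,y)\bigr),\qquad d^{e_n}g(\bar x,u,y)=D_y\bigl(g(\bar x,\cdot)\bigr)(u),
\]
the first-order directional derivative in the $U$-direction. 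By Lemma~\ref{ingrisch} it suffices to show this map is $C^{(\gamma,p,q)}$ for all $p+q\leq\ell-1$, where $p$ and $q$ count orders of differentiation in the $u$- and $y$-variables separately. The first component is independent of~$y$ and is $C^{(\gamma,\ell)}$ in $(\bar x,u)$, hence $C^{(\gamma,p,q)}$ for every $p\leq\ell$ and $q$; the second component is continuous linear in~$y$, and the characterization of $C^\alpha$-maps through their iterated directional derivatives (\ref{defn-C-alpha}, together with the relevant differentiation lemma of~\cite{Alz}) shows that differentiating $g$ once in the last variable produces a map which is $C^{(\gamma,\ell-1)}$ in $(\bar x,u)$, jointly continuous in the direction~$y$.

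The main obstacle I anticipate is exactly this final bookkeeping: matching the single index $\ell-1$ on the tangent bundle $TP$ against the pair $(p,q)$ on the split product $U\times E$, and verifying that taking one directional derivative in the last group of variables lowers the last index by precisely one while leaving the direction variable~$y$ unconstrained. Lemma~\ref{ingrisch} is the decisive tool licensing the splitting $U\times E$, and the continuous linearity of the differential in~$y$ is what renders the $y$-order $q$ harmless; the remaining care is to confirm that no hidden loss of regularity occurs in the parameters~$\bar x$, and to check that the recursion in~\ref{tangent-maps} assembles these partial statements into the asserted $C^{\alpha-ke_n}$-regularity on the genuine manifold product.
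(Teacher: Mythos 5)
Your proposal is correct and follows essentially the same route as the paper: after passing to charts, both arguments reduce everything to the one-step case, writing the partial tangent map as the pair consisting of the base component and $d^{e_n}g$ (the latter being $C^{(\gamma,\ell-1,0)}$ and linear in the direction variable), and then iterate by induction on the tangent order. The only cosmetic difference is that where you invoke Lemma~\ref{ingrisch} and check the mixed orders $(p,q)$ by hand, the paper performs the same "linearity upgrades regularity" step by citing \cite[Lemma~3.11]{Alz} directly, and combines the two components via \cite[Lemma~3.8]{Alz}.
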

\begin{proof}
We show by induction on $k_0\in\N$ that the conclusion
holds with $k\leq k_0$ for all functions as described in the lemma,
for all $\alpha$ with $\alpha_n\geq k_0$.
Using local charts, we may assume that $U_j:=M_j$ is a locally
convex subset of a locally convex space $E_j$ for all $j\in \{1,\ldots, n\}$
and $N$ a locally convex subset of a locally convex space~$F$;
thus $f$ is a map $U:=U_1\times\cdots\times U_n\to F$.
The case $k_0=0$ being trivial as $h_0=f$ is $C^\alpha$.
Let $1\leq k_0\leq \alpha_n$ now.
Then
\[
d^{e_n}f\colon U_1\times \cdots\times U_n\times E_n\to F
\]
is a $C^{(\alpha-e_n,0)}$-map.
Being linear in the final argument,
$d^{e_n}f$ is $C^{\alpha-e_n}$
as a map
\[
U_1\times \cdots\times U_{n-1}\times (U_n\times E_n)\to F
\]
of $n$ variables, i.e., as a map on
the domain $T^{e_n}U=
U_1\times U_{n-1}\times TU_n$
(see \cite[Lemma~3.11]{Alz}). Let $\pr_1\colon TU_n=U_n\times E_n\to U_n$
be the projection onto the first component.
Then $g:=f\circ \id_{U_1}\times\cdots\times \id_{U_{n-1}}\times \pr_1\colon
U_1\times \cdots \times U_{n-1}\times TU_n
\to F$ is $C^\alpha$ by the Chain Rule
\cite[Lemma~3.16]{Alz},
and hence $C^{\alpha-e_n}$.
Thus $h_1=(g,d^{e_n}f)$ is $C^{\alpha-e_n}$,
by \cite[Lemma~3.8]{Alz}.
By the inductive hypothesis,
the maps
\[
U_1\times\cdots\times U_{n-1}\times T^j(TU_n)\to T^j(TF),\quad
(\bar{x},v)\mto T^j(h_1(x,\cdot))(v)
\]
are $C^{\alpha-e_n-je_n}$ for all $j\in\{0,\ldots,k_0-1\}$.
It only remains to observe that this map equals
$h_{j+1}$.
\end{proof}
{\bf Proof of Lemma~\ref{pull-and-push}.}
(a) For $\beta\in \N_0^n$ with $\beta\leq\alpha$,
consider the maps
\[
T^\beta\colon C^\alpha(M,N)\to
C(T^\beta M,T^{|\beta|} N),\;\,
f\mto T^\beta f
\]
and
$\tau_\beta\colon C^\alpha(M,L)
\to C(T^\beta N,T^{|\beta|} L)$,
$f\mto T^\beta f$.
Going through the recursive construction
of $T^\beta(g\circ f)$ in \ref{tangent-maps}
for $f\in C^\alpha(M,N)$
and making repeated use of the functoriality of~$T$,
we see that
\begin{equation}
T^\beta(g\circ f)=T^{|\beta|}g \circ T^\beta f.
\end{equation}
Thus $\tau_\beta\circ C^\alpha(M,g)=C(T^\beta M,T^{|\beta|}g)\circ T^\beta$,
which is a continuous map by \cite[Lemma~A.6.3]{GaN}.
The topology on $C^\beta(M,L)$ being initial with respect to the maps
$\tau_\beta$, we deduce that $C^\alpha(M,g)$
is continuous.\\[1mm]
(b)
For $\beta\in \N_0^n$ with $\beta\leq\alpha$,
consider the maps $T^\beta\colon C^\alpha(M,N)\to C(T^\beta M,T^{|\beta|} N)$,
$f\mto T^\beta f$ and
$\tau_\beta\colon C^\alpha(L,N)\to C(T^\beta L,T^{|\beta|} N)$,
$f\mto T^\beta f$.
Going through the recursive construction
of $T^\beta(f\circ g)$ in \ref{tangent-maps}
for $f\in C^\alpha(M,N)$
and making repeated use of the functoriality of~$T$,
we see that
\begin{equation}
T^\beta(f\circ g)=(T^\beta f) \circ h_\beta
\end{equation}
with $h_\beta:= T^{\beta_1} g_1\times\cdots\times T^{\beta_n}g_n$.
Thus $\tau_\beta\circ C^\alpha(g,N)=C(h_\beta,T^{|\beta|}N)\circ T^\beta$,
which is a continuous map by \cite[Lemma~A.6.9]{GaN}.
The topology on $C^\alpha(L,N)$ being initial with respect to the maps
$\tau_\beta$, we deduce that $C^\alpha(g,N)$
is continuous. $\,\square$\\[2mm]
{\bf Proof of Lemma~\ref{la:ascending-union}.}
By definition,
the compact-open $C^\alpha$-topology $\cO$ on $C^\alpha(M,N)$
is initial with respect to the maps $\tau_\beta\colon
C^\alpha(M,N)\to C(T^\beta M,T^{|\beta|} N)$, $f\mto T^\beta f$
for $\beta\in (\N_0)^n$ such that $\beta\leq\alpha$.
As the interiors $(T^\beta K_i)^o$
cover $T^\beta M$, the compact-open
topology on $C(T^\beta M,T^{|\beta|} N)$
is initial with respect to the restriction maps
$\rho_{\beta,i}\colon C(T^\beta M,T^{|\beta|} N)\to C(T^\beta K_i,T^{|\beta|} N)$,
by \cite[Lemma~A.6.11]{GaN}.
By transitivity of initial topologies, $\cO$
is initial with respect to the mappings $\rho_{\beta,i}\circ \tau_\beta$.
Let $\rho_i\colon C^\alpha(M,N)\to C^\alpha(K_i,N)$
the restriction map.
The compact-open $C^\alpha$-topology on $C^\alpha(K_i,N)$ being
initial with respect to the mappings
$\tau_{\beta,i}\colon$\linebreak
$C^\alpha(K_i,N)\to C(T^\beta K_i,T^{|\beta|} N)$, $f\mto T^\beta f$,
we deduce from
\[
\rho_{\beta,i}\circ \tau_\beta=\tau_{\beta,i}\circ \rho
\]
that $\cO$ is initial with respect to
the maps $\rho_i$. $\,\square$.\\[2mm]
{\bf Proof of Lemma~\ref{top-sub}.}
The case $n=1$ is well known.
The general case follows as
$T^\beta S=T^{\beta_1}S_1\times\cdots\times T^{\beta_n}S_n$
and $T^\beta M=T^{\beta_1}M_1\times\cdots\times T^{\beta_n}M_n$. $\,\square$\\[2mm]
{\bf Proof of Lemma~\ref{maps-to-sub}.}
The inclusion map $\lambda \colon S\to N$
is smooth.
By Lemma~\ref{top-sub},
the inclusion map $T^{|\beta|}\lambda\colon T^{|\beta|} S\to T^{|\beta|} N$
is a topological embedding,
for each $\beta\in(\N_0)^n$
such that $\beta\leq\alpha$.
Thus $(T^{|\beta|} \lambda)_*\colon C(T^\beta M,T^{|\beta|} S)
\to C(T^\beta M,T^{|\beta|} N)$
is a topological embedding for the compact-open
topologies (see, e.g., \cite[Lemma~A.6.5]{GaN}).
The compact-open $C^\alpha$-topology $\cO$
on $C^\alpha(M,S)$, which is initial with respect to the maps
$\tau_{\beta,S}\colon C^\alpha(M,S)\to C(T^\beta M, T^{|\beta|}S)$, $f\mto T^\beta f$
is therefore also initial with respect to the mappings
$(T^{|\beta|} \lambda)_*\circ \tau_{\beta,S}$.
The compact-open $C^\alpha$-topology on $C^\alpha(M,N)$
is initial with respect to the maps $\tau_{\beta, N}\colon
C^\alpha(M,N)\to C(T^\beta M,T^{|\beta|} N)$, $f\mto T^\beta f$.
As $(T^{|\beta|} \lambda)_*\circ \tau_{\beta,S}=\tau_{\beta,N}\circ \lambda_*$,\linebreak
we see that
the topology
$\cO$
is initial with respect to
the inclusion map\linebreak
$\lambda_*\colon C^\alpha(M,S)\to C^\alpha(M,N)$.
Thus $\cO$ is the induced topology. $\,\square$\\[2.3mm]
{\bf Proof of Lemma~\ref{maps-to-tvs-1}.}
For each $k\in\N_0$, $T^kF=F^{2^k}$ is a locally convex space.
For each $\beta\in (\N_0)^n$ such that
$\beta\leq\alpha$,
the map
\[
T^\beta\colon C^\alpha(M,F)\to C(T^\beta M,T^{|\beta|}F),\,\;
f\mto T^\beta f
\]
is linear. In fact, $T^k\colon C^k(N,F)\to C(T^kN,T^kF)$
is linear for each smooth manifold~$N$
with rough boundary \cite[proof of Proposition 4.1.11]{GaN} and $k\in \N_0$,
establishing linearity if $n=1$.
If $n\geq 2$, the preceding entails that
$T^{(0,\ldots,0,\beta_n)}f(v)=T^{\beta_n}(f(x_1,\ldots, x_{n-1},\cdot))(v_n)$
is linear in $f$ for all $x_j\in M_j$ for $j\in\{1,\ldots,n-1\}$
and $v_n\in T^{\beta_n}M_n$,
showing that $T^{(0,\ldots,0,\beta_n)}f$
is linear in~$f$.
Likewise, $g$ and $T^{(0,\ldots,0,\beta_{k-1},\ldots,\beta_n)}f$
is linear in $f$ in the recursive
construction in \ref{tangent-maps},
which gives the assertion for $n\geq 2$.
Thus
\[
C^\alpha(M,F)\to\prod_{\beta\leq\alpha}C(T^\beta M,T^{|\beta|}F),\;
f\mto (T^\beta f)_{\beta\leq\alpha}
\]
is a linear map. It is a homeomorphism
onto its image, which is a locally convex space.
Hence also $C^\alpha(M,F)$ is a locally convex space. $\,\square$\\[2mm]
{\bf Proof of Lemma~\ref{c-alpha-top-product}.}
(a) For each $k\in \N_0$,
the topology on $T^k F=F^{2^k}$
is initial with respect to the linear maps
$T^k\lambda_i=\lambda_i^{2^k}\colon F^{2^k}\to F_i^{2^k}$.
For each $\beta\in \N_0^n$ with $\beta\leq\alpha$,
the compact-open topology on
$C(T^\beta M,T^{|\beta|}F)$
is therefore initial with respect to the mappings
\[
C(T^\beta M,T^{|\beta|}\lambda_i)\colon C(T^\beta M,T^{|\beta|} F)\to C(T^\beta M,
T^{|\beta|} F_i)
\]
for $i\in I$, see \cite[Lemma~A.6.4]{GaN}.
Thus, the compact-open $C^\alpha$-topology $\cO$ on $C^\alpha(M,F)$
is initial with respect to the maps
$C(T^\beta M,T^{|\beta|}\lambda_i)\circ T^\beta$
with $T^\beta \colon C^\alpha(M,F)\to C(T^\beta M,T^{|\beta|}F)$.
As $T^{\beta}(\lambda_i\circ f)=(T^{|\beta|}\lambda_i) \circ (T^\beta f)$,
writing $\tau_{i,\beta}(g):=T^\beta g$ for $g\in C^\alpha(M,F_i)$
we have
\[
C(T^\beta M,T^{|\beta|}\lambda_i)\circ T^\beta
=\tau_{i,\beta}\circ C^\alpha(M,\lambda_i).
\]
The topology on $C^\alpha(M,F_i)$ being initial with respect to
the mappings\linebreak
$\tau_{i,\beta}\colon C^\alpha(M,F_i)\to C(T^\beta M,T^{|\beta|}F_i)$
for $\beta\leq\alpha$, we deduce that $\cO$
is initial with respect to the mappings $C^\alpha(M,\lambda_i)=(\lambda_i)_*$.\\[1mm]
(b) By \cite[Lemma~3.8]{Alz}, the linear map
$\Theta$ is a bijection. The topology on~$F$
being initial with respect to the maps $\pr_i$,
(a) shows that the topology on $C^\alpha(M,F)$
is initial with respect to the maps $(\pr_i)_*$
and hence makes $\Theta$ a topological
embedding. Hence $\Theta$ is a homeomorphism, being bijective.\\[1mm]
(c)
By \cite[Lemma~3.8]{Alz}, $\Psi$
is a bijection.
By Lemma~\ref{pull-and-push}, $\Psi$
is continuous. To see that $\Psi^{-1}$
is continuous, we prove its
continuity at a given element
$(f_1,f_2)$\linebreak
in $C^\alpha(M,N_1)\times C^\alpha(M,N_2)$.
For $x\in M$, pick a chart
$\phi_{x,i}\colon U_{x,i}\to V_{x,i}\sub E_{x,i}$ of $N_i$ around
$f_i(x)$, for $i\in\{1,2\}$.
There exist compact full submanifolds
$K_{x,j}$ of $M_j$ for $j\in\{1,\ldots, n\}$
such that $K_x:=K_{x,1}\times\cdots\times K_{x,n}
\sub (f_1,f_2)^{-1}(U_{x,1}\times U_{x,2})$
and $x\in K_x^o$. By Lemma~\ref{la:ascending-union},
the topology on $C^\alpha(M,N_1\times N_2)$
is initial with respect to the restriction maps
\[
\rho_x\colon C^\alpha(M,N_1\times N_2)\to C^\alpha(K_x,N_1\times N_2).
\]
It thus suffices to show that $\rho_x\circ\Psi^{-1}$
is continuous at $(f_1,f_2)$
for all $x\in M$. Now $\rho_x\circ \Psi^{-1}=\Psi_x^{-1}\circ (\rho_{x,1}\times\rho_{x,2})$
using the continuous restriction maps
$\rho_{x,i}\colon C^\alpha(M,N_i)\to C^\alpha(K_x,N_i)$
for $i\in \{1,2\}$ and the map
\[
\Psi_x\colon C^\alpha(K_x,N_1\times N_2)\to C^\alpha(K_x,N_1)\times C^\alpha(K_x,N_2)
\]
taking a function to its pair of components.
Thus, it suffices to show that $\Psi_x^{-1}$
is continuous at $(f_1|_{K_x}, f_2|_{K_x})$.
Now $f_i|_{K_x}$ is contained in the open subset
$C^\alpha(K_x,U_{x,i})$
of $C^\alpha(K_x,N_i)$,
on which the latter induces the compact-open
$C^\alpha$-topology, by Lemma~\ref{maps-to-sub}.
The map $\Psi^{-1}$ takes this set
onto $C^\alpha(M,U_{x,1}\times U_{x,2})$,
on which $C^\alpha(M,N_1\times N_2)$
induces the compact-open $C^\alpha$-topology.
It thus suffices to show that
$\Psi_x^{-1}$ is continuous at $(f_1|_{K_x},f_2|_{K_x})$
as a map
\[
C^\alpha(K_x,U_{x,1})\times C^\alpha(K_x,U_{x,2})\to
C^\alpha(K_x,U_{x,1}\times U_{x,2}).
\]
Now $(\phi_{x,j})_*\colon C^\alpha(K_x,U_{x,j})\to C^\alpha(K_x,V_{x,i})$
is a homeomorphism for $i\in\{1,2\}$
and also $(\phi_{x,1}\times \phi_{x,2})_*\colon
C^\alpha(K_x,U_{x,1}\times U_{x,2})\to
C^\alpha(K_x,V_{x,1}\times V_{x,2})$
is a homeomorphism, by Lemma~\ref{pull-and-push}.
It thus suffices to show that
the mapping\linebreak
$(\phi_{x,1}\times\phi_{x,2})_*\circ \Psi_x^{-1}\circ
((\phi_{x,1})_*\times(\phi_{x,2})_*)^{-1}\colon$
\[
C^\alpha(K_x,V_{x,1})\times C^\alpha(K_x,V_{x,2})\to
C^\alpha(K_x,V_{x,1}\times V_{x,2})
\]
is continuous. But this mapping is a restriction of the
homeomorphism\linebreak
$C^\alpha(K_x,E_{x,1})\times C^\alpha(K_x,E_{x,2})\to
C^\alpha(K_x,E_{x,1}\times E_{x,2})$ discussed in (b). $\,\square$\\[2mm]
{\bf Proof of Lemma~\ref{c-alpha-top-mult}.}
The scalar multiplication $\sigma \colon \R\times TN\to TN$
being smooth, the map $\sigma_*\colon C^\alpha(M,\R\times TN)\to C^\alpha(M,TN)$,
$h\mto \sigma\circ h$ is continuous (see Lemma~\ref{pull-and-push}).
Hence $\mu=\sigma_*\circ \Psi^{-1}$ is continuous,
using the homeomorphism $\Psi\colon C^\alpha(M,\R\times TN)\to
C^\alpha(M,\R)\times C^\alpha(M,TN)$
from Lemma~\ref{c-alpha-top-product}. $\,\square$\\[2mm]
{\bf Proof of Lemma~\ref{exp-law-not-pure}.}
Let $(U_i)_{i\in I}$
be the family of pairwise distinct
connected components of~$N$
and $(V_j)_{j\in J}$ be the family of components
of~$M$.
Then
\[
r\colon C^\beta(M,E)\to \prod_{j\in J}C^\beta(V_j,E),\;\,
f\mto (f|_{V_j})_{j\in J}\vspace{-.4mm}
\]
is a bijective linear map;
by Lemma~\ref{la:ascending-union},
it is a homeomorphism.
Likewise,
\[
\rho\colon C^{\alpha,\beta}(N\times M,E)\to\prod_{(i,j)\in I\times J}
C^{\alpha,\beta}(U_i\times V_j,E),\;\,
f\mto (f|_{U_i\times V_j})_{(i,j)\in I\times J}\vspace{-.4mm}
\]
and
$R\colon C^\alpha(N,C^\beta(M,E))\to \prod_{i\in I} C^\alpha(U_i,C^\beta(M,E))$,
$f\mto (f|_{U_i})_{i\in I}$
are isomorphisms of topological vector spaces.
By Lemma~\ref{pull-and-push}, the mapping
$C^\alpha(U_i,r)\colon$\linebreak
$C^\alpha(U_i,C^\beta(M,E))\to
C^\alpha(U_i,\prod_{j\in J}C^\beta(V_j,E))$
is an isomorphism of topological
vector spaces
and so is the map
\[
\Theta_i\colon C^\alpha\Big(U_i,\prod_{j\in J}C^\beta(V_j,E)\Big)\to\prod_{j\in J}
C^\alpha(U_i,C^\beta(V_j,E))\vspace{-.7mm}
\]
taking a map to its family of components
(see Lemma~\ref{c-alpha-top-product}\,(b)).
Hence
\[
\Xi:=\prod_{i\in I}\Theta_i\circ \prod_{i\in I}C^\alpha(U_i,r)\circ R\colon
C^\alpha(N,C^\beta(M,E))\to
\prod_{(i,j)\in I\times J}C^\alpha(U_i,C^\beta(V_j,E))\vspace{-.4mm}
\]
is an isomorphism of topological vector spaces.
By \cite[Theorem~B]{Alz},
the map $\Phi_{i,j}\colon C^{\alpha,\beta}(U_i\times V_j,E))\to
C^\alpha(U_i,C^\beta(V_j,E))$, $f\mto f^\vee$
is linear and a topological embedding, whence so is
\[
\Psi:=\prod_{(i,j)\in I\times J}\Phi_{i,j}\colon
\prod_{(i,j)\in I\times J}C^{\alpha,\beta}(U_i\times V_j,E)\to
\prod_{(i,j)\in I\times J}C^\alpha(U_i,C^\beta(V_j,E)).\vspace{-.4mm}
\]
Evaluating at $x\in N$ and then in $y\in M$
(say $x\in U_i$ and $y\in V_j$),
we verify that
\[
f^\vee =(\Xi^{-1}\circ \Psi\circ \rho)(f)
\]
for all $f\in C^{\alpha,\beta}(N\times M,E)$,
whence $f^\vee\in C^\alpha(N,C^\beta(M,E))$
and $\Phi$ makes sense as a map to the latter
space. We have a commutative diagram
\[
\begin{array}{rcl}
C^{\alpha,\beta}(N\times M,E) & \stackrel{\Phi}{\longrightarrow} & C^\alpha(N,C^\beta(M,E))\\
\rho \downarrow \;\;\;\;\;\;\;\;\;\; & & \;\;\;\;\;\;\;\;\;\;\; \downarrow \Xi\\
\prod_{i,j}C^{\alpha,\beta}(U_i\times V_j,E)&
\stackrel{\Psi}{\longrightarrow} & \prod_{i,j}C^\alpha(U_i,C^\beta(V_j,E))
\end{array}
\]
where the vertical arrows are homeomorphisms
and $\Psi$ is a topological embedding.
Hence $\Phi$ is a topological embedding.
If $M$ is locally
compact, then so are the $V_j$, whence
each of the maps $\Phi_{i,j}$ is a homeomorphism
by \cite[Theorem~4.4]{Alz}
and hence also~$\Psi$. Then also $\Phi=\Xi^{-1}\circ \Psi\circ\rho$
is a homeomorphism. $\,\square$\\[2mm]
{\bf Proof of Lemma~\ref{maps-to-tvs-2}.}
Let $\cO$ be the compact-open
$C^\alpha$-topology on $C^\alpha(U,F)$
and $\cT$ be the initial topology with respect to
the maps
\[
d^\beta\colon C^\alpha(U,F)\to C(U\times E_1^{\beta_1}\times\cdots
\times E_n^{\beta_n},F)
\]
for $\beta\in \N_0^n$ such that $\beta\leq \alpha$.
We claim that, for each $\beta$ as before,
there exist a continuous linear map $\lambda_\beta\colon T^{|\beta|} F\to F$
and $C^\infty$-maps
$\theta_{\beta,j}\colon U_j\times E_j^{\beta_j}\to T^{\beta_j}U_j$
for $j\in\{1,\ldots,n\}$ such that, for all $f\in C^\alpha(U,F)$,
\begin{equation}\label{d-via-T}
d^\beta f(x_1,\ldots, x_n,y_1,\ldots, y_n)
=(\lambda_\beta\circ T^\beta f)(\theta_{\beta,1}(x_1,y_1),\ldots,
\theta_{\beta,n}(x_n,y_n))
\end{equation}
holds for all
$(x_1,\ldots, x_n)\in U$ and $(y_1,\ldots, y_n)\in E_1^{\beta_1}\times\cdots
\times E_n^{\beta_n}$.
Consider the map
$\pi_\beta\colon U\times E_1^{\beta_1}\times \cdots \times E_n^{\beta_n}
\to (U_1\times E_1^{\beta_1})\times \cdots\times (U_n\times E_n^{\beta_n})$,
\[
(x_1,\ldots, x_n,y_1,\ldots, y_n)\mto ((x_1,y_1),\ldots, (x_n,y_n)).
\]
If the claim is true, setting $\theta_\beta:=\theta_{\beta,1}\times\cdots\times \theta_{\beta,n}$
we have
\[
d^\beta =   C(U\times E_1^{\beta_1}\times \cdots\times E_n^{\beta_n},\lambda_\beta)\circ
C(\pi_\beta,T^{|\beta|} F) \circ C(\theta_\beta,T^{|\beta|} F )\circ T^\beta,
\]
which is a continuous
$C(U\times E_1^{\beta_1}\times \cdots\times E_n^{\beta_n},F)$-valued
function on $(C^\alpha(U,F),\cO)$
by Lemmas~A.5.3 and A.5.9 in \cite{GaN}.
Thus $\cT\sub \cO$.
The claim is established by induction on $|\beta|$.
If $|\beta|=1$, then $\beta=e_j$ for some $j\in \{1,\ldots, n\}$.
Using $\pr_2\colon F\times F\to F$, $(v,w)\mto w$,
we have
\[
d^{e_j} f(x_1,\ldots, x_n,y_1,\ldots, y_n)=(\pr_2\circ T^{e_j} f)(\theta_{e_j,1}(x_1,y_1),\ldots,
\theta_{e_j,n}(x_n,y_n))
\]
with $\theta_{e_j,j}(x_j,y_j):=(x_j,y_j)$
for all $(x_j,y_j)\in U_j\times E_j$
and $\theta_{e_j,i}(x_i,y_i):=x_i$
if $i\not=j$ and $(x_i,y_i)\in U_i\times E_i^0= U_i\times \{0\}$.
Assume the claim holds for $\beta$;
thus $d^\beta f$ is of the form~(\ref{d-via-T}).
Let $k\in\{1,\ldots,n\}$
be minimal with $\beta_k\not=0$.
For $j\in\{1,\ldots, k\}$, $(x_1,\ldots, x_n)\in U$,
$y_j=(v,w)\in E_j^{\beta_j}\times E_j$
and $y_i\in E_i^{\beta_i}$ if $i\not=j$,
we then have
\begin{eqnarray*}
\lefteqn{d^{\beta+e_j}f(x_1,\ldots, x_n,y_1,\ldots, y_n)}\qquad\qquad\\
&=& (d\lambda_\beta \circ T^{\beta+e_j} f)(\theta_{\beta+e_j,1}(x_1,y_1),\ldots,
\theta_{\beta+e_j,n}(x_n,y_n))
\end{eqnarray*}
of the desired form
with $\theta_{\beta+e_j,i}:=\theta_{\beta,i}$
for $i\not=j$
and $\theta_{\beta+e_j,j}(x_j,v,w):=T\theta_{\beta,j}(x,v,w,0)$.\\[1mm]
To see that $\cO\sub\cT$,
we show that, for each $\beta\in\N_0^n$ such that $\beta\leq\alpha$,
there exist $m_\beta\in\N$,
multindices $\gamma_{\beta,a}\leq \beta$ for $a\in\{1,\ldots, m_\beta\}$,
continuous linear functions $\lambda_{\beta,a}\colon F\to T^{|\beta|}F$
and smooth functions $\xi_{\beta,a,j}\colon T^{\beta_j}U_j\to
E_j^{\beta_j}$ such that
\begin{eqnarray}
\lefteqn{T^\beta f(y_1,\ldots, y_n)}\notag \\
\hspace*{-40mm}\!\!\!\!\!\!\!\!\!\!\!\! &=&\sum_{a=1}^{m_\beta}
\lambda_{\beta,a}(d^{\hspace*{.2mm}\gamma_{\beta,a}}f(\theta_{1,\beta_1}(y_1),\ldots,
\theta_{n,\beta_n}(y_n),
\xi_{\beta,a,1}(y_1),\ldots,\xi_{\beta,a,n}(y_n))\;\;\; \label{ugly}
\end{eqnarray}
for all $(y_1,\ldots,y_n)\in \prod_{j=1}^nT^{\beta_j}U_j=T^\beta U$,
where
\[
\theta_{j,k}\colon T^kU_j=U_j\times E_j^{2^k-1}\to U_j
\]
is the projection
onto the first component for $j\in\{1,\ldots, n\}$
and $k\in \N_0$
(if we identify $U_j\times E_j^0$ with $U_j$ for $k=0$).
The map $\Xi_{\beta,a}\colon T^\beta U\to U\times E_1^{\beta_1}\times\cdots
\times E_n^{\beta_n}$, $(y_1,\ldots,y_n)\mto
(\theta_{1,\beta_1}(y_1),\ldots, \theta_{n,\beta_n}(y_n),
\xi_{\beta,a,1}(y_1),\ldots,\xi_{\beta,a,n}(y_n))$ is $C^\infty$ and
\[
T^\beta=\sum_{a=1}^{m_\beta}C(T^\beta U,\lambda_{\beta,a})\circ
C(\Xi_{\beta,a},F)\circ d^{\,\gamma_{\beta,a}}
\]
is a continuous $C(T^\beta U, T^{|\beta|}F)$-valued
function on $(C^\alpha(U,F),\cT)$;
so $\cO\sub\cT$.
The proof is by induction on $|\beta|$.
If $|\beta|=1$, then $\beta=e_j$ for some $j$
and
\begin{eqnarray*}
T^{e_j}f(y_1,y_2)
\!\!\! &=&\!\!\! (\lambda_1\circ f)(\theta_{1,\beta_1}(y_1),\ldots,\theta_{n,\beta_n}(y_n))\\
\!\!\!& &\!\!\!
 +(\lambda_2\circ d^{e_j}f)(\theta_{1,\beta_1}(y_1),\ldots,\theta_{n,\beta_n}(y_n),
\pr_2(y_j))\\
\!\!\! &=& \!\!\!(\lambda_1\circ f)(\theta_{1,\beta_1}(y_1),\ldots,\theta_{n,\beta_n}(y_n),
\xi_{e_j,1,1}(y_1),\ldots, \xi_{e_j,1,n}(y_n))\\
\!\!\! & &\!\!\! +(\lambda_2\circ d^{e_j}f)(\theta_{1,\beta_1}(y_1),\ldots,\theta_{n,\beta_n}(y_n),
\xi_{e_j,2,1}(y_1),\ldots \xi_{e_j,2,n}(y_n))
\end{eqnarray*}
with $\xi_{e_j,1,i}(y_i):=0\in E_i^0$
for $i\in\{1,\ldots,n\}$,
$\xi_{e_j,2,j}(y_j):=\pr_2(y_j)$
and $\xi_{e_j,2,i}(y_i)$\\
$:=0\in E_i^0$
for $i\not=j$, using
$\pr_2\colon TU_j=U_j\times E_j\to E_j$,
$\lambda_1\colon F\to F\times F$, $v\mto (v,0)$
and $\lambda_2\colon F\to F\times F$, $v\mto (0,v)$.
Note that we identified $U$ with $U\times (E_1)^0\times \cdots\times (E_n)^0$.
If $\beta\leq \alpha$ with $|\beta|\geq 1$
is given,
let $k\in \{1,\ldots, n\}$ with $\beta_k\geq 1$
be minimal. Let $j\in \{1,\ldots, k\}$
and assume that $\beta':=\beta+ e_j\leq \alpha$.
Write $\beta_1',\ldots, \beta_n'$
for the components of $\beta'$.
Consider the continuous linear
map $\lambda_1\colon T^{|\beta|} F\to T^{|\beta|}F\times T^{|\beta|}F$,
$v\mto (v,0)$ and define $\lambda_2$ analogously.
Keeping the other variables fixed and
differentiating in the $y_j$-variable,
(\ref{ugly}) implies
that
\begin{eqnarray*}
\lefteqn{T^{\beta+e_j}f(y_1,\ldots, y_n)}\\
\!\!\! &=&\!\!\!\sum_{a=1}^{m_\beta}\lambda_1(
\lambda_{\beta,a}(d^{\hspace*{.2mm}\gamma_{\beta,a}}f(\theta_{1,\beta_1'}(y_1),\ldots,
\theta_{n,\beta_n'}(y_n),
\xi_{\beta',a,1}(y_1),\ldots,\xi_{\beta',a,n}(y_n)))\\
\!\!\! & & \!\!\! \!\!+\!
\sum_{a=1}^{m_\beta}\lambda_2(
\lambda_{\beta,a}(d^{\hspace*{.2mm}\gamma_{\beta,a}}f(\theta_{1,\beta_1'}(y_1),\ldots,
\theta_{n,\beta_n'}(y_n),
\eta_{\beta',a,1}(y_1),\ldots,\eta_{\beta',a,n}(y_n)))\\
\!\!\! & & \!\!\! \!\!+\!
\sum_{a=1}^{m_\beta}\lambda_2(
\lambda_{\beta,a}(d^{\hspace*{.2mm}\gamma_{\beta,a}+e_j}f(\theta_{1,\beta_1'}(y_1),\ldots,
\theta_{n,\beta_n'}(y_n),
\zeta_{\beta',a,1}(y_1),\ldots,\zeta_{\beta',a,n}(y_n)))
\end{eqnarray*}
for all $(y_1,\ldots, y_n)\in T^{\beta+e_j}U$,
where $\xi_{\beta',a,j}(y_j):=\xi_{\beta,a,j}(\pr_1(y_j))$,
$\xi_{\beta',a,i}(y_i):=\xi_{\beta,a,i}(y_i)$
for $i\not=j$,
$\eta_{\beta',a,j}(y_j):=d\xi_{\beta,a,j}(y_j)$,
$\eta_{\beta',a,i}(y_i):=\xi_{\beta,a,i}(y_i)$ for $i\not=j$,
$\zeta_{\beta',a,j}(y_j):=(\xi_{\beta,a,j}(\pr_1(y_j)),d\theta_{j,\beta_j}(y_j))$
and $\zeta_{\beta',a,i}(y_i):=\xi_{\beta,a,i}(y_i)$
for $i\not=j$,
using the map $\pr_1\colon T^{\beta_j+1}U_j=T^{\beta_j}U_j\times T^{\beta_j}E_j\to
T^{\beta_j}U_j$. 
Thus also $T^{\beta+e_j}f$
is of the desired form. $\,\square$

Helge Gl\"{o}ckner, Universit\"{a}t Paderborn, Warburger Str.\ 100,
33098 Paderborn, Germany; glockner@math.uni-paderborn.de\\[2.4mm]
Alexander Schmeding,
Nord universitet, H\o{}gskoleveien 27, 7601 Levanger,\linebreak
Norway; alexander.schmeding@nord.no\vfill
\end{document}